\providecommand{\printnomenclature}{\printglossary}
\providecommand{\makenomenclature}{\makeglossary}
\numberwithin{equation}{section}
\numberwithin{figure}{section}
\newcommand{\frmu}{\frm{^\}}}
\newcommand{\frmd}{\frm{_\}}}
\journalname{Inventiones mathematicae}
\spnewtheorem*{theorem*}{Theorem}{\bf}{\it}
\spnewtheorem*{xproof}{}{\itshape}{\rmfamily}
\renewenvironment{proof}[1][\proofname]
 {\xproof}
 {\qed\endxproof}
  \providecommand{\proofname}{Proof}
\begin{document}

\title{Nonexistence of Wandering Domains for Infinitely Renormalizable
Hénon Maps}

\author{Dyi-Shing Ou}

\institute{Dyi-Shing Ou \at Mathematics Department, Stony Brook University,
Stony Brook NY, 11794-3651, USA \\
\email{dsou@math.stonybrook.edu}}

\date{\today}
\maketitle
\begin{abstract}
This article extends the theorem of the absence of wandering domains
from unimodal maps to infinitely period-doubling renormalizable Hénon-like
maps in the strongly dissipative (area contracting) regime. The theorem
solves an open problem proposed by several authors \cite{van2010one,lyubich2011renormalization},
and covers a class of maps in the nonhyperbolic higher dimensional
setting. The classical proof for unimodal maps breaks down in the
Hénon settings, and two techniques, ``the area argument'' and ``the
good region and the bad region'', are introduced to resolve the main
difficulty.

The theorem also helps to understand the topological structure of
the heteroclinic web for such kind of maps: the union of the stable
manifolds for all periodic points is dense.

\subclass{37E30 \and 37C70 \and 37E20 \and 37D45}
\end{abstract}

\begin{acknowledgements}
The author thanks Marco Martens for many discussions on Hénon maps,
careful reading the manuscript, and useful comments on the results.
The author also thanks Department of Mathematics, Stony Brook University
for financial support.
\end{acknowledgements}

\section{Introduction }

This article studies the question of the existence of wandering domains
for Hénon-like maps. A Hénon-like map\index{Hénon-like map} is a
real two-dimensional continuous map that has the form 
\begin{equation}
F(x,y)=(f(x)-\epsilon(x,y),x)\label{eq:Henon-like map}
\end{equation}
where $f$ is a unimodal map\index{unimodal map} (will be defined
later) and $\epsilon$ is a small perturbation. For renormalization
purposes, the Hénon-like maps in consideration are all real analytic
and strongly dissipative (the Jacobian $\left|\frac{\partial\epsilon}{\partial y}\right|$
is small\footnote{This article assumes $\epsilon$ is small. This implies that $\left|\frac{\partial\epsilon}{\partial y}\right|$
is also small in the analytic setting.}). One can see from the definition, Hénon-like maps are a generalization
of classical Hénon maps \cite{henon1976two} (two-parameters polynomial
maps) to the analytic settings and extension of unimodal maps to higher
dimensions.

Strongly dissipative Hénon-like maps are the maps that are close to
unimodal maps. They share some dynamical properties with unimodal
maps. For example, the tool of unimodal renormalization can be adopted
to Hénon-like maps \cite{de2005renormalization,lyubich2011renormalization,hazard2011henon},
the renormalization operator is hyperbolic \cite[Theorem 4.1]{de2005renormalization},
and an infinitely renormalizable Hénon-like map has an attracting
Cantor set\index{Cantor set} \cite[Section 5.2]{de2005renormalization}.
However, there are also some properties that make Hénon-like maps
distinct from unimodal maps. For example, the Cantor set\index{Cantor set}
for infinitely renormalizable Hénon-like maps is not rigid \cite[Theorem 10.1]{de2005renormalization}
and a universal model can not be presented by a finite dimensional
family of Hénon-like maps \cite{hazard2014zero}. In the degenerate
case, unimodal maps do not have wandering intervals \cite{guckenheimer1979sensitive,de1988one,de1989structure,martens1992julia}.
It is natural to ask whether this property can be promoted to Hénon-like
maps.

The study of wandering domains has a broad interest in the field of
dynamics. In one-dimension, the problem has been widely studied and
there are many important consequences due to the absence of wandering
intervals/domains. However, only a few systems in higher dimensions
were known not having wandering domains.

In real one-dimension, showing the absence of \index{wandering interval}wandering
intervals in a system is important to solve the classification problem.
For circle homeomorphisms, a sequence of works \cite{hall1981ac,yoccoz1984n,hu1997topological,norton1999denjoy}
follows after Denjoy \cite{denjoy1932courbes} showed that a circle
homeomorphism with irrational rotation number (the average rotation
angle is irrational) does not have a wandering domain if the map is
smooth enough. Those maps are conjugated to the rigid rotation with
the same rotation number by a classical theorem from Poincare \cite{poincare1881memoire}.
For multimodal maps, a full family is a family of multimodal maps
that exhibits all relevant dynamical behavior. A multimodal map that
does not have a wandering interval is conjugated to an element in
a full family \cite{guckenheimer1979sensitive,de1988one,de1989structure,lyubich1989non,blokh1989non,martens1992julia}.

In complex dimension one, Sullivan's no-wandering-domain theorem \cite{sullivan1985}
fully solves the problem for rational maps. The theorem says that
a rational map on the Riemann sphere does not have wandering Fatou
components. As a consequence, this theorem completes the last puzzle
for the classification of Fatou components \cite{fatou1919equations,julia1918memoire}.
Thus, the main interest turns to transcendental maps. In general,
there are transcendental maps that have wandering domains \cite{baker1976entire,baker1984wandering,herman1984exemples,sullivan1985,eremenko1987examples,bishop2015constructing,fagella2015wandering}.
There are also some types of transcendental maps that do not have
wandering domains \cite{goldberg1986finiteness,eremenko1992dynamical,bergweiler1993limit,mihaljevic-brandt2013}. 

In real higher dimensions, the problem for wandering domains is still
wide open. There is no reason to expect the absence of wandering domains
\cite{yoccoz1980}, especially when the regularity is not enough as
in one-dimension \cite{mcswiggen1993,mcswiggen1995diffeomorphisms,bonatti1994wandering}.
The classification problem fails between any two different levels
of differentiability for diffeomorphisms on $d$-manifold with $d\neq1,4$
\cite{Harrison1975,Harrison1979}. Examples are found in polynomial
skew-product maps having wandering domains \cite{astorg2016two}.
Non-hyperbolic phenomena also play a role in building counterexamples
\cite{colli2001non,kiriki2015takens,kiriki2016non}. A relevant work
by Kiriki and Soma \cite{kiriki2015takens} found Hénon-like maps
having wandering domains by using a homoclinic tangency of some saddle
fixed point \cite{kiriki2010coexistence,kiriki2013existence}. On
the other hand, there are studies \cite{norton1996wandering,kwakkel2009,kwakkel2010topological,navas_wandering_2017}
suggests that some types of systems may not have wandering domains.
However, only a few \cite{norton1991area,bonatti1994wandering} were
discovered not having wandering domains.

In complex higher dimension, counterexamples in transcendental maps
can be constructed from one-dimensional examples \cite{fornaess1998fatou}
by taking direct products. For polynomial maps, very little was known
about the existence of wandering Fatou components until recent developments
on polynomial skew-products \cite{lilov2004fatou,astorg2016two,peters2015fatou,peters2016fatou,peters2016},
which are the maps of the form
\[
F(z,w)=(f(z,w),g(w)).
\]
The first example was given by Astorg, Buff, Dujardin, Peters, and
Raissy \cite{astorg2016two}, who found a polynomial skew-product
possessing a wandering Fatou component as the quasi-conformal methods
break down. The reader can refer to the survey \cite{raissy2016}
for more details about other relevant work on polynomial skew-product
\cite{lilov2004fatou,peters2015fatou,peters2016fatou,peters2016}.
For complex Hénon maps\footnote{The study of complex Hénon maps covers a broader class of functions
motivated by the classification of polynomial automorphisms \cite{friedland1989dynamical}.
It allows the map $f$ in (\ref{eq:Henon-like map}) to be any polynomial
or analytic map.}, a recent paper by Leandro Arosio, Anna Miriam Benini, John Erik
Fornaess, and Han Peters \cite{arosio2017dynamics} found a transcendental
Hénon map exhibiting a wandering domain. Nevertheless, the problem
is still unsolved \cite{bedford2015dynamics} for complex polynomial
Hénon maps \cite{hubbard1986henon,hubbard1995henon}.

In this paper, a wandering domain is a nonempty open set that does
not intersect the stable manifold of any saddle periodic points. This
definition is weaker than the classical notion because it excludes
the condition having a disjoint orbit. The reason to drop this condition
is to allow the usage study the topological structure of attractors
which will be discussed later. Dropping the condition also makes the
conclusion of the theorem stronger compared to the classical definition.
In fact, this condition is redundant in the unimodal setting (See
Remark \ref{rem:Wandering domain compare with the classical definition}).

The main result of this article, Theorem \ref{thm:nonexistence of wandering domain},
is stated as follow. 
\begin{theorem*}
A strongly dissipative infinitely period-doubling renormalizable Hénon-like
map does not have wandering domains.
\end{theorem*}
The theorem covers a class of maps in the higher dimensional nonhyperbolic
setting \cite[Corollary 6.2]{lyubich2011renormalization} and solves
an open problem proposed by van Strien \cite{lyubich2011renormalization},
Lyubich, and Martens \cite{lyubich2011renormalization}. The result
does not overlap with the previous work by Kiriki and Soma \cite{kiriki2015takens}.
The Hénon-like maps in this article are real analytic and the fixed
points are far away from having a homoclinic tangency, while the examples
they found having wandering domains only have finite differentiability
and their construction relies on the existence of a homoclinic tangency
of a fixed point \cite{kiriki2010coexistence,kiriki2013existence}. 

The condition of being infinitely renormalizable is imposed to the
theorem to gain a self-similarity between different scale. A map is
renormalizable means that a higher iterate of the map has a similar
topological structure on a smaller scale. Several papers \cite{bonatti1994wandering,martens2014hyperbolicity,martens2016coexistence}
in different contexts show that this condition will ensure the absence
of wandering domains. In this paper, we center on infinitely renormalizable
maps of period-doubling combinatorics type which is one of the most
fundamental types of maps. The reader will see later in the proof
that the condition infinitely renormalizable is essential because
that the area where bad things (nonhyperbolic phenomena) happens,
called the bad region, becomes smaller when a map gets renormalized
more times.

The theorem is important because it helps us to understand the structure
of attractors. An attracting set is a closed set such that many points
evolve toward the set. Hénon maps are famous for its chaotic limiting
behavior since Hénon first discovered the strange attractor in the
classical Hénon family \cite{henon1976two}. The $\omega$-limit set
of a point is an attractor which characterizes the long-time behavior
of a single orbit. For an infinitely period-doubling renormalizable
Hénon-like map, the map has only two types of $\omega$-limit set
\cite{gambaudo1989henon,lyubich2011renormalization}: a saddle periodic
orbit and the renormalization Cantor attractor. From this dichotomy,
a wandering domain is equivalently a non-empty open subset of the
basin of the Cantor attractor. The theorem of the absence of wandering
domains implies that the union of the stable manifolds is dense. In
other words, the basin of the Cantor set has no interior even though
it has full Lebesgue measure.

Two tools are introduced to prove the theorem: the bad region\index{bad region}
and the area argument (thickness\index{thickness}). The bad region
is a set in the domain where the length expansion argument from unimodal
maps breaks down. This is where the main difficulty of extending the
theorem occurs. The solution to this is the area argument which is
also a dimension two feature. These two concepts make the Hénon-like
maps different from the unimodal maps. 

The tools may be used to prove the nonexistence of wandering domains
in other contexts. One is infinitely renormalizable Hénon-like maps
with arbitrary combinatorics \cite{hazard2011henon}. The definition
of the bad region carries over to the arbitrary combinatorics case
directly. It is also possible to generalize the area argument because
the tip of those maps also has a universal shape \cite[Theorem 6.1]{hazard2011henon}.
However, the expansion argument breaks down for other combinatorics.
This may be solved by studying the hyperbolic length instead of the
Euclidean length. 

\subsection*{Outline of the article}

In this article, chapters, sections, or statements marked with a star
sign ``{*}'' means that the main theorem, Theorem \ref{thm:nonexistence of wandering domain},
does not depend on them. Terminologies in the outline will be defined
precisely in later chapters.

Chapters \ref{sec:Notations}, \ref{sec:Unimodal-Maps}, \ref{sec:Renormalization of Henon Like Map},
and \ref{sec:Infinite Renormalizable H=0000E9non-Like Maps} are the
preliminaries of the theorem. The chapters include basic knowledge
and conventions that will be used in the proof. Most of the theorems
in Chapter \ref{sec:Renormalization of Henon Like Map} and Section
\ref{subsec:Rescaling-levels} can be found in \cite{de2005renormalization,lyubich2011renormalization}.

The proof for the nonexistence of wandering domains is motivated by
the proof of the degenerate case. A Hénon-like map is degenerate means
that $\epsilon=0$ in (\ref{eq:Henon-like map}). In this case, the
dynamics of the map degenerates to the unimodal dynamics. In Chapter
\ref{sec:degenerate case}, a short proof for the nonexistence of
wandering intervals\index{wandering interval} for infinitely renormalizable
unimodal maps\index{unimodal map} is presented by identifying a unimodal
map as a degenerate Hénon-like map. The proof assumes the contrapositive,
there exists a wandering interval $J$. Then we apply the Hénon renormalization
instead of the standard unimodal renormalization to study the dynamics
of the rescaled orbit of $J$ that closest approaches the critical
value. The rescaled orbit is called the $J$-closest approach (Definition
\ref{def:Wandering domain}). The proof argues that the length of
the elements in the rescaled orbit approaches infinity by a length
expansion argument which leads to a contradiction. The expansion argument
motivates the proof for the Hénon case.

The proof of the main theorem is covered by Chapters \ref{sec:Framework},
\ref{sec:Good and Bad region}, \ref{sec:Good region}, and \ref{sec:Bad region}.
The structure is explained as follows. 

Assume the contrapositive, a Hénon-like map has a wandering domain
$J$. In Chapter \ref{sec:Framework}, we study the rescaled orbit
$\left\{ J_{n}\right\} _{n\geq0}$ of $J$ that closest approaches
\index{closest approach} to the tip\index{tip}, called the $J$-closest
approach. Each element $J_{n}$ belongs to some appropriate renormalization
scale (the domain of the $r(n)$-th renormalization $R^{r(n)}F$ for
some nonnegative integer $r(n)$). The transition between two constitutive
sequence elements $J_{n}\rightarrow J_{n+1}$ is called one step.
Motivated by the expansion argument from the degenerate case, we estimate
the change rate of the horizontal size\index{horizontal size} $l_{n}$
in each step. The horizontal size of a set is the size of its projection
to the first coordinate (Definition \ref{def:l,h}). Our final goal
is to show that the horizontal size of the sequence elements approaches
infinity to obtain a contradiction.

In the degenerate case, the expansion argument says that the horizontal
size expands at a uniform rate and hence the horizontal size of the
sequence elements approaches infinity. Unfortunately, the argument
breaks down in the non-degenerate case. There are two features that
make the non-degenerate case special: 
\begin{enumerate}
\item The good region and the bad region.\index{good region}\index{bad region}
\item Thickness\index{thickness}. 
\end{enumerate}
The good region and the bad region, introduced in Chapter \ref{sec:Good and Bad region},
divide the phase space of a Hénon-like map into two regions by how
similar the Hénon-like map behaves like unimodal maps. Each renormalization
scale (domain of the $n$-th renormalization $R^{n}F$ for some $n$)
has its own good region and bad region, and the size of the bad regions
contract super-exponentially as the renormalization applies to the
map more times (\cite[Theorem 4.1]{de2005renormalization} and Definition
\ref{def:Good and bad region}). When the elements in a closest approach
stay in the good regions of some appropriate scale, we show that the
expansion argument can be generalized to the Hénon-like maps. Thus,
the horizontal size expands at a uniform rate (Proposition \ref{prop:Good region estimates}).
However, when an element $J_{n}$ enters the bad region of the renormalization
scale of the set, the expansion argument breaks down. At this moment,
another quantity, called the thickness, offers a way to estimate the
horizontal size of the next element $J_{n+1}$ (Definition \ref{def:w}).
The reader should imagine the thickness of a set is the same as its
area. We will show that the thickness has a uniform contraction rate
proportion to the Jacobian of the map (Proposition \ref{prop:w_n Bound}).
For a strongly dissipative Hénon-like map, the Jacobian is small and
hence the contraction is strong. This strong contraction yields the
main obstruction toward our final goal.

The breakthrough is the discovery that the elements in a closest approach
can at most enter the bad regions finitely many times (Proposition
\ref{prop:Finite number of chain}). When an element $J_{n}$ enters
the bad region, the horizontal size contracts and the following element
$J_{n+1}$ belongs to a deeper renormalization scale. But the size
of the bad region in the deeper scale (scale of $J_{n+1}$) is much
smaller than the bad region of the original scale (scale of $J_{n}$).
Roughly speaking, we found that the contraction of the size of the
bad region is faster than the contraction of the horizontal size so
that the elements cannot enter the bad region infinitely times. The
actual proof is more delicate because another quantity, the time span
in the good regions\index{time span in the good regions} (Definition
\ref{def:J double sequence}), also involves in the competition. The
two-row lemma (Lemma \ref{lem:n recurrent relation}) is the key lemma
that gives an estimate for the competitions between the contraction
of the thickness, the expansion of the horizontal size in the good
region, the time span in the good region, and the size of the bad
region when the closest approach enters the bad region twice.  The
conclusion follows after applying the two-row lemma inductively (Lemma
\ref{lem:n lower bound}). 

In summary, the horizontal size of the elements in a closest approach
expands in the good regions, while contracts in the bad regions. However,
the contraction happens only finitely many times. This shows that
the horizontal size approaches infinity which is a contradiction.
Therefore, wandering domains cannot exist.

\section{\label{sec:Notations}Notations}

Common terminologies from Dynamical Systems will be adopted in this
article. The reader can refer to standard textbook (e.g. \cite{brin2002introduction})
for more information.

Let $I$ be an interval on the real line. The complex $\epsilon$-neighborhood
\nomenclature[I]{$I(\epsilon)$}{Complex $\epsilon$-neighborhood of the interval $I$}$I(\epsilon)$
of $I$ is defined to be the open set $I(\epsilon)=\left\{ z\in\mathbb{C};\left|z-z'\right|<\epsilon\text{ for some }z'\in I\right\} $.
The length\index{length|textbf} of $I$ is defined to be $\left|I\right|=\sup\left\{ \left|b-a\right|;a,b\in I\right\} $.

Assume that $X\subset\mathbb{R}^{2}$ is open and $F:X\rightarrow\mathbb{R}^{2}$
is differentiable. The Jacobian\index{Jacobian} of $F$ is the function
$\det DF$. 

The projections\index{projection} $\pi_{x}$ and $\pi_{y}$ are the
maps $\pi_{x}(x,y)=x$ and $\pi_{y}(x,y)=y$.

\global\long\def\interior#1{\text{int}(#1)}

\subsection{Functions}

Assume that $S$ is a set and $f$ is a real- or complex-valued function
on $S$. The sup norm\index{sup norm|textbf} on $S'\subset S$ is
denoted by 
\[
\left\Vert f\right\Vert _{S'}=\sup\left\{ \left|f(x)\right|;x\in S'\right\} .
\]
The subscript is neglected whenever the context is clear.

Assume that $V$ and $W$ are Banach spaces. A function $f:V\rightarrow W$
is called Lipschitz continuous\index{Lipschitz continuous|textbf}
with constant $L$, or $L$-Lipschitz, if 
\[
\left|f(y)-f(x)\right|\leq L\left|y-x\right|
\]
for all $x,y\in V$. The space of $L$-Lipschitz functions equipped
with the sup norm is complete. The space of $C^{n}(I)$ functions
on a closed interval $I$ is the collection of functions $f:I\rightarrow\mathbb{R}$
that are $n$-times differentiable with continuous derivatives. 

For a holomorphic function, the size of its derivatives can be estimate
by the sup norm of the map from the Cauchy integration formula.
\begin{lemma}
\label{lem:Derivative Bound}Assume that $U$ is open in $\mathbb{C}$
and $K\Subset U$. For each integer $n\geq1$, there exists a constant
$c>0$ such that 
\[
\left\Vert f^{(n)}\right\Vert _{K}\leq c\left\Vert f\right\Vert _{U}.
\]
for all holomorphic maps $f$ on $U$ where $f^{(n)}$ means the $n$-th
derivative of $f$. A similar estimate holds for multi-variable holomorphic
maps and partial derivatives.
\end{lemma}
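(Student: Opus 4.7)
The plan is to use the Cauchy integral representation of the derivatives, exploiting compact containment to obtain a uniform radius for the contours.

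First I would invoke the hypothesis $K \Subset U$: since $K$ is compact and $U^{c}$ is closed with $K \cap U^{c} = \emptyset$, the distance $\delta := \operatorname{dist}(K, \partial U) > 0$. Then for every $z \in K$ the closed disc $\overline{B(z,\delta)}$ is contained in $U$, so the Cauchy integral formula for the $n$-th derivative applies and gives
\[
f^{(n)}(z) = \frac{n!}{2\pi i} \oint_{|w-z|=\delta} \frac{f(w)}{(w-z)^{n+1}} \, dw.
\]
Taking absolute values and using the ML-inequality, together with $\{|w-z|=\delta\} \subset U$, yields
\[
\bigl|f^{(n)}(z)\bigr| \leq \frac{n!}{2\pi} \cdot \frac{\|f\|_{U}}{\delta^{n+1}} \cdot 2\pi\delta = \frac{n!}{\delta^{n}} \|f\|_{U}.
\]
Taking the supremum over $z \in K$ produces the desired inequality with the explicit constant $c = n!/\delta^{n}$.

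For the multi-variable statement, the same strategy works by iterating the Cauchy formula variable-by-variable. If $f$ is holomorphic on an open set $U \subset \mathbb{C}^{d}$ and $K \Subset U$, compactness again yields a $\delta > 0$ such that the closed polydisc of polyradius $(\delta, \ldots, \delta)$ centered at any point of $K$ lies in $U$. Then for a multi-index $\alpha = (\alpha_{1}, \ldots, \alpha_{d})$ the iterated Cauchy formula expresses $\partial^{\alpha} f(z)$ as an integral of $f$ over the distinguished boundary of that polydisc, and the same ML-type estimate gives $\|\partial^{\alpha} f\|_{K} \leq (\alpha!/\delta^{|\alpha|}) \|f\|_{U}$.

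No step looks genuinely hard; the only item requiring care is verifying that compact containment actually produces a positive uniform $\delta$, which is immediate from the standard distance-to-complement argument, and confirming that the Cauchy kernel estimate is applied on contours lying entirely inside $U$. The statement is in fact a textbook Cauchy-estimate lemma, included here purely to fix the constants used later.
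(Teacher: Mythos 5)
Your proof is correct and follows exactly the approach the paper indicates (Cauchy's integral formula plus the compact-containment argument for a uniform contour radius); the paper states this lemma without proof, citing only the Cauchy integration formula, and your argument is the standard one it has in mind. Nothing to add.
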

This estimate will be frequently used in the setting where $K$ is
a real interval $I$ and $U$ is the complex $\delta$-neighborhood
$I(\delta)$.

\subsection{Schwarzian derivative}

In this section, we recall the definition and the properties of Schwarzian
derivative. The proof for the properties stated in this section can
be found in \cite{de2012one}. These properties will be used only
in Chapter \ref{sec:Unimodal-Maps}.
\begin{definition}[Schwarzian Derivative]
\nomenclature[S]{$S$}{Schwarzian derivative}Assume that $f$ is
a $C^{3}$ real valued function on an interval. The Schwarzian derivative\index{Schwarzian derivative|textbf}
of $f$ is defined by 
\[
(Sf)(x)=\left(\frac{f''(x)}{f'(x)}\right)'-\frac{1}{2}\left(\frac{f''(x)}{f'(x)}\right)^{2}=\frac{f'''(x)}{f'(x)}-\frac{3}{2}\left(\frac{f''(x)}{f'(x)}\right)^{2}
\]
whenever $f'(x)\neq0$. The map $f$ is said to have \index{Schwarzian derivative!negative|textbf}negative
Schwarizan derivative if $Sf(x)<0$ for all $x\in I$ with $f'(x)\neq0$.
\end{definition}
Negative Schwarzian derivative is preserved under iteration.
\begin{proposition}
If $f$ has negative Schwarzian derivative, then $f^{n}$ also has
negative Schwarzian derivative for all $n>0$.
\end{proposition}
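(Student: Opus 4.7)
The plan is to prove this by induction on $n$, with the key ingredient being the composition (chain) rule for the Schwarzian derivative. Specifically, for two $C^3$ maps $f$ and $g$ whose derivatives are nonzero at the relevant points, one has
\[
S(f \circ g)(x) = (Sf)(g(x))\cdot \bigl(g'(x)\bigr)^2 + (Sg)(x).
\]
This identity is a direct (if somewhat tedious) computation from the definition: expand $(f\circ g)'$, $(f\circ g)''$, and $(f\circ g)'''$ via the chain rule, substitute into the defining formula for $S(f\circ g)$, and simplify. I would either quote this identity from the referenced text \cite{de2012one} or verify it as a one-paragraph lemma before the main argument.

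With the composition rule in hand, the induction is clean. The base case $n=1$ is the hypothesis. For the inductive step, assume $S f^n(x) < 0$ whenever $(f^n)'(x) \neq 0$. Writing $f^{n+1} = f \circ f^n$ and applying the composition rule at any point $x$ with $(f^{n+1})'(x) \neq 0$ (so in particular $(f^n)'(x) \neq 0$ and $f'(f^n(x)) \neq 0$), we get
\[
S f^{n+1}(x) = (Sf)\bigl(f^n(x)\bigr)\cdot \bigl((f^n)'(x)\bigr)^2 + (S f^n)(x).
\]
The first summand is negative because $Sf < 0$ at $f^n(x)$ (since $f'(f^n(x))\neq 0$) and $((f^n)'(x))^2 > 0$; the second is negative by the inductive hypothesis. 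Hence $S f^{n+1}(x) < 0$, completing the induction.

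The only real obstacle is the bookkeeping around points where derivatives vanish: the Schwarzian is only defined where $f' \neq 0$, so I need to make sure the composition formula is invoked at points where both $f'(f^n(x))$ and $(f^n)'(x)$ are nonzero, which is exactly the condition $(f^{n+1})'(x) \neq 0$ by the chain rule. This is routine but worth stating explicitly. No other subtleties arise, and since the statement is a standard textbook result, I would keep the writeup short.
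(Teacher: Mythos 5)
Your proof is correct and is the standard argument. The paper itself does not supply a proof here (it defers to the reference \cite{de2012one}), and the composition formula for the Schwarzian together with induction is precisely the argument one finds there. Your bookkeeping about where the chain rule guarantees both $f'(f^n(x)) \neq 0$ and $(f^n)'(x) \neq 0$ is the right point to be careful about, and you handled it correctly.
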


\begin{proposition}[Minimal Principle]
\label{prop:minimal principle}Assume that $J$ is a bounded closed
interval and $f:J\rightarrow\mathbb{R}$ is a $C^{3}$ map with negative
Schwarzian derivative. If $f'(x)\neq0$ for all $x\in J$, then $\left|f'(x)\right|$
does not attain a local minimum in the interior of $J$.
\end{proposition}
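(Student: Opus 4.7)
The plan is to reduce the problem to a standard second-derivative test applied to the logarithmic derivative $g = \log|f'|$, which is well-defined on $J$ precisely because $f'$ is continuous and nonvanishing there. Since $J$ is connected and $f' \neq 0$ on $J$, the function $f'$ has constant sign, so $|f'|$ is as smooth as $f'$, i.e.\ $C^2$. Consequently $|f'|$ attains a local minimum in the interior of $J$ if and only if $g$ does, because $\log$ is strictly increasing.

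The key algebraic step is the identity
\[
g''(x) = Sf(x) + \tfrac{1}{2}\bigl(g'(x)\bigr)^{2},
\]
which I would verify by direct computation: $g' = f''/f'$ gives $g'' = f'''/f' - (f''/f')^{2}$, and then the definition of the Schwarzian yields the displayed formula. This identity is the only place where the hypothesis $Sf<0$ enters.

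Now suppose, for contradiction, that $|f'|$ attains a local minimum at an interior point $x_{0}\in J$. Then $g$ attains a local minimum at $x_{0}$, so the standard one-variable calculus conditions give $g'(x_{0})=0$ and $g''(x_{0})\geq 0$. Substituting into the identity above yields
\[
Sf(x_{0}) \;=\; g''(x_{0}) - \tfrac{1}{2}\bigl(g'(x_{0})\bigr)^{2} \;=\; g''(x_{0}) \;\geq\; 0,
\]
which contradicts the assumption that $Sf(x_{0})<0$ (the hypothesis applies since $f'(x_{0})\neq 0$). Hence no such interior local minimum exists.

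There is no serious obstacle: the whole argument is a two-line computation once one recognizes that passing to $\log|f'|$ linearizes the minimum condition. The only point that deserves a sentence of justification is why $|f'|$ is differentiable, which follows from the constant-sign observation. I would present this as a short direct proof rather than as a contradiction if a slightly slicker phrasing is desired, but the logical content is exactly as above.
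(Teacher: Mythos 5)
Your proof is correct. The paper does not include its own proof of the Minimal Principle (it defers to the de Melo--van Strien text cited at the start of the section), so there is no internal argument to compare against, but your route is the standard one. Note that the identity you derive, $Sf = g'' - \tfrac12 (g')^2$ with $g = \log|f'|$, is in fact literally the first form of the Schwarzian given in the paper's Definition, $(Sf)(x) = \left(\frac{f''}{f'}\right)' - \frac12\left(\frac{f''}{f'}\right)^2$, since $g' = f''/f'$. So the key algebraic step requires no computation at all once that observation is made, and the rest is exactly the second-derivative test as you describe, with the $C^3$ hypothesis ensuring $g$ is $C^2$ and the constant-sign argument ensuring $\log|f'|$ is well-defined and smooth. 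A common alternative in the literature is to observe that $|f'|^{-1/2}$ is strictly convex when $Sf<0$ (its second derivative equals $-\tfrac12 |f'|^{-1/2} Sf$), so it has no interior local maximum; your logarithmic version is equivalent and, given the paper's stated form of $Sf$, arguably the more direct of the two.
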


\section{\label{sec:Unimodal-Maps}Unimodal Maps}

In this chapter, we give a short review over the procedure for unimodal
renormalization. The goal is to introduce the hyperbolic fixed point
for the renormalization operator (Proposition \ref{prop:Functional Equation})
and establish the estimations for its derivative (Subsection \ref{subsec:Expansion for the Feigenbaum map}).
\begin{definition}[Unimodal Map]
Let $I=[-1,1]$. A unimodal map\index{unimodal map|textbf} in this
paper is a smooth map $f:I\rightarrow I$ such that 
\begin{enumerate}
\item the point $-1$ is the unique fixed point with a positive multiplier,
\item $f(1)=-1$, and
\item the map $f$ has a unique maximum at $c\in\interior I$ and the point
$c$ is a non-degenerate critical point\index{critical point}, i.e.
$f'(c)=0$ and $f''(c)\neq0$. 
\end{enumerate}
The class of analytic unimodal maps $f:I\rightarrow I$ is denoted
as $\mathcal{U}$. \nomenclature[U]{$\mathcal{U}$}{Class of unimodal maps}
\end{definition}

\begin{definition}[Critical Orbit]
\nomenclature[c^(n)]{$c^{(n)}$}{Critcal point and its orbit}For
a unimodal map $f\in\mathcal{U}$, let $c^{(0)}=c^{(0)}(f)\in I$
be the critical point\index{critical point!orbit|textbf} of $f$.
The critical orbit is denoted as $c^{(n)}=f^{n}(c^{(0)})$ for all
$n>0$.
\end{definition}
\begin{definition}[Reflection]
Assume that $f\in\mathcal{U}$ and $x\in I$. If $x\neq c^{(0)}$,
define the reflection of $x$ to be the point \nomenclature[hat]{$\hat{\cdot}$}{Reflection point}$\hat{x}\in I$
such that $f(\hat{x})=f(x)$ and $\hat{x}\neq x$. If $x=c^{(0)}$,
define $\hat{x}=c^{(0)}$.
\end{definition}

\subsection{The renormalization of a unimodal map}

To define the period-doubling renormalization operator for unimodal
maps, we introduce a partition on $I$ that allows us to define the
first return map for a renormalizable unimodal map.
\begin{definition}
\label{def:Partition of I}\nomenclature[A, B, C]{$A$, $B$, $C$}{Partition of the domain $D$ for unimodal map}Assume
that $f\in\mathcal{U}$ has a unique fixed point $p(0)\in I$ with
a negative multiplier. Let $p^{(1)}=\hat{p}(0)$ and $p^{(2)}$ be
the point such that $f(p^{(2)})=p^{(1)}$ and $p^{(2)}>c^{(0)}$.
Define $A=(-1,p^{(1)})\cup(p^{(2)},1)$, $B=(p^{(1)},p(0))$, and
$C=(p(0),p^{(2)})$. The sets $A=A(f)$, $B=B(f)$, and $C=C(f)$
form a partition\index{partition!unimodal map|textbf} of the domain
$D\equiv I$. See Figure \ref{fig:Partition for unimodal map} for
an illustration.

\begin{figure}
\begin{centering}
\includegraphics[scale=0.5]{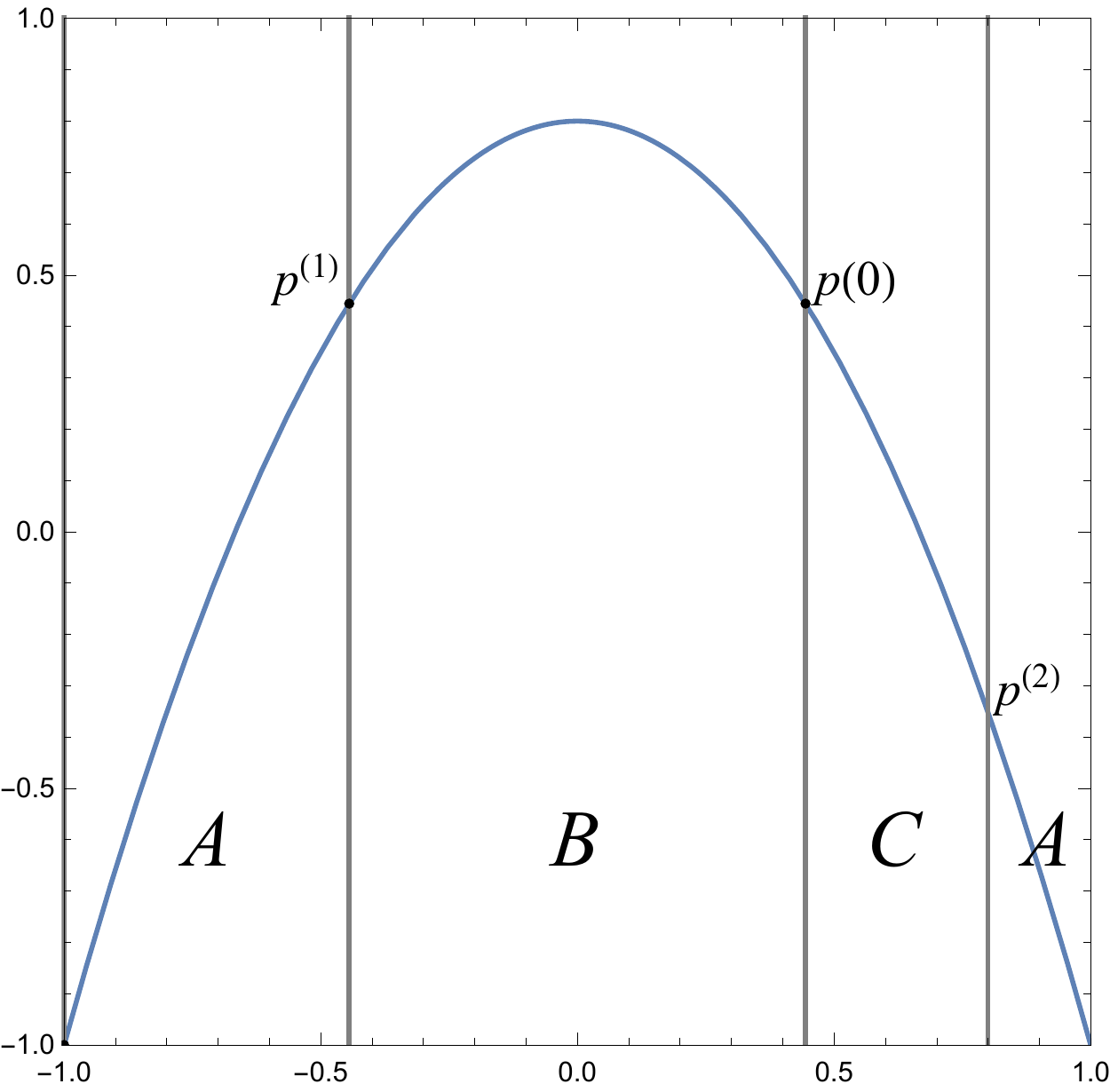}
\par\end{centering}
\caption{\label{fig:Partition for unimodal map}The partition $\left\{ A,B,C\right\} $
of a unimodal map. The parabola is the graph of a unimodal map. The
points $p(0)$, $p^{(1)}$, and $p^{(2)}$ are defined as in Definition
\ref{def:Partition of I}.}
\end{figure}
\end{definition}
The property ``renormalizable'' is defined by using the partition
elements.
\begin{definition}[Renormalizable]
A unimodal map $f\in\mathcal{U}$ is (period-doubling) renormalizable\index{renormalizable!unimodal map|textbf}
if it has a fixed point $p(0)$ with a negative multiplier and $f(B)\subset C$.
The class of renormalizable unimodal maps is denoted as \nomenclature[U^r]{$\mathcal{U}^r$}{Class of renormalizable unimodal maps}$\mathcal{U}^{r}$.
\end{definition}
\begin{remark}
Most of the articles define the unimodal renormalization by using
the critical orbit. However, here we choose to use an orbit that maps
to the fixed point with a negative multiplier instead. The purpose
of doing this is to make the partition consistent with the partition
defined for Hénon-like maps (Definition \ref{def:Domain A,B,C}) because
Hénon-like maps do not have a critical point. 
\end{remark}
For a renormalizable unimodal map, an orbit that is not eventually
periodic follows the paths in the following diagram. 
\[
\xymatrix{A\ar[r]\ar@(ul,dl) & B\ar@<.5ex>[r] & C\ar@<.5ex>[l]}
\]
  This allows us to define the first return map on $B$ and the
period-doubling renormalization.

\global\long\def\uRenormalizeC{R}
\global\long\def\uRescaleC{s}
\begin{definition}[Renormalization]
Assume that $f\in\mathcal{U}^{r}$. \nomenclature[R_c]{$R_{c}$}{Renormalization operator about the critical point}\nomenclature[s_c]{$s_{c}$}{Affine rescaling about the critical point}The
renormalization \index{renormalization operator!unimodal map} of
$f$ is the map $\uRenormalizeC f=\uRescaleC\circ f^{2}\circ\uRescaleC^{-1}$
where $\uRescaleC$ is the orientation-reversing affine rescaling
such that $\uRescaleC(p(0))=-1$ and $\uRescaleC(p^{(1)})=1$.

The renormalization operator is a map $\uRenormalizeC:\mathcal{U}^{r}\rightarrow\mathcal{U}$.
If the procedure of renormalization can be done recurrently infinitely
many times, then the map is called infinitely (period-doubling) renormalizable.
The class of infinitely renormalizable unimodal maps is denoted as
\nomenclature[U^r]{$\mathcal{U}^r$}{Class of renormalizable unimodal maps}$\mathcal{I}$.
\end{definition}

\subsection{\label{subsec:Feigenbaum map}The fixed point of the renormalization
operator}

In this section, we study the fixed point $g$ of the renormalization
operator. The map $g$ is also important for the Hénon case because
it also defines the hyperbolic fixed point of the Hénon renormalization
operator \cite[Theorem 4.1]{de2005renormalization}.

The existence and uniqueness of the fixed point was proved in \cite{epstein1981analyticity,campanino1982feigenbaum}.
Here, the theorem is restated in the coordinate system used in this
paper.
\begin{proposition}
\label{prop:Functional Equation}\index{renormalization!fixed point}There
exists a unique constant $\lambda=2.5029...$ and a unique solution
$g\in\mathcal{I}$ \nomenclature[g]{$g$}{Fixed point for $R_{c}$}
of the Cvitanovi\'{c}-Feigenbaum-Coullet-Tresser functional equation\index{Feigenbaum-Cvitanovic functional equation@Feigenbaum-Cvitanovi\'{c}
functional equation}
\begin{equation}
g(x)=-\lambda g^{2}\left(-\frac{x}{\lambda}\right)\label{eq:Functional Equation}
\end{equation}
for $-1\leq x\leq1$ with the following properties:
\begin{enumerate}
\item $g$ is analytic in a complex neighborhood of $[-1,1]$.
\item $g$ is even.
\item $g$ is concave on $[-c^{(1)},c^{(1)}]$.
\item $g(c^{(1)})=-\frac{1}{\lambda}c^{(1)}$ and $g'(c^{(1)})=-\lambda$.
\item $g$ has negative Schwarzian derivative.
\end{enumerate}
\end{proposition}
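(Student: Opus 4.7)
The plan is to obtain $g$ as the unique fixed point of the renormalization operator $\uRenormalizeC$ acting on an appropriate subclass of $\mathcal{U}$, and then to read the five listed properties off the functional equation (\ref{eq:Functional Equation}). For existence and uniqueness I would invoke the classical construction of Epstein and Campanino--Epstein already cited in the statement: restrict $\uRenormalizeC$ to the Epstein class of real-analytic even unimodal maps of the form $f(x)=\phi(x^{2})$, where $\phi$ is univalent on a suitable slit half-plane and sends the upper half-plane into itself. This class is convex, compact in the compact-open topology, and is preserved by $\uRenormalizeC$ once the orientation-reversing affine rescaling $\uRescaleC$ is built into the definition. A Schauder-type fixed point theorem then produces a fixed point $g$ of $\uRenormalizeC$; uniqueness, together with the precise value $\lambda=2.5029\dots$, comes from the hyperbolicity of $\uRenormalizeC$ at $g$ with a one-dimensional unstable direction, which is the original content of Lanford's computer-assisted proof and of its later analytic reformulations.

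Once $g$ is in hand, the remaining properties follow formally from the functional equation and the structure of the Epstein class. Property (1) is automatic: every element of the Epstein class extends holomorphically to a neighborhood of $[-1,1]$, and (\ref{eq:Functional Equation}) propagates any such extension to a strictly larger neighborhood by induction on the domain. Property (2), evenness, is built into the Epstein class, and is preserved by $\uRenormalizeC$ because $f^{2}$ inherits the symmetry $x\mapsto -x$ from $f$ and $\uRescaleC$ is affine about the fixed point with negative multiplier, so uniqueness forces $g$ to be even. For property (4), evaluating (\ref{eq:Functional Equation}) at $x=0$ with $g(0)=c^{(1)}$ and the normalization $\uRescaleC(p(0))=-1$ gives $g(c^{(1)})=-c^{(1)}/\lambda$; differentiating (\ref{eq:Functional Equation}) and using $g'(0)=0$ together with the chain rule yields $g'(c^{(1)})=-\lambda$. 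Property (3), concavity on $[-c^{(1)},c^{(1)}]$, follows from the Herglotz representation $g(x)=\phi(x^{2})$, which forces $g''$ to retain a definite sign on the post-critical interval. Finally property (5) is a consequence of the Epstein structure: a univalent map from the upper half-plane to itself has $S\phi\leq 0$, and this passes to $g$ under the substitution $x\mapsto x^{2}$ wherever $g'\neq 0$; once established, $Sg<0$ is then preserved by the iteration $\uRenormalizeC g=g$.

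The main obstacle is existence itself. The naive iteration $f\mapsto \uRenormalizeC f$ on a Banach space of real-analytic maps is not a contraction in any obvious norm, and one must first exhibit an invariant convex compact family on which a Schauder or Leray--Schauder argument applies. This is precisely what the Epstein--Lanford machinery accomplishes, so in practice the proof reduces to carefully matching normalizations: the Epstein fixed point is stated in coordinates where the critical point sits at the origin and $f(0)=1$, whereas our normalization places the negative-multiplier fixed point $p(0)$ at $-1$. Verifying that an orientation-reversing affine change of variables intertwines the two conventions is routine, and after this coordinate check every item in the proposition is a direct consequence of (\ref{eq:Functional Equation}), the properties of the Epstein class, and Proposition \ref{prop:minimal principle}.
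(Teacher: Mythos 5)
Your proposal matches the paper's treatment: the paper does not prove Proposition \ref{prop:Functional Equation} but simply cites \cite{epstein1981analyticity,campanino1982feigenbaum} and restates the result in its own normalization, which is exactly your strategy of invoking the Epstein--Lanford machinery and then matching coordinates. One small correction to your sketch of property (4): evaluating the once-differentiated equation $g'(x)=g'(-x/\lambda)\,g'\!\left(g(-x/\lambda)\right)$ at $x=0$ gives only $0=0$ because $g'(0)=0$, so $g'(c^{(1)})=-\lambda$ does not drop out at first order; you need to go to second order (differentiate twice, or compare quadratic Taylor coefficients at $0$) and use the non-degeneracy $g''(0)\neq 0$, after which the quadratic terms force $g'(c^{(1)})=-\lambda$.
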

\begin{corollary}
The map $g$ satisfies the following property
\begin{equation}
g^{2^{n}}\left(\frac{1}{\left(-\lambda\right)^{n}}x\right)=\frac{1}{\left(-\lambda\right)^{n}}g\left(x\right)\label{eq:Functional equation general}
\end{equation}
for all $n\geq0$ and all $x\in I$.
\end{corollary}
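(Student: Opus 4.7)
The plan is to prove the identity by induction on $n$, with the base case $n=0$ being trivial (both sides equal $g(x)$) and the case $n=1$ being a direct rearrangement of the Cvitanovi\'{c}--Feigenbaum equation (\ref{eq:Functional Equation}). The cleanest way to organize the induction is to introduce the linear rescaling $s(x) = x/(-\lambda)$, whose inverse is $s^{-1}(y) = -\lambda y$. Then (\ref{eq:Functional Equation}) says precisely $g = s^{-1}\circ g^{2}\circ s$, equivalently
\[
s\circ g\circ s^{-1} = g^{2}.
\]
Because conjugation is a homomorphism for composition, this immediately gives $s\circ g^{k}\circ s^{-1} = g^{2k}$ for every $k\geq 1$.

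For the inductive step, suppose $s^{n}\circ g\circ s^{-n} = g^{2^{n}}$. Conjugating once more by $s$ and using the observation above with $k = 2^{n}$ gives
\[
s^{n+1}\circ g\circ s^{-(n+1)} = s\circ\bigl(s^{n}\circ g\circ s^{-n}\bigr)\circ s^{-1} = s\circ g^{2^{n}}\circ s^{-1} = g^{2\cdot 2^{n}} = g^{2^{n+1}},
\]
closing the induction. Unwinding $s^{n}(y) = y/(-\lambda)^{n}$ in the identity $s^{n}\circ g = g^{2^{n}}\circ s^{n}$ yields exactly the stated formula (\ref{eq:Functional equation general}).

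The only thing to check, which is hardly an obstacle, is that every composition appearing in the argument is defined on the required domain. Since $\lambda > 1$, for any $x\in I$ the point $x/(-\lambda)^{n}$ lies in $[-1/\lambda^{n},\,1/\lambda^{n}]\subset I$, and by definition of $\mathcal{U}$ the map $g$ sends $I$ into $I$, so every iterate $g^{2^{n}}$ is well-defined on $I$ and the formal manipulation above is justified. The entire proof is thus a short induction whose substance is carried by the observation that (\ref{eq:Functional Equation}) conjugates $g$ to $g^{2}$ via the linear rescaling $s$.
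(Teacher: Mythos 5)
Your proof is correct and is essentially the argument the paper leaves implicit in its one-line "follows from the functional equation": an induction on $n$ whose inductive step is exactly one application of (\ref{eq:Functional Equation}), packaged tidily as the conjugacy $s\circ g\circ s^{-1}=g^{2}$ with $s(x)=x/(-\lambda)$. The only point worth making explicit, which your domain remark gestures at but does not quite pin down, is that the identity $s\circ g^{k}\circ s^{-1}=g^{2k}$ is asserted on $s(I)=[-1/\lambda,1/\lambda]$ and the chain of substitutions is legitimate there because $g^{2}$ maps $s(I)$ into itself (this is itself immediate from (\ref{eq:Functional Equation}), since $g^{2}(s(x))=s(g(x))\in s(I)$ for $x\in I$); with that observation the induction closes cleanly.
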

\begin{proof}
The proof follows from the functional equation (\ref{eq:Functional Equation}).
\end{proof}

In the remaining part of the section, the notations for the unimodal
maps will be applied to the map $g$. For example, $\left\{ c^{(j)}=c^{(j)}(g)\right\} _{j\geq0}$
is the critical orbit and the sets $A=A(g)$, $B=B(g)$, and $C=C(g)$
form a partition of the domain $D=I$.

\subsubsection{\index{critical point!orbit of renormalization fixed point}A backward orbit of the critical point}

In this section, we establish a backward orbit $b^{(2)}\rightarrow b^{(1)}\rightarrow c^{(0)}$
of the critical point $c^{(0)}$. Let $b^{(1)}\in[0,c^{(1)}]$ be the point such that $g(b^{(1)})=0$.
Set $b^{(2)}=\frac{1}{\lambda}b^{(1)}$.
\begin{lemma}
We have 
\[
g\left(b^{(2)}\right)=b^{(1)}.
\]
\end{lemma}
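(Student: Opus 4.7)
The plan is to apply the Cvitanovi\'c-Feigenbaum-Coullet-Tresser functional equation (\ref{eq:Functional Equation}) directly to $b^{(1)}$, use the evenness of $g$ to identify $g(b^{(2)})$ as a preimage of $0$, and then use concavity/monotonicity of $g$ on $[0,c^{(1)}]$ to pin down the correct sign.

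First, I would substitute $x=b^{(1)}$ into (\ref{eq:Functional Equation}). By definition $b^{(2)}=b^{(1)}/\lambda$ and $g$ is even, so
\[
0\;=\;g(b^{(1)})\;=\;-\lambda\,g^{2}\!\left(-\tfrac{b^{(1)}}{\lambda}\right)\;=\;-\lambda\,g\bigl(g(-b^{(2)})\bigr)\;=\;-\lambda\,g\bigl(g(b^{(2)})\bigr).
\]
Hence $g(g(b^{(2)}))=0$, so $g(b^{(2)})$ lies in $g^{-1}(0)$. Because $g$ is even and concave on $[-c^{(1)},c^{(1)}]$ with $g(0)=c^{(1)}>0$ and $g(c^{(1)})=-c^{(1)}/\lambda<0$, the zero set of $g$ inside $[-c^{(1)},c^{(1)}]$ consists of exactly the two symmetric points $\pm b^{(1)}$, so $g(b^{(2)})\in\{b^{(1)},-b^{(1)}\}$.

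It remains to exclude the value $-b^{(1)}$. Since $g$ has critical point $c^{(0)}=0$ and is concave on $[-c^{(1)},c^{(1)}]$, $g'$ is strictly decreasing there with $g'(0)=0$, so $g$ is strictly decreasing on $[0,c^{(1)}]$. Now $b^{(2)}=b^{(1)}/\lambda$ and $\lambda>1$ give $0<b^{(2)}<b^{(1)}$, so monotonicity yields $g(b^{(2)})>g(b^{(1)})=0$. Therefore $g(b^{(2)})=b^{(1)}$, as claimed.

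I do not anticipate any real obstacle here: the entire argument is one application of the functional equation combined with the qualitative information about $g$ collected in Proposition~\ref{prop:Functional Equation} (evenness, concavity on $[-c^{(1)},c^{(1)}]$, and the location of $c^{(1)}$). The only thing to double-check is that $b^{(2)}$ genuinely lies in $[0,b^{(1)}]$ and that $g$ has no other zeros in $[-c^{(1)},c^{(1)}]$, both of which are immediate from the listed properties of the Feigenbaum fixed point.
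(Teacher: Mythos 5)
Your proof is correct and follows essentially the same route as the paper's: apply the functional equation together with evenness to get $g^{2}(b^{(2)})=0$, observe that the only zeros of $g$ on the relevant interval are $\pm b^{(1)}$, and then rule out $-b^{(1)}$ by the sign of $g$ on $(0,b^{(1)})$. The only cosmetic differences are that the paper invokes the iterated form (\ref{eq:Functional equation general}) rather than the base equation, and you spell out via concavity why $g>0$ on $(0,b^{(1)})$, which the paper simply asserts.
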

\begin{proof}
Since $g$ is even, the only two roots of $g$ are $-b^{(1)}$ and
$b^{(1)}$. By the functional equation (\ref{eq:Functional equation general}),
we have
\[
g^{2}\left(b^{(2)}\right)=-\frac{1}{\lambda}g\left(-b^{(1)}\right)=0.
\]
Thus, $g\left(b^{(2)}\right)=-b^{(1)}$ or $b^{(1)}$. Also, $g\left(b^{(2)}\right)\neq-b^{(1)}$
because $b^{(2)}\in(0,b^{(1)})$ and $g\left(x\right)>0$ on $(0,b^{(1)})$.
Therefore, $g\left(b^{(2)}\right)=b^{(1)}$.
\end{proof}

\subsubsection{\label{subsec:Expansion for the Feigenbaum map}Estimations for the
derivative}

Apply the chain rule to the functional equation (\ref{eq:Functional Equation}),
we have
\begin{equation}
g'\left(x\right)=g'\left(-\frac{x}{\lambda}\right)g'\circ g\left(-\frac{x}{\lambda}\right)\label{eq:Derivative of Functional Equation}
\end{equation}
for $x\in I$. We will use this formula to derive the values for the
derivative of $g$ at some particular values.

\begin{lemma}
The slope at $b^{(2)}$ is 
\begin{equation}
g'(b^{(2)})=-1.\label{eq:Slope 1}
\end{equation}
\end{lemma}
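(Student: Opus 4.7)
The plan is to evaluate the identity $g'(x) = g'(-x/\lambda)\,g'(g(-x/\lambda))$ from equation (\ref{eq:Derivative of Functional Equation}) at a cleverly chosen point so that $b^{(2)}$ appears on the right-hand side, and then exploit the evenness of $g$ to cancel factors.

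First, I would choose $x = -b^{(1)}$, because then $-x/\lambda = b^{(1)}/\lambda = b^{(2)}$ by the definition of $b^{(2)}$, and $g(-x/\lambda) = g(b^{(2)}) = b^{(1)}$ by the preceding lemma. Substituting into (\ref{eq:Derivative of Functional Equation}) gives
\[
g'(-b^{(1)}) \;=\; g'(b^{(2)})\cdot g'(b^{(1)}).
\]
Next, since $g$ is even (Proposition \ref{prop:Functional Equation}, item 2), its derivative $g'$ is odd, so $g'(-b^{(1)}) = -g'(b^{(1)})$. Combining the two equalities yields
\[
-g'(b^{(1)}) \;=\; g'(b^{(2)})\cdot g'(b^{(1)}).
\]

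To conclude $g'(b^{(2)}) = -1$ by dividing through, I need to verify that $g'(b^{(1)}) \neq 0$. This is the only substantive check: by Proposition \ref{prop:Functional Equation} the unique critical point of $g$ on $I$ is $c^{(0)}=0$ (forced by evenness together with the unimodality and the non-degenerate maximum at $c$), and $b^{(1)}\in(0,c^{(1)}]$ is distinct from $0$, so $g'(b^{(1)})\neq 0$. Dividing then gives $g'(b^{(2)}) = -1$, as claimed. The only potential obstacle, namely justifying the division, is handled by this one-line observation about the location of the critical point.
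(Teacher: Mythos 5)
Your proof is correct and essentially the same as the paper's: the paper substitutes $x=b^{(1)}$ into the derivative identity and applies evenness of $g$ on the right-hand side, while you substitute $x=-b^{(1)}$ and apply oddness of $g'$ on the left-hand side, which is the same computation up to relabeling. The one thing you add that the paper leaves implicit is the justification that $g'(b^{(1)})\neq 0$, which is a reasonable bit of care (and correct, since $b^{(1)}$ is a root of $g$ and $g(0)=c^{(1)}>0$ forces $b^{(1)}\neq 0$, away from the unique critical point at $0$).
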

\begin{proof}
From (\ref{eq:Derivative of Functional Equation}) and $g$ is even,
we have
\[
g'(b^{(1)})=g'\left(-b^{(2)}\right)g'\circ g\left(-b^{(2)}\right)=-g'\left(b^{(2)}\right)g'\left(b^{(1)}\right).
\]
We solve $g'(b^{(2)})=-1$.
\end{proof}

Let $q(-1)=-1$ (the fixed point with a positive multiplier) and $q(0)$
be the fixed point with a negative multiplier. From the functional
equation (\ref{eq:Functional Equation}), we get $q(0)=\frac{1}{\lambda}$.

\begin{lemma}
The slopes at the fixed points satisfy the relation
\begin{equation}
g'\left(q(-1)\right)=\left[g'\left(q(0)\right)\right]^{2}.\label{eq:slope for fixed point}
\end{equation}
\end{lemma}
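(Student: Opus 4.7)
The plan is to specialize the chain-rule form of the Cvitanovi\'{c}--Feigenbaum--Coullet--Tresser functional equation, namely equation (\ref{eq:Derivative of Functional Equation}),
\[
g'(x) \;=\; g'\!\left(-\tfrac{x}{\lambda}\right)\cdot g'\!\left(g\!\left(-\tfrac{x}{\lambda}\right)\right),
\]
at the point $x = q(-1) = -1$. The first thing I would do is check that with this choice the inner argument simplifies: $-x/\lambda = 1/\lambda = q(0)$. This identification of $1/\lambda$ with $q(0)$ is exactly the computation recorded immediately before the statement, coming from the functional equation applied at $x = -1$ (using $g(-1) = -1$ yields $g^{2}(1/\lambda) = 1/\lambda$, and $1/\lambda$ is then the unique interior fixed point of $g$ with negative multiplier).

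Once the inner argument is $q(0)$, the composition $g(-x/\lambda)$ appearing inside the second factor of the right-hand side collapses via the fixed-point property $g(q(0)) = q(0)$. Both factors in the product therefore reduce to $g'(q(0))$, which at once gives
\[
g'(q(-1)) \;=\; g'(-1) \;=\; g'(q(0)) \cdot g'(q(0)) \;=\; \bigl[g'(q(0))\bigr]^{2},
\]
matching the claimed identity (\ref{eq:slope for fixed point}). No additional estimates or properties of $g$ beyond the functional equation and the location of its fixed points are required.

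There is no real obstacle here: the entire argument is a single substitution followed by applying the fixed-point property, and it parallels the earlier lemma computing $g'(b^{(2)})$. The only care needed is to be certain that the symbolic substitution $x = -1$ into the chain-rule identity is legitimate — i.e., that $-1$ lies in the domain on which (\ref{eq:Derivative of Functional Equation}) is valid — which is the case since (\ref{eq:Functional Equation}) holds for all $x \in [-1,1]$ and $g$ is analytic in a complex neighborhood of this interval by Proposition \ref{prop:Functional Equation}.
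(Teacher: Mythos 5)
Your proof is correct and matches the paper's argument exactly: both substitute $x = -1 = q(-1)$ into the chain-rule identity (\ref{eq:Derivative of Functional Equation}), identify $-x/\lambda = 1/\lambda = q(0)$, and use the fixed-point property $g(q(0)) = q(0)$ to collapse both factors to $g'(q(0))$. No differences in approach.
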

\begin{proof}
From (\ref{eq:Derivative of Functional Equation}), compute
\[
g'\left(q(-1)\right)=g'\left(-1\right)=g'\left(\frac{1}{\lambda}\right)g'\circ g\left(\frac{1}{\lambda}\right)=g'\left(q(0)\right)g'\circ g\left(q(0)\right)=\left[g'\left(q(0)\right)\right]^{2}.
\]
\end{proof}

Finally, we prove that the map $g$ is expanding on $A$ and $C$.
\begin{proposition}
\label{prop:Derivative of g on A}The slope of $g$ is bounded below
by
\[
\left|g'(x)\right|\geq\left|g'(q(0))\right|>1
\]
for all $x\in\left[q(-1),\hat{q}(0)\right]\cup\left[q(0),\hat{q}(-1)\right]$.
\end{proposition}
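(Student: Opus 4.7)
The plan is to apply the minimal principle (Proposition \ref{prop:minimal principle}) on each of the two closed intervals and reduce the pointwise bound to an endpoint calculation. First I would note that, by evenness of $g$, the reflections are $\hat{q}(0)=-q(0)=-1/\lambda$ and $\hat{q}(-1)=-q(-1)=1$, so the two components are $[-1,-1/\lambda]$ and $[1/\lambda,1]$. The unique critical point $c^{(0)}=0$ lies strictly between them, so $g'$ does not vanish on either component. Combined with the negative Schwarzian derivative of $g$ (Proposition \ref{prop:Functional Equation}(5)), the minimal principle guarantees that $|g'|$ cannot attain a local minimum in the interior of either component, so its infimum on each component is realized at one of the two endpoints.

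Next I would evaluate the four boundary values. Since $g$ is even, $g'$ is odd and $|g'|$ is even; together with $\hat{q}(0)=-q(0)$ and $\hat{q}(-1)=-q(-1)$ this yields $|g'(\hat{q}(0))|=|g'(q(0))|$ and $|g'(\hat{q}(-1))|=|g'(q(-1))|=|g'(-1)|$. By (\ref{eq:slope for fixed point}), $g'(-1)=[g'(q(0))]^{2}$. Thus the set of boundary values on each of the two components is the same pair $\{|g'(q(0))|,\,[g'(q(0))]^{2}\}$.

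Then I would invoke the hyperbolicity of the fixed point $q(0)$ of $g$, namely $|g'(q(0))|>1$. Assuming this, $[g'(q(0))]^{2}>|g'(q(0))|$, and so the minimum on each component equals $|g'(q(0))|$. The minimal principle then delivers $|g'(x)|\geq|g'(q(0))|$ throughout $[q(-1),\hat{q}(0)]\cup[q(0),\hat{q}(-1)]$, which combined with $|g'(q(0))|>1$ gives the stated bound.

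The main obstacle is justifying $|g'(q(0))|>1$, which is not a formal consequence of the functional equation (\ref{eq:Functional Equation}) alone. By (\ref{eq:slope for fixed point}) this is equivalent to the multiplier $g'(-1)$ at the boundary fixed point satisfying $g'(-1)>1$, i.e.\ that $-1$ is strictly repelling for $g$. I would treat this as a classical property of the Cvitanovi\'c--Feigenbaum fixed point, or, failing an earlier reference, rule out $|g'(q(0))|=1$ by observing that a neutral $q(0)$ would contradict the infinite renormalizability of $g$: the tower of periodic orbits of periods $2^n$ produced by iterated period-doubling renormalization would then fail to be uniformly repelling, which is incompatible with $Rg=g$ together with the functional identity (\ref{eq:Functional equation general}).
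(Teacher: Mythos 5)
Your reduction via evenness to $[q(0),\hat q(-1)]=[1/\lambda,1]$, your identification of the boundary values as the pair $\{|g'(q(0))|,\,[g'(q(0))]^2\}$ using evenness and (\ref{eq:slope for fixed point}), and your final invocation of the minimal principle all match the paper's argument. The skeleton is the same.

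The genuine gap is precisely the step you flag yourself: $|g'(q(0))|>1$. You propose to import it as a ``classical property'' of the Feigenbaum fixed point, or to rule out neutrality by a heuristic about the tower of $2^n$-periodic orbits; neither is a proof, and the second in particular is circular in spirit since uniform repulsion of those orbits is exactly what this proposition is meant to supply. The paper instead derives $|g'(q(0))|>1$ internally, from the functional equation, by one more application of the minimal principle on a different interval. Section \ref{subsec:Expansion for the Feigenbaum map} has already computed $g'(b^{(2)})=-1$ (equation (\ref{eq:Slope 1}), a short calculation from (\ref{eq:Derivative of Functional Equation}) and evenness), and Proposition \ref{prop:Functional Equation} gives $g'(c^{(1)})=-\lambda$. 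Since $b^{(2)}<q(0)<c^{(1)}$ and $g'$ does not vanish on $[b^{(2)},c^{(1)}]$, negative Schwarzian forces $|g'|$ to have no interior local minimum there, so $|g'(q(0))|>\min(1,\lambda)=1$. That auxiliary computation at $b^{(2)}$ (a preimage of the critical point under $g^2$) is the ingredient your proof is missing; without it the chain of inequalities on $[q(0),\hat q(-1)]$ has nothing to stand on.
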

\begin{proof}
It is enough to prove the case when $x\in\left[q(0),\hat{q}(-1)\right]$
since $g$ is even.

First, we consider the interval $[b^{(2)},c^{(1)}]$. We have $b^{(2)}<q(0)<c^{(1)}$.
By (\ref{eq:Slope 1}) and Proposition \ref{prop:Functional Equation},
the derivatives of the boundaries are $g'(b^{(2)})=-1$ and $g'(c^{(1)})=-\lambda$.
We get $\left|g'(q(0))\right|>1$ by the minimal principle (Proposition
\ref{prop:minimal principle}). 

Next, we consider the interval $[q(0),\hat{q}(-1)]$. From (\ref{eq:slope for fixed point}),
we also get $\left|g'(\hat{q}(-1))\right|>1$. Therefore, the proposition
follows from the minimal principle (Proposition \ref{prop:minimal principle}).

\end{proof}

\section{\label{sec:Renormalization of Henon Like Map}Hénon-like Maps }

In this chapter, we give an introduction to the theory of Hénon renormalization
in the strongly dissipative regime developed by \cite{de2005renormalization,lyubich2011renormalization}.
Their theorems are adopted to fit the notations and the coordinate
system used in this article.

\subsection{The class of unimodal maps}
\begin{definition}[Class of unimodal maps]
Assume that $\delta>0$, $\kappa>0$, and $I^{h}\Supset I\equiv[-1,1]$.
Let \nomenclature[U_delta]{$\mathcal{U}_{\delta}$}{Class of unimodal maps with holomorphic extension on a $\delta$-neighborhood}$\mathcal{U}_{\delta,\kappa}(I^{h})\subset\mathcal{U}$
be the class of analytic unimodal maps\index{unimodal map} $f:I^{h}\rightarrow I^{h}$
such that
\begin{enumerate}
\item $f$ has a unique critical point $c$ such that $c\leq f(c)-\kappa$
and $f(c)\leq1-\kappa$,
\item $f$ has two fixed points $-1$ and $p$ such that $-1$ has an expanding
positive multiplier and $p$ has a negative multiplier,
\item $f$ has holomorphic extension to $I^{h}(\delta)$, 
\item $f$ can be factorized as $f=Q\circ\phi$ where $Q(x)=c^{(1)}-(c^{(1)}+1)x^{2}$,
$c^{(1)}$ is the critical value, and $\phi$ is an $\mathbb{R}$-symmetric
univalent map on $I^{h}(\delta)$, and 
\item $f$ has negative Schwarzian derivative.
\end{enumerate}
In the remaining article, we fix a small $\kappa>0$ such that the
class contains the renormalization fixed point $g$, and we suppress
the subscript from the notation $\mathcal{U}_{\delta}(I^{h})=\mathcal{U}_{\delta,\kappa}(I^{h})$.
\end{definition}
\begin{remark}
From the conditions $f(-1)=-1$ and $f(1)=-1$, this forces $\phi(-1)=-1$
and $\phi(1)=1$. Thus, $\mathcal{U}_{\delta}$ forms a normal family
by \cite[Theorem 3.2]{milnor2011dynamics}.
\end{remark}

\subsection{The class of Hénon-like maps}
\begin{definition}[Hénon-like map]
\nomenclature[F]{$F$}{Hénon-like map}\nomenclature[f]{$f$}{Unimodal component for Hénon-like map}\nomenclature[epsilon]{$\epsilon$}{Perturbation component for Hénon-like map}\nomenclature[h]{$h$}{$x$-component for Hénon-like map}Assume
that $I^{v}\supset I^{h}\Supset I$ are closed intervals. A Hénon-like
map\index{Hénon-like map|textbf} is a smooth map $F:I^{h}\times I^{v}\rightarrow\mathbb{R}^{2}$
of the form 
\[
F(x,y)=(f(x)-\epsilon(x,y),x)
\]
where $f$ is a unimodal map and $\epsilon$ is a small perturbation.
The function $h$ will also be used to express the $x$-component,
$h_{y}(x)=h(x,y)=\pi_{x}F(x,y)$. A representation of $F$ will be
expressed in the form $F=(f-\epsilon,x)$.
\end{definition}
The function spaces of the Hénon-like map is defined as follows.
\begin{definition}[Class of Hénon-like maps]
Assume that $I^{v}\supset I^{h}\Supset I$ and $\delta>0$. \nomenclature[I^v]{$I^{v}$}{Vertical domain for a Hénon-like map}\nomenclature[I^h]{$I^{h}$}{Horizontal domain for a Hénon-like map}\nomenclature[H]{$\mathcal{H}$}{Class of Hénon-like maps}
\begin{enumerate}
\item Denote $\mathcal{H}_{\delta}(I^{h}\times I^{v})$ to be the class
of real analytic Hénon-like maps $F:I^{h}\times I^{v}\rightarrow\mathbb{R}^{2}$
that have the following properties:
\begin{enumerate}
\item It has a representation $F=(f-\epsilon,x)$ such that $f\in\mathcal{U}_{\delta}(I^{h})$.
\item It has a saddle fixed point $p(-1)$ near the point $(-1,-1)$. The
fixed point has an expanding positive multiplier.
\item The $x$-component $h(x,y)$ has a holomorphic extension to $I^{h}(\delta)\times I^{v}(\delta)\rightarrow\mathbb{C}$.
\end{enumerate}
\item Given $\overline{\epsilon}>0$ and $f\in\mathcal{U}_{\delta}(I^{h})$.
Denote $\mathcal{H}_{\delta}(I^{h}\times I^{v},f,\overline{\epsilon})$
to be the class of Hénon-like maps $F\in\mathcal{H}_{\delta}(I^{h}\times I^{v})$
with the form $F=(f-\epsilon,x)$ such that $\left\Vert \epsilon\right\Vert <\overline{\epsilon}$. 
\item Denote $\mathcal{H}_{\delta}(I^{h}\times I^{v},\overline{\epsilon})=\cup\mathcal{H}_{\delta}(I^{h}\times I^{v},f,\overline{\epsilon})$
where the union is taken over all $f\in\mathcal{U}_{\delta}(I^{h})$.
\end{enumerate}
\end{definition}
\begin{remark}
\label{rem:Comparison between the domain}The domain $I^{h}\times I^{v}$
used in this article is slightly larger than the domain studied in
the two original papers \cite{de2005renormalization,lyubich2011renormalization}.
Their domain is equivalent to the dynamical interval $[f^{2}(c),f(c)]$
for unimodal maps which does not include the fixed point with positive
multiplier. The larger domain is necessary in this article to study
the rescaled orbit of a point. See Proposition \ref{prop:Dynamics on D},
Proposition \ref{prop:Dynamics of the partition on D}, and Proposition
\ref{prop:Dynamics on the partition}.

Their work also holds on the larger domain $I^{h}\times I^{v}$. See
for examples \cite[Footnote 7, Section 3.4]{de2005renormalization}
and \cite[Lemma 3.3, Proposition 3.5, Theorem 4.1]{lyubich2011renormalization}.
However, reproving their theorem on the larger domain is not the aim
here. This article will assume the results from \cite{de2005renormalization,lyubich2011renormalization}
also hold in the larger domain and rephrase them in the notations
used in this article without reproving. See also Remarks \ref{rem:Comparsion between renormalizable},
\ref{rem:Comparsion between infinite renormalizable}, and \ref{rem:Absence of wandering domain for CLM-renormalizable}.
\end{remark}
From the definition, it follows immediately that 
\begin{lemma}
\label{lem:H subset}Given $I^{v}\supset I^{h}\Supset I$, $\delta>0$,
$\overline{\epsilon}>0$, and $f\in\mathcal{U}_{\delta}(I^{h})$. 
\begin{enumerate}
\item If $\overline{\epsilon_{1}}<\overline{\epsilon_{2}}$ then $\mathcal{H}_{\delta}(f,\overline{\epsilon_{1}})\subset\mathcal{H}_{\delta}(f,\overline{\epsilon_{2}})$.
\item If $I\subset I_{1}^{h}\subset I_{2}^{h}\subset I^{v}$ and $f\in\mathcal{U}_{\delta}(I_{2}^{h})$,
then $\mathcal{U}_{\delta}(I_{1}^{h})\supset\mathcal{U}_{\delta}(I_{2}^{h})$
and $\mathcal{H}_{\delta}(I_{1}^{h}\times I^{v},f,\overline{\epsilon})\supset\mathcal{H}_{\delta}(I_{2}^{h}\times I^{v},f,\overline{\epsilon})$.
\end{enumerate}
\end{lemma}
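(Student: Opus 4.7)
Both parts follow by unpacking the definitions and checking that each defining property is inherited under the appropriate restriction.

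Part (1) is immediate: the only $\overline{\epsilon}$-dependent requirement in the definition of $\mathcal{H}_\delta(I^h\times I^v,f,\overline{\epsilon})$ is the bound $\|\epsilon\|<\overline{\epsilon}$. If $F=(f-\epsilon,x)\in\mathcal{H}_\delta(f,\overline{\epsilon_1})$ then $\|\epsilon\|<\overline{\epsilon_1}<\overline{\epsilon_2}$, while the unimodal component $f$, the holomorphic extension of the $x$-component $h$, and the saddle fixed point $p(-1)$ are all unaffected by replacing $\overline{\epsilon_1}$ with $\overline{\epsilon_2}$. Hence $F\in\mathcal{H}_\delta(f,\overline{\epsilon_2})$.

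For part (2), I would fix $g\in\mathcal{U}_\delta(I_2^h)$ and verify the five conditions defining $\mathcal{U}_\delta(I_1^h)$ for its restriction to $I_1^h\subset I_2^h$. The critical point $c\in\interior{I}\subset\interior{I_1^h}$, the two fixed points $-1$ and $p$ with the prescribed multipliers, the factorization $g=Q\circ\phi$ with $\mathbb{R}$-symmetric univalent $\phi$, and the negative Schwarzian derivative are intrinsic to $g$ and survive restriction; the required holomorphic extension to $I_1^h(\delta)\subset I_2^h(\delta)$ is obtained by restricting the extension given on $I_2^h(\delta)$. The only substantive point is the self-map property $g(I_1^h)\subset I_1^h$: since $\phi$ is $\mathbb{R}$-symmetric univalent with $\phi(\pm 1)=\pm 1$ it is monotone increasing on $I_2^h$, so $g=Q\circ\phi$ is unimodal on $I_1^h$ with maximum $c^{(1)}\leq 1-\kappa$ attained at $c\in I\subset I_1^h$ and minimum attained at $\partial I_1^h$; for nested intervals compatible with the dynamics the value at the endpoints remains in $I_1^h$, giving $g(I_1^h)\subset I_1^h$. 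The H\'enon-like containment $\mathcal{H}_\delta(I_1^h\times I^v,f,\overline{\epsilon})\supset\mathcal{H}_\delta(I_2^h\times I^v,f,\overline{\epsilon})$ is actually easier, since the codomain of a H\'enon-like map is $\mathbb{R}^2$ rather than $I^h\times I^v$: every $F=(f-\epsilon,x)\in\mathcal{H}_\delta(I_2^h\times I^v,f,\overline{\epsilon})$ restricts to a map on $I_1^h\times I^v$, the holomorphic extension of $h$ on $I_2^h(\delta)\times I^v(\delta)$ restricts to one on $I_1^h(\delta)\times I^v(\delta)$, and the bound on $\epsilon$ and the saddle fixed point $p(-1)$ are preserved.

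The only step requiring genuine care is the unimodal self-map property; this reduces to a monotonicity argument on each branch via the explicit parabolic factorization $Q\circ\phi$, and I do not expect it to be the main obstacle in writing up the proof.
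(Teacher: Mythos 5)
The paper offers no explicit argument here; it simply asserts that the lemma ``follows immediately'' from the definition, which is the same unpacking-of-definitions approach you take. Part (1) is exactly as you say. For part (2), however, the one step you flag as ``requiring genuine care'' --- the self-map property $g(I_1^h)\subset I_1^h$ --- is where your argument actually breaks down as written, and the fix is to notice that the self-map requirement on $I^h$ is not really there. If one were to read ``$f:I^h\to I^h$'' literally, the lemma would be \emph{false}: since $f(-1)=-1$ with $f'(-1)>1$, one has $f(x)<x$ for $x$ slightly below $-1$, so for $I_1^h$ only slightly larger than $I$ the left endpoint is pushed outside $I_1^h$ and $f(I_1^h)\not\subset I_1^h$. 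Your phrase ``for nested intervals compatible with the dynamics'' does not resolve this; the lemma is stated for arbitrary $I\subset I_1^h\subset I_2^h\subset I^v$. The resolution is that the definition already requires $\mathcal{U}_{\delta,\kappa}(I^h)\subset\mathcal{U}$, i.e.\ elements are unimodal self-maps of $I=[-1,1]$, and $I^h$ only enters through condition (3) (holomorphic extension to $I^h(\delta)$) and condition (4) ($\phi$ univalent on $I^h(\delta)$). Both of these are monotone in $I^h$: an extension on $I_2^h(\delta)$ restricts to one on $I_1^h(\delta)$, and likewise for univalence. With that reading, there is no self-map condition on $I^h$ to check, and part (2) really is immediate. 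Your treatment of the H\'enon-like containment is fine as it stands.
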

An important property of a Hénon-like map is that it maps vertical
lines to horizontal lines; it maps horizontal lines to parabola-like
arcs.
\begin{example}[Degenerate case]
Assume that $F(x,y)=(f(x)-\epsilon(x,y),x)$ is a Hénon-like map.
The map is called a degenerate\index{Hénon-like map!degenerate|textbf}
Hénon-like map if $\frac{\partial\pi_{x}F}{\partial y}=\frac{\partial\epsilon}{\partial y}=0$;
a non-degenerate Hénon-like map if $\frac{\partial\pi_{x}F}{\partial y}=\frac{\partial\epsilon}{\partial y}\neq0$. 

If $F$ is degenerate, then $\epsilon$ only depends on $x$. In this
case, without lose of generality, we will assume the Hénon-like map
has the representation $F(x,y)=(f(x),x)$ where $f=\pi_{x}F$ and
$\epsilon=0$.

For the degenerate case, the dynamics of the Hénon-like map is completely
determined by its unimodal component. So it will also be called as
the unimodal case\index{unimodal case|see{degenerate Hénon-like map}}
in this article.

The degenerate case is an important example in this article. A proof
for the nonexistence of wandering intervals for unimodal maps will
be presented in Chapter \ref{sec:degenerate case} by identifying
a unimodal map as a degenerate Hénon-like map. The expansion argument
in the proof motivates the proof for the non-degenerate case. The
difference between the degenerate case and the non-degenerate case
produces the main difficulty (explained in Chapter \ref{sec:Good and Bad region}
and Chapter \ref{sec:Bad region}) of extending the proof to the non-degenerate
case.
\end{example}
\begin{example}[Classical Hénon maps]
The classical Hénon family is a two-parameter family of the form
$F_{a,b}(x,y)=(-1+a(1-x^{2})-by,x)$ where $a,b>0$. These are Hénon-like
maps $F_{a,b}\in\mathcal{H}_{\delta}(I^{h}\times I^{v},-1+a(1-x^{2}),b[\left|I^{v}\right|+2\delta])$
for all $\delta>0$ and $I^{v}\supset I^{h}$.
\end{example}

\subsection{\label{subsec:local stable manifolds=000026partition}Local stable
manifolds\index{local stable manifold|(} and partition\index{partition!Hénon-like map|seealso{local stable manifold}}
of a Hénon-like map}

To study the dynamics of a Hénon-like map, we need to find a domain
$D\subset I^{h}\times I^{v}$ that turns the Hénon-like map into a
self-map\index{self-map}. Also, to renormalize a Hénon-like map,
we need to find a subdomain $C\subset D$ that defines a first return
map. Motivated from unimodal maps, one can construct a partition of
the domain $I^{h}\times I^{v}$ to find the domains. In the unimodal
case, an orbit that maps to the fixed point $p(0)$ with an expanding
multiplier splits the domain $D$ into a partition $\left\{ A,B,C\right\} $
(Definition \ref{def:Partition of I}). For a strongly dissipative
Hénon-like map, the orbit becomes components of the stable manifold
of the saddle fixed point $p(0)$. These components are vertical graphs
that split the domain into multiple vertical strips. 
\begin{definition}
A set $\Gamma$ is a vertical graph\index{vertical graph|textbf}
if there exists a continuous function $\gamma:I^{v}\rightarrow I^{h}$
such that $\Gamma=\left\{ (\gamma(t),t);t\in I^{v}\right\} $. The
vertical graph $\Gamma$ is said to have Lipschitz constant $L$ if
the function $\gamma$ is Lipschitz with constant $L$.
\end{definition}
In this paper, a local stable manifold\index{local stable manifold}
is a connected component of a stable manifold. Inspired by \cite{de2005renormalization},
the partition will be the vertical strips separated by the associated
local stable manifolds.

First, we study the local stable manifolds of the saddle fixed point
$p(-1)$ which contains an expanding positive multiplier.

\begin{figure}
\begin{centering}
\includegraphics[scale=0.5]{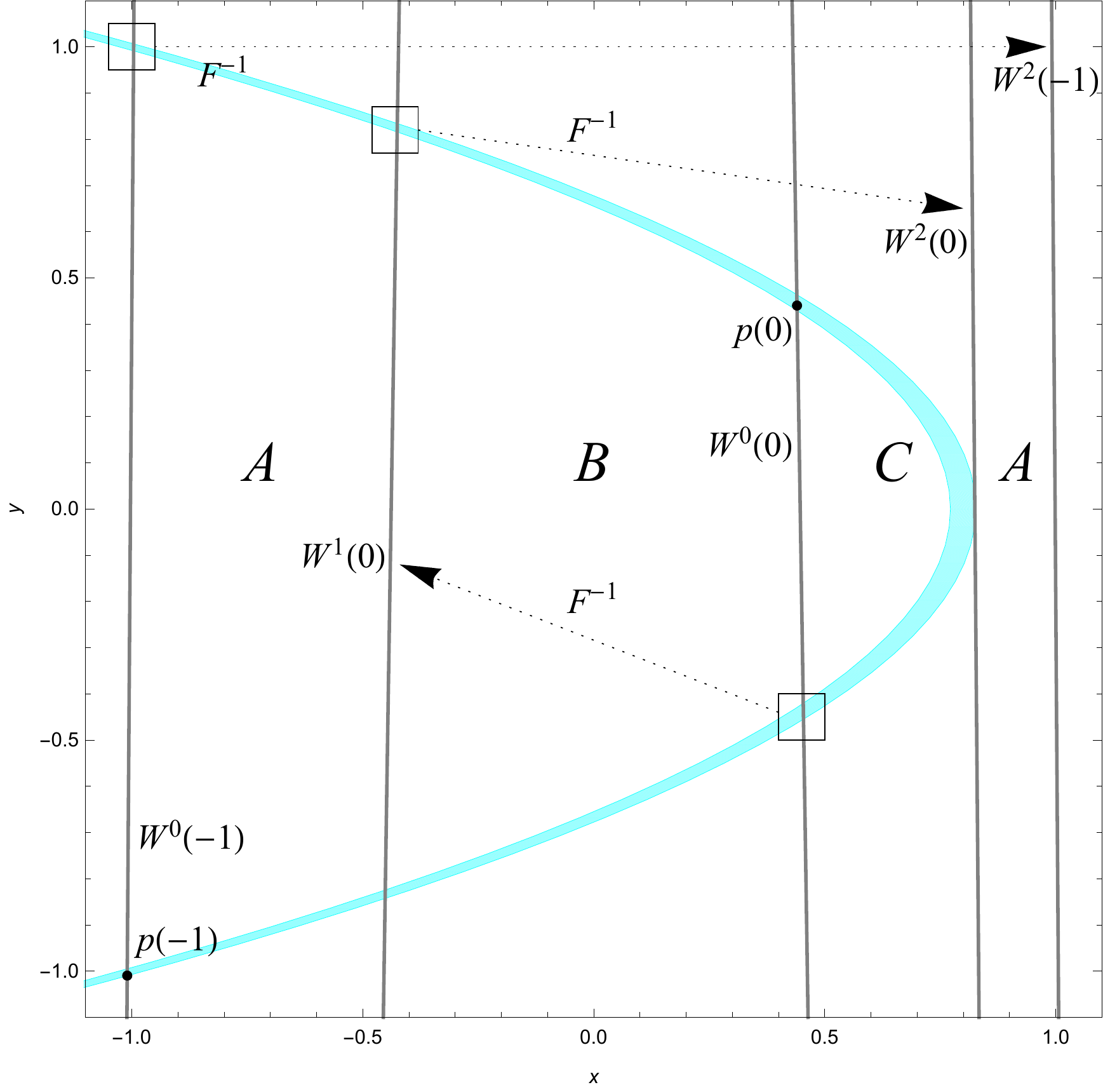}
\par\end{centering}
\caption{\label{fig:Partition for Henon-like map}Local stable manifolds and
partition $A$, $B$, $C$ for a Hénon-like map $F$. The shaded area
is the image of the Hénon-like map. The vertical graphs are the local
stable manifolds $W^{0}(-1)$, $W^{1}(0)$, $W^{0}(0)$, $W^{2}(0)$,
and $W^{2}(-1)$ from left to right. The arrows illustrates the construction
of each local stable manifold.}
\end{figure}
\begin{definition}[The local stable manifolds of $p(-1)$ and the iteration domain $D$]
\label{def:Local stable manifold W(-1)}\nomenclature[W(0)]{$W^{t}(-1)$}{Local stable manifolds of $p(-1)$}\index{local stable manifold|textbf}Given
$I^{v}\supset I^{h}\Supset I$, $\delta>0$, and $F\in\mathcal{H}_{\delta}(I^{h}\times I^{v})$.
Consider the stable manifold of the saddle fixed point $p(-1)$.
\begin{enumerate}
\item If the connected component that contains the fixed point $p(-1)$
is a vertical graph, let $W^{0}(-1)$ be the component.
\item Assume that $W^{0}(-1)$ exists. If $F^{-1}(W^{0}(-1))$ has two components,
one is $W^{0}(-1)$ and the other is a vertical graph. Let $W^{2}(-1)$
be the one that is disjoint from $W^{0}(-1)$.
\end{enumerate}
If the the local stable manifolds $W^{0}(-1)$ and $W^{2}(-1)$ exists,
define $D=D(F)\subset I^{h}\times I^{v}$ to be the open set bounded
between the two local stable manifolds. \nomenclature[D]{$D$}{The Hénon-like map is defined to be a self-map on $D\subset I^{h}\times I^{v}$}\index{partition!Hénon-like map|textbf}
See Figure \ref{fig:Partition for Henon-like map} for an illustration.
\end{definition}
The domain $D$ turns the Hénon-like map into a self-map.
\begin{proposition}
\label{prop:Dynamics on D}Given $\delta>0$ and intervals $I^{h}$
and $I^{v}$ with $I^{v}\supset I^{h}\Supset I$. There exists $\overline{\epsilon}>0$
and $c>0$ such that for all $F\in\mathcal{H}_{\delta}(I^{h}\times I^{v},\overline{\epsilon})$
the following properties hold: 
\begin{enumerate}
\item The sets $W^{0}(-1)$, $W^{2}(-1)$, and $D$ exist. The two local
stable manifolds are vertical graphs with Lipschitz constant $c\left\Vert \epsilon\right\Vert $.
\item $F(D)\subset D$.\index{self-map}
\end{enumerate}
\end{proposition}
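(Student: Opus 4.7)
The plan is to treat $F$ as a small perturbation of the degenerate map $F_{0}(x,y)=(f(x),x)$, for which the saddle fixed point is $(-1,-1)$, the stable manifolds $W^{0}(-1)$ and $W^{2}(-1)$ are the vertical lines $\{-1\}\times I^{v}$ and $\{1\}\times I^{v}$, and $D=(-1,1)\times I^{v}$. For small $\overline{\epsilon}$ every object persists as a perturbation, and the size of the perturbation is controlled by $\left\Vert \epsilon\right\Vert$, which is what produces the Lipschitz bound proportional to $\left\Vert \epsilon\right\Vert$.

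First, the equation $F(x,y)=(x,y)$ forces $y=x$ and reduces to $f(x)-\epsilon(x,x)=x$, which the implicit function theorem solves uniquely near $x=-1$ for small $\left\Vert \epsilon\right\Vert $ since $f'(-1)\neq1$. The derivative $DF(p(-1))$ has trace $\approx f'(-1)$ and determinant equal to $\partial_{y}\epsilon$ at $p(-1)$, an $O(\left\Vert \epsilon\right\Vert )$ quantity, so its eigenvalues are $\lambda_{u}\approx f'(-1)>1$ and $\lambda_{s}=O(\left\Vert \epsilon\right\Vert )$; the contracting direction is close to vertical.

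Next, I would construct $W^{0}(-1)$ by graph transform. On the space $\mathcal{G}_{L}$ of Lipschitz vertical graphs $\gamma:I^{v}\to I^{h}$ with Lipschitz constant $\leq L$ passing through $p(-1)$, the invariance condition $F(\Gamma_{\gamma})\subset\Gamma_{\gamma}$ is equivalent to the functional equation
\[
f(\gamma(t))-\epsilon(\gamma(t),t)=\gamma(\gamma(t)),\qquad t\in I^{v}.
\]
Define $T\gamma$ by taking the inverse branch of $x\mapsto f(x)-\epsilon(x,t)$ near $x=-1$ (well-defined and smooth by the implicit function theorem since $f'(-1)\neq0$) applied to the right-hand side. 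The expansion $|f'(-1)|>1$ makes $T$ a contraction on $\mathcal{G}_{L}$ in the $C^{0}$ norm with rate $\approx 1/|f'(-1)|$, so it has a unique fixed point $\gamma_{0}$, which parametrizes $W^{0}(-1)$. Differentiating the fixed-point equation and applying the contraction estimate in the Lipschitz norm gives $|\gamma_{0}'|=O(\left\Vert \epsilon\right\Vert )$. The manifold $W^{2}(-1)$ is then the branch of $F^{-1}(W^{0}(-1))$ near $x=1$: the equation $f(x)-\epsilon(x,y)=\gamma_{0}(x)$ has a solution branch near $x=1$ (since $\gamma_{0}\approx -1$ and $f'(1)\neq0$), and the implicit function theorem again supplies a vertical graph with order-$\left\Vert \epsilon\right\Vert $ Lipschitz bound.

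Finally, for $F(D)\subset D$: by construction both boundary components $W^{0}(-1)$ and $W^{2}(-1)$ are mapped into $W^{0}(-1)\subset\overline{D}$, so images of boundary points never cross out of $\overline{D}$. An interior point $(x,y)\in D$ has $x$ strictly between the nearly-vertical graphs $\gamma_{0}\approx -1$ and $\gamma_{2}\approx 1$, so $x\in(-1-O(\left\Vert \epsilon\right\Vert ),\,1+O(\left\Vert \epsilon\right\Vert ))\subset I^{v}$; the $x$-coordinate of $F(x,y)$ is $f(x)-\epsilon(x,y)$, bounded above by $f(c)+O(\left\Vert \epsilon\right\Vert )\leq1-\kappa+O(\left\Vert \epsilon\right\Vert )$ by unimodality and below by $-1-O(\left\Vert \epsilon\right\Vert )$ since $f(\pm1)=-1$, hence strictly between $\gamma_{0}$ and $\gamma_{2}$ at the relevant $y$-value for $\left\Vert \epsilon\right\Vert $ small. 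The main obstacle is the graph-transform step on the enlarged domain $I^{h}\times I^{v}$ rather than the smaller dynamical strip used in \cite{de2005renormalization,lyubich2011renormalization}, but as indicated in Remark \ref{rem:Comparison between the domain} this is a routine Hadamard--Perron argument that adapts without essential new difficulty.
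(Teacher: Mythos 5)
The construction of $W^{0}(-1)$ by a graph transform on vertical Lipschitz graphs, the $O(\left\Vert \epsilon\right\Vert )$ slope bound obtained by differentiating the fixed-point equation, and the construction of $W^{2}(-1)$ as the other branch of $F^{-1}(W^{0}(-1))$ via the implicit function theorem are all correct, and this is essentially the same route the paper takes through its citation of the graph-transform lemmas in \cite{lyubich2011renormalization}; your remark about the enlarged domain $I^{h}\times I^{v}$ versus the smaller dynamical strip is also accurate.

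The argument for $F(D)\subset D$, however, has a genuine gap at the lower bound. You bound the $x$-coordinate of $F(x,y)$ from below by $-1-O(\left\Vert \epsilon\right\Vert )$ and claim this places it strictly to the right of $\gamma_{0}$. But $\gamma_{0}$ itself is only known to equal $-1+O(\left\Vert \epsilon\right\Vert )$ with uncontrolled sign --- for instance $\pi_{x}p(-1)+1=\epsilon(-1,-1)/(f'(-1)-1)+O(\left\Vert \epsilon\right\Vert ^{2})$ can be positive or negative --- and the implicit constants in the two $O(\left\Vert \epsilon\right\Vert )$ bounds are not comparable, so ``hence strictly between $\gamma_{0}$ and $\gamma_{2}$'' does not follow near the left edge of $D$. (The upper bound \emph{is} strict thanks to the uniform gap $\kappa$ in the definition of $\mathcal{U}_{\delta,\kappa}$; there is no analogous uniform gap on the left.) The boundary invariance you note is the right ingredient, but it must be paired with a sign/connectedness argument rather than a crude size estimate. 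For example, the continuous function $g(x,y):=f(x)-\epsilon(x,y)-\gamma_{0}(x)$ on $\overline{D}$ vanishes precisely on $F^{-1}(W^{0}(-1))\cap\overline{D}=W^{0}(-1)\cup W^{2}(-1)$, i.e.\ on the lateral boundary; since $D$ is connected, $g$ has constant sign on $D$, and at $(c,y)$ one has $g\geq\kappa-O(\left\Vert \epsilon\right\Vert )>0$, so $g>0$ throughout $D$. Together with the symmetric bound against $\gamma_{2}$ (where your $\kappa$-estimate already works) this closes the step. Alternatively, the transverse expansion $\partial_{x}g\approx f'(-1)>1$ along $W^{0}(-1)$, combined with the absence of interior zeros of $g$, gives the same conclusion.
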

\begin{proof}
The first property follows from the graph transformation. The techniques
were developed in \cite[Chapter 3]{lyubich2011renormalization}. See
\cite[Lemma 3.1, 3.2]{lyubich2011renormalization}.

The second property follows from the definition of the local stable
manifolds and $\overline{\epsilon}>0$ is sufficiently small.
\end{proof}

Next, we study the local stable manifolds of the other saddle fixed
point $p(0)$ with an expanding negative multiplier to define a partition
of $D$.
\begin{definition}[The local stable manifolds of $p(0)$]
\label{def:Local stable manifold W(0)}\nomenclature[W(0)]{$W^{t}(0)$}{Local stable manifolds of $p(0)$}\index{local stable manifold|textbf}Given
$I^{v}\supset I^{h}\Supset I$, $\delta>0$, and $F\in\mathcal{H}_{\delta}(I^{h}\times I^{v})$.
Assume that $F$ has a saddle fixed point $p(0)$ with an expanding
negative multiplier. Consider the stable manifold of $p(0)$.
\begin{enumerate}
\item If the connected component that contains $p(0)$ is a vertical graph,
let $W^{0}(0)$ be the component.
\item Assume that $W^{0}(0)$ exists. If $F^{-1}(W^{0}(0))$ has two components,
one is $W^{0}(0)$ and the other is a vertical graph. Let $W^{1}(0)$
be the one that is disjoint from $W^{0}(0)$.
\item Assume that $W^{0}(0)$ and $W^{1}(0)$ exist. If $F^{-1}(W^{1}(0))$
has two components and one component is a vertical graph located to
the right of $W^{0}(0)$. Let $W^{2}(0)$ be the component.
\end{enumerate}
\end{definition}
See Figure \ref{fig:Partition for Henon-like map} for an illustration.
\begin{remark}
At this moment, the numbers $0$ and $-1$ in the notation of the
fixed points $p(0)$ and $p(-1)$ (and also the local stable manifolds)
do not have a special meaning. After introducing infinitely renormalizable
Hénon-like maps, the notation $p(k)$ will be used to define a periodic
point with period $2^{k}$. See Definition \ref{def:rescaling levels}.
The numbers are introduced here for consistency.
\end{remark}
The local stable manifolds split the domain $D$ into vertical strips.
These strips define a partition of the domain.
\begin{definition}[$A$, $B$, and $C$]
\label{def:Domain A,B,C}\nomenclature[A, B, C]{$A$, $B$, $C$}{Partition of the domain $D$ for Hénon-like map}\index{partition!Hénon-like map|textbf}Given
$I^{v}\supset I^{h}\Supset I$, $\delta>0$, and $F\in\mathcal{H}_{\delta}(I^{h}\times I^{v})$.
Assume that $F$ has a saddle fixed point $p(0)$ with an expanding
negative multiplier, the local stable manifolds in Definition \ref{def:Local stable manifold W(0)}
exist, and $D$ exists. 
\begin{enumerate}
\item Define $A=A(F)\subset I^{h}\times I^{v}$ to be the union of two sets.
One is the open set bounded between $W^{0}(-1)$ and $W^{1}(0)$;
the other is the open set bounded between $W^{2}(0)$ and $W^{2}(-1)$.
\item Define $B=B(F)\subset I^{h}\times I^{v}$ to be the open set bounded
between $W^{0}(0)$ and $W^{1}(0)$. 
\item Define $C=C(F)\subset I^{h}\times I^{v}$ to be the open set bounded
between $W^{0}(0)$ and $W^{2}(0)$.
\end{enumerate}
\end{definition}
\begin{remark}
The local stable manifolds $W^{0}(-1)$, $W^{1}(0)$, $W^{0}(0)$
, $W^{2}(0)$, and $W^{2}(-1)$ are associated to the points $p(-1)=-1$,
$p^{(1)}$, $p(0)$, $p^{(2)}$, and $1$ respectively (Definition
\ref{def:Partition of I}).
\end{remark}
For a strongly dissipative Hénon-like map, the local stable manifolds
are vertical graphs and the dynamics on the partition is similar to
the unimodal case.
\begin{proposition}
\label{prop:Dynamics of the partition on D}\index{partition!Hénon-like map|textit}Given
$\delta>0$ and intervals $I^{h}$ and $I^{v}$ with $I^{v}\supset I^{h}\Supset I$.
There exists $\overline{\epsilon}>0$ and $c>0$ such that for all
$F\in\mathcal{H}_{\delta}(I^{h}\times I^{v},\overline{\epsilon})$
the following properties hold: 
\begin{enumerate}
\item The sets $W^{0}(0)$, $W^{1}(0)$, $W^{2}(0)$, $A$, $B$, and $C$
exist. The local stable manifolds are vertical graphs with Lipschitz
constant $c\left\Vert \epsilon\right\Vert $.
\item $F(A)\subset A\cup W^{1}(0)\cup B$.
\item $F(C)\subset B$.
\item If $z\in A$ then its orbit eventually escapes $A$, i.e. there exists
$n>0$ such that $F^{n}(z)\notin A$.
\end{enumerate}
\end{proposition}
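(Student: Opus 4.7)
The plan is to proceed by perturbation from the degenerate (unimodal) case, using the same graph transformation machinery already invoked for Proposition \ref{prop:Dynamics on D}, together with the unimodal partition dynamics of Definition \ref{def:Partition of I} and the expansion bound of Proposition \ref{prop:Derivative of g on A}. First I would construct $W^{0}(0)$ by applying the stable manifold theorem at the saddle fixed point $p(0)$: since $p(0)$ has an expanding (negative) multiplier and $\|DF - DF_{0}\|$ is of order $\|\epsilon\|$ with $F_{0} = (f,x)$, the graph transformation of \cite[Chapter 3]{lyubich2011renormalization} produces a local stable manifold through $p(0)$ that is a vertical graph with Lipschitz constant $O(\|\epsilon\|)$. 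Having $W^{0}(0)$, I would define $W^{1}(0)$ as the component of $F^{-1}(W^{0}(0))$ disjoint from $W^{0}(0)$, and $W^{2}(0)$ as the component of $F^{-1}(W^{1}(0))$ lying to the right of $W^{0}(0)$.

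The core step is showing that each of $F^{-1}(W^{0}(0))$ and $F^{-1}(W^{1}(0))$ decomposes into two vertical graphs. I would use that $F$ sends each horizontal slice $\{y = y_{0}\}$ to a parabola-like arc, namely the graph $\{(h_{y_{0}}(x), x)\}$ whose first coordinate $h_{y_{0}}(x) = f(x) - \epsilon(x,y_{0})$ is unimodal with non-degenerate critical point $O(\|\epsilon\|)$-close to the critical point of $f$. If $\gamma$ is the graph function of the target vertical graph, the preimage equation $h_{y}(x) = \gamma(x)$ then has, for each $y$, exactly two solutions $x$ by the intermediate value theorem and the unimodal shape, provided $\gamma$ stays uniformly below the maximum of $h_{y}$. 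These two solutions vary Lipschitz-continuously in $y$ by the implicit function theorem, and the Lipschitz constant is $O(\|\epsilon\|)$ because at $\epsilon = 0$ each branch is a vertical line. Taking $\overline{\epsilon}$ small enough ensures both branches are contained in $I^{h} \times I^{v}$, giving property~(1).

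For properties (2) and (3) I would first verify the inclusions $f(A(f)) \subset A(f) \cup \{p^{(1)}\} \cup B(f)$ and $f(C(f)) \subset B(f)$ for the unimodal component (these follow from Definition \ref{def:Partition of I} and $f(p^{(1)}) = f(p^{(2)}) = p(0)$). Since $A$, $B$, $C$, and the bounding local stable manifolds depend continuously on $F$, with the H\'enon-like partition $O(\|\epsilon\|)$-close in Hausdorff distance to the unimodal one, the inclusions persist for $F$: the only way an image could leave $A \cup W^{1}(0) \cup B$ or $B$ respectively would be to cross one of the stable manifolds, but these are invariant under $F$ by construction. Property~(4) follows from uniform expansion in $A$: Proposition \ref{prop:Derivative of g on A} together with normality of $\mathcal{U}_{\delta}$ gives a uniform lower bound $|f'| \geq \mu > 1$ on the $A$-region for all $f \in \mathcal{U}_{\delta}(I^{h})$ near $g$, and this bound transfers to $|\partial_{x}(\pi_{x}F)|$ at the cost of an $O(\|\epsilon\|)$ perturbation; a standard horizontal distortion argument then shows the $x$-coordinate of any trapped orbit in $A$ would diverge, a contradiction.

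The main obstacle is the geometric requirement that the target graph $W^{0}(0)$ (and later $W^{1}(0)$) stays uniformly bounded away in the $x$-direction from the maximum of every horizontal slice's image; only then does the preimage split into two full vertical graphs rather than pinching at a tangency. For the unimodal fixed point $g$ this separation is strict by item~(1) of the definition of $\mathcal{U}_{\delta,\kappa}(I^{h})$ (the constant $\kappa$), and it extends uniformly to nearby $f \in \mathcal{U}_{\delta}(I^{h})$ by normality. Choosing $\overline{\epsilon}$ small compared to $\kappa$ preserves the separation for $F$ and simultaneously makes all Lipschitz constants of order $\|\epsilon\|$, closing the argument.
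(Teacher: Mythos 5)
Your treatment of properties (1)--(3) follows essentially the same path as the paper: the graph transformation from \cite[Chapter 3]{lyubich2011renormalization} produces the local stable manifolds as vertical graphs with Lipschitz constant $O(\|\epsilon\|)$, the preimage branches are isolated by the parabola-like shape of $h_{y}(\cdot)$ together with the $\kappa$-separation in the definition of $\mathcal{U}_{\delta,\kappa}$, and the inclusions $F(A)\subset A\cup W^{1}(0)\cup B$ and $F(C)\subset B$ follow because the local stable manifolds $W^{0}(-1)$, $W^{1}(0)$, $W^{0}(0)$, $W^{2}(0)$, $W^{2}(-1)$ form an invariant fence that images of connected sets cannot cross.

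The gap is in property (4). You derive escape from $A$ via a uniform expansion bound $|f'|\geq\mu>1$ on $A$, which you obtain from Proposition \ref{prop:Derivative of g on A} (a statement about the fixed point $g$) and normality, qualified by ``for all $f\in\mathcal{U}_{\delta}(I^{h})$ near $g$.'' But the proposition is stated for \emph{all} $F\in\mathcal{H}_{\delta}(I^{h}\times I^{v},\overline{\epsilon})$, and the unimodal component $f$ is only required to lie in $\mathcal{U}_{\delta}(I^{h})$ --- it need not be anywhere near $g$. The expansion bound is not available at this generality: $\mathcal{U}_{\delta}(I^{h})$ only guarantees $|f'(-1)|>1$, and without proximity to $g$ (or an explicit lower bound on $|f'(p(0))|$) the minimal principle does not force $|f'|>1$ on all of $(-1,p^{(1)})$. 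Notice also that the expansion statement the paper actually uses, Lemma \ref{lem:Expanding rate on A and C}, is only established on the smaller class $\hat{\mathcal{I}}_{\delta}$, precisely because it depends on closeness to $G$. The paper's argument for (4) is topological rather than metric: $F$ has exactly two saddle fixed points, $p(-1)$ and $p(0)$, so the unstable manifold of $p(-1)$ extends across the whole set $A$ and no orbit can remain trapped in $A$ (the two-dimensional analogue of the elementary fact that a monotone orbit in $(-1,p^{(1)})$ must converge to a fixed point, of which there is none in the interior). That argument carries none of the expansion hypotheses, so it covers the full class as required; you would need to replace your expansion step with something of this flavor, or restrict the proposition to $\hat{\mathcal{I}}_{\delta}$.
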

\begin{proof}
The first property is proved by graph transformation. See \cite[Chapter 3]{lyubich2011renormalization}. 

The second and third properties follows from the definition of the
local stable manifolds. See also \cite[Lemma 4.2]{lyubich2011renormalization}.

The last property holds because the only fixed points are $p(-1)$
and $p(0)$ so the local unstable manifold of $p(-1)$ must extends
across the whole set $A$. See also \cite[Lemma 4.2]{lyubich2011renormalization}.
\end{proof}

By the definition of $B$, its iterate $F(B)$ is contained in the
right component of $D\backslash W^{0}(0)$. With the third property
of Proposition \ref{prop:Dynamics of the partition on D}, we can
define the condition ``renormalizable'' as follows.
\begin{definition}[Renormalizable]
Assume that $\overline{\epsilon}>0$ is sufficiently small. A Hénon-like
map $F\in\mathcal{H}_{\delta}(I^{h}\times I^{v},\overline{\epsilon})$
is (period-doubling) renormalizable\index{renormalizable!Hénon-like|textbf}
if it has a saddle fixed point $p(0)$ with an expanding negative
multiplier and $F(B)\subset C$. The class of renormalizable Hénon-like
maps is denoted by $\mathcal{H}_{\delta}^{r}(I^{h}\times I^{v},\overline{\epsilon})\subset\mathcal{H}_{\delta}(I^{h}\times I^{v},\overline{\epsilon})$.
\end{definition}
\begin{remark}
\label{rem:Comparsion between renormalizable}The notion of ``renormalizable''
here is similar to \cite[Section 3.4]{de2005renormalization} (which
they called pre-renormalization) but not exactly the same. The ``renormalizable''
in their paper is called CLM-renormalizable here to compare the difference.
In their article, the set ``$C$'' (they named the set $D$) where
they define the first return map is a region bounded between $W^{0}(0)$
and a section of the unstable manifold of $p(-1)$. In this article,
the set $C$ is defined to be the largest candidate (around the critical
value) that is invariant under $F^{2}$ which only uses the local
stable manifolds of $p(0)$. Thus, the sets $B$ and $C$ in this
article is slightly larger than theirs.

As a result, the property ``renormalizable'' in this article is
stronger than theirs. If a Hénon-like map is renormalizable then it
is also CLM-renormalizable. Although the converse is not true in general,
the hyperbolicity of the renormalization operator \cite[Theorem 4.1]{de2005renormalization}
allows us to apply the notion of renormalizable to an infinitely CLM-renormalizable
map. This makes the final result, Theorem \ref{thm:nonexistence of wandering domain},
also works for CLM-renormalizable maps. See Remarks \ref{rem:Comparsion between infinite renormalizable}
and \ref{rem:Absence of wandering domain for CLM-renormalizable}
for more details. 

Their definition has some advantages and disadvantages. Their notion
of renormalizable does not depend on the size of the vertical domain
$I^{v}$. However, their sets $B$ and $C$ are too small. It may
requires more iterations for an orbit to enter their $B$ and $C$.
See the proof of \cite[Lemma 4.2]{lyubich2011renormalization}. This
is the reason for adjusting their definition.
\end{remark}

For a renormalizable Hénon-like map, an orbit that is disjoint from
the stable manifold of the fixed points follows the paths in the following
diagram. 
\[
\xymatrix{A\ar[r]\ar@(ul,dl)_{\text{finite iterations}} & B\ar@<.5ex>[r] & C\ar@<.5ex>[l]}
\]
Therefore, a renormalizable map has a first return map on $C$.

\index{local stable manifold|)}

\subsection{Renormalization operator}

When a Hénon-like map is renormalizable, the map has a first return
map on $C$. However, the first return map is no longer a Hénon-like
map by a direct computation 
\[
F^{2}(x,y)=(h_{x}(h_{y}(x)),h_{y}(x)).
\]
The paper \cite{de2005renormalization} introduced a nonlinear coordinate
change $H(x,y)\equiv(h_{y}(x),y)$ \nomenclature[H]{$H$}{Nonlinear part of the Hénon rescaling}
that turns the first return map into a Hénon-like map. The next proposition
defines the renormalization operator.
\begin{proposition}[Renormalization operator]
\label{prop:Renormalization Operator}\index{renormalization operator!Hénon-like map|textit}Given
$\delta>0$ and intervals $I^{h},I^{v}$ with $I^{v}\supset I^{h}\Supset I$.
There exists $\overline{\epsilon}>0$ and $c>0$ so that for all $F\in\mathcal{H}_{\delta}^{r}(I^{h}\times I^{v},\overline{\epsilon})$
there exists an $\mathbb{R}$-symmetric orientation reversing affine
map $s=s(F)$ \nomenclature[s]{$s$}{Affine part of the Hénon rescaling}
which depends continuously on $F$ such that the following properties
hold:

Let $\Lambda(x,y)=(s(x),s(y))$ \nomenclature[Lambda]{$\Lambda$}{Affine part of the Hénon rescaling}
and $\phi=\Lambda\circ H$. \nomenclature[phi]{$\phi$}{Hénon rescaling}
\begin{enumerate}
\item The map $x\rightarrow h_{y}(x)$ is injective on a neighborhood of
$C(F)$ and hence $\phi$ is a diffeomorphism from a neighborhood
of $C(F)$ to its image.
\item The renormalization $RF\equiv\phi\circ F^{2}\circ\phi^{-1}$ is an
Hénon-like map defined on $I_{R}^{h}(\delta_{R})\times I_{R}^{v}(\delta_{R})$
for some $\delta_{R}>0$ and intervals $I_{R}^{h}$ and $I_{R}^{v}$.
The intervals satisfy $I_{R}^{h}\Supset[-1,1]$ and $I_{R}^{v}=s(I^{v})$.
\item The domain $I_{R}^{h}\times I_{R}^{v}$ contains $D(RF)$, and the
rescaling $\phi$ maps $\phi(C(F))=D(RF)$.
\item The fixed points satisfy the relation $\phi(p(0))=p_{RF}(-1)$ where
$p_{RF}(-1)$ is the saddle fixed point of $RF$ with an expanding
positive multiplier.
\item The renormalization has a representation $RF=(f_{R}-\epsilon_{R},x)$
where $f_{R}\in\mathcal{U}$. The representation satisfies the relations
\[
\left\Vert f_{R}-R_{c}f\right\Vert _{I_{R}^{h}(\delta_{R})}<c\left\Vert \epsilon\right\Vert 
\]
and
\[
\left\Vert \epsilon_{R}\right\Vert _{I_{R}^{h}(\delta_{R})\times I_{R}^{v}(\delta_{R})}<c\left\Vert \epsilon\right\Vert ^{2}.
\]
\end{enumerate}
\end{proposition}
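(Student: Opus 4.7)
The proof follows the nonlinear-straightening trick of de Carvalho, Lyubich, and Martens. For part (1), I would observe that $C(F)$ projects to a horizontal neighborhood of the unimodal interval $[p(0),p^{(2)}]$, which lies on a monotonic branch of $f$, so $|f'|$ is bounded below by a positive constant on a slightly enlarged neighborhood $\widetilde{C}$. Because $\partial_x h_y(x)=f'(x)-\partial_x\epsilon(x,y)$ and Lemma~\ref{lem:Derivative Bound} gives $\|\partial_x\epsilon\|=O(\|\epsilon\|)$ on a shrunken polydisc, for $\overline{\epsilon}$ small enough $x\mapsto h_y(x)$ remains univalent on a complex neighborhood of $\widetilde{C}$ for every fiber $y$. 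Hence $H(x,y)=(h_y(x),y)$ is a biholomorphism from this polydisc onto its image.

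Next, I would verify the Hénon-like form by direct computation: inverting $h_v$ in its first variable gives
\[
H\circ F^{2}\circ H^{-1}(u,v)=\bigl(\,h_u\bigl(h_{h_v^{-1}(u)}(u)\bigr),\ u\,\bigr),
\]
which is of the form $(G(u,v),u)$ as required. The affine $s$ is then the unique orientation-reversing $\mathbb{R}$-symmetric map for which $\phi=\Lambda\circ H$ sends the two boundary local stable manifolds $W^{0}(0),W^{2}(0)$ of $C(F)$ onto $W^{0}_{RF}(-1),W^{2}_{RF}(-1)$, which pins down $\phi(C(F))=D(RF)$ and the vertical domain $I_R^v=s(I^v)$. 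Property (4) is automatic once one observes that any fixed point of $F$ lies on the diagonal and that $H$ fixes the diagonal (since $(h_{y_0}(x_0),y_0)=(x_0,y_0)$ whenever $F(x_0,y_0)=(x_0,y_0)$), so $\phi(p(0))=\Lambda(p(0))$ and the normalization of $s$ forces this to equal $p_{RF}(-1)$. Continuous dependence of $s$ on $F$ follows from continuous dependence of the local stable manifolds and of the fixed point. The holomorphic extension to $I_R^h(\delta_R)\times I_R^v(\delta_R)$ is inherited from $H$, with a uniform lower bound on the shrinkage ratio $\delta_R/\delta$ obtained from step (1).

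The estimates in (5) rest on the degenerate case: when $\epsilon\equiv 0$, the formula above reduces to $(f^{2}(u),u)$, and the affine rescaling $\Lambda$ coincides with the unimodal rescaling, producing exactly $(R_c f(u),u)$, so $f_R=R_c f$ and $\epsilon_R=0$. The linear bound $\|f_R-R_c f\|\leq c\|\epsilon\|$ then follows from continuous dependence of the whole construction on $\epsilon$. The quadratic bound $\|\epsilon_R\|\leq c\|\epsilon\|^{2}$ is the delicate step: differentiating $G(u,v)=h_u(h_w(u))$ with $w=h_v^{-1}(u)$ in $v$, and using $\partial_v w=\partial_y\epsilon(w,v)/h_v'(w)$ together with $\partial_w h_w(u)=-\partial_y\epsilon(u,w)$, one obtains
\[
\partial_v G(u,v)=-\,h_u'(h_w(u))\,\partial_y\epsilon(u,w)\,\frac{\partial_y\epsilon(w,v)}{h_v'(w)}.
\]
Each $\partial_y\epsilon$ factor delivers $O(\|\epsilon\|)$ via Cauchy bounds (Lemma~\ref{lem:Derivative Bound}) on the shrunken polydisc, while $h_u'(h_w(u))$ and $1/h_v'(w)$ are uniformly bounded by step (1); integration in $v$ then yields $\|\epsilon_R\|=O(\|\epsilon\|^{2})$. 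The main bookkeeping obstacle throughout is controlling the successive losses of complex neighborhood from $\delta$ to $\delta_R$, absorbed once $\overline{\epsilon}$ is chosen small enough that $I_R^h(\delta_R)\subset\phi(I^h(\delta))$.
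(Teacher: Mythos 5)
The paper does not prove this proposition; it simply cites Section~3.5 of de~Carvalho--Lyubich--Martens, so there is no ``paper proof'' to match against line by line. That said, your sketch accurately reconstructs the argument in that reference: the fiberwise straightening $H(x,y)=(h_y(x),y)$, the composition identity $H\circ F^2\circ H^{-1}(u,v)=(h_u(h_{h_v^{-1}(u)}(u)),u)$, and the computation of $\partial_v G$ exhibiting a product of two $\partial_y\epsilon$ factors, which is exactly where the quadratic bound $\|\epsilon_R\|=O(\|\epsilon\|^2)$ comes from. The univalence of $h_y$ on a complex neighborhood of $C(F)$ via the lower bound on $|f'|$ off the critical point plus the Cauchy estimate for $\partial_x\epsilon$ is also the correct mechanism, and the observation that fixed points lie on the diagonal and $H$ fixes them gives (4) cleanly.

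Two small imprecisions are worth flagging, neither fatal. First, for $\|f_R-R_cf\|\leq c\|\epsilon\|$ you invoke ``continuous dependence,'' but continuity alone does not give a linear rate; what one actually uses is that the map $\epsilon\mapsto RF$ is (real-)analytic, hence Lipschitz, on the relevant Banach ball, with the degenerate value $R_cf$ at $\epsilon=0$. Second, your characterization of $s$ as ``the unique affine map sending $W^0(0),W^2(0)$ to $W^0_{RF}(-1),W^2_{RF}(-1)$'' is logically circular, since $RF$ and its local stable manifolds already depend on $s$; in practice $s$ is normalized a priori (e.g.\ by fixing the image of $p(0)$ and of $H(W^1(0))$, the perturbations of $p(0)$ and $p^{(1)}$), and the statement that $\phi$ maps $C(F)$ onto $D(RF)$ is then a consequence via the conjugacy, as in the degenerate calculation you describe. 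With these adjustments your outline is a faithful account of the cited construction.
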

\begin{proof}
See \cite[Section 3.5]{de2005renormalization}.
\end{proof}

\begin{remark}
The rescaling $\phi$ preserves the orientation along the $x$-coordinate
and reverses the orientation along the $y$-coordinate.
\end{remark}

A map is called infinitely renormalizable if the procedure of renormalization
can be done infinitely many times. The class of infinitely renormalizable\index{infinitely renormalizable}
Hénon-like map is denoted as $\mathcal{I}_{\delta}(I^{h}\times I^{v},\overline{\epsilon})\subset\mathcal{H}_{\delta}(I^{h}\times I^{v},\overline{\epsilon})$.
\nomenclature[I]{$\mathcal{I}_{\delta}$}{Class of infinite renormalizable Hénon-like maps.}

Assume that $F\in\mathcal{I}_{\delta}(I^{h}\times I^{v},\overline{\epsilon})$,
we define $F_{n}=R^{n}F$. The subscript $n$ is called the renormalization
level\index{renormalization level}. The subscript is also used to
indicate the associated renormalization level of an object. For example,
$H_{n}$, $s_{n}$, and $\Lambda_{n}$ are the functions in Proposition
\ref{prop:Renormalization Operator} that corresponds to $F_{n}$.
The vertical domain $I_{n}^{v}$ satisfies $I_{0}^{v}=I^{v}$ and
$I_{n+1}^{v}=s_{n}(I_{n}^{v})$ for all $n\geq0$. The vertical graphs
$W_{n}^{t}(j)$ are the local stable manifolds of $F_{n}$. The sets
$A_{n}$, $B_{n}$, and $C_{n}$ form a partition of the dynamical
domain $D_{n}$ that associates to $F_{n}$. The points $p_{n}(-1)$
and $p_{n}(0)$ are the two saddle fixed points of $F_{n}$.

\nomenclature[Phi_{n}^{j}]{$\Phi_{n}^{j}$}{Nonlinear rescaling from renormalization level $n$ to $n+j$}
Also, define $\Phi_{n}^{j}=\phi_{n+j-1}\circ\cdots\circ\phi_{n}$
and \nomenclature[lambda_{n}]{$\lambda_{n}$}{$s_n'$}$\lambda_{n}=s_{n}'(x)$. 

Recall $g\in\mathcal{U}$ is the fixed point of the renormalization
operator $\uRenormalizeC$, and $\lambda$ is the rescaling constant
defined in \ref{prop:Functional Equation}. \nomenclature[G]{$G$}{Fixed point for $R$}Let
$G(x,y)=(g(x),x)$ be the induced degenerate Hénon-like map.

The renormalization operator is hyperbolic. The next proposition lists
the properties of infinitely renormalizable Hénon-like maps.
\begin{proposition}[Hyperbolicity of the Renormalization operator]
\label{prop:Hyperbolicity of the Renormalization Operator}\index{renormalization operator!Hénon-like map!hyperbolicity}Given
$\delta>0$ and intervals $I^{h},I^{v}$ with $I^{v}\supset I^{h}\Supset I$.
There exists $\rho<1$ (universal), $\overline{\epsilon}>0$, $c>0$
such that for all $F\in\mathcal{I}_{\delta}(I^{h}\times I^{v},\overline{\epsilon})$
there exists $0<\delta_{R}<\delta$, an interval $I_{R}^{h}$ with
$I^{h}\supset I_{R}^{h}\Supset I$, and $b\in\mathbb{R}$ such that
the following properties hold: 

Let $F_{n}=R^{n}F$ be the sequence of renormalizations of $F$. Then
$F_{n}\in\mathcal{H}_{\delta_{R}}(I_{R}^{h}\times I_{n}^{v})$ for
all $n\geq0$. Also, the sequence has a representation $F_{n}=(f_{n}-\epsilon_{n},x)$
with $f_{n}\in\mathcal{U}_{\delta_{R}}(I_{R}^{h})$ that satisfies
\begin{enumerate}
\item $\left\Vert f_{n}-g\right\Vert _{I_{R}^{h}(\delta_{R})}<c\rho^{n}\left\Vert F-G\right\Vert _{I_{R}^{h}(\delta_{R})\times I^{v}(\delta_{R})}$
\item $\left\Vert \epsilon_{n+1}\right\Vert _{I_{R}^{h}(\delta_{R})\times I_{n+1}^{v}(\delta_{R})}<c\left\Vert \epsilon_{n}\right\Vert _{I_{R}^{h}(\delta_{R})\times I_{n}^{v}(\delta_{R})}^{2}$,
\item $\left\Vert f_{n+1}-s_{n}\circ f_{n}^{2}\circ s_{n}^{-1}\right\Vert _{I_{R}^{h}(\delta_{R})}<c\left\Vert \epsilon_{n}\right\Vert _{I_{R}^{h}(\delta_{R})\times I_{n}^{v}(\delta_{R})}$, 
\item $\left|\lambda_{n}-\lambda\right|<c\rho^{n}\left\Vert F-G\right\Vert _{I_{R}^{h}(\delta_{R})\times I^{v}(\delta_{R})}$,
and
\item $\epsilon_{n}(x,y)=b^{2^{n}}a(x)y(1+O(\rho^{n}))$ (universality)
\end{enumerate}
for all $n\geq0$ where $a(x)$ is a universal analytic positive function.
The value $\delta_{R}$ in the estimates can be replaced by any positive
number that is smaller than $\delta_{R}$.
\end{proposition}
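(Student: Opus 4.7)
The plan is to bootstrap from two classical inputs: the hyperbolicity of the unimodal renormalization operator $R_c$ at its fixed point $g$ (the Sullivan--McMullen--Lyubich theory, already invoked implicitly in Proposition \ref{prop:Functional Equation}), and the quadratic estimate $\|\epsilon_R\| < c\|\epsilon\|^2$ from Proposition \ref{prop:Renormalization Operator}. The first controls the unimodal component; the second forces the perturbation to decay super-exponentially. Together they should yield all five items.

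First I would check that iteration is well-defined on a uniform domain. Starting from $F\in\mathcal{I}_\delta(I^h\times I^v,\bar\epsilon)$ with $\bar\epsilon$ small, each application of Proposition \ref{prop:Renormalization Operator} produces $F_{n+1}$ on a slightly smaller horizontal strip $I_{n+1}^h(\delta_{n+1})$. Because $R_c$ at $g$ has a domain strictly containing the Feigenbaum interval and depends continuously on the map, a standard compactness/continuity argument shows that for $\bar\epsilon$ small enough the intervals $I_n^h(\delta_n)$ stabilize to a common $I_R^h(\delta_R)$, compactly contained in the original domain. This is where choosing $\bar\epsilon$ small really matters.

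Next I would prove item (2) in isolation by induction: if $\|\epsilon_n\|<\eta_n$ then $\|\epsilon_{n+1}\|<c\eta_n^2$, and picking $\bar\epsilon<1/(2c)$ one obtains $\|\epsilon_n\|<(2c\bar\epsilon)^{2^n}/(2c)$, i.e.\ double-exponential decay. Item (1) then follows by comparing the actual orbit of $F$ under $R$ with the orbit of its unimodal component under $R_c$: Proposition \ref{prop:Renormalization Operator} gives $\|f_{n+1}-R_c f_n\|<c\|\epsilon_n\|$, and because $R_c$ is a contraction with rate $\rho<1$ on a neighborhood of $g$ (the stable manifold of $R_c$ at $g$, on which every infinitely renormalizable unimodal map lies), a telescoping argument
\[
\|f_n-g\|\le \rho^n\|f_0-g\|+\sum_{k=1}^{n}\rho^{n-k}\cdot c\|\epsilon_{k-1}\|
\]
together with the super-exponential control on $\|\epsilon_k\|$ gives $\|f_n-g\|<c\rho^n\|F-G\|$. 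Item (3) is an immediate reformulation of the same comparison, and item (4) follows because $s_n$ depends continuously on $F_n$ via the position of the fixed points $p_n(0),p_n^{(1)}$, so $\lambda_n=s_n'$ inherits the same exponential rate.

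The hardest part will be item (5), the universality of the shape of $\epsilon_n$. The idea is to expand $F^2$ to first order in $\epsilon$ and identify the linearized renormalization operator $L$ on the tangent space at $G$ in the $\epsilon$-direction. A direct computation using $F(x,y)=(f(x)-\epsilon(x,y),x)$ gives
\[
\pi_x F^2(x,y) = f^2(x)-\bigl[f'(f(x))\epsilon(x,y)+\epsilon(f(x),x)\bigr]+O(\|\epsilon\|^2),
\]
so after applying the rescaling $\Lambda\circ H$ the leading $\epsilon$-contribution takes the separated form $\text{const}\cdot a(x)y$, where $a$ is built from $g$ and its derivatives. One then needs to show that this rank-one mode is the dominant eigenvector of $L$ at $G$ and that the remaining spectrum is contracted by $\rho$, so iterating yields $\epsilon_n(x,y)=b^{2^n}a(x)y(1+O(\rho^n))$; the factor $b^{2^n}$ simply records the product of the $\epsilon$-multipliers along the orbit under squaring. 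Making the spectral decomposition rigorous, and controlling the error term uniformly in $n$ using the super-exponential decay of $\|\epsilon_n\|$ from step~(2), is where I expect the real technical work to lie.
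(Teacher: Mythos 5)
The paper does not actually prove this proposition; the proof block is a bare citation to de Carvalho, Lyubich, and Martens (their Theorems 3.5, 4.1, 7.9 and Lemma 7.4). You are attempting to reconstruct the proof, which is more than the text does; the outline is plausible, but item (1) has a genuine gap.

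You write the telescope
\[
\|f_n-g\|\le \rho^n\|f_0-g\|+\sum_{k=1}^{n}\rho^{n-k}\,c\|\epsilon_{k-1}\|
\]
as though $R_c$ were a contraction of rate $\rho$ near $g$. It is not: $R_c$ is \emph{hyperbolic} at $g$, with a one-dimensional unstable direction whose eigenvalue is the Feigenbaum constant $\delta\approx4.669>1$, so the local inequality $\|R_c h-g\|\le\rho\|h-g\|$ fails unless $h$ already lies on the stable manifold of $R_c$. Your parenthetical remark that infinitely renormalizable unimodal maps lie on that stable manifold does not close the gap: $f_n$ is only the unimodal component of the H\'enon renormalization $F_n$, and all you control is the drift $\|f_{n+1}-R_c f_n\|<c\|\epsilon_n\|$; there is no a priori reason each $f_n$ is itself infinitely $R_c$-renormalizable. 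The correct reasoning is a shadowing argument that splits $f_n-g$ into stable and unstable coordinates. The unstable coordinate is controlled by a \emph{backward} tail, $O\bigl(\sum_{k\ge n}\delta^{-(k-n)}\|\epsilon_k\|\bigr)=O(\|\epsilon_n\|)$, which is forced because the pseudo-orbit must stay bounded for all $n$ (this is exactly where infinite renormalizability of $F$ enters); the stable coordinate decays like $\rho^n$ plus a \emph{forward} error sum. Only by combining both with the super-exponential decay of $\|\epsilon_n\|$ from item (2) does one arrive at $\|f_n-g\|=O(\rho^n)$. This interplay between the expanding unimodal mode and the super-contracting $\epsilon$-direction is the substance of the CLM hyperbolicity theorem that the paper imports; your telescope silently assumes away precisely the step where hyperbolicity, rather than contraction, must be used. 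Items (2)--(4) and the first-order expansion in item (5) are fine as sketched, and you rightly flag that making the spectral argument in item (5) rigorous is the bulk of the remaining work.
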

\begin{proof}
See \cite[Theorem 3.5, 4.1, 7.9, and Lemma 7.4]{de2005renormalization}.
\end{proof}

\begin{remark}
The constant $b$ is called the average Jacobian of $F$. See \cite[Section 6]{de2005renormalization}.
\end{remark}
\begin{remark}
The Hénon-renormailzation is an operation that renormalizes around
the critical value. However, the renormalization $F_{n}$ converges
to the fixed point $G$ of the unimodal-renormalization that renormalizes
around the critical point. This is because of the nonlinear rescaling
$H$ maps the domain from $C$ to $B$ in the degenerate case. See
Chapter \ref{sec:degenerate case} for a more detail explanation.
\end{remark}
\begin{remark}
\label{rem:Comparsion between infinite renormalizable}Although infinitely
CLM-renormalizable in general does not imply infinitely renormalizable,
the hyperbolicity provides a connection between the two notions of
infinitely renormalizable. Assume that $F$ is infinitely CLM-renormalizable.
The hyperbolicity of the renormalizable operator \cite[Theorem 4.1]{de2005renormalization}
says that $R^{n}F$ converges to the fixed point $G$. This means
that $R^{n}F$ is also infinitely renormalizable for all $n$ sufficiently
large. This makes Theorem \ref{thm:nonexistence of wandering domain}
also applies to infinitely CLM-renormalizable Hénon-like maps. See
Remark \ref{rem:Absence of wandering domain for CLM-renormalizable}
for more details.
\end{remark}
From now on, for any infinitely renormalizable map $F$, we fix a
representation $F_{n}=(f_{n}-\epsilon_{n},x)$ such that the maps
$f_{n}$ and $\epsilon_{n}$ satisfy the properties given in Proposition
\ref{prop:Hyperbolicity of the Renormalization Operator}. Also, we
neglect the subscript of the supnorms $\left\Vert f_{n}-g\right\Vert =\left\Vert f_{n}-g\right\Vert _{I_{R}^{h}(\delta_{R})}$
and $\left\Vert \epsilon_{n}\right\Vert =\left\Vert \epsilon_{n}\right\Vert _{I_{R}^{h}(\delta_{R})\times I_{n}^{v}(\delta_{R})}$
whenever the context is clear. 
\begin{corollary}
There exists a constant $c>1$ such that 
\[
\left\Vert F_{n}-G\right\Vert <c\rho^{n}\left\Vert F-G\right\Vert 
\]
and
\[
\left\Vert \epsilon_{n+t}\right\Vert <\left(c\left\Vert \epsilon_{n}\right\Vert \right)^{2^{t}}
\]
for all $t\geq1$.
\end{corollary}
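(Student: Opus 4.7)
The second estimate is essentially a telescoped version of the quadratic contraction bound $\left\Vert \epsilon_{n+1}\right\Vert < c\left\Vert \epsilon_{n}\right\Vert^{2}$ from Proposition \ref{prop:Hyperbolicity of the Renormalization Operator}. The plan is to renormalize the constant: multiplying both sides by $c$ gives $c\left\Vert \epsilon_{n+1}\right\Vert < \left(c\left\Vert \epsilon_{n}\right\Vert\right)^{2}$, so setting $a_{n} = c\left\Vert \epsilon_{n}\right\Vert$ we get the cleaner recursion $a_{n+1} < a_{n}^{2}$. A straightforward induction on $t$ then yields $a_{n+t} < a_{n}^{2^{t}}$, i.e. $c\left\Vert \epsilon_{n+t}\right\Vert < \left(c\left\Vert \epsilon_{n}\right\Vert\right)^{2^{t}}$. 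Since $c > 1$, dividing by $c$ only strengthens the bound, giving the claimed inequality $\left\Vert \epsilon_{n+t}\right\Vert < \left(c\left\Vert \epsilon_{n}\right\Vert\right)^{2^{t}}$.

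For the first estimate, I would start from the triangle inequality on the representation $F_{n} - G = (f_{n} - g - \epsilon_{n}, 0)$, which gives $\left\Vert F_{n}-G\right\Vert \leq \left\Vert f_{n}-g\right\Vert + \left\Vert \epsilon_{n}\right\Vert$. The first summand is already handled by the Proposition: $\left\Vert f_{n}-g\right\Vert < c\rho^{n}\left\Vert F-G\right\Vert$. So the whole task reduces to bounding $\left\Vert \epsilon_{n}\right\Vert$ by a constant multiple of $\rho^{n}\left\Vert F-G\right\Vert$.

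To handle $\left\Vert \epsilon_{n}\right\Vert$, I first note that $\left\Vert \epsilon_{0}\right\Vert$ is itself controlled by $\left\Vert F-G\right\Vert$: applying the triangle inequality to $\epsilon_{0} = (f_{0} - g) - (f_{0} - \epsilon_{0} - g)$ and using the Proposition at level $n=0$ gives $\left\Vert \epsilon_{0}\right\Vert \leq c'\left\Vert F-G\right\Vert$ for some constant $c'$. Then I apply the second estimate (which I have just established, with $n=0$): $\left\Vert \epsilon_{n}\right\Vert < \tfrac{1}{c}\left(c\left\Vert \epsilon_{0}\right\Vert\right)^{2^{n}}$. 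Because $\mathcal{I}_{\delta}$ requires $\overline{\epsilon}$ small, we may assume $c\left\Vert \epsilon_{0}\right\Vert \leq \rho$, and then $\left(c\left\Vert \epsilon_{0}\right\Vert\right)^{2^{n}} \leq \rho^{2^{n}} \leq \rho^{n} \cdot c\left\Vert \epsilon_{0}\right\Vert$ for all $n \geq 0$, yielding $\left\Vert \epsilon_{n}\right\Vert \leq \left\Vert \epsilon_{0}\right\Vert \rho^{n} \leq c'\rho^{n}\left\Vert F-G\right\Vert$. Combining the two pieces and enlarging the constant completes the first estimate.

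There is no real obstacle here: the only thing to watch is to make sure the smallness assumption on $\left\Vert \epsilon_{0}\right\Vert$ needed to dominate $\rho^{n}$ by the super-exponential factor is compatible with the standing hypothesis $F \in \mathcal{I}_{\delta}(I^{h}\times I^{v},\overline{\epsilon})$; shrinking $\overline{\epsilon}$ if necessary takes care of this. The whole corollary is really a packaging step that converts the per-step estimates of the Proposition into two clean global bounds suitable for repeated use later in the paper.
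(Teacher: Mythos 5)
Your proof is correct and supplies the argument the paper leaves implicit (no proof is given for this corollary). The second estimate is exactly the standard renormalized telescoping of the quadratic contraction, and the first follows correctly once one observes (as you do) that $\left\Vert \epsilon_{n}\right\Vert$ decays super-exponentially and hence is eventually dominated by $\rho^{n}\left\Vert \epsilon_{0}\right\Vert$: with $a=c\left\Vert \epsilon_{0}\right\Vert\leq\rho<1$ the inequality $a^{2^{n}-1}\leq\rho^{2^{n}-1}\leq\rho^{n}$ holds for all $n\geq0$ since $2^{n}-1\geq n$, which is the key numeric check underlying your bound. The only cosmetic point is that for $n=0$ you should invoke $\left\Vert \epsilon_{0}\right\Vert\leq c'\left\Vert F-G\right\Vert$ directly rather than via the strict version of the second estimate (which is stated only for $t\geq1$); this changes nothing substantive.
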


\begin{lemma}
\label{lem:Bounds for e_n}Assume that $\overline{\epsilon}>0$ small
enough such that Proposition \ref{prop:Hyperbolicity of the Renormalization Operator}
holds. There exists a constant $c_{1}>0$ such that the inequalities
hold
\begin{equation}
\left|\frac{\partial\epsilon_{n}}{\partial x}(x,y)\right|,\left|\frac{\partial\epsilon_{n}}{\partial y}(x,y)\right|\leq c_{1}\left\Vert \epsilon_{n}\right\Vert \label{eq:Upper bound for de}
\end{equation}
for all $F\in\mathcal{I}_{\delta}(I^{h}\times I^{v},\overline{\epsilon})$
and $(x,y)\in I^{h}\times I_{n}^{v}$. In addition, if $F$ is non-degenerate,
there exists $N=N(F)\geq0$, $\delta_{R}>0$, and $c_{2}>0$ such
that 
\begin{equation}
\frac{\partial\epsilon_{n}}{\partial y}(x,y)\geq\frac{c_{1}}{\left|I_{n}^{v}\right|}\left\Vert \epsilon_{n}\right\Vert \label{eq:Lower bound for de/dy}
\end{equation}
for all $(x,y)\in I^{h}\times I_{n}^{v}$ and $n\geq N$.
\end{lemma}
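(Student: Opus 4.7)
Both inequalities reduce to standard tools once the right representation of $\epsilon_n$ is available: Cauchy estimates for the upper bound, and the universality expansion (item (5) of Proposition \ref{prop:Hyperbolicity of the Renormalization Operator}) for the lower bound. I would therefore split the proof into two essentially independent parts.

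For (\ref{eq:Upper bound for de}) I would invoke Lemma \ref{lem:Derivative Bound} directly. Proposition \ref{prop:Hyperbolicity of the Renormalization Operator} provides a holomorphic extension of $\epsilon_n$ to the neighbourhood $I_R^h(\delta_R)\times I_n^v(\delta_R)$, with $\delta_R$ independent of $n$ and of the choice of $F\in\mathcal{I}_\delta(I^h\times I^v,\overline{\epsilon})$. Since the Cauchy constant depends only on the shape of the neighbourhood, Lemma \ref{lem:Derivative Bound} produces a single $c_1>0$ with $|\partial_x\epsilon_n|,|\partial_y\epsilon_n|\leq c_1\|\epsilon_n\|$ on the real slice. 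The same $c_1$ works for every $F$ and every $n$, giving (\ref{eq:Upper bound for de}).

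For (\ref{eq:Lower bound for de/dy}) I would start from the universality asymptotic
\[
\epsilon_n(x,y)=b^{2^n}a(x)\,y\,\bigl(1+r_n(x,y)\bigr),\qquad \|r_n\|\leq c\rho^n,
\]
where $a>0$ is a fixed universal analytic function and $b=b(F)$ is the average Jacobian. Differentiating in $y$,
\[
\partial_y\epsilon_n(x,y)=b^{2^n}a(x)\bigl(1+r_n(x,y)\bigr)+b^{2^n}a(x)\,y\,\partial_y r_n(x,y),
\]
and applying Lemma \ref{lem:Derivative Bound} to $r_n$ so that $\partial_y r_n$ is also $O(\rho^n)$ on the real domain, one reads off $\partial_y\epsilon_n(x,y)=b^{2^n}a(x)+O(b^{2^n}\rho^n)$. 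Since $\min_{I^h}a>0$, one may choose $N=N(F)$ large enough that the remainder is dominated by $\tfrac{1}{2}b^{2^n}\min_{I^h}a$ for $n\geq N$, yielding $\partial_y\epsilon_n(x,y)\geq\tfrac{1}{2}b^{2^n}\min a$. The same universality expansion gives the matching upper bound $\|\epsilon_n\|\leq C\,b^{2^n}|I_n^v|$ for $n$ large. Dividing the two estimates produces (\ref{eq:Lower bound for de/dy}) with $c_2=\min a /(2C)$.

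The main technical nuisance I anticipate is controlling the remainder term $b^{2^n}a(x)\,y\,\partial_y r_n(x,y)$ uniformly on the full real slice $I^h\times I_n^v$. A naive Cauchy estimate on $r_n$ costs a factor of $|y|/\delta_R$, so the bound has to absorb a potential factor of $|I_n^v|$. This is harmless for $n$ large because the super-exponential decay built into the universality dominates any polynomial or geometric growth of $|I_n^v|$ with $n$, but it is the one place in the argument where care with the constants is required. Once that bookkeeping is done, the proof is essentially two applications of Lemma \ref{lem:Derivative Bound} bracketing the universality formula.
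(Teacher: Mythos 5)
Your approach is essentially the same as the paper's: Cauchy estimates (Lemma \ref{lem:Derivative Bound}) for the upper bound, and the universality asymptotic for the lower bound. The upper bound part is fine, with the caveat that $\delta_R$ in Proposition \ref{prop:Hyperbolicity of the Renormalization Operator} is permitted to depend on $F$, so the Cauchy constant is only uniform over $n$ for a fixed $F$, not over the whole class; the lemma's statement inherits the same issue, so this is not something you introduced.

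For the lower bound, the paper does not differentiate the universality expansion itself; it cites the proof of Theorem 7.9 of de Carvalho--Lyubich--Martens for the companion asymptotic $\partial_y\epsilon_n(x,y)=b^{2^n}a(x)(1+O(\rho^n))$, and then the positivity of $a$ immediately gives the claim. You instead try to derive the derivative form from the function form by Cauchy estimates. There is a genuine gap in the one place you flagged, and your proposed resolution does not close it. The error term you need to control is
\[
b^{2^n}\,a(x)\,y\,\partial_y r_n(x,y)\,,
\]
against the main term $b^{2^n}a(x)$. The super-exponential factor $b^{2^n}$ is common to both, so it cancels and cannot ``dominate'' anything here. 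After cancellation what must be small is $|y|\,\|\partial_y r_n\|\lesssim|I_n^v|\,\rho^n/\delta_R$. Since $s_n$ is close to $x\mapsto-\lambda x$ (by the definition of $\hat{\mathcal{I}}_\delta$), the vertical domains satisfy $|I_n^v|\approx\lambda^n|I^v|$, so the quantity you need to kill is of order $(\lambda\rho)^n$. Nothing in the paper asserts $\lambda\rho<1$; $\rho$ is the (unspecified) contraction rate of the renormalization operator, and a priori $\lambda\rho$ could be $\ge 1$. So the Cauchy-plus-cancellation route does not obviously work, and the clean fix is exactly what the paper does: quote the $C^1$ version of the universality expansion from the cited reference rather than trying to manufacture the derivative bound from the $C^0$ version.
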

\begin{proof}
The first inequality (\ref{eq:Upper bound for de}) follows from Lemma
\ref{lem:Derivative Bound}.

By the universality (and the proof of \cite[Theorem 7.9]{de2005renormalization})
of the infinitely renormalizable Hénon-like maps, the perturbation
$\epsilon$ and its derivative has the asymptotic form
\[
\epsilon_{n}(x,y)=b^{2^{n}}a(x)y(1+O(\rho^{n}))
\]
and
\[
\frac{\partial\epsilon_{n}}{\partial y}(x,y)=b^{2^{n}}a(x)(1+O(\rho^{n})).
\]
Since $a$ is a positive map on a compact set that covers the whole
domain, the second inequality follows.
\end{proof}

To study the wandering domains, it is enough to consider Hénon-like
maps that are close to the hyperbolic fixed point $G$. By Corollary
\ref{cor:Wandering Domain/Renormalization} later, for any integer
$n\geq0$, we show an infinitely renormalizable Hénon-like map $F$
has a wandering domain in $D(F)$ if and only if $F_{n}$ has a wandering
domain in $D(F_{n})$. Also, the maps $F_{n}$ converge to the hyperbolic
fixed point $G$ as $n$ approaches to infinity by Proposition \ref{prop:Hyperbolicity of the Renormalization Operator}.
Thus, we focus on a small neighborhood of the fixed point $G$. 
\begin{definition}
Given $\delta>0$ and $I\Subset I^{h}\subset I^{v}$. If $\overline{\epsilon}$
is small enough such that Proposition \ref{prop:Hyperbolicity of the Renormalization Operator}
holds, define $\hat{\mathcal{I}}_{\delta}(I^{h}\times I^{v},\overline{\epsilon})$
to be the class of non-degenerate Hénon-like maps $F\in\mathcal{I}_{\delta}(I^{h}\times I^{v},\overline{\epsilon})$
such that $F_{n}\in\mathcal{H}_{\delta}(I^{h}\times I_{n}^{v},\overline{\epsilon})$,
$\left\Vert F_{n}-G\right\Vert <\overline{\epsilon}$, $\left|\lambda_{n}-\lambda\right|<\overline{\epsilon}$,
$\left\Vert s_{n}(x)-(-\lambda)x\right\Vert _{I^{h}}<\overline{\epsilon}$,
and (\ref{eq:Lower bound for de/dy}) holds for all $n\geq0$. 
\end{definition}
In the remaining part of the article, we will study the dynamics and
the topology of Hénon-like maps in this smaller class of maps.

\section{\label{sec:Infinite Renormalizable H=0000E9non-Like Maps}Structure
and Dynamics of Infinitely Renormalizable Hénon-Like Maps}

In this chapter, we study the topology of the local stable manifolds
and the dynamics on the partition for a infinitely renormalizable
Hénon-like map.

\subsection{\label{subsec:Rescaling-levels}Rescaling levels\index{rescaling level}}

This section introduces a finer partition of $C$, called the rescaling
levels\index{rescaling level}, based on the maximum possible rescalings
of a point in $C$.

For each two consecutive levels of renormalization $n$ and $n+1$,
the maps $F_{n}^{2}$ and $F_{n+1}$ are conjugated by the nonlinear
rescaling $\phi_{n}$. The rescaling $\phi_{n}$ relates the two renormalization
levels as follow.
\begin{lemma}
\label{lem:Two renormalization level-relation}Given $\delta>0$ and
$I^{v}\supset I^{h}\Supset I$. There exists $\overline{\epsilon}>0$
such that for all $F\in\hat{\mathcal{I}}_{\delta}(I^{h}\times I^{v},\overline{\epsilon})$
the following properties hold for all $n\geq0$:
\begin{enumerate}
\item $\phi_{n}(p_{n}(0))=p_{n+1}(-1)$, 
\item $\phi_{n}(W_{n}^{k}(0))=W_{n+1}^{k}(-1)$ for $k=0,2$, and 
\item $\phi_{n}:C_{n}\rightarrow D_{n}$ is a diffeomorphism.
\end{enumerate}
\end{lemma}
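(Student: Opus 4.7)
The plan is to derive all three claims from Proposition~\ref{prop:Renormalization Operator} combined with the inductive construction of the local stable manifolds and the conjugacy $\phi_n\circ F_n^2=F_{n+1}\circ\phi_n$ on (a neighborhood of) $C_n$. Part (1) is an immediate restatement of Proposition~\ref{prop:Renormalization Operator}(4) applied at renormalization level $n$, and the diffeomorphism claim in (3) is Proposition~\ref{prop:Renormalization Operator}(1) together with (3). So the genuine work is part (2).

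The preliminary observation is that $\phi_n=\Lambda_n\circ H_n$ preserves vertical graphs: if $\{(\gamma(t),t):t\in I_n^v\}$ is a vertical graph, then $H_n$ sends it to $\{(h_{n,t}(\gamma(t)),t):t\in I_n^v\}$, again a vertical graph, and $\Lambda_n$ is diagonal affine. For $k=0$, I would then show that $\phi_n(W_n^0(0))$ is a vertical graph through $p_{n+1}(-1)$ contained in the $F_{n+1}$-stable manifold of $p_{n+1}(-1)$: the invariance $F_n(W_n^0(0))\subset W_n^0(0)$ combined with the conjugacy gives $F_{n+1}(\phi_n(z))=\phi_n(F_n^2(z))\in\phi_n(W_n^0(0))$ for $z\in W_n^0(0)$, and for such $z$ the sequence $F_{n+1}^k(\phi_n(z))=\phi_n(F_n^{2k}(z))$ converges to $\phi_n(p_n(0))=p_{n+1}(-1)$ by continuity of $\phi_n$. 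The uniqueness of the connected vertical-graph component in Definition~\ref{def:Local stable manifold W(-1)} then forces $\phi_n(W_n^0(0))=W_{n+1}^0(-1)$.

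For $k=2$, the inductive definitions give $W_n^2(0)\subset F_n^{-1}(W_n^1(0))\subset F_n^{-2}(W_n^0(0))$, so $F_n^2(W_n^2(0))\subset W_n^0(0)$. Applying $\phi_n$ and using the conjugacy yields
\[
F_{n+1}(\phi_n(W_n^2(0)))=\phi_n(F_n^2(W_n^2(0)))\subset\phi_n(W_n^0(0))=W_{n+1}^0(-1),
\]
hence $\phi_n(W_n^2(0))\subset F_{n+1}^{-1}(W_{n+1}^0(-1))$. By Definition~\ref{def:Local stable manifold W(-1)}, that preimage has exactly two vertical-graph components, $W_{n+1}^0(-1)$ and $W_{n+1}^2(-1)$. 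Since $W_n^0(0)$ and $W_n^2(0)$ are the two disjoint boundary components of $C_n$ and $\phi_n$ is a diffeomorphism on a neighborhood of $C_n$, the image $\phi_n(W_n^2(0))$ is a vertical graph disjoint from $\phi_n(W_n^0(0))=W_{n+1}^0(-1)$, so it must coincide with $W_{n+1}^2(-1)$.

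The only delicate point is that $W_n^0(0)$ and $W_n^2(0)$ lie on the boundary $\partial C_n$ rather than in $C_n$, so the conjugacy $\phi_n\circ F_n^2=F_{n+1}\circ\phi_n$ has to be applied on $\overline{C_n}$; this is harmless because Proposition~\ref{prop:Renormalization Operator}(1) provides the conjugacy on an open neighborhood of $C_n$, and both sides extend continuously. Once this technical extension is handled, the three properties fall out cleanly as described.
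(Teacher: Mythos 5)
The paper states Lemma~\ref{lem:Two renormalization level-relation} without a proof, treating it as a direct unwinding of Proposition~\ref{prop:Renormalization Operator} and the recursive definitions of the local stable manifolds, so there is no argument in the text to compare against; your job was therefore to supply one, and what you wrote is essentially the argument a reader would expect. Parts (1) and (3) are indeed immediate from Proposition~\ref{prop:Renormalization Operator}(4) and from (1) and (3) of that same proposition (note that (3) of the lemma almost certainly has a typo and should read $\phi_n\colon C_n\to D_{n+1}$, since $\phi(C(F))=D(RF)$). For part (2), the $k=0$ case via the conjugated orbit $F_{n+1}^k(\phi_n(z))=\phi_n(F_n^{2k}(z))\to p_{n+1}(-1)$, and the $k=2$ case via $F_n^2(W_n^2(0))\subset W_n^0(0)$ pushed through the conjugacy, are both sound; the extension of the conjugacy to $\overline{C_n}$ is handled correctly.

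One point worth making fully explicit: in both the $k=0$ and $k=2$ cases you establish the inclusion $\phi_n(W_n^k(0))\subset W_{n+1}^k(-1)$, and equality needs the extra observation that $\phi_n$ acts on the $y$-coordinate by the affine bijection $s_n\colon I_n^v\to I_{n+1}^v$, so that $\phi_n(W_n^k(0))$ is a vertical graph over all of $I_{n+1}^v$; two vertical graphs over the same interval with one contained in the other must coincide. You gesture at this via ``the image is a vertical graph'' but do not quite close the loop. Alternatively, a slightly cleaner route for (2) is to combine (3) with the orientation remark after Proposition~\ref{prop:Renormalization Operator}: $\phi_n$ is a diffeomorphism taking $\overline{C_n}$ onto $\overline{D_{n+1}}$ that preserves $x$-orientation, so the left boundary $W_n^0(0)$ must map to the left boundary $W_{n+1}^0(-1)$ and the right boundary $W_n^2(0)$ to the right boundary $W_{n+1}^2(-1)$, with the stable-manifold identification then following from (1). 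Either way, the proposal is correct in substance.
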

The itinerary of a point follows the arrows in the diagram.
\[
\xymatrix{ & A_{n+1}\ar[d]^{F_{n+1}}\ar@(ur,dr)^{{F_{n+1}}} &  & A_{n+2}\ar[d]^{F_{n+2}}\ar@(ur,dr)^{{F_{n+2}}}\\
 & B_{n+1}\ar[d]^{F_{n+1}} &  & B_{n+2}\ar[d]^{F_{n+2}}\\
C_{n}\ar[r]^{\phi_{n}}\ar[ur]|{\phi_{n}}\ar[uur]^{{\phi_{n}}} & C_{n+1}\ar[rr]\sp(0.5){\phi_{n+1}}\ar[urr]\sp(0.6){\phi_{n+1}}\ar[uurr]\sp(0.6){\phi_{n+1}} &  & C_{n+2}\ar[r]\sp(0.5){\phi_{n+2}} & \cdots
}
\]
The diagram says, if $z_{0}\in C_{n}$, then we can rescale the point.
The rescaled point $z_{1}=\phi_{n}(z_{0})$ enters the domain $D_{n+1}$
of the next renormalization level $n+1$ by Lemma \ref{lem:Two renormalization level-relation}.
On the renormalization level $n+1$, the rescaled point $z_{1}$ belongs
to one of the sets $A_{n+1}$, $B_{n+1}$, or $C_{n+1}$ if it is
disjoint from the stable manifolds. The process of rescaling stops
if $z_{1}$ belongs to $A_{n+1}$ or $B_{n+1}$ and $z_{0}$ can be
rescaled at most one time. If $z_{1}$ belongs to $C_{n+1}$, we can
continue to rescale the point. The rescaled point $z_{2}=\phi_{n+1}(z_{1})$
enters the domain $D_{n+2}$ of the next renormalization level $n+2$.
Similarly, the process of rescaling stops if $z_{2}$ belongs to $A_{n+2}$
or $B_{n+2}$ and $z_{0}$ can be rescaled at most two times. If $z_{2}$
belongs to $C_{n+2}$, we can rescale again and repeat the procedure
until the rescaled point enters the sets $A$ or $B$ of some deeper
renormalization level.

Motivated from the diagram, we define the finer partition $C_{n}(j)$
on $C_{n}$ by the maximal possible rescalings as follows.
\begin{definition}
\label{def:rescaling levels}\nomenclature[W_{n}^{2}(j)]{$W_{n}^{t}(j)$}{Local stable manifold of $p_{n}(j)$}\nomenclature[p_n(j)]{$p_n(j)$}{Periodic point with period $2^j$ for the Hénon-Like map $F_n$}\nomenclature[C_{n}(j)]{$C_{n}(j)$}{Subpartition for $C_{n}$ with rescaling level $j$}For
consistency, set $C_{n}(0)=A_{n}\cup W_{n}^{1}(0)\cup B_{n}$. Given
a positive integer $j$. The $j$-th rescaling level\index{rescaling level|textbf}
in $C$ is defined as $C_{n}(j)=\left(\Phi_{n}^{j}\right)^{-1}(C_{n+j}(0))$
and the $j$-th rescaling level in $B$ is defined as $B_{n}(j)=F_{n}^{-1}(C_{n}(j))$.
Also, set $p_{n}(j)=\left(\Phi_{n}^{j}\right)^{-1}(p_{n+j}(0))$ and
$W_{n}^{t}(j)=\left(\Phi_{n}^{j}\right)^{-1}(W_{n+j}^{t}(0))$\index{local stable manifold|textbf}
for $t=0,2$. 
\end{definition}

The diagram explains the definition of a rescaling level.
\[
\xymatrix{C_{n}(j)\ar[r]^{F_{n}^{2^{j}}}\ar[d]_{\Phi_{n}^{j}} & C_{n}(j)\ar[d]^{\Phi_{n}^{j}}\\
D_{n+j}\ar[r]_{F_{n+j}} & D_{n+j}
}
\]

From the definition, the relations of the rescaling levels between
two different renormalization level are listed as follow. 
\begin{proposition}
\label{prop:Dynamics on the partition}\index{partition!Hénon-like map|textit}\index{local stable manifold|textit}Given
$\delta>0$ and $I^{v}\supset I^{h}\Supset I$. There exists $\overline{\epsilon}>0$
such that for all $F\in\hat{\mathcal{I}}_{\delta}(I^{h}\times I^{v},\overline{\epsilon})$
the following properties hold for all $n\geq0$:
\begin{enumerate}
\item $p_{n}(j)$ is a periodic point of $F_{n}$ with period $2^{j}$ for
$j\geq0$.
\item $W_{n}^{t}(j)$ is a local stable manifold of $p_{n}(j)$ for $j\geq0$
and $t=0,2$.
\item $\Phi_{n}^{k}(W_{n}^{t}(j))=W_{n+k}^{t}(j-k)$ and $\Phi_{n}^{k}(p_{n}(j))=p_{n+k}(j-k)$
for $j\geq k-1$ and $t=0,2$.
\item The map $\Phi_{n}^{k}:C_{n}(j)\rightarrow C_{n+k}(j-k)$ is a diffeomorphism
for $j\geq k$, and 
\item For each $j\geq0$, the set $C_{n}(j)$ contains two components. The
left component \nomenclature[C_{n}^{l}(j)]{$C_{n}^{l}(j)$}{Left component of $C_{n}(j)$}$C_{n}^{l}(j)$
is the set bounded between $W_{n}^{0}(j-1)$ and $W_{n}^{0}(j)$ and
the right component \nomenclature[C_{n}^{r}(j)]{$C_{n}^{r}(j)$}{Right component of $C_{n}(j)$}$C_{n}^{r}(j)$
is the set bounded between $W_{n}^{2}(j)$ and $W_{n}^{2}(j-1)$.
\end{enumerate}
\end{proposition}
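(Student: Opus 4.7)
The plan is to derive all five properties from the tower identity $\Phi_n^j = \Phi_{n+k}^{j-k} \circ \Phi_n^k$ (immediate from the definition $\Phi_n^j = \phi_{n+j-1} \circ \cdots \circ \phi_n$) combined with the semiconjugacy $\phi_n \circ F_n^2 = F_{n+1} \circ \phi_n$ on $C_n$ furnished by Proposition~\ref{prop:Renormalization Operator}. Iterating the latter yields $\Phi_n^j \circ F_n^{2^j} = F_{n+j} \circ \Phi_n^j$, and each of the objects $C_n(j)$, $W_n^t(j)$, $p_n(j)$ is by Definition~\ref{def:rescaling levels} the $\Phi_n^j$-preimage of the corresponding object at renormalization level $n+j$. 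Each claim then reduces to a formal manipulation of these identities.

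I would first prove properties (3) and (4). For (4), applying the tower identity gives
\[
\Phi_n^k(C_n(j)) = \Phi_n^k \circ (\Phi_n^j)^{-1}(C_{n+j}(0)) = (\Phi_{n+k}^{j-k})^{-1}(C_{n+j}(0)) = C_{n+k}(j-k),
\]
and the map is a diffeomorphism because each $\phi_m$ restricts to a diffeomorphism $C_m \to D_{m+1}$ by Lemma~\ref{lem:Two renormalization level-relation}. Property (3) is obtained identically with $W_{n+j}^t(0)$ and $p_{n+j}(0)$ in place of $C_{n+j}(0)$. The boundary case $j = k-1$ invokes Lemma~\ref{lem:Two renormalization level-relation}(2) directly: $\Phi_n^k(W_n^t(k-1)) = \phi_{n+k-1}(W_{n+k-1}^t(0)) = W_{n+k}^t(-1)$. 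Property (2) follows at once: the $\Phi_n^j$-pullback of a local stable manifold of $p_{n+j}(0)$ for $F_{n+j}$ is, via the conjugacy, a local stable manifold of $p_n(j)$ for $F_n^{2^j}$, and hence for $F_n$.

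Property (1) requires a joint induction on $j$ (uniform in $n$): the base $j=0$ holds by Definition~\ref{def:Local stable manifold W(0)}, and for the step, $F_n^{2^j}(p_n(j)) = p_n(j)$ forces the period to divide $2^j$; if it were $2^i$ with $0 \le i < j$, then applying the conjugacy gives that $\Phi_n^i(p_n(j)) = p_{n+i}(j-i)$ is a fixed point of $F_{n+i}$, contradicting the inductive hypothesis applied at level $n+i$. Property (5) follows from the decomposition $C_{n+j}(0) = A_{n+j} \cup W_{n+j}^1(0) \cup B_{n+j}$ together with Definition~\ref{def:Domain A,B,C}, which exhibits this set as the two strips between $W_{n+j}^0(-1), W_{n+j}^0(0)$ and between $W_{n+j}^2(0), W_{n+j}^2(-1)$; pulling back by $(\Phi_n^j)^{-1}$ and applying property (3) identifies the four boundary graphs with $W_n^0(j-1), W_n^0(j), W_n^2(j), W_n^2(j-1)$. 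I do not expect a genuine obstacle here; the principal bookkeeping concern is handling the orientation-reversing rescaling $s_m$ consistently, but Lemma~\ref{lem:Two renormalization level-relation}(2) has already arranged the pairings ($W^0 \leftrightarrow W^0$ and $W^2 \leftrightarrow W^2$) so that the left/right strip structure of property (5) is preserved along the tower.
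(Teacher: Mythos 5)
The paper does not supply a proof for this proposition---it presents the five properties as following ``from the definition'' and moves on---so your argument is the natural filling-in, and the route you take (tower identity $\Phi_n^j = \Phi_{n+k}^{j-k}\circ\Phi_n^k$ combined with the semiconjugacy $\Phi_n^j\circ F_n^{2^j} = F_{n+j}\circ\Phi_n^j$, then pulling back through Definition~\ref{def:rescaling levels}) is exactly what the paper implicitly has in mind. Properties (2)--(5) are handled correctly, including the boundary case $j=k-1$ via Lemma~\ref{lem:Two renormalization level-relation}(2) and the left/right pairing for (5).

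There is, however, a gap in your induction for property (1) at the boundary case $i=0$. Your contradiction rests on applying the inductive hypothesis at level $n+i$ to the parameter $j-i$; when $i=0$ that parameter equals $j$, which is precisely the case being established and so is not yet available. In other words, supposing $p_n(j)$ were a fixed point of $F_n$ only yields the tautology that $\Phi_n^0(p_n(j))=p_n(j)$ is fixed by $F_n$, not a contradiction through the inductive hypothesis. You need a separate observation to close this case: $F_n$ has exactly two fixed points, $p_n(-1)$ and $p_n(0)$ (cf.\ the proof of Proposition~\ref{prop:Dynamics of the partition on D}), and $p_n(j)$ for $j\geq 1$ is distinct from both---for instance, Proposition~\ref{prop:Geometric properties of W_n(j)} places $p_n(j)\in W_n^0(j)$ within distance of order $\lambda^{-2j}$ of the tip, while $p_n(0)$ and $p_n(-1)$ remain a uniform distance away. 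With that one case patched, the proof is complete.
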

\begin{figure}
\resizebox{\columnwidth}{!}{%
\begin{centering}
\includegraphics[scale=0.8]{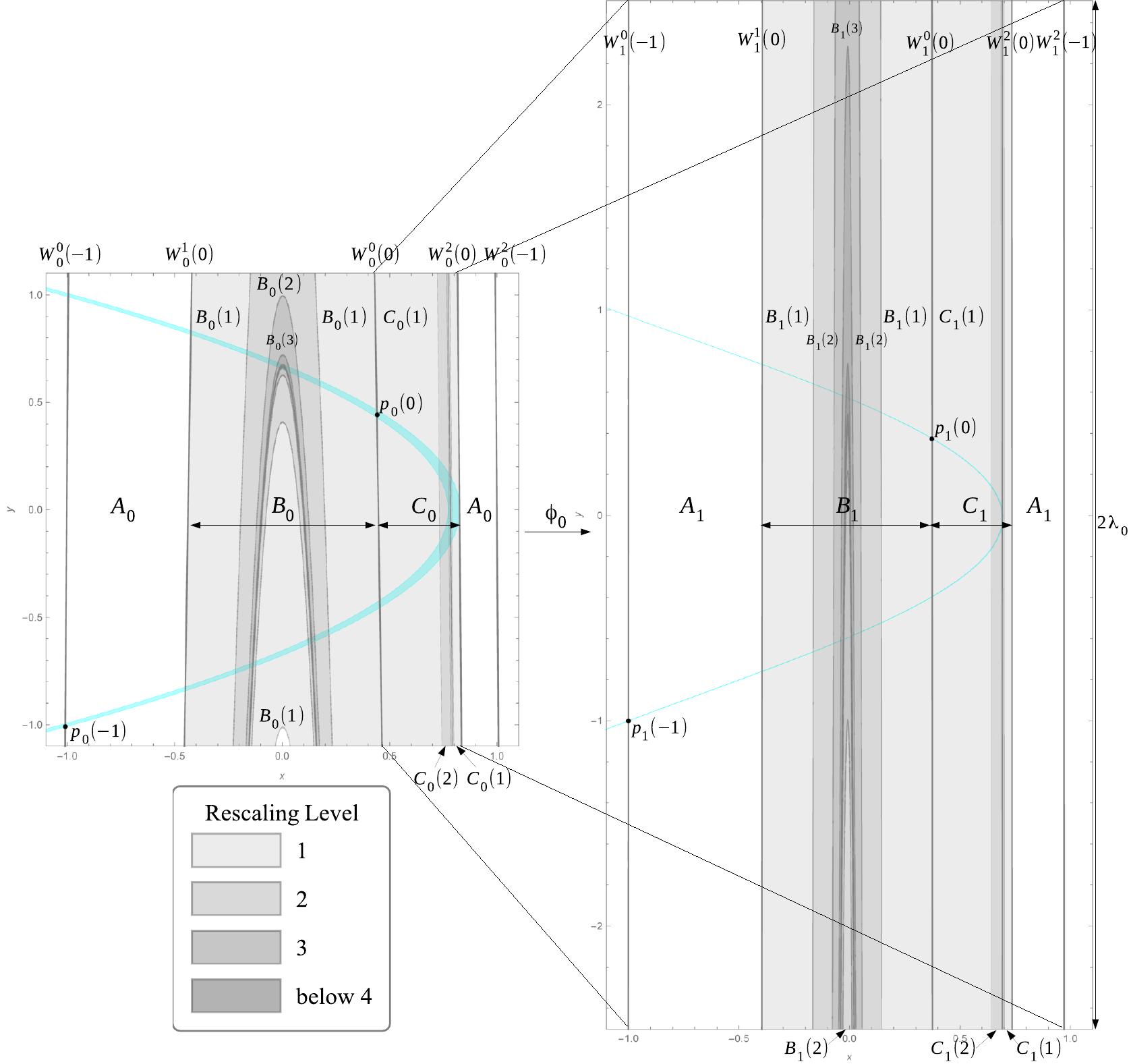}
\par\end{centering}
}

\caption{\label{fig:Partition ABC =000026 rescaling}The partition and the
local stable manifolds of two renormalization levels $F_{0}$ and
$F_{1}$ from the left to the right. The rescaling levels 1, 2, 3,
and below 4 are shaded from light to dark as shown in the legend. }
\end{figure}
 The partition and the local stable manifolds $W_{n}^{t}(j)$ are
illustrated in Figure \ref{fig:Partition ABC =000026 rescaling}.
The sets $\left\{ C_{n}(j)\right\} _{j\geq1}$ form a partition of
$C_{n}$ and the sets $\left\{ B_{n}(j)\right\} _{j\geq1}$ form a
partition of $B_{n}$. 

Next, we introduce the tip to study the geometric structure of the
rescaling levels in $C$. Recall from \cite[Section 7.2]{de2005renormalization}
that 
\begin{definition}[Tip]
\nomenclature[tau]{$\tau$}{The tip of an infinite renormalizable Hénon-like map}Assume
that $\overline{\epsilon}>0$ is sufficiently small. The tip\index{tip|textbf}
$\tau$ of an infinitely renormalizable Hénon-like map $F\in\hat{\mathcal{I}}_{\delta}(I^{h}\times I^{v},\overline{\epsilon})$
is the unique point such that 
\[
\left\{ \tau\right\} =\cap_{j=N}^{\infty}\left(\Phi_{0}^{j}\right)^{-1}(D_{j}\cap I^{h}\times I^{h})
\]
for all $N\geq0$.
\end{definition}
The tip is an analog of the critical value in the non-degenerate case.
Roughly speaking, the tip generates the attracting \index{Cantor set}Cantor
set of a Hénon-like map. See \cite[Chapter 5]{de2005renormalization}
for more information.

From Proposition \ref{prop:Dynamics on the partition}, a rescaling
level $C_{n}(j)$ contains two components which are both bounded by
two local stable manifolds. The following proposition lists the geometric
properties of the local stable manifolds. 

\begin{figure}
\begin{centering}
\resizebox{\columnwidth}{!}{%

\begin{tikzpicture}[
	letter/.style={circle, minimum size=3pt, inner sep=0, outer sep=0, fill=black, label=below:#1},
	number/.style={fill=white, pos=.5}   ]
	\draw (0,0) --
		node(wa0)[letter=$W^0(-1)$,pos=0]{}
		node(w01)[letter=$W^1(0)$,pos=.1]{}
		node(w00)[letter=$W^0(0)$,pos=.26]{}
		node(w10)[letter=$W^0(1)$,pos=.42]{}
		node(w20)[letter=$W^0(j)$,pos=.5]{}
		node(w30)[letter=$$,pos=.54]{}
		node(w40)[letter=$$,pos=.56]{}
		node(w50)[letter=$$,pos=.57]{}
		node(t)[letter=$\tau$,pos=.58]{}
		node(w52)[letter=$$,pos=.59]{}
		node(w42)[letter=$$,pos=.60]{}
		node(w32)[letter=$$,pos=.62]{}
		node(w22)[letter=$W^2(j)$,pos=.66]{}
		node(w12)[letter=$W^2(1)$,pos=.74]{}
		node(w02)[letter=$W^2(0)$,pos=.9]{}
		node(wa2)[letter=$W^2(-1)$,pos=1]{}
		(15,0);
	\draw[-]
		(wa0) to[bend left=40]	node[number]{$A$} (w01)
		(w01) to[bend left=40]	node[number]{$B$} (w00)
		(w00) to[bend left=30]	node[number]{$C$} (w02)
		(w00) to[bend left=30]	node[number]{$C(1)$} (w10)
		(w10) to[bend left=80]	node[number]{$C(2)$} (w20)
		(w20) to[bend left=80]	node[number]{$\lambda^{-2j}$} (t)
		(t) to[bend left=80]	node[number]{$\lambda^{-2j}$} (w22)
		(w22) to[bend left=80]	node[number]{$C(2)$} (w12)
		(w12) to[bend left=30]	node[number]{$C(1)$} (w02)
		(w02) to[bend left=40]	node[number]{$A$} (wa2);
\end{tikzpicture}

}
\par\end{centering}
\caption{\label{fig:Horizontal cross section that intersect the tip}The structure
of the partition of the domain. The figure shows the partition and
the local stable manifolds on the horizontal cross section that intersects
the tip.}
\end{figure}
\begin{proposition}
\label{prop:Geometric properties of W_n(j)}\index{local stable manifold|textit}\index{tip|textit}Given
$\delta>0$ and $I^{v}\supset I^{h}\Supset I$. There exists $\overline{\epsilon}>0$,
$c>0$ and $c'>1$ such that for all $F\in\hat{\mathcal{I}}_{\delta}(I^{h}\times I^{v},\overline{\epsilon})$
the following properties hold for all $n\geq0$:
\begin{enumerate}
\item $W_{n}^{t}(j)$ is a vertical graph with Lipschitz constant $c\left\Vert \epsilon_{n}\right\Vert $
for all $j\geq-1$ and $t=0,2$.
\item $\frac{1}{c'}\left(\frac{1}{\lambda}\right)^{2j}<\left|z_{n}^{(t)}(j)-\tau_{n}\right|<c'\left(\frac{1}{\lambda}\right)^{2j}$
for all $j\geq-1$ and $t=0,2$ where $z_{n}^{(t)}(j)$ is the intersection
point of $W_{n}^{t}(j)$ with the horizontal line through $\tau_{n}$.
See Figure \ref{fig:Horizontal cross section that intersect the tip}.
\end{enumerate}
\end{proposition}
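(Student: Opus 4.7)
The plan is to prove both parts simultaneously by induction on $j$, exploiting the recursive identities $W_n^t(j) = \phi_n^{-1}(W_{n+1}^t(j-1))$ and $\tau_n = \phi_n^{-1}(\tau_{n+1})$, together with uniform control on the rescaling $\phi_n = \Lambda_n \circ H_n$. The base cases $j = -1, 0$ of (1) are supplied by Propositions \ref{prop:Dynamics on D} and \ref{prop:Dynamics of the partition on D}, while the base cases of (2) are immediate, since $z_n^{(t)}(j)$ and $\tau_n$ both lie in the compact domain $I^h \times I_n^v$ with $\tau_n$ in its interior.

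For part (1), note that $\Lambda_n^{-1}$ preserves vertical graphs with unchanged Lipschitz constant, because both coordinates are scaled by the same factor. The nonlinear map $H_n^{-1}(u,v) = (h_{n,v}^{-1}(u), v)$ also sends a vertical graph $(\gamma(v), v)$ to a vertical graph $(\tilde\gamma(v), v)$; differentiating the identity $f_n(\tilde\gamma(v)) - \epsilon_n(\tilde\gamma(v), v) = \gamma(v)$ yields
\[
\tilde\gamma'(v) = \frac{\gamma'(v) + \partial_y\epsilon_n(\tilde\gamma(v), v)}{f_n'(\tilde\gamma(v)) - \partial_x\epsilon_n(\tilde\gamma(v), v)}.
\]
Because $C_n$ is contained horizontally in $[q(0), \hat q(-1)]$ up to a small perturbation, Proposition \ref{prop:Derivative of g on A} combined with $f_n \to g$ and Lemma \ref{lem:Bounds for e_n} gives a uniform lower bound $|f_n' - \partial_x\epsilon_n| \geq \eta > 1$ on $C_n$. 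Writing $L_n(j)$ for the Lipschitz constant of $W_n^t(j)$, the above yields the recursion
\[
L_n(j) \leq \eta^{-1}\bigl(L_{n+1}(j-1) + c_1\|\epsilon_n\|\bigr),
\]
which, combined with the super-exponential decay $\|\epsilon_{n+1}\| \leq c_2\|\epsilon_n\|^2$ from Proposition \ref{prop:Hyperbolicity of the Renormalization Operator}(2), closes the induction and produces the uniform bound $L_n(j) \leq c\|\epsilon_n\|$ once $\|\epsilon_0\|$ is sufficiently small.

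For part (2), the decisive observation is that $\phi_n$ preserves horizontal lines, since $H_n$ fixes the $y$-coordinate and $\Lambda_n$ acts diagonally. Consequently $\Phi_n^j$ maps the horizontal line through $\tau_n$ onto the horizontal line through $\tau_{n+j}$, and since $\Phi_n^j(W_n^t(j)) = W_{n+j}^t(0)$ it sends $z_n^{(t)}(j)$ to $z_{n+j}^{(t)}(0)$. Restricting $\phi_n$ to $\{y = \tau_{n,y}\}$ and differentiating at $x = \tau_{n,x}$ gives the horizontal derivative $-\lambda_n\bigl(f_n'(\tau_{n,x}) - \partial_x\epsilon_n(\tau_n)\bigr)$, which converges to $(-\lambda)(-\lambda) = \lambda^2$ because $\tau_n$ converges to the tip $(c^{(1)}, 0)$ of $G$ and $g'(c^{(1)}) = -\lambda$ by Proposition \ref{prop:Functional Equation}(4). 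By the chain rule, the horizontal derivative of $\Phi_n^j$ at $\tau_n$ is therefore comparable to $\lambda^{2j}$ uniformly in $n$.

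The principal obstacle is upgrading this pointwise derivative estimate at $\tau_n$ into the two-sided distance estimate $|z_n^{(t)}(j) - \tau_n| \asymp \lambda^{-2j}$ along the whole horizontal line. The plan is to handle this by a bounded distortion argument: each $\phi_n$ extends holomorphically to a fixed complex neighborhood of $\tau_n$ and restricts to a diffeomorphism from $C_n$ onto $D_{n+1}$, so a Koebe-type estimate produces uniform bounds on the distortion of the horizontal restriction of $\Phi_n^j$ on a neighborhood containing $C_n(j)$. Combined with the base-case estimate $|z_{n+j}^{(t)}(0) - \tau_{n+j}| \asymp 1$, this upgrades the derivative estimate to the desired two-sided scaling $|z_n^{(t)}(j) - \tau_n| \asymp \lambda^{-2j}$.
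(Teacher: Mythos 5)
The paper itself gives no proof for this proposition — it simply cites Lyubich--Martens (Lemma 3.4, Prop.\ 3.5) — so there is no internal argument to compare against. Taken on its own merits, your reconstruction of part (1) is sound: the Lipschitz recursion obtained by differentiating $f_n(\tilde\gamma)-\epsilon_n(\tilde\gamma,v)=\gamma(v)$, combined with the uniform expansion $\left|\partial h_n/\partial x\right|\geq E>1$ on $I^{AC}$ from Lemma \ref{lem:Expanding rate on A and C} and the superexponential decay $\left\Vert\epsilon_{n+1}\right\Vert\lesssim\left\Vert\epsilon_n\right\Vert^2$, closes the graph-transform induction cleanly. Your structural reduction in part (2) is also correct and is the key insight: $\phi_n$ preserves horizontals, $\Phi_n^j$ carries the slice through $\tau_n$ onto the slice through $\tau_{n+j}$ with $z_n^{(t)}(j)\mapsto z_{n+j}^{(t)}(0)$, and the one-dimensional multiplier $\eta_n'(a_n)=s_n'\cdot\partial_x h_n(\tau_n)\to\lambda^2$, so the pointwise derivative of $\Phi_n^j$ at $\tau_n$ is $\asymp\lambda^{2j}$.

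The genuine gap is the bounded-distortion step, which you assert as ``a Koebe-type estimate produces uniform bounds'' but do not carry out — and this is precisely the crux of part (2). A single application of Koebe gives distortion bounded by a constant depending on the modulus of the annulus, but the modulus is not automatically uniform: you need $D(a_{n+j},\delta)$ to sit inside the domain of univalence of $(\Phi_n^j)^{-1}$ (obtained by inductively pulling back the disc, which works because the derivative is $\asymp\lambda^{-2}<1$ at each step) while \emph{simultaneously} containing the interval $[z_{n+j}^{(t)}(0),a_{n+j}]$, whose length is comparable to $\mathrm{diam}(D_{n+j}^h)\asymp 1$, with definite room to spare. Whether a single $\delta$ can serve both roles is a quantitative question about the geometry that you have not checked. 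The standard ways to actually close this gap are: (i) exploit negative Schwarzian (property 5 of Proposition \ref{prop:Functional Equation}, inherited by $\eta_n\approx(-\lambda)g$ for small $\overline\epsilon$), applying the real Koebe lemma to the forward composition $\eta_{n+j-1}\circ\cdots\circ\eta_n$ with the collar $[z_n^{(t)}(j-1),z_n^{(t)}(j)]$ providing the space; or (ii) use the summability of the intermediate interval lengths $\left|[z_{n+m}^{(t)}(j-m),a_{n+m}]\right|\asymp\lambda^{-2(j-m)}$ for a Denjoy-type accumulated-distortion bound. Without one of these, the crude two-sided bounds $\Lambda_{\min}\leq\left|\eta_m'\right|\leq\Lambda_{\max}$ from the expansion lemma only give $\Lambda_{\max}^{-j}\lesssim\left|z_n^{(t)}(j)-\tau_n\right|\lesssim\Lambda_{\min}^{-j}$, which is strictly weaker than $\asymp\lambda^{-2j}$. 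Also, your claim that the base case of (2) is ``immediate'' because $\tau_n$ is an interior point does not by itself give the uniform lower bound $\left|z_n^{(t)}(0)-\tau_n\right|\geq c>0$; you need that $\tau_n$ stays a definite distance from $\partial C_n$, which follows from $\tau_n\to(c^{(1)},0)$ with $c^{(1)}$ interior to $C(g)$ and should be said explicitly.
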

\begin{proof}
See \cite[Lemma 3.4 and Proposition 3.5]{lyubich2011renormalization}.
\end{proof}

Finally, we study the geometric structure of the rescaling levels
in $B$. In the degenerate case, each rescaling level $B_{n}(j)$
contains two components which are bounded by local stable manifolds.
In the non-degenerate case, fix an integer $j$, the rescaling level
$B_{n}(j)$ also contains two components when $\overline{\epsilon}$
is small enough. The geometric properties of the boundary local stable
manifolds are listed as follow. 

\begin{proposition}
\label{prop:W(j) in B for n large}\index{local stable manifold|textit}Given
$\delta>0$ and $I^{v}\supset I^{h}\Supset I$. For all $j\geq0$
and $d>0$, there exists $\overline{\epsilon}=\overline{\epsilon}(j,d)>0$
and $c=c(j)>0$ such that for all $F\in\hat{\mathcal{I}}_{\delta}(I^{h}\times I^{v},\overline{\epsilon})$
the following properties hold for all $n\geq0$:
\begin{enumerate}
\item $F_{n}^{-1}(W_{n}^{0}(j))$ has exactly two components $W_{n}^{l}(j)\subset[q^{l}(j)-d,q^{l}(j)+d]\times I_{n}^{v}$
and $W_{n}^{r}(j)\subset[q^{r}(j)-d,q^{r}(j)+d]\times I_{n}^{v}$. 
\item Both components $W_{n}^{l}(j)$ and $W_{n}^{r}(j)$ are vertical graphs
with Lipschitz constant $c\left\Vert \epsilon_{n}\right\Vert $.
\end{enumerate}
\end{proposition}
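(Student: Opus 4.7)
The plan is to apply the implicit function theorem to the defining equation of the preimage. Writing $F_n(x,y)=(f_n(x)-\epsilon_n(x,y),x)$ and parametrizing $W_n^0(j)$ as the graph $\{(\gamma_n(t),t):t\in I_n^v\}$ (possible by Proposition \ref{prop:Geometric properties of W_n(j)}), a point $(x,y)$ lies in $F_n^{-1}(W_n^0(j))$ iff
\[
\Psi_n(x,y):=f_n(x)-\epsilon_n(x,y)-\gamma_n(x)=0.
\]
In the degenerate limit $F=G$ the map $\gamma_n$ reduces to the constant $u_j=\pi_x(p(j))$, and the equation becomes $g(x)=u_j$. Since $p(j)$ is a periodic point of $g$ disjoint from the critical orbit, $u_j$ is not the critical value of $g$, so this equation has exactly two solutions $q^l(j)<c<q^r(j)$, both with $g'(q^{l/r}(j))\neq 0$. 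I pick $\mu=\mu(j)>0$ with $|g'(x)|\geq 2\mu$ on $[q^{l/r}(j)-d,q^{l/r}(j)+d]$ and $\eta=\eta(j,d)>0$ with $|g(x)-u_j|\geq\eta$ off these two strips (but inside $I^h$).

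Next I combine this with the uniform estimates on the perturbation. By Proposition \ref{prop:Hyperbolicity of the Renormalization Operator}, Lemma \ref{lem:Bounds for e_n}, and the Lipschitz bound from Proposition \ref{prop:Geometric properties of W_n(j)}, the quantities $\|f_n-g\|$, $\|\epsilon_n\|$, $\|\partial_x\epsilon_n\|$, $\|\partial_y\epsilon_n\|$, $\|\gamma_n'\|$, and $\|\gamma_n-u_j\|$ are all bounded by $O(\overline{\epsilon})$ uniformly in $n\geq 0$. Hence for $\overline{\epsilon}=\overline{\epsilon}(j,d)$ small enough I obtain $|\partial_x\Psi_n|\geq\mu$ on $[q^{l/r}(j)-d,q^{l/r}(j)+d]\times I_n^v$, $|\Psi_n(q^{l/r}(j),y)|\ll\mu d$ for every $y\in I_n^v$, and $|\Psi_n(x,y)|\geq\eta/2$ at points of $(I^h\setminus([q^l(j)-d,q^l(j)+d]\cup[q^r(j)-d,q^r(j)+d]))\times I_n^v$. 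The one-variable implicit function theorem (with $y$ as a parameter) then produces unique $C^1$ functions $x_l,x_r:I_n^v\to\mathbb R$ with $|x_{l/r}(y)-q^{l/r}(j)|<d$ and $\Psi_n(x_{l/r}(y),y)=0$, and the off-strip lower bound prevents any other component from appearing; the two graphs of $x_l$ and $x_r$ are the asserted components $W_n^{l/r}(j)$.

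The Lipschitz constant of each graph is obtained by implicit differentiation:
\[
x_{l/r}'(y)=-\frac{\partial_y\Psi_n(x_{l/r}(y),y)}{\partial_x\Psi_n(x_{l/r}(y),y)}=\frac{\partial_y\epsilon_n(x_{l/r}(y),y)}{f_n'(x_{l/r}(y))-\partial_x\epsilon_n(x_{l/r}(y),y)-\gamma_n'(x_{l/r}(y))}.
\]
The numerator is bounded by $c_1\|\epsilon_n\|$ by Lemma \ref{lem:Bounds for e_n}, while the denominator is bounded below by $\mu$, yielding $|x_{l/r}'(y)|\leq c(j)\|\epsilon_n\|$ as required.

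The step I expect to be most delicate is the uniformity (in $n\geq 0$) of the off-strip bound $|\Psi_n|\geq\eta/2$ that guarantees no extra components appear. This needs sup-norm (not merely Lipschitz) control of $\gamma_n$ over all of $I^h$. However, since $W_n^0(j)$ is the local stable manifold of the period-$2^j$ orbit $p_n(j)$ of $F_n$, and $F_n$ converges to $G$ exponentially by Proposition \ref{prop:Hyperbolicity of the Renormalization Operator}, a continuity/stability argument for hyperbolic periodic orbits shows $\gamma_n$ is within $O(\rho^n\|F-G\|+\|\epsilon_n\|)$ of the constant $u_j$ in sup norm, uniformly in $n$; taking $\overline{\epsilon}$ small enough renders this negligible compared to $\eta$. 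All resulting constants then depend only on $(j,d)$, as the statement requires.
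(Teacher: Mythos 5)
Your proposal is essentially correct and takes the same route the paper points to: the preimage of a vertical graph under $F_n$ is computed as the zero set of a scalar function, with the implicit function theorem playing the role of the graph transformation cited (via Proposition~\ref{prop:Geometric properties of W_n(j)}) to Lyubich--Martens. You correctly assemble the needed inputs --- the Lipschitz bound on $\gamma_n$ from Proposition~\ref{prop:Geometric properties of W_n(j)}, the derivative bounds on $\epsilon_n$ from Lemma~\ref{lem:Bounds for e_n}, and the $C^0$-convergence $f_n \to g$ from Proposition~\ref{prop:Hyperbolicity of the Renormalization Operator} --- and you correctly flag the sup-norm closeness $\|\gamma_n - u_j\|_\infty = O(\overline{\epsilon})$ as the point requiring care, resolving it by stability of the hyperbolic $2^j$-periodic orbit together with the small Lipschitz constant.

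One small wrinkle worth fixing: you choose $\mu$ with $|g'|\geq 2\mu$ over the full $d$-neighborhoods, which makes $\mu$ (and hence your Lipschitz constant) depend on $d$, whereas the statement asserts $c=c(j)$ independent of $d$. The remedy is standard: first fix $d'=d'(j)$ small enough that $|g'|\geq |g'(q^{l/r}(j))|/2$ on the $d'$-neighborhoods (possible since $g'(q^{l/r}(j))\neq 0$), run the IFT argument with $d'$ in place of $d$ to locate $W_n^{l/r}(j)$ inside the $d'$-strips, and then the containment in the larger $d$-strips is automatic for any $d\geq d'$; for $d<d'$ shrink $\overline{\epsilon}$ further. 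With that adjustment the Lipschitz constant $c_1/\mu$ depends only on $j$, matching the statement. Apart from this bookkeeping, the argument is sound.
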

\begin{proof}
The proof is similar to Proposition \ref{prop:Geometric properties of W_n(j)}.
\end{proof}

\begin{remark}
Unlike Proposition \ref{prop:Geometric properties of W_n(j)}, here
the constant $\overline{\epsilon}$ is not uniform on $j\geq0$. For
a non-degenerate Hénon-like map, the structure of the local stable
manifolds is similar to degenerate case when $j$ is large. The local
stable manifold $W_{n}^{0}(j)$ is far away from the tip and hence
the pullback $F_{n}^{-1}(W_{n}^{0}(j))$ is the union of two vertical
graphs in $B_{n}$. However, the structure turns to be different when
$j$ is large. The local stable manifold is close to the tip and the
vertical line argument in Chapter \ref{sec:Good and Bad region} shows
that the pullback $F_{n}^{-1}(W_{n}^{0}(j))$ is a concave curve in
$B_{n}$.
\end{remark}

\subsection{Asymptotic behavior near $G$}

In this section, we estimate the derivatives of a Hénon-like map that
is close to the hyperbolic fixed point $G$. Define $v_{n}\in I^{h}$
to be the critical point of $f_{n}$ and $w_{n}=f_{n}(v_{n})$ be
the critical value.

The first lemma proves that a Hénon-like map acts like a quadratic
map on $B$.
\begin{lemma}
\label{lem:Quadratic center}Given $\delta>0$ and $I^{v}\supset I^{h}\Supset I$.
There exists $a>0$ (universal), $\overline{\epsilon}>0$, and an
interval $I^{B}\subset I^{h}$ (universal) such that for all $F\in\hat{\mathcal{I}}_{\delta}(I^{h}\times I^{v},\overline{\epsilon})$
the following properties hold for all $n\geq0$:

The interior of $I^{B}$ contains $\hat{q}(0)$ and $q(0)$, $I^{B}\times I_{n}^{v}\supset B_{n}$,
\[
\frac{1}{a}\left|x-v_{n}\right|\leq\left|f_{n}'(x)\right|\leq a\left|x-v_{n}\right|,
\]
and
\[
\frac{1}{2a}\left(x-v_{n}\right)^{2}\leq\left|f_{n}(x)-f_{n}(v_{n})\right|\leq\frac{a}{2}\left(x-v_{n}\right)^{2}
\]
for all $x\in I^{B}$.
\end{lemma}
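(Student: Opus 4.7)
The plan is to first establish the estimates for the Feigenbaum fixed point $g$, and then transfer them to $f_n$ using the uniform proximity $\|f_n-g\|<\overline{\epsilon}$ that is built into the class $\hat{\mathcal{I}}_\delta$.

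\textbf{Step 1 (Estimates for $g$).} By Proposition \ref{prop:Functional Equation}, $g$ is even and (strictly) concave on $[-c^{(1)},c^{(1)}]$, so $g$ has its unique critical point at $0$ with $g''(0)<0$. Writing
\[
g'(x)=g'(x)-g'(0)=x\int_{0}^{1}g''(tx)\,dt=:x\,\tilde g(x),
\]
the factor $\tilde g$ is analytic with $\tilde g(0)=g''(0)\neq 0$. Since $c^{(1)}>q(0)=1/\lambda$, one may choose a symmetric closed interval $I^{B}\subset [-c^{(1)},c^{(1)}]$ containing $\hat q(0)=-1/\lambda$ and $q(0)=1/\lambda$ in its interior. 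Strict concavity on the chosen interval forces $\tilde g(x)<0$ everywhere on $I^{B}$: for $x>0$ one has $g'(x)<g'(0)=0$, and for $x<0$ the opposite, while $\tilde g(0)=g''(0)<0$. By continuity and compactness there is a universal $a_{0}>1$ with $1/a_{0}\leq|\tilde g(x)|\leq a_{0}$ on $I^{B}$. This delivers the first-derivative bound $\tfrac{1}{a_{0}}|x|\leq|g'(x)|\leq a_{0}|x|$ on $I^{B}$.

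\textbf{Step 2 (Transfer to $f_n$).} Membership in $\hat{\mathcal{I}}_{\delta}$ gives $\|F_{n}-G\|<\overline{\epsilon}$ on the complex extension for every $n\geq 0$, hence $\|f_{n}-g\|_{I_{R}^{h}(\delta_{R})}<\overline{\epsilon}$. Cauchy estimates (Lemma \ref{lem:Derivative Bound}) promote this to $C^{2}$-proximity on $I^{B}$ uniformly in $n$. Consequently $f_{n}$ has a unique non-degenerate critical point $v_{n}\in I^{B}$ with $v_{n}\to 0$ as $\overline{\epsilon}\to 0$, and
\[
f_{n}'(x)=(x-v_{n})\tilde f_{n}(x),\qquad \tilde f_{n}(x)=\int_{0}^{1}f_{n}''\bigl(v_{n}+t(x-v_{n})\bigr)\,dt,
\]
with $\tilde f_{n}\to\tilde g$ uniformly on $I^{B}$. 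Shrinking $\overline{\epsilon}$, we keep $1/(2a_{0})\leq|\tilde f_{n}|\leq 2a_{0}$ and $\tilde f_{n}$ of constant sign on $I^{B}$. Setting $a=2a_{0}$ gives the claimed first inequality, and integrating $f_{n}(x)-f_{n}(v_{n})=\int_{v_{n}}^{x}(t-v_{n})\tilde f_{n}(t)\,dt$ (with $\tilde f_{n}$ of constant sign) yields $\tfrac{1}{2a}(x-v_{n})^{2}\leq|f_{n}(x)-f_{n}(v_{n})|\leq\tfrac{a}{2}(x-v_{n})^{2}$.

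\textbf{Step 3 (Containment of $B_n$).} By Proposition \ref{prop:Dynamics of the partition on D}, $B_{n}$ is the strip between the vertical graphs $W_{n}^{1}(0)$ and $W_{n}^{0}(0)$, each with Lipschitz constant $c\|\epsilon_{n}\|$, passing through $p_{n}^{(1)}$ and $p_{n}(0)$. Since $\|F_{n}-G\|<\overline{\epsilon}$, the $x$-coordinates of these fixed points are within $O(\overline{\epsilon})$ of $\hat q(0)$ and $q(0)$, which lie in $\interior{I^{B}}$. For $\overline{\epsilon}$ small enough the two graphs, and hence all of $B_{n}$, stay inside $I^{B}\times I_{n}^{v}$.

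\textbf{Expected obstacle.} There is no deep difficulty; the only point requiring care is that all estimates must hold uniformly across every renormalization level $n\geq 0$, not merely asymptotically. This is precisely why the lemma is stated in the restricted class $\hat{\mathcal{I}}_{\delta}$ rather than in $\mathcal{I}_{\delta}$: the a priori bound $\|F_{n}-G\|<\overline{\epsilon}$ for every $n$ is what makes $I^{B}$ and $a$ universal. The auxiliary choice to be made by hand is only that $I^{B}$ be strictly inside the concavity region of $g$ while still containing $\pm 1/\lambda$ in its interior, which is possible because $c^{(1)}>1/\lambda$ for the Feigenbaum map.
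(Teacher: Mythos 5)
Your proof is correct and follows exactly the approach the paper gestures at in its one-sentence proof: establish the quadratic-like estimates for $g$ on a compact interval inside the concavity region $[-c^{(1)},c^{(1)}]$ containing $\pm q(0)$, then transfer them to $f_n$ uniformly in $n$ via the a priori bound $\|F_n-G\|<\overline{\epsilon}$ built into $\hat{\mathcal{I}}_\delta$ together with Cauchy estimates. Your write-up simply supplies the details (the factorization $g'(x)=x\,\tilde g(x)$, the Lipschitz control of the stable manifolds for the containment of $B_n$) that the paper leaves implicit.
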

\begin{proof}
The lemma is true because the map $F$ is close to the hyperbolic
fixed point $G$ and the map $g$ is concave on the compact set $[-c^{(1)},c^{(1)}]$
by Proposition \ref{prop:Functional Equation}.
\end{proof}

The next lemma shows that a Hénon-like map is expanding on $A$ and
$C$ in the $x$-coordinate when it is close enough to the fixed point
$G$.
\begin{lemma}
\label{lem:Expanding rate on A and C}Given $\delta>0$ and $I^{v}\supset I^{h}\Supset I$.
There exists $E>1$ (universal), $\overline{\epsilon}>0$, and a union
of two intervals $I^{AC}\subset I^{h}$ such that for all $F\in\hat{\mathcal{I}}_{\delta}(I^{h}\times I^{v},\overline{\epsilon})$
the following properties hold for all $n\geq0$:

The interior of $I^{AC}$ contains $q(-1)$, $\hat{q}(0)$ , $q(0)$,
and $\hat{q}(-1)$, $I^{AC}\times I_{n}^{v}\supset A_{n}\cup W_{n}^{2}(0)\cup C_{n}$,
and 
\[
\left|\frac{\partial h_{n}}{\partial x}(x,y)\right|\geq E
\]
 for all $(x,y)\in I^{AC}\times I_{n}^{v}$ .
\end{lemma}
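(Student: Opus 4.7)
The plan is to compare $\partial h_n/\partial x = f_n'(x) - \partial\epsilon_n/\partial x(x,y)$ to the derivative $g'(x)$ of the renormalization fixed point and to invoke Proposition \ref{prop:Derivative of g on A}. Observe that in the unimodal language the set $A \cup \{p^{(2)}\} \cup C$ projects to $[-1,p^{(1)}] \cup [p(0),1] = [q(-1),\hat{q}(0)] \cup [q(0),\hat{q}(-1)]$, which is precisely the set on which Proposition \ref{prop:Derivative of g on A} guarantees $|g'| \geq |g'(q(0))| > 1$.

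First I would fix a small $\eta > 0$ and set
\[
I^{AC} = [q(-1)-\eta,\hat{q}(0)+\eta] \cup [q(0)-\eta,\hat{q}(-1)+\eta].
\]
Since $g'$ is continuous and $|g'|$ attains its minimum value $|g'(q(0))|>1$ on the original set, by shrinking $\eta$ I can ensure there exists $E' \in (1, |g'(q(0))|)$ with $|g'(x)| \geq E'$ for every $x \in I^{AC}$.

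Next I would verify that $\pi_x(A_n \cup W_n^2(0) \cup C_n) \subset I^{AC}$ uniformly in $n$. By Propositions \ref{prop:Dynamics on D} and \ref{prop:Dynamics of the partition on D}, the bounding local stable manifolds $W_n^0(-1)$, $W_n^1(0)$, $W_n^0(0)$, $W_n^2(0)$, $W_n^2(-1)$ are vertical graphs with Lipschitz constant $O(\|\epsilon_n\|)$. The hyperbolicity statement in Proposition \ref{prop:Hyperbolicity of the Renormalization Operator} gives $\|F_n - G\| = O(\bar\epsilon)$, so the associated fixed and preperiodic points $p_n(-1)$, $p_n(0)$ and their preimages converge uniformly in $n$ to the unimodal points $q(-1)$, $\hat{q}(0)$, $q(0)$, $\hat{q}(-1)$. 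Together with the Lipschitz bound, this places each of these vertical graphs inside an $\eta/2$-horizontal neighborhood of the corresponding vertical line, for $\bar\epsilon$ sufficiently small, so the bounded strips $A_n$, $W_n^2(0)$, $C_n$ project into $I^{AC}$.

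Finally, on $I^{AC} \times I_n^v$ I would estimate
\[
\left|\frac{\partial h_n}{\partial x}(x,y)\right| \;\geq\; |g'(x)| - \|f_n' - g'\|_{I_R^h} - \left\|\frac{\partial\epsilon_n}{\partial x}\right\|.
\]
The first term is at least $E'$. Applying Lemma \ref{lem:Derivative Bound} on the complex neighborhood $I_R^h(\delta_R)$ and the uniform estimate $\|f_n-g\|_{I_R^h(\delta_R)} = O(\bar\epsilon)$ from Proposition \ref{prop:Hyperbolicity of the Renormalization Operator} controls the second term by $O(\bar\epsilon)$. The third term is bounded by $c_1\|\epsilon_n\| = O(\bar\epsilon)$ via Lemma \ref{lem:Bounds for e_n}. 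Choosing $\bar\epsilon$ small enough that both error terms are below $(E'-1)/2$ yields $|\partial h_n/\partial x(x,y)| \geq E$ for some universal $E \in (1,E')$, independent of $n$ and $F$. The main subtlety is the uniformity-in-$n$ step: one must combine the exponential convergence $F_n \to G$ with the Lipschitz property of the stable manifolds so that $I^{AC}$ can be chosen once and for all, but this is furnished directly by Proposition \ref{prop:Hyperbolicity of the Renormalization Operator}.
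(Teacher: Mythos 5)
Your proof is correct and follows the same route the paper takes: invoke Proposition \ref{prop:Derivative of g on A} to get uniform expansion of $g'$ on a slightly enlarged $I^{AC}$, then transfer this to $\partial h_n/\partial x$ using the convergence $F_n \to G$ from Proposition \ref{prop:Hyperbolicity of the Renormalization Operator} together with the Cauchy estimate and Lemma \ref{lem:Bounds for e_n}. The paper's proof is just a one-sentence sketch citing exactly these two ingredients, so your write-up is simply a detailed version of the same argument.
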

\begin{proof}
The lemma is true because the map $g$ is expanding on $A$ and $C$
by Proposition \ref{prop:Derivative of g on A} and the Hénon-like
map $F$ is close to the hyperbolic fixed point $G$ of the renormalization
operator.
\end{proof}

\subsection{Relation between the tip\index{tip} and the critical value\index{critical value}}

In Lemma \ref{lem:Quadratic center}, we proved that a Hénon-like
map behaves like a quadratic map when a point is close to the critical
point $v_{n}$ of $f_{n}$ for the representation $F_{n}=(f_{n}-\epsilon_{n},x)$.
However, the critical point $v_{n}$ and the critical value $w_{n}$
in the estimates depend on the representation.

In this section, we show that the critical value $w_{n}$ (for any
representation) is $\left\Vert \epsilon_{n}\right\Vert $-close to
the tip $\tau_{n}$ in Proposition \ref{prop:Bound for f(v)-tau}.
This allows us to replace $v_{n}$ and $w_{n}$ by the representation
independent quantity $\tau_{n}$. This makes the quadratic estimations
in Lemma \ref{lem:Quadratic center} useful when a point is $\left\Vert \epsilon_{n}\right\Vert $-away
from the tip.

To estimate the distance from the tip to the critical value, we write
$\tau_{n}=(a_{n},b_{n})$. Since the rescaling $\phi_{n}$ maps a
horizontal line to a horizontal line, we focus on the horizontal slice
that intersects the tip in each renormalization level. Define the
restriction of the rescaling map $\phi$ to the slice as 
\[
\eta_{n}(x)=\pi_{x}\circ\phi_{n}(x,b_{n})=s_{n}\circ h_{n}(x,b_{n}).
\]
By the definition of the tip, the quantities satisfy the recurrence
relations $\phi_{n}(\tau_{n})=\tau_{n+1}$, $\eta_{n}(a_{n})=a_{n+1}$,
and $s_{n}(b_{n})=b_{n+1}$.

First, we prove a lemma that allows us to compare the critical value
between two renormaliztion levels.
\begin{lemma}
\label{lem:Critical value is e-orbit}Given $\delta>0$ and $I^{v}\supset I^{h}\Supset I$.
There exists $\overline{\epsilon}>0$ and $c>0$ such that for all
$F\in\hat{\mathcal{I}}_{\delta}(I^{h}\times I^{v},\overline{\epsilon})$
we have 
\[
\left|w_{n+1}-\eta_{n}(w_{n})\right|<c\left\Vert \epsilon_{n}\right\Vert 
\]
for all $n\geq0$.
\end{lemma}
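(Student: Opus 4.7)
The plan is to compare $w_{n+1}$ and $\eta_n(w_n)$ through an intermediate quantity $\tilde{w}_{n+1}$, the critical value of the degenerate renormalization $\tilde{f}_{n+1} \equiv s_n \circ f_n^2 \circ s_n^{-1}$. Writing $\eta_n(x) = s_n(f_n(x) - \epsilon_n(x,b_n))$ and using that $|s_n'| = |\lambda_n|$ is uniformly bounded by Proposition \ref{prop:Hyperbolicity of the Renormalization Operator}, the mean value theorem gives
\[
\bigl|\eta_n(w_n) - s_n(f_n(w_n))\bigr| \le c\,|\epsilon_n(w_n,b_n)| \le c\|\epsilon_n\|.
\]
A short chain-rule computation shows $\tilde{f}_{n+1}'(s_n(v_n)) = f_n'(w_n)\,f_n'(v_n) = 0$, so $\tilde{v}_{n+1}\equiv s_n(v_n)$ is the critical point of $\tilde{f}_{n+1}$, with critical value $\tilde{w}_{n+1} = s_n(f_n^2(v_n)) = s_n(f_n(w_n))$. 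Therefore $|\eta_n(w_n) - \tilde{w}_{n+1}| \le c\|\epsilon_n\|$.

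It remains to bound $|w_{n+1} - \tilde{w}_{n+1}|$. Proposition \ref{prop:Hyperbolicity of the Renormalization Operator}(3) supplies $\|f_{n+1} - \tilde{f}_{n+1}\|_{I_R^h(\delta_R)} < c\|\epsilon_n\|$, and Cauchy's estimate (Lemma \ref{lem:Derivative Bound}) transfers this to $\|f_{n+1}' - \tilde{f}_{n+1}'\|$ on a slightly shrunken real interval. Differentiating once more at $\tilde{v}_{n+1}$ yields
\[
\tilde{f}_{n+1}''(\tilde{v}_{n+1}) = \lambda_n^{-1}\,f_n'(w_n)\,f_n''(v_n),
\]
which is bounded away from $0$ uniformly in $n$ because the condition $F\in\hat{\mathcal{I}}_\delta$ forces $f_n\to g$, while $g'(c^{(1)}) = -\lambda \ne 0$ and $g''(c^{(0)}) \ne 0$ by Proposition \ref{prop:Functional Equation}. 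A standard Newton / implicit function argument applied to $f_{n+1}'$ then locates the critical point $v_{n+1}$ of $f_{n+1}$ near $\tilde{v}_{n+1}$ with $|v_{n+1} - \tilde{v}_{n+1}| \le c\|\epsilon_n\|$. Finally, decomposing
\[
w_{n+1} - \tilde{w}_{n+1} = \bigl[f_{n+1}(v_{n+1}) - \tilde{f}_{n+1}(v_{n+1})\bigr] + \bigl[\tilde{f}_{n+1}(v_{n+1}) - \tilde{f}_{n+1}(\tilde{v}_{n+1})\bigr],
\]
the first bracket is $O(\|\epsilon_n\|)$ by the sup-norm estimate, and the second is $O(|v_{n+1} - \tilde{v}_{n+1}|^2) = O(\|\epsilon_n\|^2)$ since $\tilde{f}_{n+1}'(\tilde{v}_{n+1}) = 0$. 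Combining with the previous paragraph yields the asserted bound.

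The only delicate point is the uniformity in $n$ of the implicit-function constant, which reduces to the uniform non-degeneracy of $\tilde{f}_{n+1}''(\tilde{v}_{n+1})$. This is guaranteed by the exponential convergence $f_n \to g$ in Proposition \ref{prop:Hyperbolicity of the Renormalization Operator}(1) together with the class condition $\|F_n-G\| < \overline{\epsilon}$ built into $\hat{\mathcal{I}}_\delta$. It is important to note that the argument does not invoke any prior relation between $\tau_n$ and $w_n$, so there is no circular dependence on Proposition \ref{prop:Bound for f(v)-tau}, which is proved as a consequence of this lemma.
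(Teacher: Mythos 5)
Your proof is correct and follows essentially the same route as the paper: compare through the degenerate renormalization $\tilde f_{n+1}=s_n\circ f_n^2\circ s_n^{-1}$, locate $v_{n+1}$ within $O(\|\epsilon_n\|)$ of $s_n(v_n)$ using Proposition~\ref{prop:Hyperbolicity of the Renormalization Operator}(3) together with nondegeneracy of the critical point, and convert the distance between critical points to a distance between critical values via the quadratic behavior there. The only difference is bookkeeping — you split $w_{n+1}-\tilde w_{n+1}$ through $\tilde f_{n+1}(v_{n+1})$ and use $\tilde f_{n+1}'(\tilde v_{n+1})=0$, whereas the paper splits through $f_{n+1}(s_n(v_n))$ and uses $f_{n+1}'(v_{n+1})=0$ via Lemma~\ref{lem:Quadratic center}; both yield the same $O(\|\epsilon_n\|)+O(\|\epsilon_n\|^2)$ estimate.
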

\begin{proof}
First, we compare the critical points $v_{n}$ and $v_{n+1}$. By
Proposition \ref{prop:Hyperbolicity of the Renormalization Operator},
we have
\[
\left\Vert f_{n+1}'-(s_{n}\circ f_{n}^{2}\circ s_{n}^{-1})'\right\Vert _{I^{h}}<c\left\Vert f_{n+1}-s_{n}\circ f_{n}^{2}\circ s_{n}^{-1}\right\Vert _{I^{h}(\delta)}<c\left\Vert \epsilon_{n}\right\Vert 
\]
for some constant $c>0$ when $\overline{\epsilon}>0$ is sufficiently
small. Since the critical point of the map $f_{n+1}$ is nondegenerate and
the map $(s_{n}\circ f_{n}^{2}\circ s_{n}^{-1})'$ is a small perturbation
of $f_{n+1}'$, the root $s_{n}(v_{n})$ of $(s_{n}\circ f_{n}^{2}\circ s_{n}^{-1})'$
is also a small perturbation of the root $v_{n+1}$ of $f_{n+1}'$.
That is, there exists $c'>0$ such that 
\[
\left|v_{n+1}-s_{n}(v_{n})\right|\leq c'\left\Vert \epsilon_{n}\right\Vert .
\]
The constant $c'$ can be chosen to be independent of $F$ because
$F$ is close to $G$.

Moreover, by the quadratic estimates in Lemma \ref{lem:Quadratic center},
we get
\begin{eqnarray*}
\left|f_{n+1}(v_{n+1})-s_{n}\circ f_{n}^{2}(v_{n})\right| & \leq & \left|f_{n+1}(v_{n+1})-f_{n+1}(s_{n}(v_{n}))\right|+\left|f_{n+1}(s_{n}(v_{n}))-s_{n}\circ f_{n}^{2}(v_{n})\right|\\
 & \leq & \frac{a}{2}\left|v_{n+1}-s_{n}(v_{n})\right|^{2}+\left|f_{n+1}(s_{n}(v_{n}))-s_{n}\circ f_{n}^{2}\circ s_{n}^{-1}(s_{n}(v_{n}))\right|\\
 & \leq & \frac{ac^{\prime2}}{2}\left\Vert \epsilon_{n}\right\Vert ^{2}+c\left\Vert \epsilon_{n}\right\Vert \\
 & \leq & c''\left\Vert \epsilon_{n}\right\Vert 
\end{eqnarray*}
for some constant $c''>0$.

Finally, we compare the critical values $w_{n}$ and $w_{n+1}$. Compute
\begin{eqnarray*}
\left|w_{n+1}-\eta_{n}(w_{n})\right| & = & \left|f_{n+1}(v_{n+1})-s_{n}(f_{n}^{2}(v_{n})-\epsilon_{n}(f_{n}(v_{n}),b_{n}))\right|\\
 & \leq & \left|f_{n+1}(v_{n+1})-s_{n}\circ f_{n}^{2}(v_{n})\right|+\lambda_{n}\left|\epsilon_{n}(f_{n}(v_{n}),b_{n})\right|\\
 & \leq & c''\left\Vert \epsilon_{n}\right\Vert +2\lambda\left\Vert \epsilon_{n}\right\Vert \\
 & = & (c''+2\lambda)\left\Vert \epsilon_{n}\right\Vert 
\end{eqnarray*}
for all $n\geq0$ whenever $\overline{\epsilon}$ is small enough
such that $\lambda_{n}\leq2\lambda$.
\end{proof}

The rescaling maps $\{\eta_{n}\}_{n\ge0}$ can be viewed as a non-autonomous
dynamical system (system that depends on time). An orbit is defined
as follows.
\begin{definition}[Orbit of Non-Autonomous Systems]
Let $Y_{n}$ be a complete metric space, $X_{n}\subset Y_{n}$ be
a closed subset, and $f_{n}:X_{n}\rightarrow Y_{n+1}$ be a continuous
map for all $n\geq1$. A sequence $\left\{ x_{n}\right\} _{n=1}^{\infty}$
is an orbit of the non-autonomous system $\left\{ f_{n}\right\} _{n=1}^{\infty}$
if $x_{n}\in X_{n}$ and $x_{n+1}=f_{n}(x_{n})$ for all $n\geq1$.
A sequence $\left\{ x_{n}\right\} _{n=1}^{\infty}$ is an $\epsilon$-orbit
of the non-autonomous system $\left\{ f_{n}\right\} _{n=1}^{\infty}$
if $x_{n}\in X_{n}$ and $\left|x_{n+1}-f_{n}(x_{n})\right|<\epsilon$
for all $n\geq1$.
\end{definition}
Next, we  state an analog of the shadowing theorem for non-autonomous
systems.
\begin{lemma}[Shadowing Theorem for Non-Autonomous Systems]
\label{lem:Shadowing theorem for random maps}\index{shadowing theorem|textit}For
each $n\geq1$, let $Y_{n}$ be a complete metric space equipped with
a metric $d$ (the metric depends on $n$), $X_{n}\subset Y_{n}$
be a closed subset, and $f_{n}:X_{n}\rightarrow Y_{n+1}$ be a homeomorphism.
Also assume that the non-autonomous system $\left\{ f_{n}\right\} _{n=1}^{\infty}$
has a uniform expansion. That is, there exists a constant $L>1$ such
that $\left|f_{n}(a)-f_{n}(b)\right|\geq L\left|a-b\right|$ for all
$a,b\in X_{n}$ and $n\geq1$. 

If $\left\{ x_{n}\right\} _{n=1}^{\infty}$ is an $\epsilon$-orbit
of $\left\{ f_{n}\right\} _{n=1}^{\infty}$, there exists a unique
orbit $\left\{ u_{n}\right\} _{n=1}^{\infty}$ of $\left\{ f_{n}\right\} _{n=1}^{\infty}$
such that 
\[
d(x_{n},u_{n})\leq\frac{\epsilon}{L-1}
\]
for all $n\geq1$. In addition, if $\left\{ X_{n}\right\} _{n=1}^{\infty}$
is uniformly bounded, then the non-autonomous system $\left\{ f_{n}\right\} _{n=1}^{\infty}$
has exactly one orbit $\left\{ u_{n}\right\} _{n=1}^{\infty}$. 
\end{lemma}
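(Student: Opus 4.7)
The plan is to exploit that the uniform expansion of $\{f_n\}$ turns each inverse $f_n^{-1}$ into a $\tfrac{1}{L}$-contraction, and then construct the shadowing orbit by pulling back an arbitrarily distant iterate of the pseudo-orbit and letting the pullback horizon tend to infinity. Uniqueness will come from the standard observation that two orbits that remain close forever cannot both survive the expansion.

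First, I would record that $f_n^{-1}$ is well-defined on $f_n(X_n)$ and is Lipschitz with constant $1/L$, and verify that the pullbacks appearing below actually land where needed (this uses only that $x_{n+1}$ is within $\epsilon$ of $f_n(x_n)\in f_n(X_n)$, and then that the inductive pullback estimate keeps us inside $X_n$). Then, for each $N\geq 1$, I would define
\[
u_n^{(N)}=f_n^{-1}\circ f_{n+1}^{-1}\circ\cdots\circ f_{N-1}^{-1}(x_N)\quad(n<N),\qquad u_N^{(N)}=x_N,
\]
and prove by downward induction on $n$ that
\[
d\bigl(x_n,u_n^{(N)}\bigr)\leq \epsilon\sum_{k=1}^{N-n}L^{-k}\;<\;\frac{\epsilon}{L-1},
\]
the inductive step being the triangle inequality $d(x_n,u_n^{(N)})\leq d(x_n,f_n^{-1}(f_n(x_n)))+\tfrac{1}{L}|f_n(x_n)-x_{n+1}|+\tfrac{1}{L}d(x_{n+1},u_{n+1}^{(N)})$.

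Next, I would fix $n$ and show the sequence $\{u_n^{(N)}\}_{N\geq n}$ is Cauchy in $N$. Applying $f_n^{-1}\circ\cdots\circ f_{N-1}^{-1}$, which contracts by $L^{-(N-n)}$, to the two candidates for the $N$-th coordinate gives
\[
d\bigl(u_n^{(N)},u_n^{(N+1)}\bigr)\;\leq\; L^{-(N-n)}\,d\bigl(x_N,f_N^{-1}(x_{N+1})\bigr)\;\leq\; \frac{\epsilon}{L^{N-n+1}},
\]
which is summable in $N$. Set $u_n=\lim_{N\to\infty}u_n^{(N)}$. Passing to the limit in $f_n(u_n^{(N)})=u_{n+1}^{(N)}$ (using continuity of $f_n$) yields $f_n(u_n)=u_{n+1}$, and passing to the limit in the previous displayed estimate gives $d(x_n,u_n)\leq \epsilon/(L-1)$. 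Uniqueness of the shadowing orbit is then the elementary expansion dichotomy: if $\{u_n\}$ and $\{u_n'\}$ both shadow, then $d(u_n,u_n')\leq 2\epsilon/(L-1)$ for all $n$, while $d(u_{n+k},u_{n+k}')\geq L^{k}d(u_n,u_n')$ forces $d(u_n,u_n')=0$.

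For the final clause, suppose $\{X_n\}$ is uniformly bounded (say with a common diameter bound $M$). Uniqueness is immediate from the same expansion argument applied with bound $M$ in place of $2\epsilon/(L-1)$. For existence, pick any $y_N\in X_N$ and form $v_n^{(N)}=f_n^{-1}\circ\cdots\circ f_{N-1}^{-1}(y_N)$; the Cauchy argument above, with the pseudo-orbit $\{y_N\}$ replaced by an arbitrary bounded choice, goes through with the factor $M$ instead of $\epsilon$, yielding a limit orbit $\{v_n\}$. The only real obstacle I foresee is the bookkeeping of domains, namely verifying that each $f_n^{-1}$ applied in the pullback construction is genuinely defined at the point we feed it; I would handle this once and for all at the outset by observing that the inductive distance estimate keeps every intermediate pullback inside $X_n$ (where $f_{n-1}^{-1}$ needs its argument to live in $f_{n-1}(X_{n-1})$, which follows from the uniform expansion together with the $\epsilon$-orbit property near the base point).
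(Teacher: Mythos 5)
The paper does not prove this lemma; it cites a textbook for the autonomous version and explicitly leaves the proof to the reader, so there is no proof of record to compare against. Your pullback construction is the natural argument and is correct in substance: the downward induction giving $d\bigl(x_n,u_n^{(N)}\bigr)\leq\epsilon\sum_{k=1}^{N-n}L^{-k}$ is right, the Cauchy-in-$N$ estimate $d\bigl(u_n^{(N)},u_n^{(N+1)}\bigr)\leq L^{-(N-n)}d\bigl(x_N,f_N^{-1}(x_{N+1})\bigr)\leq \epsilon L^{-(N-n+1)}$ is summable, completeness of $Y_n$ together with closedness of $X_n$ gives the limit $u_n\in X_n$, continuity passes the orbit relation to the limit, and the expansion dichotomy handles uniqueness of both the shadowing orbit and the orbit in the bounded case.

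One remark on the bookkeeping you flagged. Your proposed fix for the domain question --- that the inductive distance estimate keeps the pullbacks inside $X_n$, hence in the domain of $f_{n-1}^{-1}$ --- does not actually close the gap: knowing $u_n^{(N)}$ is within $\epsilon/(L-1)$ of $x_n\in X_n$ does not put $u_n^{(N)}$ in $f_{n-1}(X_{n-1})$, and uniform expansion of $f_{n-1}$ on $X_{n-1}$ alone does not force $f_{n-1}(X_{n-1})$ to contain a ball around $f_{n-1}(x_{n-1})$. The clean resolution is to read ``$f_n:X_n\to Y_{n+1}$ is a homeomorphism'' as a bijection onto $Y_{n+1}$ (which is the only reading under which the lemma's final clause, and its use in Proposition \ref{prop:Bound for f(v)-tau}, makes sense): then $f_n^{-1}$ is defined on all of $Y_{n+1}\supset X_{n+1}$, every pullback $f_n^{-1}(z)$ with $z\in X_{n+1}$ automatically lands in $X_n$, and all three of your recursions (the shadowing pullback, the Cauchy estimate, and the final-clause existence with bound $M$ in place of $\epsilon$) are well-posed as written. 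With that reading fixed, the proof is complete.
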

This Lemma is an analog of the Anosov's Shadowing Theorem. See \cite[Exercise 5.1.3, Corollary 5.3.2]{brin2002introduction}
for the version of autonomous systems. The proof is left to the reader.

The result from Lemma \ref{lem:Critical value is e-orbit} shows that
the sequence of critical values $w_{n}$ is an $\epsilon$-orbit of
the expanding non-autonomous system $\eta_{n}$. With the help from
the Shadowing Theorem, we are able to obtain the goal for this section.
\begin{proposition}
\label{prop:Bound for f(v)-tau}\index{tip|textit}\index{critical value|textit}Given
$\delta>0$ and $I^{v}\supset I^{h}\Supset I$. There exists $\overline{\epsilon}>0$
and $c>0$ such that for all $F\in\hat{\mathcal{I}}_{\delta}(I^{h}\times I^{v},\overline{\epsilon})$
we have 
\[
\left|f_{n}(v_{n})-\pi_{x}(\tau_{n})\right|<c\left\Vert \epsilon_{n}\right\Vert 
\]
for all $n\geq0$.
\end{proposition}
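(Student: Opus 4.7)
The plan is to apply the Shadowing Theorem for non-autonomous systems (Lemma \ref{lem:Shadowing theorem for random maps}) to the rescaling dynamics $\{\eta_n\}$. By Lemma \ref{lem:Critical value is e-orbit}, the critical values $\{w_n\}$ form an approximate orbit with error $c\|\epsilon_n\|$ at step $n$, while the tip coordinates $a_n = \pi_x(\tau_n)$ form an exact orbit of $\{\eta_n\}$ by construction of the tip. The Shadowing Theorem will then tie these two sequences together and yield the desired bound.

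First I would exhibit a uniform neighborhood $X$ of the critical value $c^{(1)}$ of $g$ on which the limit map $\eta_G(x) = s \circ g(x)$ is an expanding diffeomorphism. Property (4) of Proposition \ref{prop:Functional Equation} gives $\eta_G'(c^{(1)}) = (-\lambda)(-\lambda) = \lambda^2 > 1$, so by continuity $\eta_G$ is uniformly expanded by some $L_0 > 1$ on a neighborhood $X$ of $c^{(1)}$, and is a diffeomorphism there because $g$ is strictly monotone near $c^{(1)}$. For $\overline{\epsilon}$ sufficiently small, each $\eta_n$ restricted to $X$ is $C^1$-close to $\eta_G$ by Proposition \ref{prop:Hyperbolicity of the Renormalization Operator}, so it remains a homeomorphism with uniform expansion constant $L = (1+L_0)/2 > 1$. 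Since $F_n \to G$, both $w_n$ and $a_n$ converge to $c^{(1)}$, so after possibly shrinking $\overline{\epsilon}$ the two sequences lie in $X$ for every $n$, uniformly in $F \in \hat{\mathcal{I}}_{\delta}$.

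Next, fix $n \geq 0$ and apply the Shadowing Theorem to the truncated non-autonomous system $\{\eta_k\}_{k \geq n}$. The sequence $\{w_k\}_{k \geq n}$ is an $\epsilon$-orbit with $\epsilon = c\|\epsilon_n\|$, because the error at step $k \geq n$ is $c\|\epsilon_k\| \leq c\|\epsilon_n\|$ by the super-exponential decay of $\|\epsilon_k\|$ recorded in Proposition \ref{prop:Hyperbolicity of the Renormalization Operator}. Shadowing then produces a unique orbit $\{u_k\}_{k \geq n} \subset X$ of $\{\eta_k\}_{k \geq n}$ with $|u_k - w_k| \leq \epsilon/(L-1)$ for all $k \geq n$. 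But $\{a_k\}_{k \geq n}$ is also an orbit of $\{\eta_k\}_{k \geq n}$ sitting inside the uniformly bounded domain $X$, so the uniqueness clause of Lemma \ref{lem:Shadowing theorem for random maps} forces $u_k = a_k$ for all $k \geq n$. Evaluating at $k = n$ yields $|w_n - a_n| \leq c\|\epsilon_n\|/(L-1)$, which is the claimed inequality.

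The main obstacle is ensuring that the diffeomorphism property and the expansion constant for $\eta_n$ are uniform over both $n$ and the class $\hat{\mathcal{I}}_{\delta}$, rather than just holding pointwise. This uniformity rests on the super-exponential convergence $F_n \to G$ from Proposition \ref{prop:Hyperbolicity of the Renormalization Operator}, together with the fact that $c^{(1)}$ is a regular (non-critical) point of $g$, so small $C^1$-perturbations preserve local invertibility and the lower bound on the derivative. Once these uniform constants are in hand, everything else is a direct application of the shadowing machinery and the triangle-type estimate already established in Lemma \ref{lem:Critical value is e-orbit}.
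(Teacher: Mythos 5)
Your proposal follows the same approach as the paper: you invoke Lemma \ref{lem:Critical value is e-orbit} to get that $\{w_k\}$ is an approximate orbit of the expanding non-autonomous system $\{\eta_k\}$, note that $\{a_k\}=\{\pi_x\tau_k\}$ is an exact orbit, and apply the shadowing Lemma \ref{lem:Shadowing theorem for random maps} to conclude. The only (cosmetic) difference is how the uniform expansion of $\eta_n$ is justified: the paper cites Lemma \ref{lem:Expanding rate on A and C} together with $\lambda_n>1$, while you derive $\eta_G'(c^{(1)})=\lambda^2>1$ directly from Proposition \ref{prop:Functional Equation} (4) and extend it by $C^1$-closeness; both yield the same conclusion.
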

\begin{proof}
Fix $n\geq0$. The critical values $\{w_{j}\}_{j\geq n}$ form an
$\left\Vert \epsilon_{n}\right\Vert $-orbit and the tips $\{\tau_{j}\}_{j\geq n}$
form an orbit of the perturbed maps $\{\eta_{j}\}_{j\geq n}$. Also,
the perturbed maps are uniform expanding by Lemma \ref{lem:Expanding rate on A and C}
and $\lambda_{n}>1$. Therefore, the proposition follows by Lemma
\ref{lem:Shadowing theorem for random maps}.
\end{proof}

In addition, we can also estimate the distance from the critical point
to the preimage of the tip.
\begin{corollary}
\label{cor:Bound for v-tau}\index{tip|textit}\index{critical point|textit}Given
$\delta>0$ and $I^{v}\supset I^{h}\Supset I$. There exists $\overline{\epsilon}>0$
and $c>0$ such that for all $F\in\hat{\mathcal{I}}_{\hat{\delta}}(\hat{I}^{h}\times I^{v},\overline{\epsilon})$
we have 
\[
\left|v_{n}-\pi_{y}(\tau_{n})\right|<c\sqrt{\left\Vert \epsilon_{n}\right\Vert }
\]
for all $n\geq0$.
\end{corollary}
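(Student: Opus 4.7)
The plan is to exploit the quadratic behavior of $f_n$ near its critical point together with Proposition \ref{prop:Bound for f(v)-tau}. Write $\tau_n=(a_n,b_n)$, so that the goal becomes $|v_n-b_n|<c\sqrt{\|\epsilon_n\|}$. Proposition \ref{prop:Bound for f(v)-tau} already gives $|f_n(v_n)-a_n|<c\|\epsilon_n\|$, so if I can produce another point that $f_n$ sends $O(\|\epsilon_n\|)$-close to $a_n$ and which is provably near $b_n$, the quadratic lower bound of Lemma \ref{lem:Quadratic center} will convert the linear estimate on the $f_n$-values into the desired square-root estimate on the arguments.

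The second such point is $b_n$ itself, furnished by the following preimage observation. Since $\tau_n\in C_n$ and $F_n(B_n)\subset C_n$ by Proposition \ref{prop:Dynamics of the partition on D}, there is a preimage $(x',y')\in B_n$ with $F_n(x',y')=\tau_n$. Because $\pi_y F_n(x',y')=x'$, this forces $x'=b_n$, so $(b_n,y')\in B_n$ and
\[
f_n(b_n)-\epsilon_n(b_n,y')=\pi_xF_n(b_n,y')=a_n.
\]
Hence $|f_n(b_n)-a_n|\leq\|\epsilon_n\|$, and combining with Proposition \ref{prop:Bound for f(v)-tau} yields
\[
|f_n(v_n)-f_n(b_n)|\leq|f_n(v_n)-a_n|+|a_n-f_n(b_n)|\leq c'\|\epsilon_n\|.
\]

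Next I need to know that $v_n$ and $b_n$ both lie in the interval $I^B$ of Lemma \ref{lem:Quadratic center}, so that the quadratic estimate
\[
\tfrac{1}{2a}(v_n-b_n)^2\leq|f_n(v_n)-f_n(b_n)|
\]
applies. For $v_n$ this is immediate from $F_n\to G$ (Proposition \ref{prop:Hyperbolicity of the Renormalization Operator}), which forces $v_n$ to be close to the critical point $c^{(0)}$ of $g$ lying in the interior of $I^B$. For $b_n$ the same conclusion follows from the fact that the tip depends continuously on $F$ (via the nested intersection defining it) and coincides with $(c^{(1)},c^{(0)})$ at the hyperbolic fixed point $G$; shrinking $\overline{\epsilon}$ puts $b_n$ into $I^B$ uniformly in $n$. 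Plugging the previous inequality into the quadratic lower bound gives $|v_n-b_n|^2\leq 2ac'\|\epsilon_n\|$, and the corollary follows with $c=\sqrt{2ac'}$.

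The only delicate point is verifying that $b_n$ stays close to $c^{(0)}$ uniformly in $n$ (so the quadratic estimate is usable); this is really the same kind of convergence argument that underlies Proposition \ref{prop:Geometric properties of W_n(j)}, since the tip is caught between local stable manifolds that limit to those of $g$. Everything else is a short computation: the whole proof is really the observation that the $F_n$-preimage of $\tau_n$ inside $B_n$ automatically has $x$-coordinate equal to $b_n$, which converts the linear control on $\pi_x$ given by the previous proposition into square-root control on $\pi_y$ via the fold.
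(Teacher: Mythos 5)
Your overall strategy — pushing the linear estimate $|f_n(v_n)-\pi_x\tau_n|=O(\|\epsilon_n\|)$ of Proposition \ref{prop:Bound for f(v)-tau} through the quadratic behavior of $f_n$ near $v_n$ (Lemma \ref{lem:Quadratic center}) to obtain a square-root bound on the arguments — is the same as the paper's, and your observation that any $F_n$-preimage of $\tau_n$ automatically has $x$-coordinate $b_n=\pi_y\tau_n$ (since $\pi_y\circ F_n=\pi_x$) is correct and nicely exploited.

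There is, however, a real gap in the step that produces that preimage. You assert that because $\tau_n\in C_n$ and $F_n(B_n)\subset C_n$, there is a preimage $(x',y')\in B_n$ with $F_n(x',y')=\tau_n$. This does not follow: the inclusion $F_n(B_n)\subset C_n$ says nothing about which points of $C_n$ are hit, and the image $F_n(D_n)$ is an $O(\|\epsilon_n\|)$-thin parabolic strip inside $D_n$, so an arbitrary point of $C_n$ need not lie in it. The fact you actually need — that $\tau_n$ lies within distance $O(\|\epsilon_n\|)$ of the graph of $h_n$, equivalently $|f_n(b_n)-a_n|\le C\|\epsilon_n\|$ — is, once fed into Lemma \ref{lem:Quadratic center}, essentially the statement of the corollary, so it cannot be asserted without proof. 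It is true (the tip belongs to the $F_n$-invariant Cantor attractor, a fact from \cite{de2005renormalization}), but this is not among the statements quoted in the paper and must be argued.

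A self-contained repair runs exactly parallel to the proof of Proposition \ref{prop:Bound for f(v)-tau}. The paper already records the exact recurrence $b_{n+1}=s_n(b_n)$, and the proof of Lemma \ref{lem:Critical value is e-orbit} establishes $|v_{n+1}-s_n(v_n)|\le c\|\epsilon_n\|$. Thus, for each fixed $n$, the sequence $\{v_m\}_{m\ge n}$ is a $c\|\epsilon_n\|$-orbit of the uniformly expanding non-autonomous system $\{s_m\}_{m\ge n}$, whose unique bounded orbit is $\{b_m\}_{m\ge n}$; Lemma \ref{lem:Shadowing theorem for random maps} then yields $|v_n-b_n|\le C\|\epsilon_n\|$, which is even stronger than the stated bound and does not require the quadratic estimate at all. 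As written, your argument rests on an unjustified claim whose truth is precisely what is at stake.
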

\begin{proof}
The corollary is true because the map $f_{n}$ behaves like a quadratic
map near the critical point by Lemma \ref{lem:Quadratic center}.
\end{proof}

\section{\label{sec:Framework}Closest Approach}

The proof for the nonexistence of wandering domains begins from this
chapter. We assume the contrapositive: there exists a wandering domain
$J$.

In this chapter, we construct a rescaled orbit $\left\{ J_{n}\right\} _{n=0}^{\infty}$
of an wandering domain $J$ which is called the $J$-closest approach.
Then we define the horizontal size $l_{n}$, the vertical size $h_{n}$,
and the rescaling level $k_{n}$ of an element $J_{n}$. 

Recall the definition of wandering domains.
\begin{definition}[Wandering Domain]
\label{def:Wandering domain}Assume that $F\in\mathcal{H}_{\delta}(I^{h}\times I^{v})$,
$D(F)$ exists, and $F$ is an open map (diffeomorphism from $D(F)$
to the image). A nonempty connected open set $J\subset D(F)$ is a
\index{wandering domain|textbf}wandering domain of $F$ if the orbit$\left\{ F^{n}(J)\right\} _{n\geq0}$
does not intersect the stable manifold of a periodic point.
\end{definition}
\begin{remark}
\label{rem:Wandering domain compare with the classical definition}The
classical definition of wandering intervals contains one additional
condition: the elements of the orbit do not intersect. This condition
is redundant for case of the unimodal maps. Assume that $J$ is an
nonempty open interval that does not contain points from the basin
of a periodic orbit. If the elements in the orbit of $J$ intersect,
then take a connected component $A$ of the union of the orbit that
contains at least two elements from the orbit. Then, there exists
a positive integer $n$ such that $f^{n}(U)\subset U$. It is easy
to show that $f^{n}$ has a fixed point in the interior of $U$ by
applying the Brouwer fixed-point theorem several times which leads
to a contradiction. Therefore, the orbit elements of $J$ are disjoint.
\end{remark}
The following proposition allow us to generate wandering domains by
iteration and rescaling.
\begin{proposition}
\label{prop:Wandering Domain/Renormalization}\index{wandering domain|textit}Given
$\delta>0$ and $I^{v}\supset I^{h}\Supset I$. There exists $\overline{\epsilon}>0$
such that for all open maps $F\in\mathcal{H}_{\delta}^{r}(I^{h}\times I^{v})$,
the following properties hold:
\begin{enumerate}
\item A set $J\subset D(F)$ is a wandering domain of $F$ if and only if
$F(J)$ is a wandering domain of $F$.
\item A set $J\subset C(F)$ is a wandering domain of $F$ if and only if
$\phi(J)\subset D(RF)$ is a wandering domain of $RF$.
\end{enumerate}
\end{proposition}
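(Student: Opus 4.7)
Since $F$ is an open continuous self-map of $D(F)$ (using Proposition~\ref{prop:Dynamics on D} for $F(D(F)) \subset D(F)$), the image $F(J)$ is nonempty, open, connected, and contained in $D(F)$ whenever $J$ is, and vice versa because $F$ is a diffeomorphism onto its image. The orbit of $F(J)$ under $F$ is the tail $\{F^{n+1}(J)\}_{n \geq 0}$ of the orbit of $J$, so if $J$ is wandering then so is $F(J)$. Conversely, if $F(J)$ is wandering but $J \cap W^s(\mathcal{O}) \neq \emptyset$ for some periodic orbit $\mathcal{O}$, forward invariance of $W^s(\mathcal{O})$ gives $F(J) \cap W^s(\mathcal{O}) \neq \emptyset$, a contradiction. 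All other iterates of $J$ are iterates of $F(J)$, so they too avoid stable manifolds.

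\textbf{Part (2), setup.} The key ingredients are the conjugacy $RF = \phi \circ F^2 \circ \phi^{-1}$ (Proposition~\ref{prop:Renormalization Operator}) and the $F^2$-invariance of $C(F)$, which follows from $F(C) \subset B$ and $F(B) \subset C$ (Proposition~\ref{prop:Dynamics of the partition on D}). Hence $F^{2k}(J) \subset C(F)$ for every $k \geq 0$ and
\[
(RF)^k(\phi(J)) = \phi(F^{2k}(J)).
\]
Since $\phi: C(F) \to D(RF)$ is a diffeomorphism that extends smoothly to a neighborhood of $\overline{C(F)}$ with $\phi(p(0)) = p_{RF}(-1)$, the set $\phi(J)$ is nonempty, open, and connected iff $J$ is, and $\phi$ carries $F$-periodic orbits meeting $\overline{C(F)}$ to $RF$-periodic orbits in $\overline{D(RF)}$.

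\textbf{Part (2), the two directions.} Forward: if $\phi(J)$ fails to be wandering, pick $y \in (RF)^k(\phi(J)) \cap W^s_{RF}(\mathcal{P})$ and set $w = \phi^{-1}(y) \in F^{2k}(J)$. A period-$m$ orbit $\mathcal{P}$ of $RF$ lifts to the $F$-periodic orbit $\phi^{-1}(\mathcal{P}) \subset \overline{C(F)}$ of period dividing $2m$, and the convergence $(RF)^j(y) \to \mathcal{P}$ transports via $\phi$ to $F^{2j}(w) \to \phi^{-1}(\mathcal{P})$, placing $w \in W^s_F(\phi^{-1}(\mathcal{P}))$ and contradicting wandering of $J$. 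Backward: if $J$ fails to be wandering, part~(1) together with forward invariance of stable manifolds lets us take an intersection $F^n(J) \cap W^s_F(\mathcal{O}) \neq \emptyset$ with $n = 2k$ even, so $F^{2k}(J) \subset C(F)$ contains a point $w$ whose orbit alternates between $C$ and $B$ and accumulates on $\mathcal{O} \cap \overline{C(F)}$. Applying $\phi$ gives $(RF)^j(\phi(w)) \to \phi(\mathcal{O} \cap \overline{C(F)})$, a periodic orbit of $RF$, so $\phi(w) \in (RF)^k(\phi(J))$ lies on a stable manifold of $RF$, contradicting wandering of $\phi(J)$.

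\textbf{Main obstacle.} The subtlety lies in the backward direction of part~(2): one must ensure that \emph{every} $F$-periodic orbit whose stable manifold is touched by iterates of $J$ corresponds to an honest $RF$-periodic orbit via $\phi$. This is resolved by the observation that iterates of $J \subset C$ never leave $B \cup C$, so any limiting orbit $\mathcal{O}$ is confined to $\overline{B \cup C}$, and the smooth extension of $\phi$ to $\overline{C(F)}$ (with $\phi(p(0)) = p_{RF}(-1)$) produces a genuine periodic orbit of $RF$ even in the boundary case $\mathcal{O} = \{p(0)\}$.
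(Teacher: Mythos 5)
Your proof is correct and follows the same underlying idea as the paper — the paper's proof is a one-sentence sketch noting that stable manifolds are invariant under iteration and transform into stable manifolds under the rescaling $\phi$, and your argument is simply the detailed expansion of that observation via the conjugacy $RF=\phi\circ F^{2}\circ\phi^{-1}$. The only minor imprecision is calling $\phi^{-1}(\mathcal{P})$ an ``$F$-periodic orbit''; it is an $F^{2}$-orbit whose $F$-saturation is the periodic orbit, but this does not affect the argument.
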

\begin{proof}
The proposition is true because the stable manifold of a periodic
orbit is invariant under iteration and the rescaling of a stable manifold
is also a stable manifold.
\end{proof}

\begin{corollary}
\label{cor:Wandering Domain/Renormalization}\index{wandering domain|textit}Given
$\delta>0$ and $I^{v}\supset I^{h}\Supset I$. There exists $\overline{\epsilon}>0$
such that for all open maps $F\in\mathcal{H}_{\delta}^{r}(I^{h}\times I^{v},\overline{\epsilon})$,
$F$ has a wandering domain in $D(F)$ if and only if $RF$ has a
wandering domain in $D(RF)$.
\end{corollary}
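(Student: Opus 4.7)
The plan is to reduce the corollary directly to the two statements of Proposition \ref{prop:Wandering Domain/Renormalization}, with the only real content being the fact that a wandering domain's orbit must eventually enter $C(F)$.

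For the forward direction, suppose $J\subset D(F)$ is a wandering domain of $F$. Since $J$ is connected and, by the definition of wandering domain, disjoint from every local stable manifold of periodic points, in particular $J$ does not meet $W^{0}(-1)$, $W^{2}(-1)$, $W^{0}(0)$, $W^{1}(0)$, or $W^{2}(0)$. Hence $J$ lies entirely in one of the three regions $A(F)$, $B(F)$, or $C(F)$. Iterating part (1) of Proposition \ref{prop:Wandering Domain/Renormalization}, each $F^{n}(J)$ is again a wandering domain in $D(F)$, and the same connectedness argument shows each $F^{n}(J)$ lies in exactly one of $A$, $B$, or $C$. Now I invoke the dynamics on the partition from Proposition \ref{prop:Dynamics of the partition on D}: orbits starting in $A$ leave $A$ in finitely many steps, at which point (using $F(A)\subset A\cup W^{1}(0)\cup B$ and disjointness from $W^{1}(0)$) they enter $B$; from $B$ renormalizability gives $F(B)\subset C$. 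Thus there is an $n\geq 0$ with $F^{n}(J)\subset C(F)$. Applying part (2) of Proposition \ref{prop:Wandering Domain/Renormalization} to this $F^{n}(J)$ produces a wandering domain $\phi(F^{n}(J))\subset D(RF)$ of $RF$.

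For the backward direction, suppose $J'\subset D(RF)$ is a wandering domain of $RF$. By Proposition \ref{prop:Renormalization Operator}, $\phi$ is a diffeomorphism from a neighborhood of $C(F)$ onto its image, and $\phi(C(F))=D(RF)$, so $J:=\phi^{-1}(J')$ is a well-defined nonempty connected open subset of $C(F)\subset D(F)$. Part (2) of Proposition \ref{prop:Wandering Domain/Renormalization} applied in the opposite direction shows $J$ is a wandering domain of $F$ in $D(F)$.

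The only step that requires any argument beyond quoting the proposition is the trichotomy ``$F^{n}(J)$ lies in $A$, $B$, or $C$'' and the finite escape from $A$; both are immediate from the connectedness of $F^{n}(J)$, its disjointness from the relevant local stable manifolds, and Proposition \ref{prop:Dynamics of the partition on D}. No obstacle of substance arises, which is why this is stated as a corollary rather than a separate proposition.
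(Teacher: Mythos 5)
Your proof is correct and follows essentially the same route as the paper's: the case split on which of $A$, $B$, $C$ contains $J$, escape from $A$ via Proposition \ref{prop:Dynamics of the partition on D}, the step $F(B)\subset C$, and then Proposition \ref{prop:Wandering Domain/Renormalization} in both directions. You simply make explicit the connectedness-plus-disjointness argument that the paper leaves implicit when it cites the proposition.
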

\begin{proof}
Assume that $J\subset D(F)$ is a wandering domain. If $J\subset C$,
then $RF$ has a wandering domain by Proposition \ref{prop:Wandering Domain/Renormalization}.
If $J\subset A$, there exists $n\geq1$ such that $F^{n}(J)\subset B$
by Proposition \ref{prop:Dynamics of the partition on D}. If $J\subset B$,
then $F(J)\subset C$ by Proposition \ref{prop:Dynamics of the partition on D}.
Thus, $RF$ has a wandering domain by Proposition \ref{prop:Wandering Domain/Renormalization}.

The converse follows from the second property of Proposition \ref{prop:Wandering Domain/Renormalization}.
\end{proof}

Also, we define the rescaling level of a wandering domain in $B$.
\begin{definition}[Rescaling level]
Assume that $U\subset A_{n}\cup B_{n}$ is a connected set that does
not intersect any of the stable manifolds. Define the rescaling level\index{rescaling level!Hénon-like|textbf}
\nomenclature[k]{$k$}{Level of rescaling}$k(U)$ as follows. If $U\subset B_{n}$,
set $k(U)$ to be the integer such that $U\subset B_{n}(k(U))$; otherwise
if $U\subset A_{n}$, set $k(U)=0$.
\end{definition}
To study the dynamics of a wandering domain, we apply the procedure
of renormalization. If a wandering domain is contained in $A_{0}$
or $B_{0}$, then its orbit will eventually leave $A_{0}$ and $B_{0}$
and enter $C_{0}$. If the orbit of the wandering domain enters $C_{0}$,
we rescale the orbit element by $\phi_{0}$, $\phi_{1}$, $\cdots$
as many times as possible until it lands on one of the sets $A_{n}$
or $B_{n}$ of some renormalization level $n$, then study the dynamics
of the rescaled orbit by the renormalized map $F_{n}$. If the rescaled
orbit enters $C_{n}$ again, then we rescale it and repeat the same
procedure again. By this process, we construct a rescaled orbit as
follows.
\begin{definition}[Closest approach]
\label{def: J_n Sequence of Wandering Domain}\index{wandering domain!sequence|see{closest approach}}Assume
that $\overline{\epsilon}>0$ is sufficiently small and $F\in\mathcal{I}_{\delta}(I^{h}\times I^{v},\overline{\epsilon})$. 

Given a set $J\subset A\cup B$ such that it does not intersect any
of the stable manifolds. Define a sequence of sets \nomenclature[J_n]{$J_{n}$}{$J$-closest approach}$\left\{ J_{n}\right\} _{n=0}^{\infty}$
and the associate renormalization levels \nomenclature[r(n)]{$r(n)$}{Level of renormalization of the sequence of wandering domain $J_n$}$\left\{ r(n)\right\} _{n=0}^{\infty}$
by induction such that  $J_{n}\subset A_{r(n)}\cup B_{r(n)}$ for
all $n\geq0$. 
\begin{enumerate}
\item Set $J_{0}=J$ and $r(0)=0$. 
\item Write the rescaling level of $J_{n}$ as $k_{n}=k(J_{n})$ whenever
$J_{n}$ is defined.
\item If $J_{n}\subset A_{r(n)}$, set $J_{n+1}=F_{r(n)}(J_{n})$ and $r(n+1)=r(n)$. 
\item If $J_{n}\subset B_{r(n)}$, set $J_{n+1}=\Phi_{r(n)}^{k_{n}}\circ F_{r(n)}(J_{n})$
and $r(n+1)=r(n)+k_{n}$. 
\end{enumerate}
The transition between two constitutive sequence elements, one iteration
together with rescaling (if possible), is called one step\index{closest approach!step}.
The sequence $\left\{ J_{n}\right\} _{n=0}^{\infty}$ is called the
rescaled iterations of $J$ that closest approaches the tip\index{tip},
or $J$-closest approach\index{closest approach|textbf} for short.
\end{definition}

The itinerary of a closest approach is summarized by the following
diagram. 

\[
\xymatrix@C=6pc{A_{r(n)}\ar[d]^{F_{r(n)}}\ar@(ur,dr)^{{F_{r(n)}}} &  & A_{r(n+1)}\\
B_{r(n)}\ar[r]\sp(0.4){\Phi_{r(n)}^{k_{n}}\circ F_{r(n)}} & A_{r(n+1)}\cup B_{r(n+1)}\ar[r]\ar[ur] & B_{r(n+1)}
}
\]
\begin{example}
\label{exa:Sequence of wandering domain}In this example, we explain
the construction of a closest approach and demonstrate the idea of
proving the nonexistence of wandering domains. Let $F=(f-\epsilon,x)$
be a Hénon-like map such that $f(x)=1.7996565(1+x)(1-x)-1$ and $\epsilon(x,y)=0.025y$.
The map $F$ is numerically checked to be seven times renormalizable.
Given a set $J=\left(-0.950,-0.947\right)\times\left(0.042,0.045\right)\subset A$.
We show that the set is not a wandering domain by contradiction. 

\begin{figure}
\resizebox{\columnwidth}{!}{%
\begin{centering}
\includegraphics[scale=0.8]{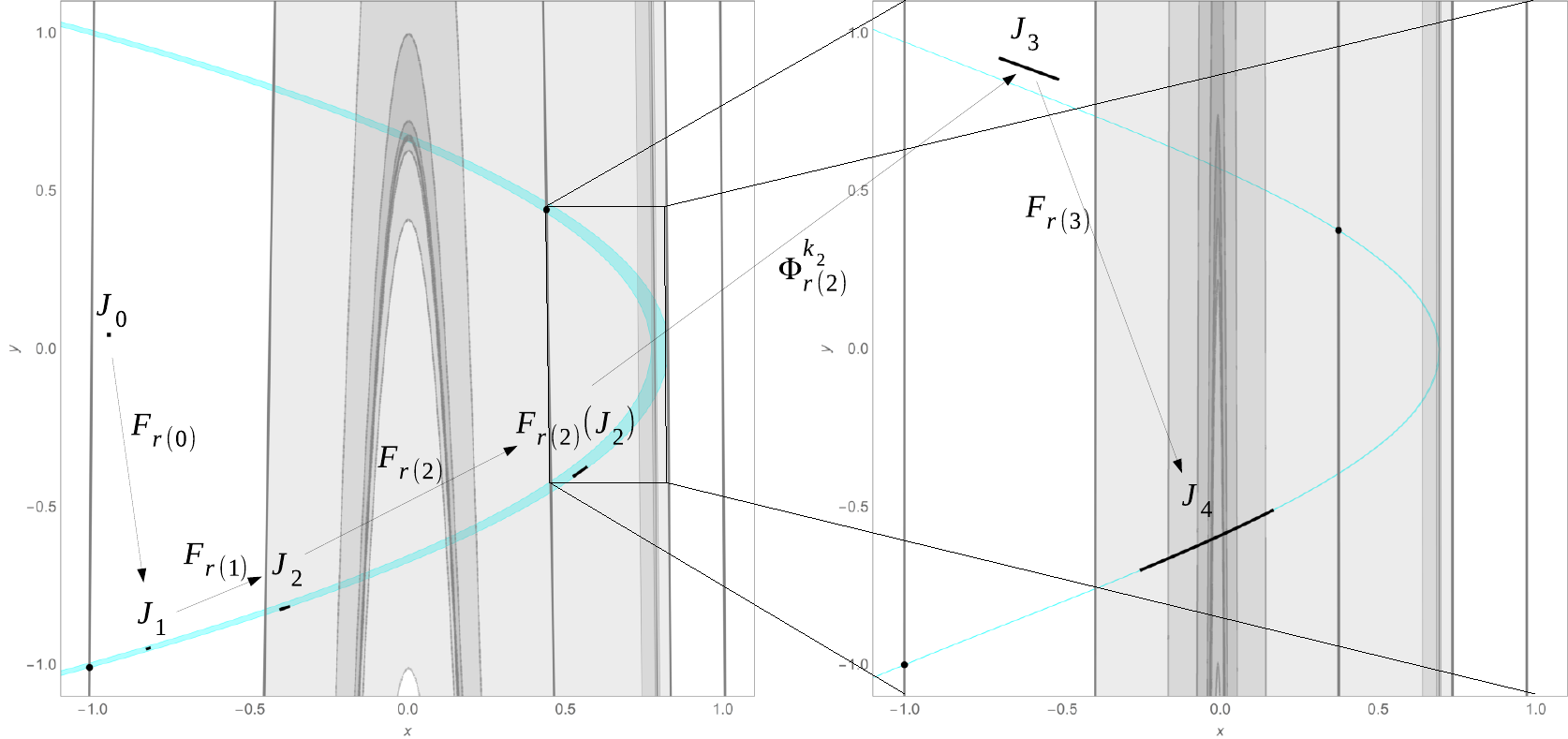}
\par\end{centering}
}

\caption{\label{fig:Sequence of wandering domain}The construction of a closest
approach $J_{n}$. The graphs are the domains and the partitions of
$F_{0}$ and $F_{1}$ from the left to the right.}
\end{figure}

If $J$ is a wandering domain, we construct a $J$-closest approach
as shown in Figure \ref{fig:Sequence of wandering domain}. Set $J_{0}=J$
and $r(0)=0$. The set $J_{0}$ is contained in $A_{r(0)}$. The next
element is defined to be $J_{1}=F_{r(0)}(J_{0})$ and $r(1)=r(0)=0$.
The set $J_{1}$ is also contained in $A_{r(1)}$. Set $J_{2}=F_{r(1)}(J_{1})$
and $r(2)=r(1)=0$. The set $J_{2}$ is contained in $B_{r(2)}(1)$.
Set $k_{2}=1$, $r(3)=r(2)+k_{2}=1$, and $J_{3}=\Phi_{r(2)}^{k_{2}}\circ F_{r(2)}(J_{2})=\phi_{0}\circ F_{0}(J_{2})$.
The set $J_{3}$ is contained in $A_{r(3)}$. Set $J_{4}=F_{r(3)}(J_{3})$
and $r(4)=r(3)=1$.

From the graph, we see that the sizes of the elements $\{J_{n}\}$
grow as the procedure continues and the element $J_{4}\subset B_{1}$
becomes so large that it intersects some local stable manifolds. This
leads to a contradiction. Therefore, $J$ is not a wandering domain.
\end{example}
Motivated from the example, we study the growth of the horizontal
size and prove the sizes of the elements approach to infinity to obtain
a contradiction. The size is defined as follow.
\begin{definition}[Horizontal and Vertical size]
\label{def:l,h}Assume that $J\subset\mathbb{R}^{2}$. Define the
horizontal size\index{horizontal size|textbf} as\nomenclature[l]{$l$}{Horizontal size}
\[
l(J)=\sup\left\{ \left|x_{1}-x_{2}\right|;(x_{1},y_{1}),(x_{2},y_{2})\in J\right\} =\left|\pi_{x}J\right|
\]
and the vertical size\index{vertical size|textbf} as\nomenclature[h]{$h$}{Vertical size}
\[
h(J)=\sup\left\{ \left|y_{1}-y_{2}\right|;(x_{1},y_{1}),(x_{2},y_{2})\in U\right\} =\left|\pi_{y}J\right|.
\]
If $J$ is compact, the horizontal endpoints\index{horizontal endpoints}
of $J$ are two points in the set that determines $l(J)$.
\end{definition}
Figure \ref{fig:Comparison of lengths} illustrates the horizontal
size and the vertical size of a set $J$. For a Hénon-like map $F\in\mathcal{H}_{\delta}(I^{h}\times I^{v})$,
it follows from the definition that 
\[
h(F(J))=l(J)
\]
for all $J\subset I^{h}\times I^{v}$.

For simplicity, we start from a closed subset $J$ of a wandering
domain such that $\overline{\text{int}(J)}=J$. Then consider the
$J$-closest approach $\left\{ J_{n}\right\} _{n\geq0}$ instead to
ensure the horizontal endpoints exist. Note that the sequence element
$J_{n}$ is also a subset of a wandering domain of $F_{r(n)}$. For
elements in a closest approach, set $l_{n}=l(J_{n})$ and $h_{n}=h(J_{n})$.
Our final goal is to show that the horizontal size $l_{n}$ approaches
to infinity and hence wandering domains cannot exist.

\section{\label{sec:degenerate case}{*}The Degenerate Case\index{Hénon-like map!degenerate}}

In this chapter, we study the relationship between the unimodal renormalization\index{unimodal maps}
and the Hénon renormalization by identifying a unimodal map as a degenerate
Hénon-like map. The main goal is to present a short proof for the
nonexistence of wandering intervals for an infinitely renormalizable
unimodal map at the end of this chapter. It is well known that a unimodal
map (under some regularity condition) does not have wandering interval
\cite{de1988one,de1989structure,lyubich1989non,blokh1989non,martens1992julia}.
 Here, we give a different proof by using the Hénon renormalization
instead of the unimodal renormalization. The expansion argument introduced
in the proof motivates the proof for the non-degenerate case.

\subsection{Local stable manifolds and partition}

First, we adopt the notations from unimodal maps and Hénon-like maps.
Let $F$ be a degenerate Hénon-like map
\[
F(x,y)=(f(x),x).
\]
We use the super-scripts ``$u$'' and ``$h$'' to distinguish
the difference between the notations for unimodal maps and Hénon-like
maps to avoid confusion. For example, $p^{u}(-1)=-1$ and $p^{u}(0)$
are the fixed points of $f$; $p^{h}(-1)=(-1,-1)$ and $p^{h}(0)$
are the saddle fixed points of $F$; $A^{u},B^{u},C^{u}\subset I$
is the partition defined for $f$; $A^{h},B^{h},C^{h}\subset I^{h}\times I^{v}$
is the partition defined for $F$. 

The next lemma gives the relations between the local stable manifolds
for the degenerate Hénon-like map with the fixed points and their
preimages for the unimodal maps. Recall that $p^{(1)}$ and $p^{(2)}$
are the points such that $f(p^{(2)})=p^{(1)}$, $f(p^{(1)})=p^{u}(0)$,
and $p^{(1)}<p^{u}(0)<p^{(2)}$ (Definition \ref{def:Partition of I});
$W^{0}(-1)$ and $W^{2}(-1)$ are the local stable manifolds of $p^{h}(-1)$
(Definition \ref{def:Local stable manifold W(-1)}); $W^{0}(0),W^{1}(0),W^{2}(0)$
are the local stable manifolds of $p^{h}(0)$ (Definition \ref{def:Local stable manifold W(0)}).
\begin{lemma}[Fixed points and local stable manifolds]
\index{local stable manifold!degenerate|textit}Assume that $F\in\mathcal{H}_{\delta}(I^{h}\times I^{v})$
is a degenerate Hénon-like map. Then
\begin{enumerate}
\item $p^{h}(j)=(p^{u}(j),p^{u}(j))$ for $j=-1,0$, 
\item the local stable manifold $W^{0}(j)$ is the vertical line $x=p^{u}(j)$
for $j=-1,0$, 
\item the local stable manifold $W^{2}(-1)$ is the vertical line $x=\hat{p}^{u}(-1)$, 
\item the local stable manifold $W^{1}(0)$ is the vertical line $x=p^{(1)}$,
and 
\item the local stable manifold $W^{2}(0)$ is the vertical line $x=p^{(2)}$.
\end{enumerate}
\end{lemma}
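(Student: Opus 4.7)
The plan is to exploit the fact that in the degenerate case the map factors through the unimodal $f$: since $F(x,y) = (f(x), x)$ is independent of $y$ in its first coordinate, one has $F^n(x,y) = (f^n(x), f^{n-1}(x))$ for $n \geq 1$, and $F$ collapses each vertical line $\{x = a\} \times I^v$ to the single point $(f(a), a)$. From the iterate formula, $(x,y)$ is a fixed point of $F$ iff $y = x$ and $f(x) = x$, immediately giving $p^h(j) = (p^u(j), p^u(j))$ for $j = -1, 0$. The same formula shows the orbit of $(x,y)$ converges to $p^h(j)$ iff $f^n(x) \to p^u(j)$, so the full stable manifold of $p^h(j)$ inside the domain equals $W^s(p^u(j); f) \times I^v$. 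Because $p^u(j)$ is repelling, its $f$-stable set is discrete, and this product therefore decomposes as a disjoint union of vertical lines; the unique connected component containing $p^h(j)$ is the vertical line $\{x = p^u(j)\}$, yielding the claims for $W^0(-1)$ and $W^0(0)$.

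For $W^2(-1)$, $W^1(0)$, and $W^2(0)$ I would invoke the collapsing property above, which gives $F^{-1}(\{x = c\} \times I^v) = \bigcup_{a \in f^{-1}(c)} \{x = a\} \times I^v$. Applied to $c = p^u(-1) = -1$, the unimodal conditions $f(-1) = f(1) = -1$ together with a single interior critical point force $f^{-1}(-1) = \{-1, 1\}$, so $F^{-1}(W^0(-1))$ has exactly two components, and the one disjoint from $W^0(-1)$ is $\{x = 1\} = \{x = \hat{p}^u(-1)\}$. Applied to $c = p^u(0)$, the preimages under $f$ are $\{p^u(0), p^{(1)}\}$ with $p^{(1)} = \hat{p}^u(0)$, so $W^1(0) = \{x = p^{(1)}\}$. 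Finally, for $W^2(0)$, the two preimages of $p^{(1)}$ lie on the two monotone branches of $f$; by Definition \ref{def:Partition of I} the one designated $p^{(2)}$ satisfies $p^{(2)} > c^{(0)}$, and a brief comparison on the decreasing branch (using $f(p^{(2)}) = p^{(1)} < p^u(0) = f(p^u(0))$) shows $p^{(2)} > p^u(0)$, so $\{x = p^{(2)}\}$ is the vertical-graph component of $F^{-1}(W^1(0))$ lying to the right of $W^0(0)$, matching the definition of $W^2(0)$.

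There is no real obstacle here: the whole argument is bookkeeping with the definitions of Section \ref{subsec:local stable manifolds=000026partition} plus elementary preimage combinatorics of a unimodal map. The only mildly subtle check is that each preimage component is itself a vertical graph and correctly positioned relative to the previously constructed manifolds, but in the degenerate setting these are \emph{literal} vertical lines, and the positioning follows from the chain of inequalities $-1 < p^{(1)} < c^{(0)} < p^u(0) < p^{(2)} < 1$ recorded in Definition \ref{def:Partition of I}.
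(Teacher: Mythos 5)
Your proof is correct and fills in exactly what the paper leaves unstated — the lemma is stated without proof in Chapter 6, being treated as immediate from the definitions. Your route (exploiting $F^n(x,y)=(f^n(x),f^{n-1}(x))$ and the collapsing of vertical lines under $F$, then reading off $F^{-1}$ of a vertical line as a union of vertical lines over $f$-preimages) is the natural one and matches the intent of Section 6. One small imprecision: you call the $f$-stable set of the repelling fixed point ``discrete.'' It is the grand preimage $\bigcup_{n\ge 0}f^{-n}(p^{u}(j))$, which is countable but can accumulate on itself, so ``discrete'' is not quite right; what you actually need is that it is totally disconnected, which any countable subset of $\mathbb{R}$ is, and then the product with $I^{v}$ still has vertical lines as its connected components. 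Also worth noting: the ordering $-1<p^{(1)}<c^{(0)}<p^{u}(0)<p^{(2)}<1$ that you invoke is not stated verbatim in Definition \ref{def:Partition of I} (only $p^{(2)}>c^{(0)}$ is explicit there), though it is implicit in the construction and Figure \ref{fig:Partition for unimodal map}; your one-line comparison on the decreasing branch to get $p^{(2)}>p^{u}(0)$ is a good self-contained patch for the one inequality you actually use in identifying $W^{2}(0)$.
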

It follows from the definition that the partition for unimodal maps
and degenerate Hénon-like maps coincide.
\begin{corollary}[Partition]
\index{partition!degenerate|textit}Assume that $F\in\mathcal{H}_{\delta}(I^{h}\times I^{v})$
is a degenerate Hénon-like map. Then $A^{h}=A^{u}\times I^{v}$, $B^{h}=B^{u}\times I^{v}$,
$C^{h}=C^{u}\times I^{v}$, and $D^{h}=I\times I^{v}$.
\end{corollary}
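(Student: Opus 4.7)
The plan is essentially to unpack the definitions of $A^h$, $B^h$, $C^h$, $D^h$ (Definitions \ref{def:Local stable manifold W(-1)} and \ref{def:Domain A,B,C}) and substitute the explicit descriptions of the local stable manifolds supplied by the preceding lemma. Since $F$ is degenerate, that lemma identifies each of the relevant local stable manifolds as a vertical line: $W^{0}(-1)=\{-1\}\times I^{v}$, $W^{2}(-1)=\{\hat p^{u}(-1)\}\times I^{v}=\{1\}\times I^{v}$, $W^{1}(0)=\{p^{(1)}\}\times I^{v}$, $W^{0}(0)=\{p^{u}(0)\}\times I^{v}$, and $W^{2}(0)=\{p^{(2)}\}\times I^{v}$. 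So all the separating curves on the $h$-side become products of single points in $I^{h}$ with the vertical interval $I^{v}$.

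Next I would invoke the first property of Proposition \ref{prop:Dynamics on D} (together with $F$ being a self-map of $I^{h}\times I^{v}$ over the unimodal base) to conclude that $D^{h}$, which is by definition the open region bounded between $W^{0}(-1)$ and $W^{2}(-1)$, is exactly the vertical strip $(-1,1)\times I^{v}=\interior(I)\times I^{v}$; taking closures (or using the convention $D\equiv I$ for the unimodal domain) gives $D^{h}=I\times I^{v}$. The same reasoning applied to Definition \ref{def:Domain A,B,C} shows that $A^{h}$, being the union of the strips between $W^{0}(-1)$ and $W^{1}(0)$ and between $W^{2}(0)$ and $W^{2}(-1)$, equals $\bigl((-1,p^{(1)})\cup(p^{(2)},1)\bigr)\times I^{v}$; that $B^{h}$ is the strip between $W^{1}(0)$ and $W^{0}(0)$, hence equals $(p^{(1)},p^{u}(0))\times I^{v}$; and that $C^{h}$ is the strip between $W^{0}(0)$ and $W^{2}(0)$, hence equals $(p^{u}(0),p^{(2)})\times I^{v}$.

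Finally, I would match these with the unimodal partition from Definition \ref{def:Partition of I}: $A^{u}=(-1,p^{(1)})\cup(p^{(2)},1)$, $B^{u}=(p^{(1)},p^{u}(0))$, $C^{u}=(p^{u}(0),p^{(2)})$. Comparing term-by-term yields $A^{h}=A^{u}\times I^{v}$, $B^{h}=B^{u}\times I^{v}$, $C^{h}=C^{u}\times I^{v}$, completing the proof. There is really no obstacle here; the statement is a direct bookkeeping corollary of the preceding lemma, whose sole purpose is to make explicit that in the degenerate case every local stable manifold collapses to a vertical line through the corresponding unimodal fixed point or its preimage, so the two-dimensional partition is the Cartesian product of the one-dimensional one with $I^{v}$.
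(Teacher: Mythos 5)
Your proposal is correct and takes the same approach as the paper, which states the corollary ``follows from the definition'' of the partitions and the preceding lemma identifying each local stable manifold of a degenerate H\'enon-like map as a vertical line over the corresponding unimodal fixed point or preimage. Your unpacking, matching the strips bounded by those lines with the one-dimensional partition of Definition \ref{def:Partition of I}, is exactly the intended bookkeeping; the only subtlety you rightly flag is the open/closed convention for $D^{h}$ versus $D^{u}\equiv I$.
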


\subsection{Renormalization operator}

Next we compare the renormaliztion operator for Hénon-like maps with
the renormalization operator  for unimodal maps. Recall the definitions
of the rescaling maps. For a degenerate renormalizable Hénon-like
map $F$, the rescaling map has the form $\phi=\Lambda\circ H$ where
$\Lambda(x,y)=(s^{h}(x),s^{h}(y))$, $s^{h}$ is the affine rescaling
map, and $H(x,y)=(f(x),y)$ is the nonlinear rescaling term. The renormalized
map is $RF=\phi\circ F^{2}\circ\phi^{-1}$. For a renormalizable unimodal
map $f$, $s^{u}$ is the affine rescaling and $\uRenormalizeC f=s^{u}\circ f^{2}\circ(s^{u})^{-1}$
is the renormalization about the critical point.

Although the Hénon renormaliation rescales the first return map around
the ``critical value'', the operation acts like the unimodal renormalization
which rescales the first return map around the ``critical point''.
This is because of the nonlinear rescaling term $H$ for the Hénon-renormalization.
Let $A_{0}^{h}$,$B_{0}^{h}$,$C_{0}^{h}$ be the partition for $F$
and $D_{1}^{h}$ be the domain for $RF$. The rescaling map $\phi(x,y)=(s^{h}\circ f(x),s^{h}(y))$
maps $C_{0}^{h}$ to $D_{1}^{h}$. This means the operation $f$ in
the $x$-component maps $C_{0}^{u}$ to $B_{0}^{u}$ and the affine
map $s^{h}$ maps $C_{0}^{u}$ back to the unit size $I$. Thus, the
two affine maps $s^{u}$ and $s^{h}$ are the same and 
\[
H\circ F^{2}\circ H^{-1}(x,y)=(f^{2}|_{B_{0}^{u}}(x),x)
\]
is the first return map on $B_{0}^{h}$. Therefore, the two renormalizations
coincide
\[
RF(x,y)=(s^{u}\circ f^{2}\circ(s^{u})^{-1}(x),x)=(\uRenormalizeC f(x),x).
\]
 This also explains why $R^{n}F$ converges to the fixed point $g$
of the unimodal renormalization operator.

The observation is summarized as follows.
\begin{lemma}[Renormalization operator]
\label{lem:degenerate-renormalization operator}\index{renormalization!degenerate|textit}Assume
that $F\in\mathcal{H}_{\delta}(I^{h}\times I^{v})$ is a degenerate
Hénon-like map. Then $F$ is Hénon renormalizable if and only if $f$
is unimodal renormalizable. When the map is renormalizable, we have
\begin{enumerate}
\item $s^{h}=s^{u}$ and
\item $RF(x,y)=(\uRenormalizeC f(x),x)$.
\end{enumerate}
In fact, if $F$ is infinitely renormalizable, then the affine term
$\Lambda_{n}:B_{n}(j)\rightarrow B_{n+1}(j-1)$ is a bijection for
all $n\geq0$ and $j\geq1$ where $B_{n}(0)\equiv A_{n}\cup W_{n}^{2}(0)\cup C_{n}$.
\end{lemma}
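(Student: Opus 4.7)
The plan is to exploit the fact that, in the degenerate case, the nonlinear part of the H\'enon rescaling $H(x,y) = (f(x)-\epsilon(x,y), y)$ collapses to $H(x,y) = (f(x),y)$, so $H$ acts only in the $x$-coordinate and agrees there with the unimodal map $f$. Everything in the lemma will follow by unwinding this observation.

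First, for the equivalence of renormalizability, I would use the preceding corollary giving $A^h = A^u\times I^v$, $B^h = B^u\times I^v$, $C^h = C^u\times I^v$ in the degenerate setting. A direct computation gives $F(B^h) = f(B^u)\times B^u$, which lies in $C^h = C^u\times I^v$ if and only if $f(B^u)\subset C^u$; the saddle/expanding-multiplier conditions on $p(0)$ transfer between $f$ and $F$ trivially because the Jacobian of $F$ at $(p^u(0),p^u(0))$ has the same nonzero eigenvalue $f'(p^u(0))$. Next, to identify $s^h$ with $s^u$, I would use that $\phi$ is an orientation-reversing affine conjugacy pinned by two conditions: $\phi(p^h(0)) = p_{RF}^h(-1)$ and $\phi(W^2(0)) = W^2_{RF}(-1)$ (this follows from Lemma \ref{lem:Two renormalization level-relation}). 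In the degenerate case $H(p^u(0),p^u(0)) = (p^u(0),p^u(0))$ (using $f(p^u(0))=p^u(0)$) and $H$ sends the vertical line $\{x=p^{(2)}\}=W^2(0)$ to $\{x=p^{(1)}\}$, so applying $\Lambda$ forces $s^h(p^u(0))=-1$ and $s^h(p^{(1)})=1$, which are exactly the two conditions defining $s^u$.

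For the formula $RF(x,y) = (R_c f(x),x)$, I would simply compute $\phi^{-1}(u,v) = (f_{|C^u}^{-1}(s^{-1}(u)), s^{-1}(v))$ (using the injectivity of $f$ on $C^u$), apply $F^2$ to get $(f(s^{-1}(u)), s^{-1}(u))$, and then apply $\phi$ to obtain $(s(f^2(s^{-1}(u))), u) = (R_c f(u),u)$. Note this identity propagates: if $F$ is infinitely renormalizable, then inductively every $F_n$ is degenerate with $f_n = R_c^n f$, so $H_n(x,y) = (f_n(x),y)$ throughout.

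Finally, for the bijection statement, the key is the commutative diagram $\phi_n\circ F_n = F_{n+1}\circ\Lambda_n$ on $D_n$, which I would verify by a one-line computation: $\phi_n\circ F_n(x,y) = \Lambda_n(f_n^2(x),x) = (s_n f_n^2(x), s_n(x))$ and $F_{n+1}\circ\Lambda_n(x,y) = (f_{n+1}(s_n(x)), s_n(x))$, and these agree by the conjugacy $f_{n+1} = s_n\circ f_n^2\circ s_n^{-1}$ established in the preceding paragraph. With this in hand, the chain of equivalences
\[
(x,y)\in B_n(j)\iff F_n(x,y)\in C_n(j)\iff \phi_n(F_n(x,y))\in C_{n+1}(j-1)\iff F_{n+1}(\Lambda_n(x,y))\in C_{n+1}(j-1)\iff \Lambda_n(x,y)\in B_{n+1}(j-1)
\]
uses $B_n(j) = F_n^{-1}(C_n(j))$ together with the bijection $\phi_n\colon C_n(j)\to C_{n+1}(j-1)$ from Definition \ref{def:rescaling levels}, and $\Lambda_n$ is a bijection because it is affine invertible. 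The main subtlety I anticipate is handling the base case $j=1$, where $B_{n+1}(0) = A_{n+1}\cup W_{n+1}^2(0)\cup C_{n+1}$: one must check this set coincides with $F_{n+1}^{-1}(C_{n+1}(0)) = F_{n+1}^{-1}(A_{n+1}\cup W_{n+1}^1(0)\cup B_{n+1})$, which follows from the dynamics on the partition in Proposition \ref{prop:Dynamics of the partition on D} (in particular $F_{n+1}^{-1}(W_{n+1}^1(0)) = W_{n+1}^2(0)$, $F_{n+1}^{-1}(B_{n+1})\cap D_{n+1} = A_{n+1}\cup C_{n+1}$, and $F_{n+1}^{-1}(A_{n+1})\cap D_{n+1} = A_{n+1}$).
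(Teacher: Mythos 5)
Your proof is correct and mirrors the paper's own (informal) derivation in the paragraph preceding the lemma: identifying $s^h$ with $s^u$ via the two pinning conditions, and then computing $\phi\circ F^2\circ\phi^{-1}$ directly. The bijection claim, which the paper asserts without explicit justification, you handle naturally via the commutative diagram $\phi_n\circ F_n=F_{n+1}\circ\Lambda_n$ and the chain of equivalences through $C_n(j)$. One minor imprecision in the base case $j=1$: the three preimage identities you list in the final parenthetical are not individually exact — for instance $F_{n+1}^{-1}(W_{n+1}^{1}(0))\cap D_{n+1}$ has an additional vertical-line component inside $A_{n+1}$, and $F_{n+1}^{-1}(A_{n+1})\cap D_{n+1}$ and $F_{n+1}^{-1}(B_{n+1})\cap D_{n+1}$ are each only proper subsets of $A_{n+1}$ and $A_{n+1}\cup C_{n+1}$ respectively — though their union does equal $B_{n+1}(0)$, so the conclusion survives. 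The cleaner verification is to use Proposition \ref{prop:Dynamics of the partition on D} to check that $F_{n+1}$ carries each of $A_{n+1}$, $W_{n+1}^{2}(0)$, $C_{n+1}$ into $C_{n+1}(0)$, while the complementary pieces $W_{n+1}^{1}(0)$, $B_{n+1}$, $W_{n+1}^{0}(0)$ map into $W_{n+1}^{0}(0)\cup C_{n+1}$, which is disjoint from $C_{n+1}(0)$.
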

From now on, we remove the super-script from $s$ because the maps
are the same.

For an infinitely renormalizable Hénon-like map, we also adopt the
subscript used for the renormalization levels to the degenerate case.
Assume that a degenerate Hénon-like map $F(x,y)=(f(x),x)$ is infinitely
renormalizable. Let $F_{n}=R^{n}F$ and $f_{n}=\uRenormalizeC^{n}f$.
Then $F_{n}(x,y)=(f_{n}(x),x)$ by the second property of Lemma \ref{lem:degenerate-renormalization operator}.

Next proposition proves an important equality which will be used to
prove the nonexistence of wandering intervals for infinitely renormalizable
unimodal maps. The expansion argument comes from this proposition.
\begin{proposition}[Rescaling trick]
\index{rescaling trick}\label{prop:degenerate-rescaling trick}Assume
that $f\in\mathcal{I}$. Then 
\[
(s_{n+j-1}\circ f_{n+j-1})\circ\cdots\circ(s_{n}\circ f_{n})\circ f_{n}=f_{n+j}\circ s_{n+j-1}\circ\cdots\circ s_{n}
\]
for all integers $n\geq0$ and $j\geq0$.
\end{proposition}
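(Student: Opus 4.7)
The plan is to prove the identity by induction on $j$, with the single key input being the defining relation of the unimodal renormalization operator in the form $f_{n+1}\circ s_n = s_n\circ f_n^2$, i.e.\ $f_{n+1}\circ s_n = (s_n\circ f_n)\circ f_n$. This relation is just a rewriting of $f_{n+1} = R_c f_n = s_n\circ f_n^2\circ s_n^{-1}$, which is available because $F$ is infinitely renormalizable and, by Lemma \ref{lem:degenerate-renormalization operator}, the H\'enon and unimodal affine rescalings agree in the degenerate case.

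The base case $j=0$ is immediate: both sides reduce to $f_n$ (empty compositions on each side). For the inductive step, assume the identity holds at level $j$, namely
\[
(s_{n+j-1}\circ f_{n+j-1})\circ\cdots\circ(s_{n}\circ f_{n})\circ f_{n}=f_{n+j}\circ s_{n+j-1}\circ\cdots\circ s_{n}.
\]
Pre-compose both sides by $s_{n+j}\circ f_{n+j}$. On the left we get exactly the length-$(j+1)$ chain appearing in the claim. On the right we get
\[
(s_{n+j}\circ f_{n+j})\circ f_{n+j}\circ s_{n+j-1}\circ\cdots\circ s_{n}
=(s_{n+j}\circ f_{n+j}^{2})\circ s_{n+j-1}\circ\cdots\circ s_{n},
\]
and applying the renormalization relation $s_{n+j}\circ f_{n+j}^{2}=f_{n+j+1}\circ s_{n+j}$ yields
\[
f_{n+j+1}\circ s_{n+j}\circ s_{n+j-1}\circ\cdots\circ s_{n},
\]
which is the right-hand side at level $j+1$. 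This closes the induction.

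There is no real obstacle: the whole proof is two applications of the functional equation for $R_c$, once to initialize and once inside the inductive step. The only thing to be careful about is bookkeeping of the order of the factors $s_k\circ f_k$ versus $f_k\circ s_k$, which is precisely what makes the statement nontrivial — morally, the identity says that one can \emph{move} all the iterations of $f$ past the affine rescalings by paying the price of raising the renormalization level, and this is exactly what the expansion argument in the degenerate wandering-interval proof will exploit.
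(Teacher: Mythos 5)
Your proof is correct and follows essentially the same route as the paper's: induction on $j$, with the base case $j=0$ being immediate and the inductive step consisting of pre-composing with $s_{n+j}\circ f_{n+j}$ and applying the renormalization relation $f_{n+j+1}=s_{n+j}\circ f_{n+j}^{2}\circ s_{n+j}^{-1}$. The only cosmetic difference is that the paper inserts $s_{n+j}^{-1}\circ s_{n+j}$ explicitly, whereas you state the relation in the equivalent pre-rearranged form $s_{n+j}\circ f_{n+j}^{2}=f_{n+j+1}\circ s_{n+j}$.
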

\begin{proof}
Prove by induction on $j$. It is clear that the equality holds when
$j=0$.

Assume that the equality holds for some $j$. Then 
\begin{align*}
 & (s_{n+j}\circ f_{n+j})\circ(s_{n+j-1}\circ f_{n+j-1})\circ\cdots\circ(s_{n}\circ f_{n})\circ f_{n}\\
= & (s_{n+j}\circ f_{n+j})\circ f_{n+j}\circ s_{n+j-1}\circ\cdots\circ s_{n}\\
= & (s_{n+j}\circ f_{n+j}\circ f_{n+j}\circ s_{n+j}^{-1})\circ s_{n+j}\circ s_{n+j-1}\circ\cdots\circ s_{n}\\
= & f_{n+j+1}\circ s_{n+j}\circ s_{n+j-1}\circ\cdots\circ s_{n}.
\end{align*}
Therefore, the lemma is proved by induction.
\end{proof}

By Lemma \ref{lem:degenerate-renormalization operator} and Proposition
\ref{lem:degenerate-renormalization operator}, we get
\begin{corollary}
\label{cor:degenerate-Rescaling trick}Assume that $F\in\mathcal{I}_{\delta}(I^{h}\times I^{v})$
is a degenerate Hénon-like map. Then 
\[
\Phi_{n}^{j}\circ F_{n}=F_{n+j}\circ\Lambda_{n+j-1}\circ\cdots\circ\Lambda_{n}
\]
for all integers $n\geq0$ and $j\geq0$.
\end{corollary}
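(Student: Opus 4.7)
My plan is to unwind everything into components using the degenerate normal form $F_k(x,y)=(f_k(x),x)$ supplied by Lemma \ref{lem:degenerate-renormalization operator}, and then invoke the rescaling trick of Proposition \ref{prop:degenerate-rescaling trick} to match the two sides. Nothing deeper is needed: the whole identity is a bookkeeping consequence of those two results.

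Concretely, first I would observe that in the degenerate case the nonlinear part is $H_n(x,y)=(f_n(x),y)$, so $\phi_n(x,y)=\Lambda_n\circ H_n(x,y)=(s_n\circ f_n(x),s_n(y))$. An easy induction on $j$ then gives
\[
\Phi_n^j(x,y)=\bigl((s_{n+j-1}\circ f_{n+j-1})\circ\cdots\circ(s_n\circ f_n)(x),\; S(y)\bigr),
\]
where $S:=s_{n+j-1}\circ\cdots\circ s_n$. Applying this to $F_n(x,y)=(f_n(x),x)$ yields
\[
\Phi_n^j\circ F_n(x,y)=\bigl((s_{n+j-1}\circ f_{n+j-1})\circ\cdots\circ(s_n\circ f_n)\circ f_n(x),\; S(x)\bigr).
\]

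Second, I would compute the right-hand side in the same coordinates. Since $\Lambda_k(x,y)=(s_k(x),s_k(y))$, the composition $\Lambda_{n+j-1}\circ\cdots\circ\Lambda_n$ is simply $(S(x),S(y))$, and therefore
\[
F_{n+j}\circ\Lambda_{n+j-1}\circ\cdots\circ\Lambda_n(x,y)=(f_{n+j}(S(x)),\,S(x)).
\]
The second components of the two expressions already agree, and the equality of the first components is precisely the content of Proposition \ref{prop:degenerate-rescaling trick} applied to the unimodal component $f$. This completes the identification.

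I do not foresee any genuine obstacle; the only point requiring a little care is recognizing that each nonlinear factor $H_k$ inside $\phi_k$ is exactly what the unimodal rescaling trick is designed to absorb once it meets the leading $f_n$ coming from $F_n$. Once that is noticed, the identity is a one-line component computation.
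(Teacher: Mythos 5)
Your proof is correct and fills in exactly the component-wise computation the paper is gesturing at when it cites Lemma \ref{lem:degenerate-renormalization operator} (for the degenerate form $\phi_n(x,y)=(s_n\circ f_n(x),s_n(y))$ and $\Lambda_n(x,y)=(s_n(x),s_n(y))$) together with Proposition \ref{prop:degenerate-rescaling trick}. Unwinding both sides into first and second coordinates and matching the first coordinates via the rescaling trick is precisely the intended argument.
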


\subsection{Nonexistence of wandering intervals}

In this section, we present a proof for the nonexistence of wandering
intervals for infinitely renormalizable unimodal maps by identifying
a unimodal map as a degenerate Hénon-like map and using the Hénon
renormalization. A wandering interval\index{wandering interval|textbf}
is a nonempty interval such that its orbit does not intersect itself
and the omega limit set does not contain a periodic point.
\begin{proposition}
\label{No wandering interval (degenerate)}\index{expansion argument!degenerate|textit}A
infinitely renormalizable unimodal map does not have a wandering interval\index{wandering interval|textit}.
\end{proposition}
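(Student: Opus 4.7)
Argue by contradiction: assume $f$ has a wandering interval $J$, and identify $f$ with the degenerate H\'enon-like map $F(x,y)=(f(x),x)$, so that $J\times I^v$ is a wandering domain of $F$. By Corollary \ref{cor:Wandering Domain/Renormalization}, after replacing $F$ by a sufficiently high iterated renormalization we may assume $F\in\hat{\mathcal{I}}_{\delta}(I^h\times I^v,\overline{\epsilon})$ so that every uniform estimate of Chapters \ref{sec:Renormalization of Henon Like Map} and \ref{sec:Infinite Renormalizable H=0000E9non-Like Maps} applies; after possibly iterating or rescaling $J$ further we may also assume $J\subset A_0\cup B_0$. Let $\{J_n\}_{n\ge 0}$ be the $J$-closest approach of Definition \ref{def: J_n Sequence of Wandering Domain}, with renormalization levels $r(n)$, rescaling levels $k_n$, and horizontal sizes $l_n:=l(J_n)$. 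The plan is to prove a uniform one-step expansion $l_{n+1}\ge E\,l_n$ with the universal constant $E>1$ from Lemma \ref{lem:Expanding rate on A and C}; this forces $l_n\ge E^n l_0\to\infty$, contradicting $l_n\le|I^h|$.

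Two cases arise, using that for the degenerate map $l(F_m(U))=|f_m(\pi_x U)|$. In Case 1, $J_n\subset A_{r(n)}$ and $J_{n+1}=F_{r(n)}(J_n)$; since $A_{r(n)}\subset I^{AC}\times I^v_{r(n)}$, Lemma \ref{lem:Expanding rate on A and C} gives $l_{n+1}\ge E\,l_n$ at once. In Case 2, $J_n\subset B_{r(n)}(k_n)$ with $k_n\ge 1$, and the naive estimate breaks down because $f_{r(n)}$ has a critical point inside $B$. The rescaling trick (Corollary \ref{cor:degenerate-Rescaling trick}) rescues the argument by rewriting
\[
J_{n+1}=\Phi_{r(n)}^{k_n}\circ F_{r(n)}(J_n)=F_{r(n+1)}\circ\Lambda_{r(n+1)-1}\circ\cdots\circ\Lambda_{r(n)}(J_n).
\]
Each $\Lambda_i$ multiplies the horizontal size by $|\lambda_i|>1$, and by Lemma \ref{lem:degenerate-renormalization operator} the composition $\Lambda_{r(n+1)-1}\circ\cdots\circ\Lambda_{r(n)}$ maps $B_{r(n)}(k_n)$ bijectively onto $B_{r(n+1)}(0)=A_{r(n+1)}\cup W_{r(n+1)}^2(0)\cup C_{r(n+1)}\subset I^{AC}\times I^v_{r(n+1)}$, the expanding region of $f_{r(n+1)}$ (connectedness of $J_n$ and disjointness from stable manifolds keep the image in a single monotonicity interval of $f_{r(n+1)}$). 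Applying Lemma \ref{lem:Expanding rate on A and C} once more to $F_{r(n+1)}$ yields
\[
l_{n+1}\ge E\prod_{i=r(n)}^{r(n+1)-1}|\lambda_i|\cdot l_n\ge E\,l_n.
\]

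Combining the two cases, $l_n\to\infty$, a contradiction. The main obstacle is Case 2: a direct iteration estimate would fail near the critical point of $f_{r(n)}$ inside $B$, and it is precisely the rescaling trick that converts the nonlinear rescaling $\phi$ composed with $F$ into a chain of affine expansions followed by an expanding iteration in the ``good'' region $A\cup C$, sidestepping the critical point entirely. This is the expansion argument that later breaks down in the non-degenerate setting, where $\phi=\Lambda\circ H$ is no longer affine and a ``bad region'' of non-expansion reappears, motivating the thickness/area machinery of Chapters \ref{sec:Good and Bad region}--\ref{sec:Bad region}.
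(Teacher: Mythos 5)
Your proof is correct and takes essentially the same route as the paper's: reduce to a map close to $G$, follow the $J$-closest approach, show uniform one-step expansion of $l_n$ via $g$'s expansion on $A$ in Case 1 and via Corollary \ref{cor:degenerate-Rescaling trick} plus $g$'s expansion on $A\cup C$ in Case 2, and derive a contradiction with $l_n\le|I^h|$. The only cosmetic differences are that the paper starts from the horizontal slice $J^u\times\{0\}$ rather than $J\times I^v$, and invokes Proposition \ref{prop:Derivative of g on A} directly where you cite its Hénon-form consequence Lemma \ref{lem:Expanding rate on A and C}.
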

\begin{proof}
(Sketch of the proof) Prove by contradiction. Assume that $f$ is
an infinitely renormalizable unimodal map that has a wandering interval
$J^{u}$. Without lose of generality, we may assume that the map is
close to the fixed point $g$ of the renormalization operator because
the sequence of renormalizations $\uRenormalizeC^{n}f$ converges
to $g$ as $n$ approaches to infinity. Let $F=(f,x)$. Then $F$
is a degenerate infinitely renormalizable Hénon-like map. Assume that
$J^{u}\subset I$ is a wandering interval of $f_{0}$. Let $J^{h}=J^{u}\times\{0\}$
and $J_{n}\subset A_{r(n)}\cup B_{r(n)}$ be the $J^{h}$-closest
approach. The projection $\pi_{x}J_{n}$ is a wandering interval of
$f_{r(n)}$ and the horizontal size $l_{n}$ is the length of the
projection.

If $J_{n}\subset A_{r(n)}$, then
\[
l_{n+1}>El_{n}
\]
for some constant $E>1$ because $g$ is expanding on $A(g)$ by Proposition
\ref{prop:Derivative of g on A} and the map $f_{r(n)}$ is close
to $g$.

If $J_{n}\subset B_{r(n)}(k_{n})$, then $J_{n+1}=F_{r(n+1)}\circ\Lambda_{r(n)+k_{n}-1}\circ\cdots\circ\Lambda_{r(n)}(J_{n})$
by Corollary \ref{cor:degenerate-Rescaling trick}. The rescaling
maps $\Lambda_{r(n)},\cdots,\Lambda_{r(n)+k_{n}-1}$ expands the horizontal
size. The map $F_{r(n+1)}$ also expands the horizontal size because
$\Lambda_{r(n)+k_{n}-1}\circ\cdots\circ\Lambda_{r(n)}(J_{n})\subset A_{r(n+1)}\cup C_{r(n+1)}$,
$g$ is expanding on $A(g)\cup C(g)$ by Proposition \ref{prop:Derivative of g on A},
and the map $f_{r(n+1)}$ is close to $g$. Thus, 
\[
l_{n+1}>E'l_{n}
\]
for some constant $E'>1$.

This shows that the horizontal size $l_{n}$ approaches to infinity
which yields a contradiction. Therefore, wandering intervals cannot
exist.
\end{proof}

In the proof, we showed that the horizontal size expands at a definite
size in each size. This motivates the proof for the non-degenerate
case. In the remain part of the article, we will study the growth
rate or contraction rate of the horizontal size. In Chapter \ref{sec:Good region},
we will show this is also true for the non-degenerate case under some
conditions.

\section{\label{sec:Good and Bad region}The Good\index{good region} Region
and the Bad Region\index{bad region}}

In this chapter, we group the sub-partitions of $\left\{ B_{n}(j)\right\} _{j=1}^{\infty}$
and $\left\{ C_{n}(j)\right\} _{j=1}^{\infty}$ into two regions by
the following phenomena. Assume that $\left\{ J_{n}\right\} _{n=0}^{\infty}$
is the $J$-closest approach and $J_{n}\in B_{r(n)}(k_{n})$ for some
$n$.

When $J_{n}$ is far from the center, i.e. $k_{n}$ is small, the
topology of $B_{r(n)}(k_{n})$ and the dynamics of $F_{r(n)}$ behave
like the unimodal case. It can be proved that the boundaries of $B_{n}(k_{n})$ are vertical
graphs of small Lipschitz constant.  Also, studying the iteration of horizontal endpoints of a wandering
domain provides a good approximation to the expansion rate of the
horizontal size. Chapter \ref{sec:Good region} will show the expansion
argument works in this case. This group is called ``the good region''.

However, when $J_{n}$ is close to the center, i.e. $k_{n}$ is large,
the topology of $B_{r(n)}(k_{n})$ and dynamics of $F_{r(n)}$ is
different from the unimodal case. In this group, the two boundary
local stable manifolds of $C_{r(n)}(k_{n})$ are so close to the tip
$\tau_{r(n)}$ that they only intersect the image $F_{r(n)}(D_{r(n)})$
once. Thus, the preimage of a local stable manifold becomes concave
and hence $B_{r(n)}(j)$ becomes an arch-like domain. See the left
graph of Figure \ref{fig:Sequence of wandering domain} and the next
paragraph. Also, the expansion argument fails. The iteration of horizontal
endpoints of a wandering domain fails to provide an approximation
for the change rate of the horizontal size. In fact, we show that
the $x$-coordinate of the two iterated horizontal endpoints can be
as close as possible in the next paragraph. This group is called ``the
bad region''.

The vertical line argument\index{vertical line argument} in Figure
\ref{fig:vertical line argument} explains why the expansion argument
fails in the bad region. The construction is as follows. Draw a vertical
line (dashed vertical line in the figure) so close to the tip that
its intersection with the image of $F_{r(n)}$ only has one component.
Take the preimage of the intersection. Unlike the case in the good
region, the preimage is not a vertical graph. Instead, it is a concave
curve that has a $y$-extremal point close to the center of the domain.
When a sequence element $J_{n}$ is in the bad region, it is close
to the center. The size of $J_{n}$ is small because the size of bad
region is small and hence $F_{r(n)}$ acts like a linear map. If the
line $\overleftrightarrow{UV}$ connecting horizontal endpoints $U$
and $V$ of $J_{n}$ is also parallel to the concave curve as in Figure
\ref{fig:vertical argument-J}, the image of the horizontal endpoints
will also be parallel to the vertical line as Figure \ref{fig:vertical argument-F(J)}
shows. In this case, the iterated horizontal endpoints forms a vertical
line that has no $x$-displacement. Therefore, the horizontal size
shrinks and the horizontal endpoints fail to estimate the change of
horizontal size when a sequence element enters the bad region.

\begin{figure}
\begin{centering}
\includegraphics[scale=0.5]{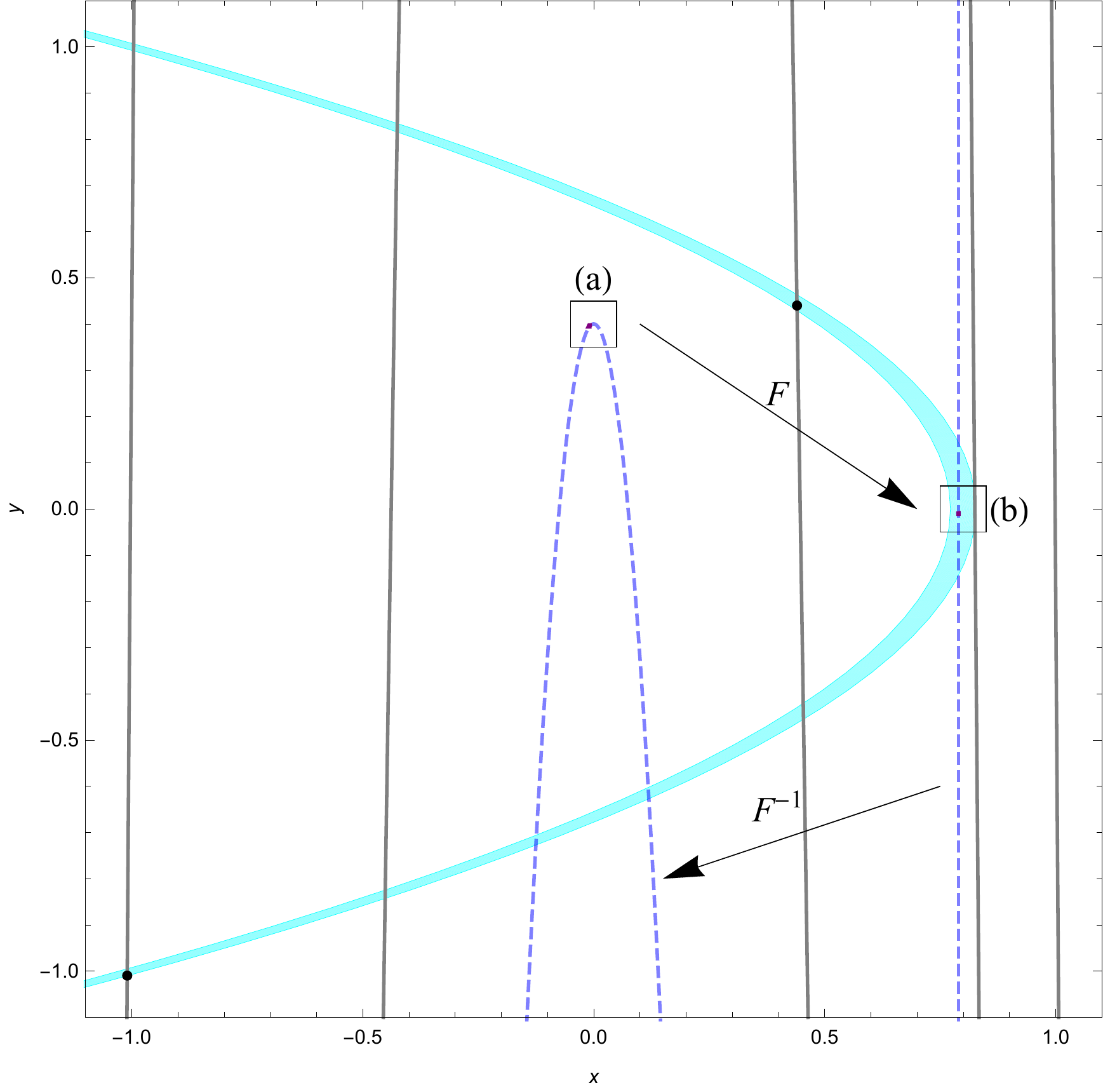}
\par\end{centering}
\resizebox{\columnwidth}{!}{%
\begin{centering}
\subfloat[\label{fig:vertical argument-J}]{\includegraphics[scale=0.5]{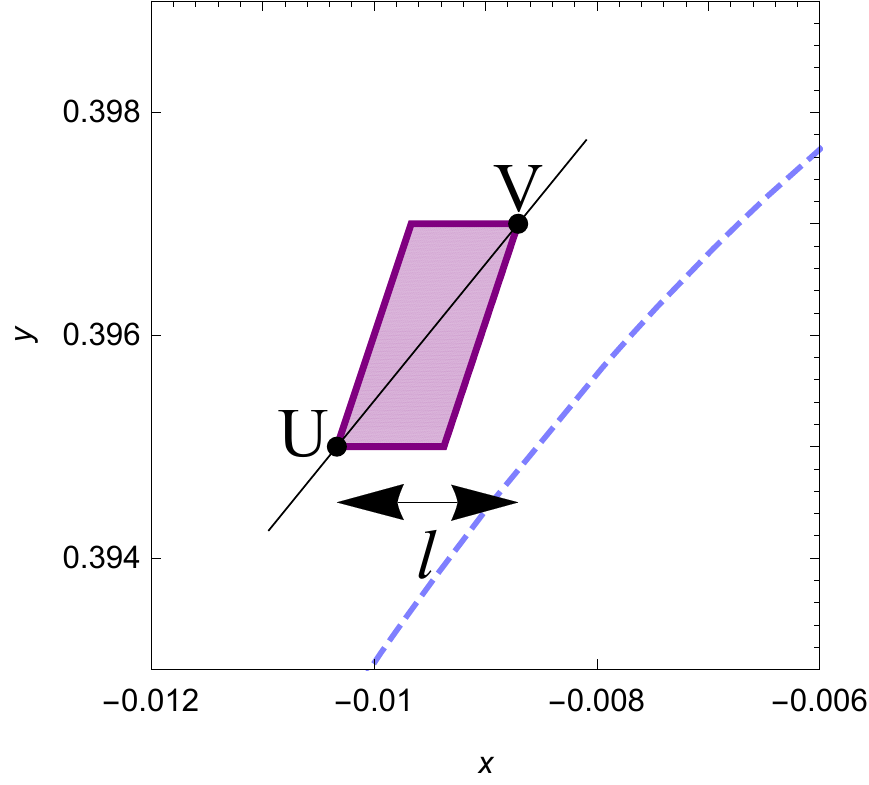}

}\subfloat[\label{fig:vertical argument-F(J)}]{\includegraphics[scale=0.5]{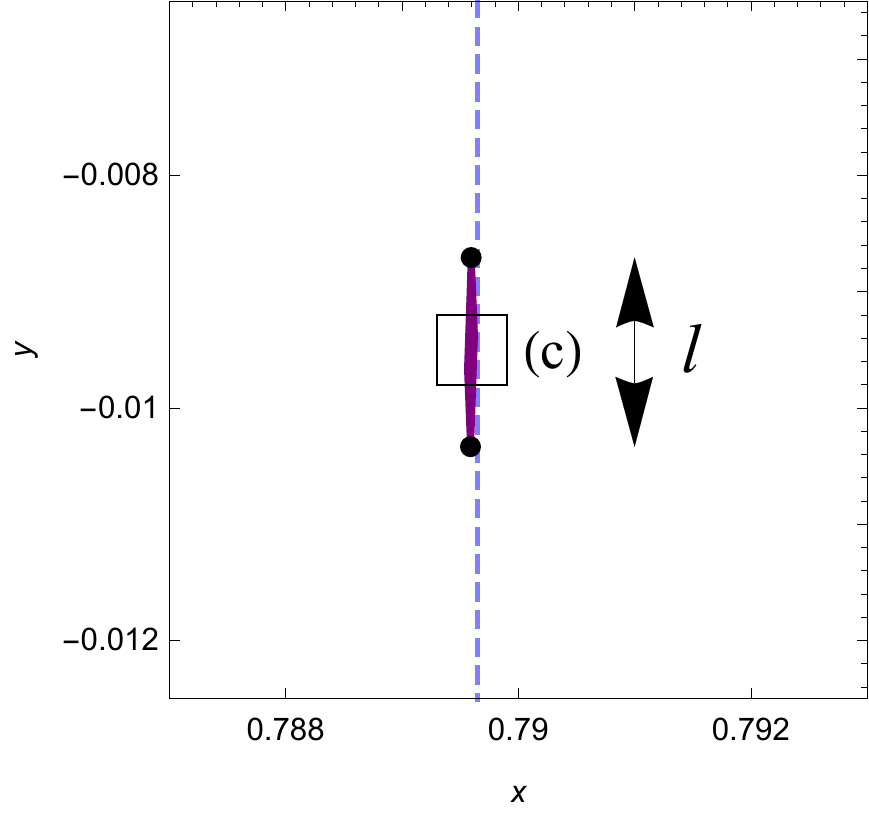}

}\subfloat[\label{fig:vertical argument-area}Zoom in (b).]{\includegraphics[scale=0.5]{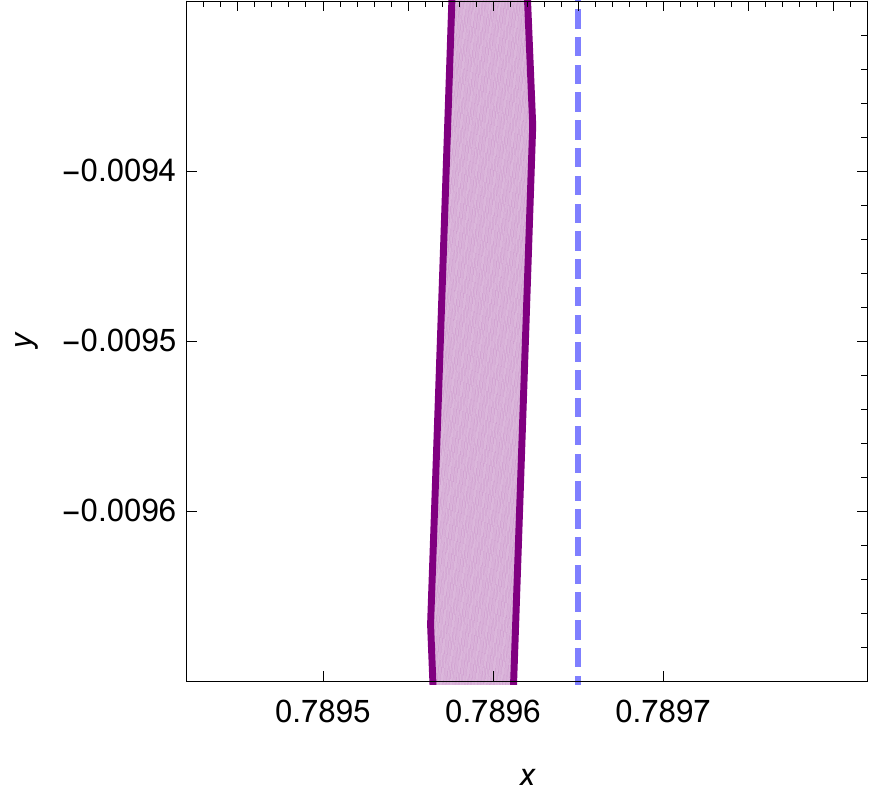}

}
\par\end{centering}
}

\caption{\label{fig:vertical line argument}Vertical line argument. The scales
in (a) and (b) are chosen to be the same for the reader to compare
the change of horizontal size.}
\end{figure}

From the vertical line argument, it becomes crucial to group the sets
$\left\{ C_{n}(j)\right\} _{j\geq1}$ by how close the set to the
tip is. The size of the image is $\left\Vert \epsilon_{n}\right\Vert $.
To avoid a vertical line intersecting the image only once, the line
has to be $\left\Vert \epsilon_{n}\right\Vert $ away from the tip.
This motivates the definition of the boundary sequence $\left\{ K_{n}\right\} _{n\geq0}$,
the good region, and the bad region.
\begin{definition}[Good and Bad Regions]
\label{def:Good and bad region}\nomenclature[K_n]{$K_{n}$}{Boundary for good and bad regions}\label{def:K_n}Fixed
$b>0$. Assume that $\overline{\epsilon}>0$ is sufficiently small
so that Proposition \ref{prop:Geometric properties of W_n(j)} holds
and $F\in\hat{\mathcal{I}}_{\delta}(I^{h}\times I^{v},\overline{\epsilon})$.
For each $n\geq0$, define $K_{n}=K_{n}(b)$ to be the largest positive
integer such that 
\[
\left|\pi_{x}z-\pi_{x}\tau_{n}\right|>b\left\Vert \epsilon_{n}\right\Vert 
\]
for all $z\in W_{n}^{0}(K_{n})\cap\left(I^{h}\times I^{h}\right)$.

The set $C_{n}(j)$ (resp. $B_{n}(j)$) is in the good region\index{good region|textbf}
if $j\leq K_{n}$; in the bad region\index{bad region|textbf} if
$j>K_{n}$. The sequence $K_{n}$ is called the boundary for the good
region and the bad region. See Figure \ref{fig:Good and Bad region}.
\end{definition}
\begin{remark}
It is enough to consider the subdomain $I^{h}\times I^{h}\subset I^{h}\times I_{n}^{v}$
in the definition because $F_{n}(D_{n})\subset I^{h}\times I^{h}$.
\end{remark}
\begin{remark}
Here the boundary sequence $\left\{ K_{n}\right\} _{n\geq0}$ depends
on the constant $b$ and we make $b$ flexible. In the theorems of
this chapter, we will prove that each property holds for all $b$
that satisfies certain constraints. At the end, we will fix a constant
$b$ sufficiently large that makes all theorems work. So the sequence
$\left\{ K_{n}\right\} _{n\geq0}$ will be fixed in the remaining
article.
\end{remark}
\begin{remark}
One can see that the bad region is a special feature for the Hénon
case. For the degenerate case, $\epsilon_{n}=0$ and hence $K_{n}=\infty$.
This means that there are no bad region for the degenerate case.

\begin{figure}
\begin{centering}
\includegraphics{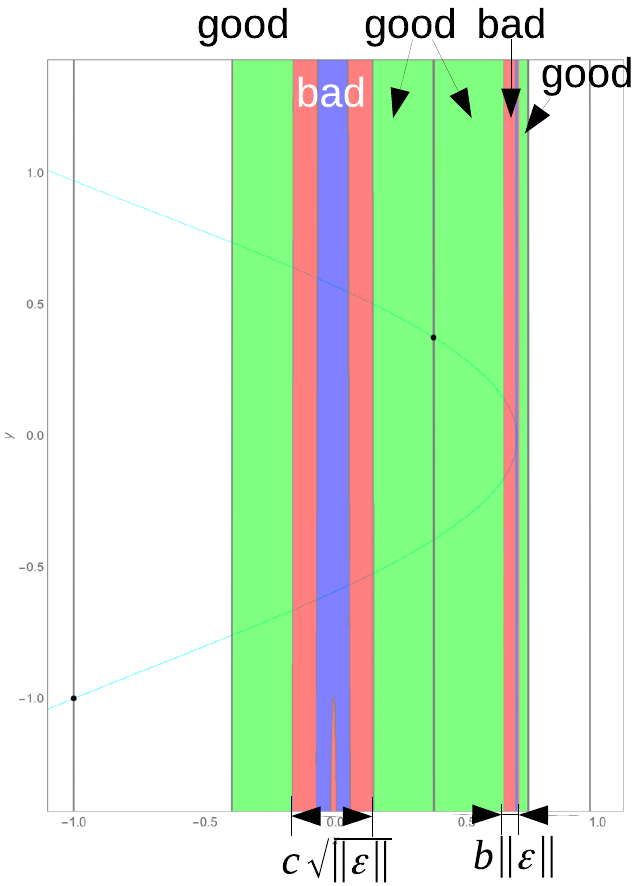}
\par\end{centering}
\caption{\label{fig:Good and Bad region}Good and bad region. The shaded areas
represent the rescaling levels 1, 2, and below from light to dark.
In this example, one can see a tiny light area on the center bottom
part of the graph. This is because $C^{r}(2)$ intersects the image
$F(D)$. Hence, the boundary is $K=2$ and the good region contains
the lightest part and the bad region contains the two darker parts.}
\end{figure}
\end{remark}
Our goal in this chapter is to study the geometric properties for
the good region and the bad region. The main theorem is stated as
follows.
\begin{proposition}[Geometric properties for the good region and the bad region]
\label{prop:Good and Bad Regions}\index{good region|textit}\index{bad region|textit}Given
$\delta>0$ and $I^{v}\supset I^{h}\Supset I$. There exists $\overline{\epsilon}>0$,
$\overline{b}>0$, and $c>1$ such that for all $F\in\hat{\mathcal{I}}_{\delta}(I^{h}\times I^{v},\overline{\epsilon})$
and $b>\overline{b}$ the following properties hold for all $n\geq0$:

The boundary $K_{n}$ is bounded by 
\begin{equation}
\frac{1}{c}\frac{1}{\sqrt{b\left\Vert \epsilon_{n}\right\Vert }}\leq\lambda^{K_{n}}\leq c\frac{1}{\sqrt{b\left\Vert \epsilon_{n}\right\Vert }}.\label{eq:K_n bounds}
\end{equation}
For the good region $1\leq j\leq K_{n}$, we have

\begin{enumerate}
\item $C_{n}^{r}(j)\cap F_{n}(D_{n})=\phi$,
\item $\left|\pi_{x}z-\pi_{x}\tau_{n}\right|>b\left\Vert \epsilon_{n}\right\Vert $
for all $z\in C_{n}(j)\cap F_{n}(D_{n})$,
\item $\left|\pi_{x}z-v_{n}\right|>\frac{1}{c}\sqrt{b\left\Vert \epsilon_{n}\right\Vert }$
for all $z\in B_{n}(j)$, and 
\item $\frac{1}{c}\left(\frac{1}{\lambda}\right)^{2j}<\left|\pi_{x}z-\pi_{x}\tau_{n}\right|<c\left(\frac{1}{\lambda}\right)^{2j}$
for all $z\in C_{n}(j)\cap F_{n}(D_{n})$.
\end{enumerate}
For the bad region $j>K_{n},$ we have

\begin{enumerate}
\item $\left|\pi_{x}z-\pi_{x}\tau_{n}\right|<cb\left\Vert \epsilon_{n}\right\Vert $
for all $z\in C_{n}(j)\cap F_{n}(D_{n})$ and
\item $\left|\pi_{x}z-v_{n}\right|<c\sqrt{b\left\Vert \epsilon_{n}\right\Vert }$
for all $z\in B_{n}(j)$.
\end{enumerate}
\end{proposition}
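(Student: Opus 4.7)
The plan is to first establish the bound (\ref{eq:K_n bounds}) on $K_n$ and then derive all six geometric properties from it together with Proposition~\ref{prop:Geometric properties of W_n(j)}, Lemma~\ref{lem:Quadratic center}, and Proposition~\ref{prop:Bound for f(v)-tau}. For (\ref{eq:K_n bounds}): Proposition~\ref{prop:Geometric properties of W_n(j)} yields $\tfrac{1}{c'}\lambda^{-2j} \leq |\pi_x z_n^{(0)}(j) - \pi_x\tau_n| \leq c'\lambda^{-2j}$, while the vertical-graph Lipschitz property implies $|\pi_x z - \pi_x z_n^{(0)}(j)| \leq c|I^h|\|\epsilon_n\|$ for every $z \in W_n^0(j)\cap(I^h\times I^h)$. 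Hence the infimum of $|\pi_x z - \pi_x\tau_n|$ over this intersection equals $\lambda^{-2j}$ up to an additive $O(\|\epsilon_n\|)$ error; requiring it to exceed $b\|\epsilon_n\|$ is, once $b > \overline{b}$ is large enough to absorb the Lipschitz correction, equivalent to $\lambda^{-2j} \asymp b\|\epsilon_n\|$, and taking $j = K_n$ produces (\ref{eq:K_n bounds}).

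For the good region $1 \leq j \leq K_n$: Property (2) is immediate from the definition of $K_n$ combined with the Lipschitz bound applied to the four local stable manifolds $W_n^{0,2}(j-1), W_n^{0,2}(j)$ that bound $C_n(j)$. Property (4) is a restatement of Proposition~\ref{prop:Geometric properties of W_n(j)} with the $O(\|\epsilon_n\|)$ Lipschitz error absorbed into the constant, which is permissible because $\lambda^{-2j} \geq c^{-2}b\|\epsilon_n\|$ in this range. For Property (1), Proposition~\ref{prop:Bound for f(v)-tau} gives $\pi_x F_n(D_n) \leq f_n(v_n) + \|\epsilon_n\| \leq \pi_x\tau_n + (c_0 + 1)\|\epsilon_n\|$, and this is compared with $\pi_x C_n^r(j) \geq \pi_x\tau_n + c^{-1}\lambda^{-2j} - c|I^h|\|\epsilon_n\| \geq \pi_x\tau_n + c_1 b\|\epsilon_n\|$; enlarging $\overline{b}$ to overwhelm $c_0 + 1$ forces disjointness. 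Property (3) follows by applying Property~(2) to $F_n(z) \in C_n(j)$, subtracting the $\|\epsilon_n\|$ contributions coming from $\epsilon_n(z)$ and from $f_n(v_n) - \pi_x\tau_n$ to conclude $|f_n(\pi_x z) - f_n(v_n)| \geq c_2 b\|\epsilon_n\|$, and then inverting the quadratic upper bound $|f_n(x) - f_n(v_n)| \leq \tfrac{a}{2}(x - v_n)^2$ of Lemma~\ref{lem:Quadratic center}.

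For the bad region $j > K_n$: maximality of $K_n$ produces a witness $z_0 \in W_n^0(K_n + 1)\cap(I^h\times I^h)$ with $|\pi_x z_0 - \pi_x\tau_n| \leq b\|\epsilon_n\|$, and the Lipschitz bound propagates this to $|\pi_x z - \pi_x\tau_n| \leq (1 + c|I^h|)b\|\epsilon_n\|$ for every $z \in W_n^0(K_n + 1)\cap(I^h\times I^h)$; the monotonicity $|z_n^{(0)}(j) - \tau_n| < c'\lambda^{-2j}$ from Proposition~\ref{prop:Geometric properties of W_n(j)} shows the same bound holds for all deeper $j > K_n$, and an identical estimate holds for $W_n^2(j)$, which together confine $C_n(j)\cap F_n(D_n)$ and establish Property~(1). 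Property~(2) then follows by combining Property~(1), Proposition~\ref{prop:Bound for f(v)-tau}, and the quadratic lower bound $\tfrac{1}{2a}(x - v_n)^2 \leq |f_n(x) - f_n(v_n)|$ of Lemma~\ref{lem:Quadratic center}. The main obstacle is the simultaneous bookkeeping of constants: $\overline{b}$ must be fixed only after all the other constants $c, c', c_0, |I^h|, a$ are determined, so that every additive $O(\|\epsilon_n\|)$ correction --- coming from the Lipschitz graph estimate, from $\epsilon_n$ itself, and from the shadowing error $|f_n(v_n) - \pi_x\tau_n|$ --- is dominated by a definite fraction of $b\|\epsilon_n\|$ uniformly in $F \in \hat{\mathcal{I}}_\delta$ and in $b > \overline{b}$.
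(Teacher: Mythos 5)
Your proposal is correct and follows essentially the same route as the paper: the $K_n$ bound from Proposition \ref{prop:Geometric properties of W_n(j)} plus the Lipschitz graph estimate, and then each of the six properties derived from that bound together with Lemma \ref{lem:Quadratic center} and Proposition \ref{prop:Bound for f(v)-tau}, with the same attention to fixing $\overline{b}$ last. The one cosmetic difference is in the bad-region containment: the paper bounds the bad region directly by $W_n^0(K_n)$ and $W_n^2(K_n)$ using Lemma \ref{lem:W location} at $j=K_n$ together with (\ref{eq:K_n bounds}), whereas you invoke a witness on $W_n^0(K_n+1)$ and then appeal to monotonicity --- this works but is slightly more roundabout, and you should note that the outer boundary of the bad region is $W_n^{0,2}(K_n)$ rather than $W_n^{0,2}(K_n+1)$ so the direct estimate on $W_n^{0,2}(K_n)$ is what actually confines $C_n^l(K_n+1)$.
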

This proposition will be proved by the lemmas in this chapter.

First, we estimate the bounds for the boundary $K_{n}$.
\begin{lemma}
\label{lem:K_n bounds}Given $\delta>0$ and $I^{v}\supset I^{h}\Supset I$.
There exists $\overline{\epsilon}>0$, $\overline{b}>0$, and $c>1$
such that for all $F\in\hat{\mathcal{I}}_{\delta}(I^{h}\times I^{v},\overline{\epsilon})$
and $b>\overline{b}$ we have
\[
\frac{1}{c}\frac{1}{\sqrt{b\left\Vert \epsilon_{n}\right\Vert }}\leq\lambda^{K_{n}}\leq c\frac{1}{\sqrt{b\left\Vert \epsilon_{n}\right\Vert }}
\]
for all $n\geq0$.
\end{lemma}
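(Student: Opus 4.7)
The plan is to derive the bounds on $K_n$ directly from the geometric description of the local stable manifolds given in Proposition \ref{prop:Geometric properties of W_n(j)}, using the definition of $K_n$ as the largest integer for which a certain separation from the tip holds.

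First I would fix the horizontal slice through $\tau_n$, where Proposition \ref{prop:Geometric properties of W_n(j)} gives the two-sided estimate
\[
\tfrac{1}{c'}\lambda^{-2j} < |z_n^{(0)}(j) - \pi_x \tau_n| < c'\lambda^{-2j}
\]
for the intersection point $z_n^{(0)}(j)$ of $W_n^0(j)$ with that slice. Then, since $W_n^0(j)$ is a vertical graph with Lipschitz constant $c\|\epsilon_n\|$ (same proposition), for any other point $z \in W_n^0(j) \cap (I^h \times I^h)$ the $x$-coordinate differs from $z_n^{(0)}(j)$ by at most $c\|\epsilon_n\| \cdot |I^h|$. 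Combined with the triangle inequality, this yields a uniform bound of the form
\[
\bigl|\, |\pi_x z - \pi_x\tau_n| - |z_n^{(0)}(j) - \pi_x\tau_n|\,\bigr| \leq c''\|\epsilon_n\|
\]
for every $z \in W_n^0(j)\cap(I^h\times I^h)$.

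Next I would use the defining property of $K_n$. For the upper bound, the inequality $|\pi_x z - \pi_x\tau_n| > b\|\epsilon_n\|$ must hold on all of $W_n^0(K_n)\cap(I^h\times I^h)$, so in particular
\[
\tfrac{1}{c'}\lambda^{-2K_n} - c''\|\epsilon_n\| \geq b\|\epsilon_n\|,
\]
which after taking $\overline{b}$ large enough that $b \gg c''/b$ gives $\lambda^{-2K_n} \geq \tfrac{1}{2c'}(b\|\epsilon_n\|)$, i.e. $\lambda^{K_n} \leq C(b\|\epsilon_n\|)^{-1/2}$ for a suitable $C$. For the lower bound, maximality of $K_n$ implies that the inequality \emph{fails} at level $K_n+1$: there is some $z \in W_n^0(K_n+1)\cap(I^h\times I^h)$ with $|\pi_x z - \pi_x\tau_n| \leq b\|\epsilon_n\|$, so that
\[
c'\lambda^{-2(K_n+1)} + c''\|\epsilon_n\| \geq b\|\epsilon_n\|,
\]
and again choosing $\overline{b}$ large compared to $c''$ yields $\lambda^{-2(K_n+1)} \gtrsim b\|\epsilon_n\|$, i.e. $\lambda^{K_n} \geq c^{-1}(b\|\epsilon_n\|)^{-1/2}$ after absorbing the factor $\lambda$.

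There is no real obstacle here: the argument is a bookkeeping exercise combining the geometric estimate on the tip slice with the Lipschitz-graph control away from that slice, and it is precisely to kill the error term $c''\|\epsilon_n\|$ against $b\|\epsilon_n\|$ that the threshold $\overline{b}$ must be chosen sufficiently large. The final constant $c$ in the statement is then obtained by combining $c'$, $c''$, and the single factor of $\lambda$ coming from the lower bound step.
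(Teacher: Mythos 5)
Your high-level plan is the same as the paper's (translate the defining property of $K_n$ into a bound on $\lambda^{-2K_n}$ via Proposition~\ref{prop:Geometric properties of W_n(j)}), but you have swapped the roles of the upper and lower halves of that two-sided estimate, and this breaks both halves of the argument. For the upper bound on $\lambda^{K_n}$, the displayed inequality $\frac{1}{c'}\lambda^{-2K_n}-c''\|\epsilon_n\|\geq b\|\epsilon_n\|$ does \emph{not} follow from the definition: knowing that $|\pi_x z-\pi_x\tau_n|>b\|\epsilon_n\|$ for all $z\in W_n^0(K_n)$ together with the separate fact that this distance is also bounded below by $\frac{1}{c'}\lambda^{-2K_n}-c''\|\epsilon_n\|$ is a comparison of two lower bounds for the same quantity, which yields nothing. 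What one actually needs is the \emph{upper} bound from Proposition~\ref{prop:Geometric properties of W_n(j)}: since $z_n^{(0)}(K_n)\in W_n^0(K_n)\cap(I^h\times I^h)$, the definition gives $|z_n^{(0)}(K_n)-\pi_x\tau_n|>b\|\epsilon_n\|$, and the proposition gives $|z_n^{(0)}(K_n)-\pi_x\tau_n|<c'\lambda^{-2K_n}$, so $c'\lambda^{-2K_n}>b\|\epsilon_n\|$ directly, with no Lipschitz error term needed at all.

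For the lower bound on $\lambda^{K_n}$ the situation is worse: the inequality you write down, $c'\lambda^{-2(K_n+1)}+c''\|\epsilon_n\|\geq b\|\epsilon_n\|$, is again a comparison of bounds that does not follow from the hypothesis $|\pi_x z-\pi_x\tau_n|\leq b\|\epsilon_n\|$, and moreover it is a \emph{lower} bound on $\lambda^{-2(K_n+1)}$, which after taking reciprocals and square roots produces an \emph{upper} bound on $\lambda^{K_n}$, not the lower bound you assert. Here the Lipschitz control is genuinely needed, but on the other side: from the triangle inequality and the vertical-graph estimate, $|z_n^{(0)}(K_n+1)-\pi_x\tau_n|\leq|\pi_x z-\pi_x\tau_n|+c\|\epsilon_n\||I^h|\leq(b+c|I^h|)\|\epsilon_n\|$, and combining this with the \emph{lower} half of Proposition~\ref{prop:Geometric properties of W_n(j)}, $\frac{1}{c'}\lambda^{-2(K_n+1)}\leq|z_n^{(0)}(K_n+1)-\pi_x\tau_n|$, gives $\lambda^{-2(K_n+1)}\leq c'(b+c|I^h|)\|\epsilon_n\|$ and hence $\lambda^{K_n}\gtrsim(b\|\epsilon_n\|)^{-1/2}$. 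So the ``bookkeeping'' is indeed all there is, but you have the two sides of the dichotomy crossed; as written, neither implication in your argument is valid.
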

\begin{proof}
In the proof, we apply Proposition \ref{prop:Geometric properties of W_n(j)}
to relate the rescaling level $K_{n}$ with the $x$-coordinate of
the local stable manifold. Assume that $\overline{\epsilon}>0$ is
small enough.

For the upper bound, by the definition of $K_{n}$ and Proposition
\ref{prop:Geometric properties of W_n(j)}, we have 
\[
c'\left(\frac{1}{\lambda}\right)^{2K_{n}}\geq\left|z_{n}^{(0)}(K_{n})-\tau_{n}\right|\geq b\left\Vert \epsilon_{n}\right\Vert .
\]
for some constant $c'>1$. Thus, 
\[
\lambda^{K_{n}}\leq\sqrt{\frac{c'}{b}}\frac{1}{\sqrt{\left\Vert \epsilon_{n}\right\Vert }}.
\]

For the lower bound, by the definition of $K_{n}$, there exists $z\in W_{n}^{0}(K_{n}+1)\cap I^{h}\times I^{h}$
such that $\left|\pi_{x}z-\pi_{x}\tau_{n}\right|\leq b\left\Vert \epsilon_{n}\right\Vert $.
Apply Proposition \ref{prop:Geometric properties of W_n(j)} , we
get 
\begin{eqnarray*}
\frac{1}{c'}\left(\frac{1}{\lambda}\right)^{2(K_{n}+1)} & \leq & \left|z_{n}^{(0)}(K_{n}+1)-\tau_{n}\right|\\
 & \leq & \left|\pi_{x}z-\pi_{x}\tau_{n}\right|+\left|\pi_{x}z-\pi_{x}z_{n}^{(0)}(K_{n}+1)\right|\\
 & \leq & b\left\Vert \epsilon_{n}\right\Vert +c\left|I^{h}\right|\left\Vert \epsilon_{n}\right\Vert .
\end{eqnarray*}
for some constant $c>0$. We solved 
\[
\lambda^{K_{n}}\geq\frac{1}{\lambda}\sqrt{\frac{1}{c'(b+c\left|I^{h}\right|)}}\frac{1}{\sqrt{\left\Vert \epsilon_{n}\right\Vert }}\geq\frac{1}{\lambda}\sqrt{\frac{1}{2c'b}}\frac{1}{\sqrt{\left\Vert \epsilon_{n}\right\Vert }}
\]
when $b\geq c\left|I^{h}\right|$.
\end{proof}

\subsection{Properties for the good region\index{good region|(}}

To prove the properties, the strategy is to first estimate the $x$-location
of the local stable manifolds $W_{n}^{t}(j)$. Since the local stable
manifolds bounds $C_{n}(j)$, the properties for $C_{n}(j)$ follows.
Properties for $B_{n}(j)$ follows directly by the quadratic estimations
from Lemma \ref{lem:Quadratic center} and Proposition \ref{prop:Bound for f(v)-tau}.
\begin{lemma}
\label{lem:W location}Given $\delta>0$ and $I^{v}\supset I^{h}\Supset I$.
There exists $\overline{\epsilon}>0$, $\overline{b}>0$, and $c>1$
such that for all $F\in\hat{\mathcal{I}}_{\delta}(I^{h}\times I^{v},\overline{\epsilon})$
and $b>\overline{b}$ we have 
\[
\frac{1}{c}\left(\frac{1}{\lambda}\right)^{2j}\leq\left|\pi_{x}z-\pi_{x}\tau_{n}\right|\leq c\left(\frac{1}{\lambda}\right)^{2j}
\]
 for all $z\in W_{n}^{t}(j)\cap\left(I^{h}\times I^{h}\right)$ with
$t\in\{0,2\}$, $0\leq j\leq K_{n}$, and $n\geq0$.
\end{lemma}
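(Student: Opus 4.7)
\bigskip

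\noindent\textbf{Proof proposal for Lemma \ref{lem:W location}.}

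The plan is to combine the vertical-graph estimate for $W_n^t(j)$ from Proposition \ref{prop:Geometric properties of W_n(j)} with the already-proved bound on $K_n$ (Lemma \ref{lem:K_n bounds}), using the good-region hypothesis $j\le K_n$ to absorb the Lipschitz error into the geometric term $(1/\lambda)^{2j}$.

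First, fix $z\in W_n^t(j)\cap(I^h\times I^h)$ and let $z_n^{(t)}(j)$ denote the intersection of $W_n^t(j)$ with the horizontal line through $\tau_n$, as in Proposition \ref{prop:Geometric properties of W_n(j)}. Since $W_n^t(j)$ is a vertical graph with Lipschitz constant $c_1\|\epsilon_n\|$ and since $z$ and $\tau_n$ both have $y$-coordinate in $I^h$, I would estimate
\[
\bigl|\pi_x z-\pi_x z_n^{(t)}(j)\bigr|\le c_1\|\epsilon_n\|\cdot\bigl|\pi_y z-\pi_y\tau_n\bigr|\le c_1|I^h|\,\|\epsilon_n\|.
\]
Combining this with the bound $\frac{1}{c'}(1/\lambda)^{2j}<|\pi_x z_n^{(t)}(j)-\pi_x\tau_n|<c'(1/\lambda)^{2j}$ from Proposition \ref{prop:Geometric properties of W_n(j)} and applying the triangle inequality gives
\[
\bigl|\pi_x z-\pi_x\tau_n\bigr|\le c'(1/\lambda)^{2j}+c_1|I^h|\,\|\epsilon_n\|,
\]
and similarly a lower bound $|\pi_x z-\pi_x\tau_n|\ge (1/c')(1/\lambda)^{2j}-c_1|I^h|\,\|\epsilon_n\|$.

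Next, I would use the assumption $j\le K_n$ to control the error term $c_1|I^h|\,\|\epsilon_n\|$ by the geometric term $(1/\lambda)^{2j}$. By Lemma \ref{lem:K_n bounds}, $\lambda^{K_n}\le c_2/\sqrt{b\|\epsilon_n\|}$, so for any $j\le K_n$,
\[
(1/\lambda)^{2j}\ge (1/\lambda)^{2K_n}\ge \frac{b\,\|\epsilon_n\|}{c_2^{\,2}},
\]
which rearranges to $\|\epsilon_n\|\le (c_2^{\,2}/b)\,(1/\lambda)^{2j}$. Plugging this into the previous inequalities yields
\[
\bigl|\pi_x z-\pi_x\tau_n\bigr|\le\Bigl(c'+\frac{c_1|I^h|c_2^{\,2}}{b}\Bigr)(1/\lambda)^{2j},\qquad \bigl|\pi_x z-\pi_x\tau_n\bigr|\ge\Bigl(\frac{1}{c'}-\frac{c_1|I^h|c_2^{\,2}}{b}\Bigr)(1/\lambda)^{2j}.
\]
Choosing $\overline{b}$ so large that $c_1|I^h|c_2^{\,2}/\overline{b}<1/(2c')$ makes the lower coefficient positive and bounded below by $1/(2c')$, and the upper coefficient bounded above by $c'+1/(2c')$. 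Taking $c=\max(2c',\,c'+1/(2c'))$ (and shrinking $\overline{\epsilon}$ so that Proposition \ref{prop:Geometric properties of W_n(j)} and Lemma \ref{lem:K_n bounds} both apply uniformly) completes the proof for all $b>\overline{b}$, $n\ge 0$, $0\le j\le K_n$, and $t\in\{0,2\}$.

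The only subtlety I anticipate is the boundary case $j=0$, where the local stable manifolds $W_n^t(0)$ are further from the tip and the normalization of the constants in Proposition \ref{prop:Geometric properties of W_n(j)} needs to be traced carefully; but since the proposition is stated for all $j\ge -1$ with a uniform constant, no separate argument is required. The rest is bookkeeping of constants, and no new dynamical input is needed beyond the two cited results.
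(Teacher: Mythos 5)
Your proposal is correct and follows essentially the same route as the paper's proof: triangle inequality through the reference point $z_n^{(t)}(j)$, the vertical-graph Lipschitz bound from Proposition \ref{prop:Geometric properties of W_n(j)}, and Lemma \ref{lem:K_n bounds} combined with $j\le K_n$ to absorb the $\|\epsilon_n\|$ error into $(1/\lambda)^{2j}$ by choosing $b$ large. The only cosmetic difference is that the paper factors out $(1/\lambda)^{2j}$ first and uses $\lambda^{2K_n}\|\epsilon_n\|\le c'^2/b$, while you substitute $\|\epsilon_n\|\le (c_2^2/b)(1/\lambda)^{2j}$ directly; these are the same computation.
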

\begin{proof}
In the proof, we apply Proposition \ref{prop:Geometric properties of W_n(j)}
to relate the rescaling level $j$ with the $x$-coordinate of the
local stable manifold $W_{n}^{t}(j)$. Assume that $\overline{\epsilon}>0$
is sufficiently small. We prove the case for $t=0$ and the other
case is similar.

Let $z\in W_{n}^{0}(j)\cap\left(I^{h}\times I^{h}\right)$ and $b>\overline{b}$
where $\overline{b}$ is given by Lemma \ref{lem:K_n bounds}.

To prove the lower bound, apply Proposition \ref{prop:Geometric properties of W_n(j)},
we get 
\begin{eqnarray*}
\left|\pi_{x}z-\pi_{x}\tau_{n}\right| & \geq & \left|z_{n}^{(0)}(j)-\tau_{n}\right|-\left|\pi_{x}z-\pi_{x}z_{n}^{(0)}(j)\right|\\
 & \geq & \frac{1}{c}\left(\frac{1}{\lambda}\right)^{2j}-c\left\Vert \epsilon_{n}\right\Vert \left|I^{h}\right|\\
 & \geq & \left(\frac{1}{c}-c\left|I^{h}\right|\left\Vert \epsilon_{n}\right\Vert \lambda^{2K_{n}}\right)\left(\frac{1}{\lambda}\right)^{2j}
\end{eqnarray*}
for some constant $c>1$. By Lemma \ref{lem:K_n bounds}, there exists
$c'>1$ such that 
\begin{eqnarray*}
\left|\pi_{x}z-\pi_{x}\tau_{n}\right| & \geq & \left(\frac{1}{c}-\frac{cc^{\prime2}\left|I^{h}\right|}{b}\right)\left(\frac{1}{\lambda}\right)^{2j}\\
 & \geq & \frac{1}{2c}\left(\frac{1}{\lambda}\right)^{2j}.
\end{eqnarray*}
Here we assume that $b\geq2c^{2}c^{\prime2}\left|I^{h}\right|$.

Similarly, to prove the upper bound, apply Proposition \ref{prop:Geometric properties of W_n(j)},
we get 
\begin{eqnarray*}
\left|\pi_{x}z-\pi_{x}\tau_{n}\right| & \le & \left|z_{j}^{(0)}(j)-\tau_{n}\right|+\left|\pi_{x}z-\pi_{x}z_{n}^{(0)}(j)\right|\\
 & \leq & c\left(\frac{1}{\lambda}\right)^{2j}+c\left\Vert \epsilon_{n}\right\Vert \left|I^{h}\right|\\
 & \leq & \left(c+c\left|I^{h}\right|\left\Vert \epsilon_{n}\right\Vert \lambda^{2K_{n}}\right)\left(\frac{1}{\lambda}\right)^{2j}.
\end{eqnarray*}
By Lemma \ref{lem:K_n bounds}, we get
\begin{eqnarray*}
\left|\pi_{x}z-\pi_{x}\tau_{n}\right| & \leq & \left(\frac{1}{c}+\frac{cc^{\prime2}\left|I^{h}\right|}{b}\right)\left(\frac{1}{\lambda}\right)^{2j}\\
 & \leq & \frac{3}{2c}\left(\frac{1}{\lambda}\right)^{2j}.
\end{eqnarray*}
\end{proof}

We prove the first property for the good region.
\begin{lemma}
\label{lem:C^r is not in the image of F}Given $\delta>0$ and $I^{v}\supset I^{h}\Supset I$.
There exists $\overline{\epsilon}>0$ and $\overline{b}>0$ such that
for all $F\in\hat{\mathcal{I}}_{\delta}(I^{h}\times I^{v},\overline{\epsilon})$
and $b>\overline{b}$ we have 
\[
C_{n}^{r}(j)\cap F_{n}(D_{n})=\phi
\]
 for all $1\leq j\leq K_{n}$ and $n\geq0$.
\end{lemma}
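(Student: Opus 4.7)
The plan is to exploit the fact that $C_n^r(j)$ sits strictly to the right of the tip, while $F_n(D_n)$ can extend past the tip only by a $\|\epsilon_n\|$-scale amount. In the good region the stable manifolds $W_n^2(j)$ are on scale $\lambda^{-2j}$, which is much larger than $\|\epsilon_n\|$ once $b$ is chosen large, so the two regions must be disjoint.

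First I would bound the rightward extent of the image. Since $\pi_x F_n(x,y) = f_n(x) - \epsilon_n(x,y)$ and $f_n$ attains its maximum $w_n$ at $v_n$, every $z \in F_n(D_n)$ satisfies $\pi_x z \le w_n + \|\epsilon_n\|$. Combining this with Proposition~\ref{prop:Bound for f(v)-tau}, which gives $|w_n - \pi_x \tau_n| < c_0 \|\epsilon_n\|$ for a universal $c_0$, produces
\[
\pi_x z - \pi_x \tau_n \le (c_0 + 1)\|\epsilon_n\| \qquad \text{for all } z \in F_n(D_n).
\]

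Second I would produce a matching lower bound for points in $C_n^r(j)$. Note that $F_n(D_n) \subset I^h \times I^h$, because $\pi_y F_n(x,y) = x \in I^h$ and $\pi_x F_n(x,y) \in I^h$, so any hypothetical $z \in C_n^r(j) \cap F_n(D_n)$ lies in $C_n^r(j) \cap (I^h \times I^h)$. By Proposition~\ref{prop:Dynamics on the partition}(5), the left boundary of $C_n^r(j)$ is $W_n^2(j)$, so at height $\pi_y z \in I^h$ the point $z$ lies to the right of the corresponding point $z^\ast \in W_n^2(j) \cap (I^h \times I^h)$, which itself lies to the right of $\tau_n$. Applying Lemma~\ref{lem:W location} with $t=2$ at $z^\ast$ gives
\[
\pi_x z - \pi_x \tau_n \;\ge\; \pi_x z^\ast - \pi_x \tau_n \;\ge\; \tfrac{1}{c_1}\lambda^{-2j}
\]
for some universal $c_1 > 1$, provided $b$ exceeds the threshold from that lemma.

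Finally I would combine the two inequalities with the lower bound $\lambda^{-2K_n} \ge (b\|\epsilon_n\|)/c_2^{\,2}$ coming from Lemma~\ref{lem:K_n bounds}. For $1 \le j \le K_n$ this gives
\[
\tfrac{b}{c_1 c_2^{\,2}}\|\epsilon_n\| \;\le\; \pi_x z - \pi_x \tau_n \;\le\; (c_0 + 1)\|\epsilon_n\|,
\]
which is impossible as soon as $b > \overline{b} := c_1 c_2^{\,2}(c_0+1)$; taking $\overline{b}$ to be the maximum of this threshold and the ones produced by the preceding lemmas finishes the proof. There is no analytic obstacle; the only care needed is to bookkeep the constants from Proposition~\ref{prop:Bound for f(v)-tau}, Lemma~\ref{lem:K_n bounds}, and Lemma~\ref{lem:W location} so that a single universal $\overline{b}$ works for every $n$.
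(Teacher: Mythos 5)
Your proposal is correct and follows essentially the same route as the paper: bound the rightward extent of $F_n(D_n)$ past the tip by $O(\|\epsilon_n\|)$ using the quadratic maximum of $f_n$ and Proposition~\ref{prop:Bound for f(v)-tau}, bound the $x$-distance of $C_n^r(j)$ from the tip from below via Lemma~\ref{lem:W location} applied to its left boundary $W_n^2(j)$, and then use Lemma~\ref{lem:K_n bounds} to show the two estimates are incompatible once $b$ is large. The only cosmetic difference is that you derive a contradiction pointwise for each $j\le K_n$, whereas the paper argues directly that the single manifold $W_n^2(K_n)$ lies strictly to the right of $\sup_{z'\in D_n} h_n(z')$ and notes that $\overline{\cup_{j\le K_n}C_n^r(j)}$ sits to its right; these are the same estimate.
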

\begin{proof}
Since $\overline{\cup_{j=1}^{K_{n}}C_{n}^{r}(j)}$ is bounded by the
local manifolds $W_{n}^{2}(K_{n})$ and $W_{n}^{2}(0)$, it suffices
to prove the local stable manifold $W_{n}^{2}(K_{n})$ is far away
form the image.

We have 
\[
f_{n}(v_{n})-\left\Vert \epsilon_{n}\right\Vert \leq\sup_{z'\in D_{n}}h_{n}(z')=\sup_{z'\in D_{n}}\left(f_{n}(\pi_{x}z')+\epsilon_{n}(z')\right)\leq f_{n}(v_{n})+\left\Vert \epsilon_{n}\right\Vert .
\]
Apply Proposition \ref{prop:Bound for f(v)-tau}, Lemma \ref{lem:K_n bounds},
and Lemma \ref{lem:W location}, there exists constants $c>0$ and
$a>1$ such that 
\begin{eqnarray*}
\pi_{x}z-\sup_{z'\in D_{n}}h_{n}(z') & \geq & (\pi_{x}z-\pi_{x}\tau_{n})-\left|\pi_{x}\tau_{n}-f_{n}(v_{n})\right|-\left|f_{n}(v_{n})-\sup_{z'\in D_{n}}h_{n}(z')\right|\\
 & \geq & \frac{1}{a}\left(\frac{1}{\lambda}\right)^{2K_{n}}-c\left\Vert \epsilon_{n}\right\Vert -\left\Vert \epsilon_{n}\right\Vert \\
 & \geq & \left(\frac{b}{a^{3}}-c-1\right)\left\Vert \epsilon_{n}\right\Vert 
\end{eqnarray*}
for all $z\in W_{n}^{2}(K_{n})\cap\left(I^{h}\times I^{h}\right)$.
The coefficient is positive when $b>0$ is large enough. Consequently,
$C_{n}^{r}(j)\cap F_{n}(D_{n})=\phi$ for all $1\leq j\leq K_{n}$.
\end{proof}

By the previous lemma, it is enough to prove the rest of the properties
for the left component $C_{n}^{l}(j)$. The second property follows
directly from the definition of $K_{n}$.
\begin{lemma}
\label{lem:Good region away from the tip}Given $\delta>0$ and $I^{v}\supset I^{h}\Supset I$.
There exists $\overline{\epsilon}>0$ and $\overline{b}>0$ such that
for all $F\in\hat{\mathcal{I}}_{\delta}(I^{h}\times I^{v},\overline{\epsilon})$
and $b>\overline{b}$ we have 
\[
\left|\pi_{x}z-\pi_{x}\tau_{n}\right|>b\left\Vert \epsilon_{n}\right\Vert 
\]
 for all $z\in C_{n}(j)\cap F_{n}(D_{n})$ with $1\leq j\leq K_{n}$
and $n\geq0$.
\end{lemma}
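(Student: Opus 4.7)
My plan is to reduce the statement, via Lemma~\ref{lem:C^r is not in the image of F}, to the left component of $C_n(j)$, and then exploit the nested structure of the stable-manifold graphs to transport the inequality from the extremal case $j=K_n$ (where the required bound is essentially the definition of $K_n$) to all $j\le K_n$.

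In detail, for $1\le j\le K_n$ the previous lemma gives $C_n^{r}(j)\cap F_n(D_n)=\emptyset$, so it suffices to take $z\in C_n^{l}(j)\cap F_n(D_n)$. Since $F_n(D_n)\subset I^h\times I^h$, we have $\pi_y z\in I^h$. By Proposition~\ref{prop:Dynamics on the partition} the strip $C_n^{l}(j)$ is bounded on the right by the vertical graph $W_n^{0}(j)$, and by the global picture in Figure~\ref{fig:Horizontal cross section that intersect the tip} the manifolds $W_n^{0}(0),W_n^{0}(1),\ldots$ lie strictly to the left of $\tau_n$ and move monotonically rightward, toward $\tau_n$, as their rescaling level increases. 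Taking the horizontal slice at height $y=\pi_y z$, let $z_j$ and $z_{K_n}$ be the (unique) points where $W_n^{0}(j)$ and $W_n^{0}(K_n)$ meet this slice; then
\[
\pi_x z \;<\; \pi_x z_j \;\le\; \pi_x z_{K_n} \;<\; \pi_x \tau_n .
\]

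To close the argument, observe that the Lipschitz bound $c\|\epsilon_n\|$ from Proposition~\ref{prop:Geometric properties of W_n(j)}, combined with the fact that the periodic point $p_n(K_n)\in I^h\times I^h$ that $W_n^{0}(K_n)$ limits to, guarantees for $\overline{\epsilon}$ sufficiently small that $z_{K_n}\in W_n^{0}(K_n)\cap(I^h\times I^h)$. The defining property of $K_n$ (Definition~\ref{def:K_n}) then yields $\pi_x\tau_n-\pi_x z_{K_n}>b\|\epsilon_n\|$, and chaining this with the display above gives $|\pi_x z-\pi_x\tau_n|>b\|\epsilon_n\|$, as required. The only even mildly subtle point, and hence the main ``obstacle,'' is this last verification that the horizontal slicing keeps $z_{K_n}$ inside $I^h\times I^h$ so that the definition of $K_n$ is applicable; it is handled uniformly by the small-Lipschitz vertical-graph description of the local stable manifolds together with the inclusion $F_n(D_n)\subset I^h\times I^h$, provided $\overline{\epsilon}$ is chosen small from the outset (independently of $n$ and $j$).
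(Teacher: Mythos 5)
Your proof is correct and follows essentially the same route as the paper's very brief argument: reduce to the left component $C_n^l(j)$ via Lemma~\ref{lem:C^r is not in the image of F}, use the nested ordering of the manifolds $W_n^0(j)$ for $j\le K_n$, and invoke the defining property of $K_n$. The only difference is that you make the monotonicity and the containment $z_{K_n}\in I^h\times I^h$ explicit; the latter is in fact immediate from the definition of a vertical graph (a map $\gamma:I^v_n\to I^h$), so your detour through the Lipschitz bound and the location of the periodic point $p_n(K_n)$ is not needed.
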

\begin{proof}
By Lemma \ref{lem:C^r is not in the image of F}, only the left component
$C_{n}^{l}(j)$ intersects the image. Also the set $C_{n}^{l}(j)$
is bounded by the local stable manifolds $W_{n}^{0}(j-1)$ and $W_{n}^{0}(j)$.
Thus, the lemma follows from the definition of $K_{n}$.
\end{proof}

The third property of the good regions follows by the previous proposition
and the quadratic estimates from Lemma \ref{lem:Quadratic center}.
\begin{corollary}
\label{cor:Good region away from the center}Given $\delta>0$ and
$I^{v}\supset I^{h}\Supset I$. There exists $\overline{\epsilon}>0$,
$\overline{b}>0$, and $c>0$ such that for all $F\in\hat{\mathcal{I}}_{\delta}(I^{h}\times I^{v},\overline{\epsilon})$
and $b>\overline{b}$ so that the following property hold for all
$n\geq0$:

If $z\in I^{B}\times I_{n}^{v}$ satisfies $\left|h_{n}(z)-\pi_{x}\tau_{n}\right|\geq b\left\Vert \epsilon_{n}\right\Vert $,
then 
\begin{equation}
\left|\pi_{x}z-v_{n}\right|\geq c\sqrt{b\left\Vert \epsilon_{n}\right\Vert }.\label{eq:lower bound for good region in B}
\end{equation}
In particular, (\ref{eq:lower bound for good region in B}) holds
for all $z\in B_{n}(j)$ with $1\leq j\leq K_{n}$.
\end{corollary}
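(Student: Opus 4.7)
The plan is to combine the triangle inequality with the quadratic estimate from Lemma \ref{lem:Quadratic center} and the tip-vs-critical-value comparison from Proposition \ref{prop:Bound for f(v)-tau}. The main inequality to control is $|f_n(\pi_x z) - f_n(v_n)|$, which via the quadratic estimate translates a lower bound on $|h_n(z) - \pi_x\tau_n|$ into the desired lower bound on $|\pi_x z - v_n|$. The obstacle is merely bookkeeping: ensuring the auxiliary errors (the distance between $f_n(v_n)$ and $\pi_x\tau_n$, and the perturbation $\epsilon_n$) can be absorbed by taking $b$ large.

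Concretely, I would first write $h_n(z) = f_n(\pi_x z) - \epsilon_n(z)$ and apply the triangle inequality to obtain
\[
|f_n(\pi_x z) - f_n(v_n)| \;\geq\; |h_n(z) - \pi_x\tau_n| \;-\; |\pi_x\tau_n - f_n(v_n)| \;-\; |\epsilon_n(z)|.
\]
By Proposition \ref{prop:Bound for f(v)-tau}, $|\pi_x\tau_n - f_n(v_n)| < c_1\|\epsilon_n\|$ for a universal constant $c_1$, and trivially $|\epsilon_n(z)| \leq \|\epsilon_n\|$. Using the hypothesis $|h_n(z) - \pi_x\tau_n| \geq b\|\epsilon_n\|$, the right-hand side is at least $(b - c_1 - 1)\|\epsilon_n\|$, and choosing $\overline{b}$ so large that $b - c_1 - 1 \geq b/2$ for every $b > \overline{b}$ gives $|f_n(\pi_x z) - f_n(v_n)| \geq (b/2)\|\epsilon_n\|$.

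Next, since $z \in I^B \times I_n^v$, Lemma \ref{lem:Quadratic center} applies and yields $|f_n(\pi_x z) - f_n(v_n)| \leq \tfrac{a}{2}(\pi_x z - v_n)^2$. Combining,
\[
(\pi_x z - v_n)^2 \;\geq\; \frac{b}{a}\|\epsilon_n\|,
\]
so $|\pi_x z - v_n| \geq c\sqrt{b\|\epsilon_n\|}$ with $c = 1/\sqrt{a}$.

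Finally, for the "in particular" part, suppose $z \in B_n(j)$ with $1 \leq j \leq K_n$. By Lemma \ref{lem:Quadratic center}, $B_n \subset I^B \times I_n^v$, so the domain condition holds. Moreover, renormalizability gives $F_n(B_n(j)) \subset C_n(j)$, so $F_n(z) \in C_n(j) \cap F_n(D_n)$. Lemma \ref{lem:Good region away from the tip} then applies to $F_n(z)$ and provides $|h_n(z) - \pi_x\tau_n| = |\pi_x F_n(z) - \pi_x\tau_n| > b\|\epsilon_n\|$, which is exactly the hypothesis of the first part. Hence the conclusion follows. \qed
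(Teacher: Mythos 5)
Your proof is correct and follows essentially the same route as the paper: triangle inequality to compare $f_n(\pi_x z)-f_n(v_n)$ with $h_n(z)-\pi_x\tau_n$, Proposition \ref{prop:Bound for f(v)-tau} to bound $|\pi_x\tau_n - f_n(v_n)|$, and the quadratic estimate of Lemma \ref{lem:Quadratic center} to convert back to $|\pi_x z - v_n|$. The one small addition is that you spell out the derivation of the ``in particular'' clause via $F_n(B_n(j))\subset C_n(j)$ and Lemma \ref{lem:Good region away from the tip}, which the paper leaves implicit.
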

\begin{proof}
Assume that $b>0$ is large enough such that Lemma \ref{lem:Good region away from the tip}
holds. Let $z\in I^{B}\times I_{n}^{v}$ be such that $\left|h_{n}(z)-\pi_{x}\tau_{n}\right|>b\left\Vert \epsilon_{n}\right\Vert $.
Apply Proposition \ref{prop:Bound for f(v)-tau}, we get 
\begin{eqnarray*}
\left|f_{n}(\pi_{x}z)-f_{n}(v_{n})\right| & \geq & \left|h_{n}(z)-\pi_{x}\tau_{n}\right|-\left|f_{n}(\pi_{x}z)-h_{n}(z)\right|-\left|\pi_{x}\tau_{n}-f_{n}(v_{n})\right|\\
 & \geq & (b-1-c)\left\Vert \epsilon_{n}\right\Vert \\
 & > & \frac{b}{2}\left\Vert \epsilon_{n}\right\Vert 
\end{eqnarray*}
for some $c>0$ when $b>2(1+c)$.

Moreover, by the quadratic estimates from Lemma \ref{lem:Quadratic center},
there exists $a>1$ such that 
\[
\left|f_{n}(\pi_{x}z)-f_{n}(v_{n})\right|\leq\frac{a}{2}(\pi_{x}z-v_{n})^{2}
\]
for all $n\geq0$ when $\overline{\epsilon}>0$ is small enough. Therefore,
\[
\left|\pi_{x}z-v_{n}\right|\geq\sqrt{\frac{1}{a}}\sqrt{b\left\Vert \epsilon_{n}\right\Vert }.
\]
\end{proof}

The fourth property for the good region gives an estimate for the
$x$-location of $C_{n}^{l}(j)$ in terms of the rescaling level $j$.
To prove the property, we use the boundary stable manifolds $W_{n}^{0}(j-1)$
and $W_{n}^{0}(j)$ to estimate the $x$-location of $C_{n}^{l}(j)$.
\begin{corollary}
Given $\delta>0$ and $I^{v}\supset I^{h}\Supset I$. There exists
$\overline{\epsilon}>0$, $\overline{b}>0$, and $c>1$ such that
for all $F\in\hat{\mathcal{I}}_{\delta}(I^{h}\times I^{v},\overline{\epsilon})$
and $b>\overline{b}$ we have 
\[
\frac{1}{c}\left(\frac{1}{\lambda}\right)^{2j}<\left|\pi_{x}z-\pi_{x}\tau_{n}\right|<c\left(\frac{1}{\lambda}\right)^{2j}
\]
 for all $z\in C_{n}(j)\cap F_{n}(D_{n})$ with $1\leq j\leq K_{n}$
and $n\geq0$.
\end{corollary}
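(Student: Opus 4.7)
The plan is to reduce to the left component using Lemma \ref{lem:C^r is not in the image of F} and then sandwich the $x$-coordinate between the two bounding local stable manifolds using Lemma \ref{lem:W location}.

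First I would apply Lemma \ref{lem:C^r is not in the image of F} with $\overline{b}$ chosen large enough that the right component $C_n^r(j)$ is disjoint from $F_n(D_n)$ for every $1 \le j \le K_n$. Hence if $z \in C_n(j) \cap F_n(D_n)$, then in fact $z \in C_n^l(j)$, which by Proposition \ref{prop:Dynamics on the partition} (fifth item) is the strip bounded by the two local stable manifolds $W_n^0(j-1)$ and $W_n^0(j)$. Moreover, $F_n(D_n) \subset I^h \times I^h$, so $z$ lies in the horizontal slab on which Lemma \ref{lem:W location} applies.

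Next I would push $\overline{b}$ further if necessary so that Lemma \ref{lem:W location} is available for both rescaling levels $j-1$ and $j$ (recall $0 \le j-1 < j \le K_n$), yielding a constant $c_1 > 1$ with
\[
\frac{1}{c_1}\left(\frac{1}{\lambda}\right)^{2k} \le |\pi_x w - \pi_x \tau_n| \le c_1 \left(\frac{1}{\lambda}\right)^{2k}
\]
for all $w \in W_n^0(k) \cap (I^h \times I^h)$ and $k = j-1, j$. Since $z$ lies between $W_n^0(j)$ (closer to $\tau_n$) and $W_n^0(j-1)$ (further from $\tau_n$) at the same height, its $x$-distance to $\pi_x \tau_n$ is squeezed between these two bounds. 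This gives
\[
\frac{1}{c_1}\left(\frac{1}{\lambda}\right)^{2j} \le |\pi_x z - \pi_x \tau_n| \le c_1 \left(\frac{1}{\lambda}\right)^{2(j-1)} = c_1 \lambda^2 \left(\frac{1}{\lambda}\right)^{2j}.
\]
Setting $c = c_1 \lambda^2$ yields the desired double inequality.

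There is no real obstacle here; the corollary is a direct packaging of two lemmas already established in the chapter. The only minor point of care is to make sure the $\overline{b}$ and $\overline{\epsilon}$ are chosen as the maximum and minimum respectively of the thresholds coming from Lemmas \ref{lem:C^r is not in the image of F} and \ref{lem:W location}, so that both hypotheses hold simultaneously for every $n \ge 0$ and every $1 \le j \le K_n$.
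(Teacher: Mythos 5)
Your proof is correct and takes essentially the same route as the paper: reduce to the left component via Lemma \ref{lem:C^r is not in the image of F}, then use the vertical-graph property of $W_n^0(j-1)$ and $W_n^0(j)$ to pick points at the same height and sandwich $|\pi_x z - \pi_x\tau_n|$ between the two bounds from Lemma \ref{lem:W location}, absorbing the extra $\lambda^2$ into the constant.
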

\begin{proof}
We use the property that $C_{n}^{l}(j)$ is bounded by $W_{n}^{0}(j-1)$
and $W_{n}^{0}(j)$ then apply the estimations from the local stable
manifolds Lemma \ref{lem:W location}. Assume that $b>0$ is large
enough such that Lemma \ref{lem:W location} and Lemma \ref{lem:C^r is not in the image of F}
hold.

For all $z\in C_{n}^{l}(j)\cap F_{n}(D_{n})$ with $1\leq j\leq K_{n}$,
there exists $z_{1}\in W_{n}^{0}(j-1)\cap\left(I^{h}\times I^{h}\right)$
and $z_{2}\in W_{n}^{0}(j)\cap\left(I^{h}\times I^{h}\right)$ such
that $\pi_{y}z=\pi_{y}z_{1}=\pi_{y}z_{2}$ since the local stable
manifolds are vertical graphs. From Lemma \ref{lem:W location}, we
obtain 
\[
\frac{1}{c}\left(\frac{1}{\lambda}\right)^{2j}\leq\left|\pi_{x}z_{2}-\pi_{x}\tau_{n}\right|\leq\left|\pi_{x}z-\pi_{x}\tau_{n}\right|\leq\left|\pi_{x}z_{1}-\pi_{x}\tau_{n}\right|\leq c\lambda^{2}\left(\frac{1}{\lambda}\right)^{2j}.
\]
\end{proof}

\index{good region|)}

\subsection{Properties for the bad region\index{bad region|(}}

We prove the first property for the bad region by applying Lemma \ref{lem:W location}
to the boundary local stable manifolds $W_{n}^{0}(K_{n})$ and $W_{n}^{2}(K_{n})$.
\begin{lemma}
\label{lem:Bad region close to the tip}Given $\delta>0$ and $I^{v}\supset I^{h}\Supset I$.
There exists $\overline{\epsilon}>0$, $\overline{b}>0$, and $c>0$
such that for all $F\in\hat{\mathcal{I}}_{\delta}(I^{h}\times I^{v},\overline{\epsilon})$
and $b>\overline{b}$ we have
\[
\left|\pi_{x}z-\pi_{x}\tau_{n}\right|<cb\left\Vert \epsilon_{n}\right\Vert 
\]
for all $z\in C_{n}(j)\cap F_{n}(D_{n})$ with $j>K_{n}$ and $n\geq0$.
\end{lemma}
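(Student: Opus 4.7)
The plan is to trap the bad region between the two boundary local stable manifolds of the good region, namely $W_n^0(K_n)$ and $W_n^2(K_n)$, and then exploit the fact that, by the very definition of $K_n$ together with Lemma \ref{lem:W location} and Lemma \ref{lem:K_n bounds}, these two manifolds are already $O(b\lVert\epsilon_n\rVert)$-close to the tip in the $x$-direction.

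First I would invoke the structure of the rescaling levels given by Proposition \ref{prop:Dynamics on the partition}: the left component $C_n^l(j)$ is bounded by $W_n^0(j-1)$ and $W_n^0(j)$, while the right component $C_n^r(j)$ is bounded by $W_n^2(j)$ and $W_n^2(j-1)$. Since the rescaling levels nest toward the tip as $j$ grows, for every $j > K_n$ the set $C_n(j)$ lies in the strip (containing $\tau_n$) bounded on the left by $W_n^0(K_n)$ and on the right by $W_n^2(K_n)$. Fix now $z\in C_n(j)\cap F_n(D_n)$; since $F_n(D_n)\subset I^h\times I^h$, we have $\pi_y z\in I^h$. Because $W_n^0(K_n)$ and $W_n^2(K_n)$ are vertical graphs over $I^v\supset I^h$ with small Lipschitz constant by Proposition \ref{prop:Geometric properties of W_n(j)}, there exist $z_1\in W_n^0(K_n)$ and $z_2\in W_n^2(K_n)$ with $\pi_y z_1=\pi_y z_2=\pi_y z$, and $\pi_x z$ lies between $\pi_x z_1$ and $\pi_x z_2$.

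Assuming that $z_1,z_2\in I^h\times I^h$, Lemma \ref{lem:W location} applied at $j=K_n$ gives $|\pi_x z_i-\pi_x\tau_n|\leq c(1/\lambda)^{2K_n}$ for $i=1,2$, and the upper bound $\lambda^{K_n}\geq (1/c)/\sqrt{b\lVert\epsilon_n\rVert}$ from Lemma \ref{lem:K_n bounds} converts this into $|\pi_x z_i-\pi_x\tau_n|\leq C b\lVert\epsilon_n\rVert$. Since $\pi_x\tau_n$ lies between $\pi_x z_1$ and $\pi_x z_2$ and so does $\pi_x z$, we conclude $|\pi_x z-\pi_x\tau_n|\leq\max_i|\pi_x z_i-\pi_x\tau_n|\leq Cb\lVert\epsilon_n\rVert$, which is the required estimate.

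The only real obstacle is the bookkeeping step of verifying that $z_1,z_2\in I^h\times I^h$ so that Lemma \ref{lem:W location} is applicable. This should follow because $\tau_n$ is uniformly in the interior of $I^h\times I^h$ (as $F_n$ is close to $G$), and the bound $|\pi_x z_i-\pi_x z_n^{(t)}(K_n)|\leq c\lVert\epsilon_n\rVert\,|I^h|$ from the small Lipschitz constants of the vertical graphs, combined with $|z_n^{(t)}(K_n)-\tau_n|=O((1/\lambda)^{2K_n})=O(b\lVert\epsilon_n\rVert)$, keeps the $x$-coordinates of $z_1,z_2$ inside $I^h$ once $\overline{\epsilon}$ is chosen small enough. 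Beyond this, the argument is a direct combination of previously established geometric lemmas, so no new estimate is needed.
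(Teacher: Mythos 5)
Your proof is correct and takes essentially the same route as the paper: the paper's own argument also observes that the bad region is bounded between $W_n^0(K_n)$ and $W_n^2(K_n)$, then applies Lemma \ref{lem:W location} (which is exactly the combination of Proposition \ref{prop:Geometric properties of W_n(j)} with the bookkeeping you worry about) and Lemma \ref{lem:K_n bounds} to place those two boundary manifolds within $O(b\lVert\epsilon_n\rVert)$ of the tip. Your extra paragraph verifying that $z_1,z_2\in I^h\times I^h$ is a reasonable bit of care, though Lemma \ref{lem:W location} is already stated for $z\in W_n^t(j)\cap(I^h\times I^h)$ and the paper simply restricts to that intersection directly.
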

\begin{proof}
Since the bad region $\overline{\cup_{j>K_{n}}C_{n}(j)}$ is bounded
by the local stable manifolds $W_{n}^{0}(K_{n})$ and $W_{n}^{2}(K_{n})$,
it is sufficient to estimate the location of $W_{n}^{0}(K_{n})$ and
$W_{n}^{2}(K_{n})$. 

Assume that $z\in W_{n}^{t}(K_{n})\cap\left(I^{h}\times I^{h}\right)$
with $t\in\{0,2\}$. By Lemma \ref{lem:K_n bounds} and Lemma \ref{lem:W location},
there exists $c>1$ such that 
\[
\left|\pi_{x}z-\pi_{x}\tau_{n}\right|\leq c\left(\frac{1}{\lambda}\right)^{2K_{n}}\leq c^{3}b\left\Vert \epsilon_{n}\right\Vert 
\]
for all $b>0$ sufficiently large.
\end{proof}

The second property for the bad region follows from the quadratic
estimates Lemma \ref{lem:Quadratic center}.
\begin{corollary}
Given $\delta>0$ and $I^{v}\supset I^{h}\Supset I$. There exists
$\overline{\epsilon}>0$, $\overline{b}>0$, and $c>0$ such that
for all $F\in\hat{\mathcal{I}}_{\delta}(I^{h}\times I^{v},\overline{\epsilon})$
and $b>\overline{b}$ we have
\[
\left|\pi_{x}z-v_{n}\right|<c\sqrt{b\left\Vert \epsilon_{n}\right\Vert }
\]
 for all $z\in B_{n}(j)$ with $j>K_{n}$ and $n\geq0$.
\end{corollary}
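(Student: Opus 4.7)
The plan is to reduce this corollary to the preceding lemma (Lemma \ref{lem:Bad region close to the tip}) on $C_n(j)$ by using the defining relation $B_n(j) = F_n^{-1}(C_n(j))$, and then invert with the quadratic estimate from Lemma \ref{lem:Quadratic center}. First I would pick $z \in B_n(j)$ with $j > K_n$. By definition of $B_n(j)$, the image $F_n(z) \in C_n(j) \cap F_n(D_n)$, so the preceding lemma gives a constant $c_1 > 0$ with $|\pi_x F_n(z) - \pi_x \tau_n| < c_1 b \|\epsilon_n\|$. Since $\pi_x F_n(z) = h_n(z) = f_n(\pi_x z) - \epsilon_n(z)$ and $|\epsilon_n(z)| \leq \|\epsilon_n\|$, I can combine this with Proposition \ref{prop:Bound for f(v)-tau} (which yields a constant $c_2 > 0$ with $|\pi_x \tau_n - f_n(v_n)| < c_2 \|\epsilon_n\|$) to get, via the triangle inequality,
\[
|f_n(\pi_x z) - f_n(v_n)| \leq c_1 b \|\epsilon_n\| + c_2 \|\epsilon_n\| + \|\epsilon_n\| \leq c_3 b \|\epsilon_n\|
\]
for $b \geq 1$, where $c_3 = c_1 + c_2 + 1$.

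Next I would feed this into the quadratic lower bound of Lemma \ref{lem:Quadratic center}, provided $\pi_x z$ lies in the interval $I^B$ where that estimate is valid. This is automatic for small enough $\overline{\epsilon}$ and large enough $\overline{b}$, because Lemma \ref{lem:Bad region close to the tip} places $\pi_x F_n(z)$ in an $O(b\|\epsilon_n\|)$-neighborhood of $\pi_x \tau_n$, which by Proposition \ref{prop:Bound for f(v)-tau} sits near the critical value, and the preimage of any such narrow strip under $f_n$ sits inside $I^B$ (the $B$-side of the critical point) for $\overline{\epsilon}$ small. Applying $\tfrac{1}{2a}(\pi_x z - v_n)^2 \leq |f_n(\pi_x z) - f_n(v_n)|$ then yields
\[
|\pi_x z - v_n| \leq \sqrt{2a c_3 \, b \|\epsilon_n\|} = c \sqrt{b \|\epsilon_n\|}
\]
with $c = \sqrt{2 a c_3}$, which is the required inequality.

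The only step that requires some care is the last verification that $\pi_x z \in I^B$ so that the quadratic estimate applies (as opposed to, say, $\pi_x z$ landing in $A \cup C$ where $f_n$ is merely expanding). I expect this to follow cleanly from the fact that $B_n(j) \subset B_n$ and $B_n \subset I^B \times I_n^v$ by Lemma \ref{lem:Quadratic center}, so actually there is no real obstacle here at all — the corollary is essentially a one-line consequence of the preceding lemma once the quadratic behavior of $f_n$ on $B$ is invoked.
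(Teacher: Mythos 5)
Your proof is correct and follows essentially the same route as the paper's: push $z$ forward to $C_n(j)\cap F_n(D_n)$, apply the bad-region estimate on the $x$-distance of $F_n(z)$ from $\tau_n$, transfer to $|f_n(\pi_x z)-f_n(v_n)|$ via Proposition \ref{prop:Bound for f(v)-tau} and the triangle inequality, then invert using the quadratic lower bound of Lemma \ref{lem:Quadratic center}. The only addition you make beyond the paper's proof is the explicit check that $\pi_x z\in I^B$, which, as you correctly observe, is immediate from $B_n(j)\subset B_n\subset I^B\times I_n^v$.
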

\begin{proof}
Assume that $z\in B_{n}(j)$. Then $F_{n}(z)\in C_{n}(j)\cap F_{n}(D_{n})$.
By Proposition \ref{prop:Bound for f(v)-tau} and Lemma \ref{lem:Bad region close to the tip},
there exists $c>0$ such that 
\begin{eqnarray*}
\left|f_{n}(\pi_{x}z)-f_{n}(v_{n})\right| & \leq & \left|h_{n}(z)-\pi_{x}\tau_{n}\right|+\left|f_{n}(\pi_{x}z)-h_{n}(z)\right|+\left|\pi_{x}\tau_{n}-f_{n}(v_{n})\right|\\
 & \leq & (cb+1+c)\left\Vert \epsilon_{n}\right\Vert \\
 & < & 2cb\left\Vert \epsilon_{n}\right\Vert 
\end{eqnarray*}
for all $b>0$ sufficiently large. Also, by Lemma \ref{lem:Quadratic center},
we have 
\[
\left|f_{n}(\pi_{x}z)-f_{n}(v_{n})\right|\geq\frac{1}{2a}(\pi_{x}z-v_{n})^{2}
\]
for some constant $a>0$. Combine the two inequalities, we obtain
\[
\left|\pi_{x}z-v_{n}\right|\leq\sqrt{4ac}\sqrt{b\left\Vert \epsilon_{n}\right\Vert }.
\]
\end{proof}

\index{bad region|)}

\section{\label{sec:Good region}The Good Region\index{good region} and the
Expansion Argument\index{expansion argument}}

Our goal in this chapter is to prove the expansion argument in the
good region, Proposition \ref{prop:Good region estimates}: the horizontal
size of the closest approach expands when the wandering domains stay
in the good region. This shows that Hénon-like maps behaves like unimodal
maps in the good region. 

From now on, fix $b>0$ sufficiently large so that Proposition \ref{prop:Good and Bad Regions}
holds and the sequence $\left\{ K_{n}\right\} _{n\geq0}$ depends
only on $F$.

To prove the horizontal size expands, we iterate the horizontal endpoints
to estimate the expansion of the horizontal size. The vertical line
argument in Chapter \ref{sec:Good and Bad region} showed that the
iteration of the horizontal endpoints fails to approximate the expansion
rate when the line connecting the two horizontal endpoints is parallel
to the preimage of a vertical line. The following condition, $R$-regular,
provides a criteria to ensure ``parallel'' does not happen in the
good region.
\begin{definition}[Regular]
Let $R>0$. A set $U\subset D(F)$ is $R$-regular\index{regular|textbf}
if 
\begin{equation}
\frac{h(U)}{l(U)}\leq R\frac{1}{\left\Vert \epsilon\right\Vert ^{1/4}}.\label{eq:h/l bound}
\end{equation}
\end{definition}
To see $R$-regular implies not parallel, we estimate the slope of
the preimage of a vertical line. Assume that $\gamma:I^{v}\rightarrow I^{h}$
is the vertical graph of the preimage of some vertical line $x=x_{0}$
by the Hénon-like map $F_{n}$ and the vertical graph is in the good
region. Then
\[
h_{n}(\gamma(y),y)=x_{0}.
\]
Apply the derivative in terms of $y$ to the both sides, we solved
\[
\gamma'(y)=\frac{\frac{\partial\epsilon_{n}}{\partial y}(\gamma(y),y)}{f_{n}'(\gamma(y))-\frac{\partial\epsilon_{n}}{\partial x}(\gamma(y),y)}.
\]
By Lemma \ref{lem:Quadratic center} and Proposition \ref{prop:Good and Bad Regions},
we get
\begin{eqnarray*}
\left|f_{n}'(y)-\frac{\partial\epsilon_{n}}{\partial x}(\gamma(y),y)\right| & \geq & \frac{1}{a}\left|\gamma(y)-v_{n}\right|-\frac{1}{\delta}\left\Vert \epsilon_{n}\right\Vert \\
 & \geq & \frac{c}{a}\sqrt{\left\Vert \epsilon_{n}\right\Vert }-\frac{1}{\delta}\left\Vert \epsilon_{n}\right\Vert \\
 & \geq & \frac{c}{2a}\sqrt{\left\Vert \epsilon_{n}\right\Vert }
\end{eqnarray*}
when $\overline{\epsilon}$ is small enough. This yields
\begin{equation}
\left|\gamma'(y)\right|\leq c'\sqrt{\left\Vert \epsilon_{n}\right\Vert }\label{eq:slope of preimage of VL}
\end{equation}
for some constant $c'>0$. 

The condition $R$-regular says that the vertical slope of the line
determined by the horizontal endpoints $(x_{1},y_{1})$ and $(x_{2},y_{2})$
of $J$ is bounded by
\begin{equation}
\frac{\left|x_{2}-x_{1}\right|}{\left|y_{2}-y_{1}\right|}\geq\frac{l(J)}{h(J)}\geq\frac{1}{R}\left\Vert \epsilon_{n}\right\Vert ^{1/4}.\label{eq:slope of horizontal endpoints}
\end{equation}
From (\ref{eq:slope of preimage of VL}) and (\ref{eq:slope of horizontal endpoints}),
we get
\[
\frac{\left|x_{2}-x_{1}\right|}{\left|y_{2}-y_{1}\right|}\gg\left|\gamma'(y)\right|.
\]
This concludes that the line connecting the horizontal endpoints is
not parallel to the preimage of a vertical line if the wandering domain
is $R$-regular.

Now we state the main proposition of this chapter.
\begin{proposition}[Expansion argument]
\label{prop:Good region estimates}\index{good region|textit}\index{horizontal size|textit}\index{expansion argument|textit}Given
$\delta>0$ and $I^{v}\supset I^{h}\Supset I$. There exists $\overline{\epsilon}>0$,
$E>1$, and $R>0$ such that for all $F\in\hat{\mathcal{I}}_{\delta}(I^{h}\times I^{v},\overline{\epsilon})$
the following property hold:

Assume that $J\subset A\cup B$ is a $R$-regular closed subset of
a wandering domain for $F$ and $\left\{ J_{n}\right\} _{n=0}^{\infty}$
is the $J$-closest approach. If $k_{n}\leq K_{r(n)}$ for all $n\leq m$,
then $J_{n}$ is $R$-regular for all $n\leq m+1$ and 
\begin{equation}
l_{n+1}\geq El_{n}\label{eq:Good region expansion}
\end{equation}
for all $n\leq m$.
\end{proposition}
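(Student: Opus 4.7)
The plan is to prove the two conclusions simultaneously by induction on $n$, the base case $n=0$ being given. For the inductive step, fix horizontal endpoints $U=(x_U,y_U)$, $V=(x_V,y_V)$ of $J_n$ with $|x_V-x_U|=l_n$, and estimate $|\pi_x T(V)-\pi_x T(U)|$, where $T=F_{r(n)}$ if $J_n\subset A_{r(n)}$ and $T=\Phi_{r(n)}^{k_n}\circ F_{r(n)}$ if $J_n\subset B_{r(n)}(k_n)$; this gives a lower bound on $l_{n+1}$. The $R$-regularity will be used in the form $|y_V-y_U|\leq R\|\epsilon_{r(n)}\|^{-1/4}\,l_n$ to absorb the off-axis contributions coming from $\partial_y\epsilon_{r(n)}$.

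\textbf{Case A} ($J_n\subset A_{r(n)}$). Write
\[
h_{r(n)}(V)-h_{r(n)}(U)=\bigl[f_{r(n)}(x_V)-f_{r(n)}(x_U)\bigr]-\bigl[\epsilon_{r(n)}(V)-\epsilon_{r(n)}(U)\bigr].
\]
Lemma \ref{lem:Expanding rate on A and C} gives $|\partial_x h_{r(n)}|\geq E$ on $A_{r(n)}$; combined with Lemma \ref{lem:Bounds for e_n} and the $R$-regularity, the perturbation is bounded by $c\|\epsilon_{r(n)}\|(1+R\|\epsilon_{r(n)}\|^{-1/4})l_n$, which is $O(\|\epsilon_{r(n)}\|^{3/4})l_n$ and hence negligible for $\overline\epsilon$ small. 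This produces $l_{n+1}\geq E'l_n$ with any $E'<E$. Since $h_{n+1}=l_n$, one has $h_{n+1}/l_{n+1}\leq 1/E'$, trivially preserving $R$-regularity.

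\textbf{Case B} ($J_n\subset B_{r(n)}(k_n)$, $k_n\leq K_{r(n)}$). First, $F_{r(n)}$ sends $J_n$ into $C_{r(n)}(k_n)$, which by Proposition \ref{prop:Good and Bad Regions}(4) lies at distance $\asymp\lambda^{-2k_n}$ from the tip. The quadratic estimate Lemma \ref{lem:Quadratic center} together with Proposition \ref{prop:Bound for f(v)-tau} therefore forces $|x-v_{r(n)}|\asymp\lambda^{-k_n}$ on $J_n$, giving $|f_{r(n)}'|\asymp\lambda^{-k_n}$; since $\lambda^{-k_n}\geq\lambda^{-K_{r(n)}}\asymp\sqrt{\|\epsilon_{r(n)}\|}$ dominates the $\|\epsilon_{r(n)}\|^{3/4}$ error from $R$-regularity, one concludes
\[
l(F_{r(n)}J_n)\geq c\,\lambda^{-k_n}\,l_n.
\]
Next I apply the $k_n$ rescalings $\phi_j=\Lambda_j\circ H_j$. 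Each $\Lambda_j$ multiplies both coordinates by $\lambda_j\approx\lambda$, so the purely affine contribution after $k_n$ steps is $\lambda^{k_n}$ in each direction; the nonlinear parts $H_j$, being $\|\epsilon\|$-close to the identity when acting on sets already in the good region (and with errors that sum geometrically thanks to the super-exponential decay of $\|\epsilon_{r(n)+j}\|$ from Proposition \ref{prop:Hyperbolicity of the Renormalization Operator}), contribute only a bounded, $F$-independent distortion. Moreover, $C_{r(n)}(k_n)$ has horizontal extent $\asymp\lambda^{-2k_n}$ and is mapped by $\Phi_{r(n)}^{k_n}$ diffeomorphically onto a uniformly large subset of $D_{r(n+1)}$, so the horizontal expansion of $\Phi_{r(n)}^{k_n}$ along $F_{r(n)}J_n$ is $\asymp\lambda^{2k_n}$. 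Combining gives $l_{n+1}\geq c\,\lambda^{k_n}\,l_n\geq c\lambda\,l_n$, yielding the expansion constant $E=c\lambda>1$. A parallel tracking of $h$-sizes (multiplied by $\lambda^{k_n}$ after the affine part and by $l_n$ after $F_{r(n)}$) shows $h_{n+1}/l_{n+1}$ remains controlled by a universal constant, so choosing $R$ large keeps $J_{n+1}$ within the $R$-regular class.

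The main obstacle is Case B: the horizontal size first \emph{contracts} by $\lambda^{-k_n}$ under the quadratic map $F_{r(n)}$ before being restored by the subsequent $k_n$ rescalings, and one must show that the $\partial_y\epsilon$ error (which grows relative to the horizontal part as $\lambda^{-k_n}$ shrinks) is still dominated. This is exactly where both the definition of the good region $k_n\leq K_{r(n)}$ (keeping $\lambda^{-k_n}\geq\sqrt{b\|\epsilon\|}$) and the regularity bound $h_n/l_n\leq R\|\epsilon\|^{-1/4}$ are used in tandem, the cross-inequality $\sqrt{b\|\epsilon\|}\cdot\|\epsilon\|^{-1/4}=\sqrt{b}\|\epsilon\|^{1/4}\to 0$ providing the needed margin for $\overline\epsilon$ sufficiently small.
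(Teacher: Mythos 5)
Your Case A argument matches the paper's Lemma \ref{lem:A->A bounds} and is fine. Case B, however, has three linked problems, the last of which is fatal. First, the assertion that the nonlinear parts $H_j$ are ``$\|\epsilon\|$-close to the identity'' on the good region is false: $H_j(x,y)=(h_j(x,y),y)$ with $h_j\approx g$, so $\partial_x H_j$ is close to $g'$, which on $A\cup C$ is bounded away from $1$ (it is $\geq E>1$ by Proposition \ref{prop:Derivative of g on A}), not close to $1$. The $k_n$-fold composition therefore contributes an \emph{unbounded} factor, not a bounded distortion; the paper handles this via a pointwise induction (Lemma \ref{lem:B->C2 estimates}) showing each $\phi_{n+j}$ expands by $\geq\lambda E$, or via the telescoping rescaling trick (Proposition \ref{prop:degenerate-rescaling trick}, Lemma \ref{lem:B->C1 derivative estimation}). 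Second, ``$C_{r(n)}(k_n)$ has extent $\asymp\lambda^{-2k_n}$ and maps onto a uniformly large set, so the expansion along $F_{r(n)}J_n$ is $\asymp\lambda^{2k_n}$'' passes from \emph{average} set expansion to a \emph{pointwise} derivative bound along a specific small curve; these need not coincide, and the pointwise lower bound is exactly what Lemma \ref{lem:B->C2 estimates} supplies via the uniform estimate $|\partial_x h_n|\geq E$ of Lemma \ref{lem:Expanding rate on A and C}.

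Third and most substantively: your final inequality $l_{n+1}\geq c\lambda^{k_n}l_n\geq c\lambda\, l_n$ only gives an expansion constant $c\lambda>1$ if $c>1/\lambda$, and the $c$ you obtain aggregates several uncontrolled $\asymp$-constants with no reason to be close to $1$. For small $k_n$ (e.g.\ $k_n=1$) the bound is therefore vacuous. This is not a cosmetic issue---it is precisely why the paper splits Case B into two regimes. Lemma \ref{lem:B->C2 bounds} gives $l(J')\geq cE^{k}l(J)$ with a possibly small $c$, useful only for $k\geq\overline K$ after choosing $\overline K$ so that $cE^{\overline K}>1$; the intermediate regime $1\leq k<\overline K$ requires the separate Lemma \ref{lem:B->C1 bounds}, which exploits the exact rescaling trick for the limiting map $G$ (Lemma \ref{lem:B->C1 derivative estimation}, Corollary \ref{cor:Rescaling trick expansion}) and proximity $\|F_n-G\|$ small to obtain the clean bound $E\lambda^{k}>1$ with no loss constant. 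Your proof needs an argument of this kind for small $k_n$; without it the induction does not close. The same gap undermines your claim that $h_{n+1}/l_{n+1}$ is bounded by a universal constant: with the paper's lower bound $l_{n+1}\gtrsim E^{k_n}l_n$ (rather than $\lambda^{k_n}l_n$) and $h_{n+1}\sim\lambda^{k_n}l_n$, the ratio grows like $(\lambda/E)^{k_n}$; the paper only shows $h/l\lesssim\|\epsilon_{r(n)}\|^{-1/2}$ and then converts this to $R$-regularity at the new level using the super-exponential decay $\|\epsilon_{r(n)+k_n}\|\lesssim\|\epsilon_{r(n)}\|^2$.
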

\begin{proof}
The proof is based on the estimations for the expansion rate developed
later in this chapter. The expansion rate will be computed in three
different cases: 

\begin{enumerate}
\item $J_{n}\subset A_{r(n)}$, proved in Lemma \ref{lem:A->A bounds}.
\item $J_{n}\subset B_{r(n)}(k_{n})$ for the intermediate region $1\leq k_{n}<\overline{K}$
where $\overline{K}$ is some constant, proved in Lemma \ref{lem:B->C1 bounds}.
\item $J_{n}\subset B_{r(n)}(k_{n})$ for the region close to the center
$\overline{K}\leq k_{n}\leq K_{r(n)}$, proved in Lemma \ref{lem:B->C2 bounds}.
\end{enumerate}
Here we assume the three lemmas to prove this proposition.

Fixed $R>0$ to be the constant given by Lemma \ref{lem:B->C2 bounds}.
Also, fixed $\overline{K}\geq1$ to be an integer large enough such
that
\[
cE_{2}^{\overline{K}}>1
\]
from (\ref{eq:B->C2 expansion rate}) where $c>0$ and $E_{2}>1$
are the constants in Lemma \ref{lem:B->C2 bounds}. Let $E_{1}>1$
be the expansion constant in Lemma \ref{lem:A->A bounds} and $E_{3}>1$
be the expansion constant in Lemma \ref{lem:B->C1 bounds}. Set $E=\min(E_{1},cE_{2}^{\overline{K}},E_{3})>1$.
Let $\overline{\epsilon}>0$ be small enough such that Proposition
\ref{prop:Good and Bad Regions}, Lemma \ref{lem:A->A bounds}, Lemma
\ref{lem:B->C2 bounds}, and Lemma \ref{lem:B->C1 bounds} hold.

Assume that $F\in\hat{\mathcal{I}}_{\delta}(I^{h}\times I^{v},\overline{\epsilon})$
and $J\subset A\cup B$ is a closed $R$-regular subset that is a
wandering domain of $F$. We prove that $J_{n}$ is $R$-regular by
induction then (\ref{eq:Good region expansion}) follows by the three
lemmas. 

For the base case, $J_{0}$ is $R$-regular by assumption.

If $J_{n}$ is $R$-regular and $k_{n}\leq K_{r(n)}$ for some $n\geq0$.
If $J_{n}\subset A_{r(n)}$, then $J_{n+1}$ is $R$-regular and 
\[
l_{n+1}\geq E_{1}l_{n}\geq El_{n}
\]
by Lemma \ref{lem:A->A bounds}. If $J_{n}\subset B_{r(n)}(k_{n})$
with $1\leq k_{n}\leq\overline{K}$, then $J_{n+1}$ is $R$-regular
and
\[
l_{n+1}\geq E_{3}\lambda^{k_{n}}l_{n}\geq El_{n}
\]
by Lemma \ref{lem:B->C1 bounds}. If $J_{n}\subset B_{r(n)}(k_{n})$
with $\overline{K}\leq k_{n}\leq K_{n}$, then $J_{n+1}$ is $R$-regular
and
\[
l_{n+1}\geq cE_{2}^{k_{n}}l_{n}\geq cE_{2}^{\overline{K}}l_{n}\geq El_{n}
\]
by Lemma \ref{lem:B->C2 bounds}.

Therefore, the theorem is proved by induction.
\end{proof}

\subsection[Case A]{\label{subsec:Case A->A or B}Case $J_{n}\subset A_{r(n)}$}

In this section, we compute the expansion rate when a wandering domain
$J$ lies in $A$. We use the property that $F_{n}$ is close to the
fixed point $G$ then apply the properties for $g$ in Section \ref{subsec:Feigenbaum map}
to estimate the expansion.
\begin{lemma}
\label{lem:A->A bounds}Given $\delta>0$ and $I^{v}\supset I^{h}\Supset I$.
For all $R>0$, there exists $\overline{\epsilon}=\overline{\epsilon}(R)>0$
and $E>1$ such that for all $F\in\hat{\mathcal{I}}_{\delta}(I^{h}\times I^{v},\overline{\epsilon})$
the following property hold for all $n\geq0$:

Assume that $J\subset A_{n}$ is an $R$-regular closed set. Then
$J'\subset C_{n}(0)=A_{n}\cup W_{n}^{1}(0)\cup B_{n}$ is $R$-regular
and 
\[
l(J')\geq El(J)
\]
where $J'=F_{n}(J)$.
\end{lemma}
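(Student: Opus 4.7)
The plan is to import the expansion of $g$ on $A(g)$ given by Proposition \ref{prop:Derivative of g on A} and push it to $F_n$ by a perturbation argument, then use the $R$-regularity hypothesis to kill the error contributions coming from $\epsilon_n$. Set $E_g := |g'(q(0))| > 1$; by Proposition \ref{prop:Derivative of g on A}, $|g'(x)| \geq E_g$ on the closed set $[q(-1),\hat q(0)] \cup [q(0),\hat q(-1)]$, which contains a neighborhood of $\overline{A(g)}$. Fix any $E \in (1, E_g)$ and set $E_* := (E+E_g)/2$. Since $F_n \to G$ (so $f_n \to g$ in $C^1$ by Proposition \ref{prop:Hyperbolicity of the Renormalization Operator}) and the local stable manifolds $W_n^0(-1), W_n^1(0)$ cutting out $A_n$ are Lipschitz-$O(\overline{\epsilon})$ vertical graphs close to the corresponding vertical lines for $g$, there is $\overline{\epsilon}_0 > 0$ such that $|f_n'(x)| \geq E_*$ for every $x \in \pi_x A_n$ and every $n \geq 0$ whenever $\overline{\epsilon} < \overline{\epsilon}_0$.

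Next I would estimate $l(J')$ by tracking horizontal endpoints. Since $J$ is closed and bounded, horizontal endpoints $(x_1,y_1),(x_2,y_2) \in J$ with $|x_2-x_1|=l(J)$ exist. Their $F_n$-images $(h_n(x_i,y_i), x_i)$ lie in $J'$, so
\[
l(J') \geq |h_n(x_2,y_2) - h_n(x_1,y_1)| \geq |f_n(x_2)-f_n(x_1)| - |\epsilon_n(x_2,y_2) - \epsilon_n(x_1,y_1)|.
\]
The mean value theorem and Step 1 give $|f_n(x_2)-f_n(x_1)| \geq E_* \, l(J)$. Lemma \ref{lem:Bounds for e_n} yields
\[
|\epsilon_n(x_2,y_2) - \epsilon_n(x_1,y_1)| \leq c_1 \|\epsilon_n\|\bigl(|x_2-x_1| + |y_2-y_1|\bigr) \leq c_1 \|\epsilon_n\|\, l(J) + c_1 \|\epsilon_n\|\, h(J),
\]
and $R$-regularity bounds $h(J) \leq R\|\epsilon_n\|^{-1/4} l(J)$, so the total error is at most $\bigl(c_1 \|\epsilon_n\| + c_1 R \|\epsilon_n\|^{3/4}\bigr) l(J)$. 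Thus $l(J') \geq \bigl(E_* - c_1\|\epsilon_n\| - c_1 R \|\epsilon_n\|^{3/4}\bigr)\, l(J) \geq E\, l(J)$ once $\overline{\epsilon}$ is taken small in terms of $R$. For the $R$-regularity of $J'$, observe $\pi_y J' = \pi_x J$, so $h(J') = l(J)$ and $h(J')/l(J') \leq 1/E$; shrinking $\overline{\epsilon}$ further to guarantee $\overline{\epsilon} \leq (ER)^4$ yields $1/E \leq R\|\epsilon_n\|^{-1/4}$, as required. Finally, $J' \subset C_n(0) = A_n \cup W_n^1(0) \cup B_n$ holds by the second property of Proposition \ref{prop:Dynamics of the partition on D}.

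The only subtle point, and therefore the \emph{main obstacle}, is controlling the $y$-contribution $c_1 \|\epsilon_n\| |y_2 - y_1|$ to the $\epsilon_n$-error: without the $R$-regularity hypothesis, $|y_2-y_1|$ could be arbitrarily large compared to $l(J)$ and would overwhelm the expansion from $f_n'$. The exponent $1/4$ in the definition of $R$-regular is calibrated precisely so that $\|\epsilon_n\| \cdot \|\epsilon_n\|^{-1/4} = \|\epsilon_n\|^{3/4} \to 0$ faster than any fixed constant, while still leaving $h(J')/l(J') \leq 1/E \leq R\|\epsilon_n\|^{-1/4}$ with enough slack to propagate through the subsequent cases handled in Lemmas \ref{lem:B->C1 bounds} and \ref{lem:B->C2 bounds}. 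Everything else is routine perturbation bookkeeping.
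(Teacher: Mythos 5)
Your proof is correct and follows essentially the same route as the paper's: iterate the horizontal endpoints, use the expansion of $f_n'$ (equivalently $\partial h_n/\partial x$) inherited from $g$ near the renormalization fixed point, and absorb the $\epsilon_n$-errors using $R$-regularity, with $h(J')=l(J)$ giving the $R$-regularity of $J'$. The only cosmetic difference is that you re-derive the expansion on $A_n$ from Proposition \ref{prop:Derivative of g on A} plus $C^1$-convergence rather than citing Lemma \ref{lem:Expanding rate on A and C} directly, and you split the error as $|f_n(x_2)-f_n(x_1)| - |\epsilon_n(x_2,y_2)-\epsilon_n(x_1,y_1)|$ whereas the paper folds the $\partial\epsilon_n/\partial x$ contribution into the lower bound on $\partial h_n/\partial x$ and only subtracts the $\partial\epsilon_n/\partial y$ term; both yield the same $E_* - O(\|\epsilon_n\|^{3/4})$ estimate.
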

\begin{proof}
Let $E>1$ be the constant defined in Lemma \ref{lem:Expanding rate on A and C}
and $\Delta E>0$ be small enough such that $E'\equiv E-\Delta E>1$.
Assume that $\overline{\epsilon}>0$ is small enough such that Lemma
\ref{lem:Expanding rate on A and C} holds and 
\begin{equation}
\frac{R}{\delta}\left\Vert \epsilon_{n}\right\Vert ^{3/4}<\Delta E\label{eq:delta E}
\end{equation}
for all $n\geq0$.

To prove the inequality, let $(x_{1},y_{1}),(x_{2},y_{2})\in J$ such
that $x_{2}-x_{1}=l(J)$. Then $h(J)\geq\left|y_{2}-y_{1}\right|$. 

Compute
\begin{eqnarray*}
l(J') & \geq & \left|\pi_{x}\left[F_{n}(x_{2},y_{2})-F_{n}(x_{1},y_{1})\right]\right|\\
 & \geq & \left|\pi_{x}\left[F_{n}(x_{2},y_{2})-F_{n}(x_{1},y_{2})\right]\right|-\left|\pi_{x}\left[F_{n}(x_{1},y_{2})-F_{n}(x_{1},y_{1})\right]\right|.
\end{eqnarray*}
By the mean value theorem, there exists $\xi\in(x_{1},x_{2})$ and
$\eta\in(y_{1},y_{2})$ such that 
\[
\pi_{x}\left[F_{n}(x_{2},y_{2})-F_{n}(x_{1},y_{2})\right]=\frac{\partial h_{n}}{\partial x}(\xi,y_{2})(x_{2}-x_{1})
\]
and 
\[
\pi_{x}\left[F_{n}(x_{1},y_{2})-F_{n}(x_{1},y_{1})\right]=\frac{\partial\epsilon_{n}}{\partial y}(x_{1},\eta)(y_{2}-y_{1}).
\]
Since $(x_{1},y_{1}),(x_{2},y_{2})\in A_{n}\subset I^{AC}\times I_{n}^{v}$,
we have $(\xi,y_{2})\in I^{AC}\times I_{n}^{v}$. By Lemma \ref{lem:Bounds for e_n}
and Lemma \ref{lem:Expanding rate on A and C}, we get

\begin{eqnarray*}
l(J') & \geq & El(J)-\frac{1}{\delta}\left\Vert \epsilon_{n}\right\Vert h(J)\\
 & = & \left(E-\frac{1}{\delta}\left\Vert \epsilon_{n}\right\Vert \frac{h(J)}{l(J)}\right)l(J).
\end{eqnarray*}
Also, by $J$ is $R$-regular and (\ref{eq:delta E}), this yields
\begin{eqnarray}
l(J') & \geq & \left(E-\frac{R}{\delta}\left\Vert \epsilon_{n}\right\Vert ^{3/4}\right)l(J)\nonumber \\
 & \geq & E'l(J).\label{eq:A->B E'}
\end{eqnarray}

To prove that $J'$ is $R$-regular, we apply (\ref{eq:A->B E'})
and $h(J')=l(J)$. We get
\[
\frac{h(J')}{l(J')}\leq\frac{1}{E'}.
\]
Also assume that $\overline{\epsilon}$ is small enough such that
$\frac{1}{E'}\leq R\left\Vert \epsilon_{n}\right\Vert ^{-1/4}$ for
all $n\geq0$. This proves that $J'$ is $R$-regular.
\end{proof}

\subsection[Case B center]{\label{subsec:Case B->C2}Case $J_{n}\subset B_{r(n)}(k_{n})$,
$\overline{K}\leq k_{n}\leq K_{r(n)}$}

In this section, we prove the horizontal size expands when a wandering
domains is in the center of good region. 

Unfortunately, the rescaling trick, Proposition \ref{prop:degenerate-rescaling trick},
does not work in the non-degenerate case. In the degenerate case,
the affine rescaling coincide with the rescaling for the unimodal
renormalization about the critical point. Thus, the affine rescaling
$\Lambda_{n}$ maps rescaling levels in $B_{n}$ to renormalization
levels in $B_{n+1}$ (Proposition \ref{lem:degenerate-renormalization operator}).
However, in the non-degenerate case, the affine rescaling has no geometrical
and dynamical meaning. So the proofs for Proposition \ref{prop:degenerate-rescaling trick}
and Corollary \ref{cor:degenerate-Rescaling trick} do not apply to
the non-degenerate case. We have to find another strategy to estimate
the expansion rate for the horizontal size.

The idea of the proof is as follows. When the sequence element $J_{n}$
enters $B_{r(n)}$, the step from $J_{n}$ to $J_{n+1}$ contains
an iteration $F_{r(n)}$ and a composition of rescalings $\Phi_{r(n)}^{k_{n}}$.
On the one hand, the horizontal size contracts by the iteration $F_{r(n)}$
because the Hénon-like map acts like a quadratic map. Lemma \ref{lem:Quadratic center}
says that the contraction becomes strong as $J_{n}$ approach to the
center. On the other hand, the horizontal size expands by the rescaling
$\Phi_{r(n)}^{k_{n}}$ (Lemma \ref{lem:Expanding rate on A and C}).
Proposition \ref{prop:Good and Bad Regions} says the number of rescaling
$k_{n}$ becomes large as $J_{n}$ approach to the center. We will
show the expansion compensates with the contraction. This yields the
following lemma.
\begin{lemma}
\label{lem:B->C2 bounds}Given $\delta>0$ and $I^{v}\supset I^{h}\Supset I$.
There exists $\overline{\epsilon}>0$, $E>1$, $R>0$, and $c>0$
such that for all $F\in\hat{\mathcal{I}}_{\delta}(I^{h}\times I^{v},\overline{\epsilon})$
the following property holds for all $n\geq0$:

Assume that $J\subset B_{n}(k)$ is an $R$-regular closed set and
$1\leq k\leq K_{n}$, then $J'\subset C_{n+k}(0)=A_{n+k}\cup W_{n+k}^{1}(0)\cup B_{n+k}$
is $R$-regular and
\begin{equation}
l(J')\geq cE^{k}l(J)\label{eq:B->C2 expansion rate}
\end{equation}
where $J'=\Phi_{n}^{k}\circ F_{n}(J)$.
\end{lemma}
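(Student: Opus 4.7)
The plan is to factor the step $J\mapsto J'$ into the single iteration $F_n$ on $B_n(k)$, which contracts horizontally because $F_n$ looks quadratic near the critical point $v_n$, and the subsequent $k$-fold rescaling $\Phi_n^k$ on $C_n(k)$, which expands horizontally on the order of $\lambda^{2k}$; the two effects combine into a net horizontal expansion by $cE^k$ for some $E>1$. The numerical fact driving everything is that for any $z\in B_n(k)$ one has $|\pi_xz-v_n|\sim\lambda^{-k}$: indeed $F_n(z)\in C_n(k)\cap F_n(D_n)$, so property~4 of Proposition~\ref{prop:Good and Bad Regions} gives $|h_n(z)-\pi_x\tau_n|\sim\lambda^{-2k}$; Proposition~\ref{prop:Bound for f(v)-tau} and the bound $\|\epsilon_n\|\lesssim\lambda^{-2K_n}$ from Lemma~\ref{lem:K_n bounds} upgrade this to $|f_n(\pi_xz)-f_n(v_n)|\sim\lambda^{-2k}$, and the quadratic estimate of Lemma~\ref{lem:Quadratic center} converts it into the stated size of $|\pi_xz-v_n|$.

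Step one: take horizontal endpoints $z_i=(x_i,y_i)$ of $J$ with $x_2-x_1=l(J)$ and apply the mean value theorem to $h_n$ to write
\[
\pi_xF_n(z_2)-\pi_xF_n(z_1)=f_n'(\xi)(x_2-x_1)-\tfrac{\partial\epsilon_n}{\partial x}(x_2-x_1)-\tfrac{\partial\epsilon_n}{\partial y}(y_2-y_1).
\]
The main term satisfies $|f_n'(\xi)|\geq a^{-1}|\xi-v_n|\gtrsim\lambda^{-k}$ by Lemma~\ref{lem:Quadratic center}; Lemma~\ref{lem:Bounds for e_n} and the $R$-regularity estimate $h(J)\leq R\|\epsilon_n\|^{-1/4}l(J)$ bound the two $\epsilon$-terms by $c(\|\epsilon_n\|+R\|\epsilon_n\|^{3/4})\,l(J)$. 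Multiplying by $\lambda^k$ and using Lemma~\ref{lem:K_n bounds} in the form $\|\epsilon_n\|^{3/4}\lambda^{K_n}\lesssim\|\epsilon_n\|^{1/4}$, both errors are strictly dominated by $\lambda^{-k}l(J)$ once $\overline{\epsilon}$ is small, giving $l(F_n(J))\geq c_1\lambda^{-k}l(J)$.

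Step two: set $w_i^{(0)}=F_n(z_i)\in C_n(k)$, $w_i^{(j+1)}=\phi_{n+j}(w_i^{(j)})\in C_{n+j+1}(k-j-1)$, $L^{(j)}=|\pi_xw_2^{(j)}-\pi_xw_1^{(j)}|$ and $H^{(j)}=|\pi_yw_2^{(j)}-\pi_yw_1^{(j)}|$, so $L^{(0)}\geq c_1\lambda^{-k}l(J)$ and $H^{(0)}=l(J)$. Since $w_i^{(j)}\in C_{n+j}\subset I^{AC}\times I_{n+j}^v$, Lemma~\ref{lem:Expanding rate on A and C} gives $|\partial h_{n+j}/\partial x|\geq E$ for a fixed $E>1$, and applying the mean value theorem to $\phi_{n+j}=\Lambda_{n+j}\circ H_{n+j}$ yields the two-row recursion
\[
L^{(j+1)}\geq|\lambda_{n+j}|\bigl[E\cdot L^{(j)}-c\|\epsilon_{n+j}\|H^{(j)}\bigr],\qquad H^{(j+1)}=|\lambda_{n+j}|H^{(j)}.
\]
Because $L^{(j)}$ grows by $\geq|\lambda_{n+j}|E>\lambda$ per step whereas $H^{(j)}$ grows only by $|\lambda_{n+j}|\sim\lambda$, the ratio $H^{(j)}/L^{(j)}\leq(H^{(0)}/L^{(0)})E^{-j}\lesssim c_1^{-1}\lambda^kE^{-j}$. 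Combined with the super-exponential decay $\|\epsilon_{n+j}\|\leq(c\|\epsilon_n\|)^{2^j}$, the error sum $\sum_j\|\epsilon_{n+j}\|H^{(j)}/L^{(j)}\lesssim\|\epsilon_n\|^{1/2}$ is arbitrarily small, so telescoping the recursion produces $L^{(k)}\geq c_2(\lambda E)^kL^{(0)}\geq c_1c_2E^kl(J)$, the asserted estimate.

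The $R$-regularity of $J'$ then follows from $h(J')/l(J')\leq H^{(k)}/L^{(k)}\lesssim c_1^{-1}(\lambda/E)^k$, which grows only exponentially in $k$, whereas the admissible bound $R\|\epsilon_{n+k}\|^{-1/4}$ grows super-exponentially; after fixing $R$ sufficiently large once and for all, regularity is preserved for every $k\geq 1$ and every $n\geq 0$. The principal difficulty in this outline is the bookkeeping in step two: the off-diagonal $\partial_y\epsilon_{n+j}$ contribution mixes the large $H^{(j)}$ back into $L^{(j+1)}$, and only the triple combination of $R$-regularity (which caps $H^{(0)}/L^{(0)}$ at $\lambda^k/c_1$, not worse), the geometric decay of $H^{(j)}/L^{(j)}$, and the super-exponential smallness of $\|\epsilon_{n+j}\|$ keeps this error strictly summable and dominated by the main expansion.
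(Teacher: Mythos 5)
Your proposal is correct and follows essentially the same route as the paper: pick horizontal endpoints, use the mean value theorem with the quadratic estimate $|f_n'|\sim|\cdot - v_n|$ plus $R$-regularity to control the $\epsilon$-error in the $F_n$ step, run the same two-quantity recursion for $L^{(j)}$ and $H^{(j)}$ under the rescalings $\phi_{n+j}$ using the expansion on $A\cup C$, and finally convert the quadratic loss $|x-v_n|\gtrsim\lambda^{-k}$ (the content of Lemma~\ref{lem:C_good location =000026 renormalization level}) into the net $E^k$ gain. The only cosmetic differences are that the paper keeps the ratio $H^{(j)}/L^{(j)}$ under the single uniform bound $R'\|\epsilon_n\|^{-1/2}$ (which makes the per-step error absorption slightly cleaner and the independence of the output constant $R'$ from the input $R$ explicit, resolving the apparent circularity in the choice of $R$), whereas you instead track the geometric decay $H^{(j)}/L^{(j)}\lesssim\lambda^kE^{-j}$ and sum the errors; both are valid.
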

In the remaining part of this chapter, we will fixed $\overline{K}>0$
to be sufficiently large so that (\ref{eq:B->C2 expansion rate})
provides a strict expansion to the horizontal size for all $k\geq\overline{K}$.

To prove this lemma, we set up the notations. Given a closed set $J\subset B_{n}(k)$.
Let $(x_{1},y_{1}),(x_{2},y_{2})\in J$ be such that $l(J)=\left|x_{2}-x_{1}\right|$.
Then $h(J)\geq\left|y_{2}-y_{1}\right|$. We define $(x_{1}^{(j)},y_{1}^{(j)})=\Phi_{n}^{j}\circ F_{n}(x_{1},y_{1})$
and $(x_{2}^{(j)},y_{2}^{(j)})=\Phi_{n}^{j}\circ F_{n}(x_{2},y_{2})$
for $j=0,\cdots,k$. Also, let $x\in\{x_{1},x_{2}\}$ be such that
$\left|x-v_{n}\right|=\min_{i=1,2}\left|x_{i}-v_{n}\right|$. 

We first prove the following estimate.
\begin{lemma}
\label{lem:B->C2 estimates}Given $\delta>0$ and $I^{v}\supset I^{h}\Supset I$.
For all $R>0$, there exists $\overline{\epsilon}=\overline{\epsilon}(R)>0$,
$E>1$, $a>1$, and $R'>0$ such that for all $F\in\hat{\mathcal{I}}_{\delta}(I^{h}\times I^{v},\overline{\epsilon})$
the following property holds for all $n\geq0$:

Assume that $J\subset B_{n}(k)$ is an $R$-regular closed set and
$k\leq K_{n}$ then 
\begin{eqnarray*}
\left|x_{2}^{(j)}-x_{1}^{(j)}\right| & \geq & \frac{1}{2a}\left|x-v_{n}\right|\left(\lambda E\right)^{j}l(J)
\end{eqnarray*}
and
\[
\left|\frac{y_{2}^{(j)}-y_{1}^{(j)}}{x_{2}^{(j)}-x_{1}^{(j)}}\right|\leq R'\frac{1}{\sqrt{\left\Vert \epsilon_{n}\right\Vert }}
\]
 for $j=0,\cdots,k$.

The constants $E$, $a$, and $R'$ does not depend on $R$.
\end{lemma}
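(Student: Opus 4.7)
I would prove the two bounds simultaneously by induction on $j$, since the horizontal growth and the slope control feed into each other at each inductive step. The key ingredients already at hand are the quadratic behavior of $f_n$ near $v_n$ (Lemma \ref{lem:Quadratic center}), the uniform expansion $|\partial_x h_{n+j}| \geq E$ on the good region $C_{n+j}$ (Lemma \ref{lem:Expanding rate on A and C}), the fact that $(x_i^{(j)}, y_i^{(j)}) \in C_{n+j}(k-j)$ for $0 \leq j \leq k$ so that all intermediate points stay in the good region, the uniform closeness $\lambda_{n+j} \approx \lambda$, and the bounds $|\partial_\bullet \epsilon_{n+j}| \leq c\|\epsilon_{n+j}\|$ with the super-exponential decay $\|\epsilon_{n+j}\| \leq (c\|\epsilon_n\|)^{2^j}$ from Proposition \ref{prop:Hyperbolicity of the Renormalization Operator}.

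\textbf{Base case $j=0$.} Here $y_2^{(0)} - y_1^{(0)} = x_2 - x_1$ and $x_2^{(0)} - x_1^{(0)} = h_n(x_2,y_2) - h_n(x_1,y_1)$. A two-variable mean value decomposition splits the latter into an $f_n'(\xi)(x_2-x_1)$ term and two error terms $\partial_x\epsilon_n\cdot(x_2-x_1)$ and $\partial_y\epsilon_n\cdot(y_2-y_1)$. By Lemma \ref{lem:Quadratic center} the main term is bounded below by $\frac{1}{a}|x-v_n|\,l(J)$ (after noting $|\xi - v_n|$ is comparable to $|x-v_n|$, which in the good region is at least $c\sqrt{b\|\epsilon_n\|}$ by Proposition \ref{prop:Good and Bad Regions}, and hence much larger than $l(J)$). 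The $\epsilon$-errors are at most $c\|\epsilon_n\|(l(J)+h(J))$; using $R$-regularity to estimate $h(J) \leq R\|\epsilon_n\|^{-1/4}l(J)$, this error is $O(\|\epsilon_n\|^{3/4})\,l(J)$, negligible relative to the $|x-v_n|\,l(J)/a \gtrsim \sqrt{\|\epsilon_n\|}\,l(J)$ main term when $\overline{\epsilon}$ is small. This gives the first inequality at $j=0$, and dividing yields the ratio bound with $R' \sim 2a/c$.

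\textbf{Inductive step.} Writing $\phi_{n+j}(x,y) = (s_{n+j}(h_{n+j}(x,y)), s_{n+j}(y))$, we get $|y_2^{(j+1)}-y_1^{(j+1)}| = \lambda_{n+j}|y_2^{(j)}-y_1^{(j)}|$ and $|x_2^{(j+1)}-x_1^{(j+1)}| = \lambda_{n+j}|h_{n+j}(x_2^{(j)},y_2^{(j)}) - h_{n+j}(x_1^{(j)},y_1^{(j)})|$. Since the points lie in $C_{n+j}$, a mean value argument combined with Lemma \ref{lem:Expanding rate on A and C} and Lemma \ref{lem:Bounds for e_n} yields
\[
|h_{n+j}(x_2^{(j)},y_2^{(j)}) - h_{n+j}(x_1^{(j)},y_1^{(j)})| \geq E|x_2^{(j)}-x_1^{(j)}| - c\|\epsilon_{n+j}\|\,|y_2^{(j)}-y_1^{(j)}|.
\]
Plugging in the inductive ratio bound and using $\|\epsilon_{n+j}\|/\sqrt{\|\epsilon_n\|} \to 0$ super-exponentially, the correction is $o(1)$ times the main term, so the horizontal difference expands by at least $\lambda_{n+j}E(1-o(1)) \geq \lambda E$ (after absorbing the $o(1)$ and the deviation $\lambda_{n+j}-\lambda$ into a slightly reduced constant $E$). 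Multiplying the telescoping inequality yields the first bound. The $y$-ratio in fact contracts by $1/(E(1-o(1)))$ at each step, so the second bound survives the induction.

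\textbf{Main obstacle.} The delicate point throughout is ensuring that the $\epsilon$-driven error terms never overtake the main expansion. This rests on three quantitative facts working in concert: the good-region separation $|x-v_n| \gtrsim \sqrt{\|\epsilon_n\|}$ that prevents the quadratic factor from collapsing in the base case, the $R$-regularity controlling $h(J)/l(J)$ at the start, and the super-exponential decay $\|\epsilon_{n+j}\| \ll \sqrt{\|\epsilon_n\|}$ that kills the $\partial_y\epsilon_{n+j}$ contribution in every inductive step. Keeping the constants $E$, $a$, and $R'$ independent of $R$ (as the statement demands) requires choosing $\overline{\epsilon}(R)$ small enough so that the $R$-dependent error at $j=0$ is absorbed there, after which the induction is driven by the universal expansion $\lambda E$.
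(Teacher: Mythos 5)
Your proposal follows essentially the same route as the paper's proof: induction on $j$, a mean-value decomposition separating the $f_n'$ expansion from the $\partial_x\epsilon$ and $\partial_y\epsilon$ perturbations, the good-region separation $|x-v_n| \gtrsim \sqrt{\|\epsilon_n\|}$ in the base case, and the inductive slope bound to absorb the $\partial_y\epsilon_{n+j}$ error in the step. The only (harmless) overstatement is invoking super-exponential decay of $\|\epsilon_{n+j}\|$ — the paper gets by with the cruder bound $\|\epsilon_{n+j}\|\leq\|\epsilon_n\|$, which already renders the error $O(\sqrt{\|\epsilon_n\|})$ relative to the main term.
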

\begin{proof}
Let $\overline{\epsilon}>0$ be small enough so that Lemma \ref{lem:Bounds for e_n},
Lemma \ref{lem:Quadratic center}, and Proposition \ref{prop:Good and Bad Regions}
hold. Let $E'>1$ be the constant defined in Lemma \ref{lem:Expanding rate on A and C}
and $E$ be a constant such that $E'>E>1$. We prove the lemma by
induction on $j$. 

For the case $j=0$, we have 
\begin{eqnarray}
\left|x_{2}^{(0)}-x_{1}^{(0)}\right| & = & \left|\pi_{x}\left(F_{n}(x_{2},y_{2})-F_{n}(x_{1},y_{1})\right)\right|\nonumber \\
 & \ge & \left|\pi_{x}\left(F_{n}(x_{2},y_{2})-F_{n}(x_{1},y_{2})\right)\right|-\left|\pi_{x}\left(F_{n}(x_{1},y_{2})-F_{n}(x_{1},y_{1})\right)\right|\label{eq:B->C2 estimates base triangular}
\end{eqnarray}
Apply the mean value theorem, there exists $\xi\in(x_{1},x_{2})$
and $\eta\in(y_{1},y_{2})$ such that
\begin{equation}
\pi_{x}\left(F_{n}(x_{2},y_{2})-F_{n}(x_{1},y_{2})\right)=\left[f_{n}'(\xi)-\frac{\partial\epsilon_{n}}{\partial x}(\xi,y_{2})\right](x_{2}-x_{1})\label{eq:B->C2 estimates base triangular 1}
\end{equation}
and
\begin{equation}
\pi_{x}\left(F_{n}(x_{1},y_{2})-F_{n}(x_{1},y_{1})\right)=-\frac{\partial\epsilon_{n}}{\partial y}(x_{1},\eta)(y_{2}-y_{1}).\label{eq:B->C2 estimates base triangular 2}
\end{equation}
Then $\xi\in I^{B}$ since $(x_{1},y_{1}),(x_{2},y_{2})\in B_{n}\subset I^{B}\times I_{n}^{v}$.
By Lemma \ref{lem:Quadratic center}, (\ref{eq:B->C2 estimates base triangular 1})
yields 
\begin{eqnarray}
\left|\pi_{x}\left(F_{n}(x_{2},y_{2})-F_{n}(x_{1},y_{2})\right)\right| & \geq & \left(\left|f_{n}'(\xi)\right|-\left|\frac{\partial\epsilon_{n}}{\partial x}(\xi,y_{2})\right|\right)l(J)\nonumber \\
 & \geq & \left(\frac{1}{a}\left|x-v_{n}\right|-\frac{1}{\delta}\left\Vert \epsilon_{n}\right\Vert \right)l(J).\label{eq:B->C2 estimates base triangular 1'}
\end{eqnarray}
Also, since $J$ is $R$-regular, (\ref{eq:B->C2 estimates base triangular 2})
yields
\begin{equation}
\left|\pi_{x}\left(F_{n}(x_{1},y_{2})-F_{n}(x_{1},y_{1})\right)\right|\leq\frac{1}{\delta}\left\Vert \epsilon_{n}\right\Vert h(J)\leq\frac{R}{\delta}\left\Vert \epsilon_{n}\right\Vert ^{3/4}l(J).\label{eq:B->C2 estimates base triangular 2'}
\end{equation}
Combine (\ref{eq:B->C2 estimates base triangular}), (\ref{eq:B->C2 estimates base triangular 1'}),
and (\ref{eq:B->C2 estimates base triangular 2'}), we get 
\[
\left|x_{2}^{(0)}-x_{1}^{(0)}\right|\geq\left[\frac{1}{a}\left|x-v_{n}\right|-\frac{1}{\delta}\left(\left\Vert \epsilon_{n}\right\Vert ^{1/2}+R\left\Vert \epsilon_{n}\right\Vert ^{1/4}\right)\sqrt{\left\Vert \epsilon_{n}\right\Vert }\right]l(J)
\]
By Proposition \ref{prop:Good and Bad Regions}, $c\left|x-v_{n}\right|>\sqrt{\left\Vert \epsilon_{n}\right\Vert }$
for some constant $c>1$. Also, assume that $\overline{\epsilon}=\overline{\epsilon}(R)$
is small enough such that $\frac{c}{\delta}\left(\left\Vert \epsilon_{n}\right\Vert ^{1/2}+R\left\Vert \epsilon_{n}\right\Vert ^{1/4}\right)<\frac{1}{2a}$
for all $n\geq0$. We obtain 
\[
\left|x_{2}^{(0)}-x_{1}^{(0)}\right|\geq\frac{1}{2a}\left|x-v_{n}\right|l(J).
\]

Moreover, by applying $y_{2}^{(0)}-y_{1}^{(0)}=l(J)$ and the previous
inequality, we have 
\begin{eqnarray*}
\left|\frac{y_{2}^{(0)}-y_{1}^{(0)}}{x_{2}^{(0)}-x_{1}^{(0)}}\right| & \leq & \frac{2a}{\left|x-v_{n}\right|}.
\end{eqnarray*}
By Proposition \ref{prop:Good and Bad Regions} again, we get 
\[
\left|\frac{y_{2}^{(0)}-y_{1}^{(0)}}{x_{2}^{(0)}-x_{1}^{(0)}}\right|\leq R'\frac{1}{\sqrt{\left\Vert \epsilon_{n}\right\Vert }}
\]
where $R'=2ac$.

Assume that the two inequalities are true for $j\leq k$. We prove
the inequalities for $j+1\leq k$. We have $(x_{1}^{(j)},y_{1}^{(j)}),(x_{2}^{(j)},y_{2}^{(j)})\in C_{n+j}(k-j)$.
By the mean value theorem, there exists $\xi_{j}\in(x_{1}^{(j)},x_{2}^{(j)})\subset I^{AC}$
and $\eta_{j}\in(y_{1}^{(j)},y_{2}^{(j)})$ such that
\begin{equation}
\pi_{x}\left(\phi_{n+j}(x_{2}^{(j)},y_{2}^{(j)})-\phi_{n+j}(x_{1}^{(j)},y_{2}^{(j)})\right)=-\lambda_{n+j}\frac{\partial h_{n}}{\partial x}(\xi_{j},y_{2}^{(j)})\left(x_{2}^{(j)}-x_{1}^{(j)}\right)\label{eq:B->C2 estimates j triangular 1}
\end{equation}
and
\begin{equation}
\pi_{x}\left(\phi_{n+j}(x_{1}^{(j)},y_{2}^{(j)})-\phi_{n+j}(x_{1}^{(j)},y_{1}^{(j)})\right)=\lambda_{n+j}\frac{\partial\epsilon_{n+j}}{\partial y}(x_{1}^{(j)},\eta_{j})\left(y_{2}^{(j)}-y_{1}^{(j)}\right).\label{eq:B->C2 estimates j triangular 2}
\end{equation}
Apply Lemma \ref{lem:Expanding rate on A and C} to (\ref{eq:B->C2 estimates j triangular 1}),
we have 
\begin{eqnarray}
\left|\pi_{x}\left(\phi_{n+j}(x_{2}^{(j)},y_{2}^{(j)})-\phi_{n+j}(x_{1}^{(j)},y_{1}^{(j)})\right)\right| & \geq & \lambda_{n+j}E'\left|x_{2}^{(j)}-x_{1}^{(j)}\right|\label{eq:B->C2 estimates j triangular 1'}
\end{eqnarray}
for some constant $E'>1$. Also apply the induction hypothesis to
(\ref{eq:B->C2 estimates j triangular 2}), we have
\begin{eqnarray}
\left|\pi_{x}\left(\phi_{n+j}(x_{1}^{(j)},y_{2}^{(j)})-\phi_{n+j}(x_{1}^{(j)},y_{1}^{(j)})\right)\right| & \leq & \frac{\lambda_{n+j}}{\delta}\left\Vert \epsilon_{n+j}\right\Vert \left|y_{2}^{(j)}-y_{1}^{(j)}\right|\nonumber \\
 & \leq & \frac{\lambda_{n+j}R'}{\delta}\sqrt{\left\Vert \epsilon_{n}\right\Vert }\left|x_{2}^{(j)}-x_{1}^{(j)}\right|.\label{eq:B->C2 estimates j triangular 2'}
\end{eqnarray}
By the triangular inequality, (\ref{eq:B->C2 estimates j triangular 1'}),
and (\ref{eq:B->C2 estimates j triangular 2'}),we get 
\begin{align}
 & \left|x_{2}^{(j+1)}-x_{1}^{(j+1)}\right|\nonumber \\
\leq & \left|\pi_{x}\left(\phi_{n+j}(x_{2}^{(j)},y_{2}^{(j)})-\phi_{n+j}(x_{1}^{(j)},y_{2}^{(j)})\right)\right|-\left|\pi_{x}\left(\phi_{n+j}(x_{1}^{(j)},y_{2}^{(j)})-\phi_{n+j}(x_{1}^{(j)},y_{1}^{(j)})\right)\right|\nonumber \\
\leq & \lambda_{n+j}\left(E'-\frac{R'}{\delta}\sqrt{\left\Vert \epsilon_{n}\right\Vert }\right)\left|x_{2}^{(j)}-x_{1}^{(j)}\right|\label{eq:B->C2 induction step}
\end{align}
Assume that $\overline{\epsilon}$ is sufficiently small such that
$\lambda_{n+j}\left(E'-\frac{R'}{\delta}\sqrt{\left\Vert \epsilon_{n}\right\Vert }\right)>\lambda E$
for all $n\geq0$ and $j\geq0$ since $E'>E>1$ and $\left|\lambda_{n}-\lambda\right|<\overline{\epsilon}$.
Apply the induction hypothesis to (\ref{eq:B->C2 induction step}),
we get 
\[
\left|x_{2}^{(j+1)}-x_{1}^{(j+1)}\right|\geq\lambda E\left|x_{2}^{(j)}-x_{1}^{(j)}\right|\geq\frac{1}{2a}\left|x-v_{n}\right|\left(\lambda E\right)^{j+1}l(J).
\]

Moreover, by applying $y_{2}^{(j+1)}-y_{1}^{(j+1)}=\lambda_{n+j}\left(y_{2}^{(j)}-y_{1}^{(j)}\right)$,
(\ref{eq:B->C2 induction step}), and the induction hypothesis, we
get 
\[
\frac{\left|y_{2}^{(j+1)}-y_{1}^{(j+1)}\right|}{\left|x_{2}^{(j+1)}-x_{1}^{(j+1)}\right|}\leq\frac{1}{E'-\frac{1}{\delta}R'\sqrt{\left\Vert \epsilon_{n}\right\Vert }}\frac{\left|y_{2}^{(j)}-y_{1}^{(j)}\right|}{\left|x_{2}^{(j)}-x_{1}^{(j)}\right|}<R'\frac{1}{\sqrt{\left\Vert \epsilon_{n}\right\Vert }}
\]
since $E'-\frac{1}{\delta}R'\sqrt{\left\Vert \epsilon_{n}\right\Vert }>1$
when $\overline{\epsilon}$ is small enough. 

Therefore, the two inequalities are proved by induction.
\end{proof}

We also need a lemma to relate the rescaling level $k_{n}$ with the
distance from the wandering domain $J_{n}$ to the center.
\begin{lemma}
\label{lem:C_good location =000026 renormalization level}Given $\delta>0$
and $I^{v}\supset I^{h}\Supset I$. There exists $\overline{\epsilon}>0$
and $c>0$ such that for all $F\in\hat{\mathcal{I}}_{\delta}(I^{h}\times I^{v},\overline{\epsilon})$
the following property holds for all $n\geq0$:

If $(x,y)\in B_{n}(k)$ with $k\leq K_{n}$ then $k$ is bounded below
by
\[
\lambda^{k}>\frac{c}{\left|x-v_{n}\right|}.
\]
\end{lemma}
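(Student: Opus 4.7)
The plan is to relate the rescaling level $k$ to the horizontal distance from $v_n$ through the chain
\[
|x - v_n|^2 \;\asymp\; |f_n(x) - f_n(v_n)| \;\asymp\; |h_n(x,y) - \pi_x\tau_n| \;\asymp\; \lambda^{-2k},
\]
and to keep only the direction that yields the required lower bound on $\lambda^k$. Starting from $(x,y)\in B_n(k)$, I note that $z:=F_n(x,y) = (h_n(x,y),x)$ lies in $C_n(k)\cap F_n(D_n)$ by Definition \ref{def:rescaling levels}, and the assumption $k\le K_n$ puts $z$ in the good region, so Proposition \ref{prop:Good and Bad Regions}(4) supplies the lower bound
\[
|\pi_x z - \pi_x\tau_n| \;>\; \tfrac{1}{c_0}\lambda^{-2k}.
\]

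Next I would convert this to an estimate on $|f_n(x) - f_n(v_n)|$ via the triangle inequality. Writing $f_n(x)-f_n(v_n) = [h_n(x,y)-\pi_x\tau_n] + [\pi_x\tau_n - f_n(v_n)] - \epsilon_n(x,y)$ and applying Proposition \ref{prop:Bound for f(v)-tau} together with $|\epsilon_n(x,y)|\le \|\epsilon_n\|$ gives
\[
|f_n(x)-f_n(v_n)| \;\ge\; \tfrac{1}{c_0}\lambda^{-2k} - (1+c_1)\|\epsilon_n\|.
\]
The key use of the hypothesis $k\le K_n$ appears now: by Lemma \ref{lem:K_n bounds}, $\|\epsilon_n\|\le (c_2/b)\lambda^{-2K_n}\le (c_2/b)\lambda^{-2k}$. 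Since the constant $b$ has already been fixed large enough in the opening of Chapter \ref{sec:Good region}, we may further enlarge it (or equivalently shrink $\overline{\epsilon}$) so that $(1+c_1)c_2/b \le 1/(2c_0)$, yielding
\[
|f_n(x)-f_n(v_n)| \;\ge\; \tfrac{1}{2c_0}\lambda^{-2k}.
\]

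Finally, I would apply the quadratic upper bound of Lemma \ref{lem:Quadratic center} in the form $|f_n(x)-f_n(v_n)| \le (a/2)(x-v_n)^2$, valid because $(x,y)\in B_n\subset I^B\times I_n^v$. Combining with the previous display gives $(x-v_n)^2 \ge (1/(ac_0))\lambda^{-2k}$, hence $|x-v_n|\ge c'\lambda^{-k}$, which rearranges to $\lambda^k > c/|x-v_n|$ with $c=1/\sqrt{ac_0}$. No single step is really an obstacle; the only subtlety is the bookkeeping in step two, where one must verify that the $\|\epsilon_n\|$ error terms coming from the triangle inequality are dominated by the main term $\lambda^{-2k}$, which is exactly what the good-region hypothesis $k\le K_n$ is designed to guarantee through Lemma \ref{lem:K_n bounds}.
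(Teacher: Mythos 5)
Your proposal is correct and follows essentially the same route as the paper: both start from Proposition \ref{prop:Good and Bad Regions}(4) for the lower bound $\left|h_n(x,y)-\pi_x\tau_n\right| > c^{-1}\lambda^{-2k}$, apply the triangle inequality together with Proposition \ref{prop:Bound for f(v)-tau} and $\left|\epsilon_n\right|\le\left\Vert\epsilon_n\right\Vert$, and finish with the quadratic upper bound from Lemma \ref{lem:Quadratic center}. The one place you diverge is in absorbing the $\left\Vert\epsilon_n\right\Vert$ error term: you invoke Lemma \ref{lem:K_n bounds} and $k\le K_n$ to dominate it by a multiple of $\lambda^{-2k}$, which then forces an extra largeness constraint on $b$; the paper instead uses Proposition \ref{prop:Good and Bad Regions}(3), which says $\left|x-v_n\right|>\tfrac{1}{c}\sqrt{b\left\Vert\epsilon_n\right\Vert}$ in the good region of $B$, so $\left\Vert\epsilon_n\right\Vert$ is absorbed directly into $(x-v_n)^2$ with no new constraint on $b$. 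Your parenthetical that one could ``equivalently shrink $\overline{\epsilon}$'' is not right---the needed inequality $(1+c_1)c_2/b\le 1/(2c_0)$ involves only universal constants and $b$, not $\overline{\epsilon}$---but this is a cosmetic point, since the constraint is universal and could have been folded into the choice of $b$ when it was fixed.
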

\begin{proof}
Let $\overline{\epsilon}$ be sufficiently small. Apply Proposition
\ref{prop:Good and Bad Regions} and triangular inequality, we have
\begin{eqnarray*}
\frac{1}{c}\left(\frac{1}{\lambda}\right)^{2k} & < & \left|h_{n}(x,y)-\pi_{x}\tau_{n}\right|\\
 & \leq & \left|f_{n}(x)-f_{n}(v_{n})\right|+\left|f_{n}(v_{n})-\pi_{x}\tau_{n}\right|+\left|h_{n}(x,y)-f_{n}(x)\right|
\end{eqnarray*}
for some constant $c>0$ since $F_{n}(x,y)\in C_{n}(k)$. By Lemma
\ref{lem:Quadratic center}, Lemma \ref{prop:Bound for f(v)-tau},
and Proposition \ref{prop:Good and Bad Regions}, we get
\begin{eqnarray*}
\frac{1}{c}\left(\frac{1}{\lambda}\right)^{2k} & \leq & \frac{a}{2}\left(x-v_{n}\right)^{2}+(c'+1)\left\Vert \epsilon_{n}\right\Vert \\
 & \leq & \left[\frac{a}{2}+(c'+1)c'\right]\left(x-v_{n}\right)^{2}
\end{eqnarray*}
for some constants $a>0$ and $c'>0$. Therefore, the lemma is proved.
\end{proof}

Now we are ready to prove the main lemma of this section.
\begin{proof}
(Proof of Lemma \ref{lem:B->C2 bounds}) By Lemma \ref{lem:B->C2 estimates},
we have 
\[
l(J')\geq\left|x_{2}^{(k)}-x_{1}^{(k)}\right|\geq\frac{1}{2a}\left|x-v_{n}\right|\left(\lambda E\right)^{k}l(J).
\]
for some constant $a>0$. Apply Lemma \ref{lem:C_good location =000026 renormalization level},
we get 
\[
l(J')\geq\frac{c}{2a}E^{k}l(J)
\]
for some constant $c>0$. This proves (\ref{eq:B->C2 expansion rate}).

It remains to show that $J'$ is $R$-regular. By Proposition \ref{prop:Hyperbolicity of the Renormalization Operator},
we have 
\[
\left\Vert \epsilon_{n+k}\right\Vert \leq\left\Vert \epsilon_{n+1}\right\Vert \leq c\left\Vert \epsilon_{n}\right\Vert ^{2}
\]
for some constant $c>0$ since $k\geq1$. Also, the Hénon-like map
$F_{n}$ maps the $x$-th coordinate to $y$-th coordinate and $\Phi_{n}^{k}$
rescales the $y$-th coordinate affinely, we have $h(J')=\left|y_{2}^{(k)}-y_{1}^{(k)}\right|$.
By Lemma \ref{lem:B->C2 estimates}, we get 
\[
\frac{h(J')}{l(J')}\leq R'\frac{1}{\sqrt{\left\Vert \epsilon_{n}\right\Vert }}\leq R'c\frac{1}{\left\Vert \epsilon_{n+k}\right\Vert ^{1/4}}.
\]
Set $R=R'c$. Then $J'$ is $R$-regular. Here, we fixed $R$ so $\overline{\epsilon}$
is a constant.
\end{proof}

\subsection[Case B intermediate]{\label{subsec:Case B->C1}Case $J_{n}\subset B_{r(n)}(k_{n})$,
$1\leq k_{n}<\overline{K}$}

In Lemma \ref{lem:B->C2 bounds}, the expansion of horizontal size
only works for levels $k\geq\overline{K}$ which are close to the
center. In this section, we prove the expansion argument also holds
in the intermediate region $1\leq k<\overline{K}$.

Although the rescaling trick does not apply to the non-degenerate
case, we still can apply it to the limiting degenerate Hénon-like
map $G$. In the limiting case, the horizontal size expands by the
rescaling trick. Because $\overline{K}$ is a fixed number, we will
show the horizontal size also expands in the intermediate region of
a non-degenerate Hénon-like map when it is close enough to the limiting
function $G$.

Observe in the limiting case, we have 
\[
\lim_{n\rightarrow\infty}F_{n}(x,y)=(g(x),x),
\]
and
\[
\lim_{n\rightarrow\infty}\phi_{n}(x,y)=(-\lambda)(g(x),y).
\]
Then 
\begin{equation}
\lim_{n\rightarrow\infty}\Phi_{n}^{j}(x,y)=(\left[(-\lambda)g\right]^{j}(x),(-\lambda)^{j}y)\label{eq:limiting case Phi}
\end{equation}
and 
\[
\lim_{n\rightarrow\infty}\Phi_{n}^{j}\circ F_{n}(x,y)=(\left[(-\lambda)g\right]^{j}\circ g(x),(-\lambda)^{j}x)
\]
where $\left[(-\lambda)g\right]^{j}$ means the function $x\rightarrow(-\lambda)g(x)$
is composed $j$ times.

The following is the rescaling trick, Lemma \ref{prop:degenerate-rescaling trick},
for the limiting case.
\begin{lemma}[Rescaling trick]
\index{rescaling trick}Assume that $j\geq0$ is an integer. Then
\begin{equation}
\left[(-\lambda)g\right]^{j}\circ g(x)=g((-\lambda)^{j}x)\label{eq:Rescaling Trick}
\end{equation}
for all $-\left(\frac{1}{\lambda}\right)^{j}\leq x\leq\left(\frac{1}{\lambda}\right)^{j}$.
\end{lemma}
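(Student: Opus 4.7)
The plan is to prove the identity by induction on $j$, using the Cvitanovi\'c--Feigenbaum--Coullet--Tresser functional equation (\ref{eq:Functional Equation}) as the inductive engine. The rewritten form we need is
\[
(-\lambda)\, g^{2}(y) \;=\; g\bigl((-\lambda)\, y\bigr),
\]
which is obtained by setting $y=-x/\lambda$ in (\ref{eq:Functional Equation}); this reformulation is the key algebraic content that drives the whole argument.

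For the base case $j=0$, both sides of (\ref{eq:Rescaling Trick}) equal $g(x)$ for $x\in I$, so the identity is trivial. For the inductive step, assume the identity holds at level $j$ on $[-(1/\lambda)^{j},(1/\lambda)^{j}]$. For $x$ satisfying $|x|\le(1/\lambda)^{j+1}$ (which in particular forces $|x|\le(1/\lambda)^{j}$, so the induction hypothesis applies), I would compute
\[
\bigl[(-\lambda)g\bigr]^{j+1}\circ g(x)
\;=\;(-\lambda)\,g\Bigl(\bigl[(-\lambda)g\bigr]^{j}\circ g(x)\Bigr)
\;=\;(-\lambda)\,g\bigl(g((-\lambda)^{j}x)\bigr)
\;=\;(-\lambda)\,g^{2}\bigl((-\lambda)^{j}x\bigr),
\]
and then apply the reformulated functional equation with $y=(-\lambda)^{j}x$ (which lies in $I$ by the domain assumption on $x$) to conclude that this equals $g((-\lambda)^{j+1}x)$, completing the step.

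The only real thing to check beyond this three-line algebra is that every function evaluation along the way is legitimate, i.e.\ that the intermediate points $[(-\lambda)g]^{i}\circ g(x)$ for $i\le j$ stay inside the domain of $g$. This is built into the induction: the induction hypothesis asserts that the composite $[(-\lambda)g]^{j}\circ g(x)$ equals $g((-\lambda)^{j}x)$, which is well-defined precisely when $(-\lambda)^{j}x\in I$, i.e.\ $|x|\le(1/\lambda)^{j}$ -- exactly the domain constraint in the hypothesis. So no separate domain analysis is needed beyond propagating the bound $|x|\le(1/\lambda)^{j+1}\Rightarrow|(-\lambda)^{j}x|\le1/\lambda\le 1$ at each step. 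I don't anticipate a genuine obstacle here; the lemma is essentially a repackaging of (\ref{eq:Functional Equation}), and the mild subtlety is merely bookkeeping of the shrinking interval $[-(1/\lambda)^{j},(1/\lambda)^{j}]$ on which the iterated identity is valid.
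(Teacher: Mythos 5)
Your proof is correct and matches the approach the paper points to: the paper's one-line proof says the lemma ``follows either from the functional equation or Proposition \ref{prop:degenerate-rescaling trick},'' and your induction on $j$ via the rewritten form $(-\lambda)g^{2}(y)=g((-\lambda)y)$ is precisely the functional-equation route, structurally identical to the paper's own inductive proof of Proposition \ref{prop:degenerate-rescaling trick} specialized to the fixed point $f_n=g$, $s_n(x)=-\lambda x$. Your domain bookkeeping is also right on target — the rewritten functional equation requires $|y|\le 1/\lambda$, and $|(-\lambda)^{j}x|\le 1/\lambda$ is exactly what the hypothesis $|x|\le(1/\lambda)^{j+1}$ supplies.
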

\begin{proof}
The lemma follows either from the functional equation \ref{eq:Functional Equation}
or Proposition \ref{prop:degenerate-rescaling trick}.
\end{proof}

By the rescaling trick, we are able to estimate the derivative for
the limiting case as follows.
\begin{lemma}
\label{lem:B->C1 derivative estimation}There exists universal constants
$E,E'>1$ such that for all integer $j\geq0$ we have 
\begin{equation}
E\lambda^{j}\leq\left|\frac{\mathrm{d}\left[(-\lambda)g\right]^{j}\circ g}{\mathrm{d}x}(x)\right|\leq E'\lambda^{j}\label{eq:derivative estimation}
\end{equation}
for all and $\left(\frac{1}{\lambda}\right)^{j+1}\leq\left|x\right|\leq\left(\frac{1}{\lambda}\right)^{j}$.
\end{lemma}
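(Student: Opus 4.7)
The proof will be a direct computation using the rescaling trick followed by the pointwise estimates on $g'$ already established.

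\emph{Step 1: differentiate the rescaling trick.} Applying $\frac{d}{dx}$ to the identity $\left[(-\lambda)g\right]^{j}\circ g(x)=g\left((-\lambda)^{j}x\right)$ from the previous lemma (valid for $|x|\leq\lambda^{-j}$) gives
\[
\frac{\mathrm{d}\left[(-\lambda)g\right]^{j}\circ g}{\mathrm{d}x}(x)=(-\lambda)^{j}\,g'\!\left((-\lambda)^{j}x\right),
\]
so $\left|\frac{\mathrm{d}}{\mathrm{d}x}\left[(-\lambda)g\right]^{j}\!\circ g(x)\right|=\lambda^{j}\bigl|g'(y)\bigr|$ with $y=(-\lambda)^{j}x$.

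\emph{Step 2: reduce to a fixed annulus.} The assumption $\lambda^{-(j+1)}\leq|x|\leq\lambda^{-j}$ translates exactly to $\lambda^{-1}\leq|y|\leq1$, i.e.\ $y\in[-1,-\lambda^{-1}]\cup[\lambda^{-1},1]$. Since $q(-1)=-1$ and $q(0)=\lambda^{-1}$, and since $g$ is even so that $\hat{q}(0)=-\lambda^{-1}$ and $\hat{q}(-1)=1$, this annulus is precisely $[q(-1),\hat{q}(0)]\cup[q(0),\hat{q}(-1)]$, the set appearing in Proposition~\ref{prop:Derivative of g on A}.

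\emph{Step 3: apply the two bounds.} The lower bound in Proposition~\ref{prop:Derivative of g on A} gives $|g'(y)|\geq|g'(q(0))|>1$ on this set; set $E=|g'(q(0))|>1$. For the upper bound, $g$ is analytic in a complex neighborhood of $[-1,1]$ (Proposition~\ref{prop:Functional Equation}), hence $g'$ is continuous on the compact interval $[-1,1]$ and attains a finite maximum; we know moreover that $|g'(c^{(1)})|=\lambda>1$, so $E':=\max(\lambda,\|g'\|_{[-1,1]})>1$ satisfies $|g'(y)|\leq E'$ for every $y\in[-1,1]$. Combining the three steps yields
\[
E\lambda^{j}\leq\lambda^{j}|g'(y)|=\left|\frac{\mathrm{d}\left[(-\lambda)g\right]^{j}\!\circ g}{\mathrm{d}x}(x)\right|\leq E'\lambda^{j},
\]
which is (\ref{eq:derivative estimation}).

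There is essentially no obstacle here: the rescaling trick converts the iterated composition into a single derivative of $g$ evaluated on a universal annulus, and both bounds on $g'$ over that annulus have already been recorded. The only minor care is making sure the endpoints $\pm1$ and $\pm\lambda^{-1}$ match up with the points $q(-1),\hat{q}(0),q(0),\hat{q}(-1)$ of Proposition~\ref{prop:Derivative of g on A}, which follows immediately from the evenness of $g$ and the identifications $q(-1)=-1$, $q(0)=\lambda^{-1}$ derived from the Cvitanovi\'{c}-Feigenbaum equation.
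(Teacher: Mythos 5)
Your proposal is correct and follows the same route as the paper's own proof: differentiate the rescaling trick to get $(-\lambda)^{j}g'((-\lambda)^{j}x)$, note that the hypothesis places $(-\lambda)^{j}x$ in the universal annulus $\lambda^{-1}\leq|y|\leq1$, and then apply Proposition~\ref{prop:Derivative of g on A} for the lower bound and compactness for the upper bound. The only difference is cosmetic: you make the endpoint identification with $q(-1),\hat{q}(0),q(0),\hat{q}(-1)$ explicit and force $E'>1$ by an explicit $\max$, which the paper leaves implicit.
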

\begin{proof}
By the rescaling trick and chain rule, we get
\[
\frac{\mathrm{d}\left[(-\lambda)g\right]^{j}\circ g}{\mathrm{d}x}(x)=(-\lambda)^{j}g'((-\lambda)^{j}x)
\]
for all $\left|x\right|\leq\left(\frac{1}{\lambda}\right)^{j}$. By
Proposition \ref{prop:Derivative of g on A}, there exists $E>1$
such that 
\[
\left|g'(x)\right|\geq E
\]
for all $\frac{1}{\lambda}\leq\left|x\right|\leq1$. Also, by compactness,
there exists $E'>0$ such that 
\[
\left|g'(x)\right|\leq E'
\]
for all $x\in I$. This yields (\ref{eq:derivative estimation}) since
$\frac{1}{\lambda}\leq\left|(-\lambda)^{j}x\right|\leq1$ for all
$\left(\frac{1}{\lambda}\right)^{j+1}\leq\left|x\right|\leq\left(\frac{1}{\lambda}\right)^{j}$.
\end{proof}

Next, we need to do some hard work to make these expansion estimates
also work on Hénon-like maps that are close enough to the fixed point
$G$.

One of the difficulty is the subpartitions on $B$ and $C$ are not
rectangular in the non-degenerate case. The following lemmas, Lemma
\ref{lem:unif cts on interval}, Lemma \ref{lem:rect extension on C},
and Corollary \ref{cor:rect ext on B}, allow us to apply the function
$\Phi_{n}^{j}\circ F_{n}$ on a rectangular neighborhood of $B_{n}(j)$.
\begin{lemma}
\label{lem:unif cts on interval}Given $\delta>0$ and $I^{v}\supset I^{h}\Supset I$.
For all $d>0$, there exists $\overline{\epsilon}=\overline{\epsilon}(d)>0$
and $d'=d'(d)$ such that for all $F\in\hat{\mathcal{I}}_{\delta}(I^{h}\times I^{v},\overline{\epsilon})$
we have
\[
s_{n}\circ h_{n}([a-d',b+d'],y)\subset[(-\lambda g)(a)-d,(-\lambda g)(b)+d]
\]
for all $[a-d',b+d']\subset I^{AC,r}$, $y\in I_{n}^{v}$, and $n\geq0$
where $I^{AC,r}$ is the right component of $I^{AC}$ defined in Lemma
\ref{lem:Expanding rate on A and C}.
\end{lemma}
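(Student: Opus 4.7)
The plan is to decompose the error $s_n\circ h_n(x,y)-(-\lambda g)(x)$ into the deviations of each factor from its limit and then invoke the uniform continuity of $-\lambda g$ on the compact interval $I^{AC,r}$. The key observation is that on $I^{AC,r}$ the fixed point $g$ is strictly monotone (decreasing, since the critical point of $g$ is at $0$ and $I^{AC,r}$ lies to the right of it), so $-\lambda g$ is strictly increasing there; in particular $(-\lambda g)([a-d',b+d'])=[(-\lambda g)(a-d'),(-\lambda g)(b+d')]$, which makes the left and right endpoints in the target interval meaningful.

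First I would use the definition of $\hat{\mathcal{I}}_\delta$ together with Proposition \ref{prop:Hyperbolicity of the Renormalization Operator} to bound the three sources of error uniformly in $n$: $\|s_n-(-\lambda)\cdot\|_{I^h}<\overline{\epsilon}$, $\|f_n-g\|<\overline{\epsilon}$, and $\|\epsilon_n\|<\overline{\epsilon}$. Writing
\[
s_n\circ h_n(x,y)-(-\lambda g)(x)=\bigl[s_n-(-\lambda)\cdot\bigr]\bigl(f_n(x)-\epsilon_n(x,y)\bigr)+(-\lambda)\bigl[f_n(x)-g(x)\bigr]+\lambda\,\epsilon_n(x,y),
\]
the triangle inequality yields
\[
\bigl|s_n\circ h_n(x,y)-(-\lambda g)(x)\bigr|\le(1+2\lambda)\overline{\epsilon}
\]
for every $(x,y)\in I^h\times I_n^v$ and every $n\ge 0$ (one only needs $\overline{\epsilon}$ small enough that $f_n(x)-\epsilon_n(x,y)$ remains in the domain of $s_n$, which follows from $F_n$ being a self-map of $D_n$).

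Next, since $g$ is continuous on the compact interval $I^{AC,r}$, it is uniformly continuous there, and so is $-\lambda g$. Choose $d'=d'(d)>0$ such that $|x-x'|\le d'$ with $x,x'\in I^{AC,r}$ implies $|(-\lambda g)(x)-(-\lambda g)(x')|<d/2$. Then for any $[a-d',b+d']\subset I^{AC,r}$ and any $x\in[a-d',b+d']$, monotonicity gives
\[
(-\lambda g)(a)-\tfrac{d}{2}<(-\lambda g)(a-d')\le(-\lambda g)(x)\le(-\lambda g)(b+d')<(-\lambda g)(b)+\tfrac{d}{2}.
\]
Now choose $\overline{\epsilon}=\overline{\epsilon}(d)$ so small that $(1+2\lambda)\overline{\epsilon}<d/2$. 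Combining the two bounds by the triangle inequality gives, for every $x\in[a-d',b+d']$, $y\in I_n^v$ and $n\ge0$,
\[
(-\lambda g)(a)-d<s_n\circ h_n(x,y)<(-\lambda g)(b)+d,
\]
which is the desired inclusion.

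No serious obstacle is anticipated: the argument is an ``$\epsilon$ close to limit $+$ uniform continuity of the limit'' soft estimate, and every ingredient has already been set up. The only mild point requiring attention is verifying that $-\lambda g$ is monotone on $I^{AC,r}$ so that the endpoints of $(-\lambda g)([a-d',b+d'])$ are exactly $(-\lambda g)(a-d')$ and $(-\lambda g)(b+d')$ (otherwise one would have to replace these by $\min$ and $\max$ over the subinterval), but this follows immediately from Proposition \ref{prop:Functional Equation}(3) and the location of $I^{AC,r}$ to the right of $c^{(0)}=0$.
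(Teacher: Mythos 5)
Your proof is correct and takes essentially the same approach as the paper: the paper bounds $|g'|\le E$ on $\overline{I^{h}}$ (which is the quantitative form of your ``uniform continuity of $-\lambda g$'' step) and splits $s_{n}\circ h_{n}-(-\lambda)g$ into the $s_{n}-(-\lambda)\cdot$ piece and the $(-\lambda)(h_{n}-g)$ piece, both controlled by the $\hat{\mathcal{I}}_{\delta}$ smallness conditions, exactly as you do after regrouping. One tiny bookkeeping point: the definition of $\hat{\mathcal{I}}_{\delta}$ gives $\|F_{n}-G\|=\|f_{n}-\epsilon_{n}-g\|<\overline{\epsilon}$, so $\|f_{n}-g\|$ is bounded by $2\overline{\epsilon}$ rather than $\overline{\epsilon}$, making your uniform error bound $(1+3\lambda)\overline{\epsilon}$ instead of $(1+2\lambda)\overline{\epsilon}$ --- a harmless constant change.
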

\begin{proof}
By the compactness of $\overline{I^{h}}$, there exists $E\geq1$
such that $\left|g'(x)\right|\leq E$ for all $x\in I^{h}$. Then
\begin{align*}
 & s_{n}\circ h_{n}(a-d')\\
\geq & (-\lambda)g(a-d')-\left|(-\lambda)g(a-d')-(-\lambda)h_{n}(a-d')\right|-\left|(-\lambda)h_{n}(a-d')-s_{n}\circ h_{n}(a-d')\right|\\
\geq & (-\lambda)g(a)-\lambda Ed'-\lambda\left\Vert F_{n}-G\right\Vert -\left\Vert s_{n}(x)-(-\lambda)x\right\Vert _{I^{h}}\\
\geq & (-\lambda)g(a)-d
\end{align*}
when $d'$ is small enough such that $\lambda Ed'<\frac{d}{2}$ and
$\overline{\epsilon}$ is small enough such that $\lambda\left\Vert F_{n}-G\right\Vert +\left\Vert s_{n}(x)-(-\lambda)x\right\Vert _{I^{h}}<\frac{d}{2}$
for all $n\geq0$.

Similarly, 
\begin{align*}
 & s_{n}\circ h_{n}(b+d')\\
\leq & (-\lambda)g(b+d')+\left|(-\lambda)g(b+d')-(-\lambda)h_{n}(b+d')\right|+\left|(-\lambda)h_{n}(b+d')-s_{n}\circ h_{n}(b+d')\right|\\
\leq & (-\lambda)g(b)+\lambda Ed'+\lambda\left\Vert F_{n}-G\right\Vert +\left\Vert s_{n}(x)-(-\lambda)x\right\Vert _{I^{h}}\\
\leq & (-\lambda)g(b)+d.
\end{align*}
Therefore, the lemma is proved.
\end{proof}

Recall that $q(j)=g(q^{c}(j))$ from Definition \ref{def:q}.
\begin{lemma}
\label{lem:rect extension on C}Given $\delta>0$ and $I^{v}\supset I^{h}\Supset I$.
For all integer $j\geq1$, there exists $\overline{\epsilon}(j)>0$
and $d^{C}(j)>0$ such that for all $F\in\hat{\mathcal{I}}_{\delta}(I^{h}\times I^{v},\overline{\epsilon})$
the rescaling $\Phi_{n}^{j}$ is defined on $[q(j-1)-d^{C}(j),q(j)+d^{C}(j)]\times I_{n}^{v}$
for all $n\geq0$ and $[q(0)-d^{C}(1),q(1)+d^{C}(1)]\subset I^{AC}$.

In addition, 
\begin{equation}
\phi_{n}([q(k-1)-d^{C}(k),q(k)+d^{C}(k)]\times I_{n}^{v})\subset[q(k-2)-d^{C}(k-1),q(k-1)+d^{C}(k-1)]\times I_{n+1}^{v}\label{eq:extension for C(j)}
\end{equation}
 for all $F\in\hat{\mathcal{I}}_{\delta}(I^{h}\times I^{v},\overline{\epsilon}(j))$,
$n\geq0$, and $2\leq k\leq j$.
\end{lemma}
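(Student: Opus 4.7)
The plan is induction on $j$, running in parallel with the recurrence $(-\lambda g)(q(k)) = q(k-1)$ that defines the points $q(k)$ in the limiting model; at each step I pull the level-$k$ extended rectangle back through a single rescaling $\phi_n$ to produce the level-$(k{+}1)$ rectangle.

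For the base case $j=1$, the points $q(0)$ and $q(1)$ both lie in the interior of the set $I^{AC}$ of Lemma~\ref{lem:Expanding rate on A and C} (the former directly, the latter because $q(1)$ is the $C$-side preimage of $q(0)$ under $-\lambda g$), so I choose $d^{C}(1)>0$ small enough that $[q(0)-d^{C}(1),\,q(1)+d^{C}(1)]\subset I^{AC}$. On this strip Lemma~\ref{lem:Expanding rate on A and C} gives $|\partial h_n/\partial x|\geq E>1$ whenever $\overline{\epsilon}$ is small, so $x\mapsto h_n(x,y)$ is injective, $H_n=(h_n,y)$ is a diffeomorphism in $x$, and $\phi_n=\Lambda_n\circ H_n$ is defined on the whole rectangle $[q(0)-d^{C}(1),\,q(1)+d^{C}(1)]\times I_n^v$.

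For the inductive step $k\to k+1$, suppose $d^{C}(k)>0$ and $\overline{\epsilon}_k>0$ have been fixed so that $\Phi_n^{k}$ is defined on $[q(k-1)-d^{C}(k),\,q(k)+d^{C}(k)]\times I_n^v$ for every $n\geq 0$. Apply Lemma~\ref{lem:unif cts on interval} with $d:=d^{C}(k)$, $a:=q(k)$, $b:=q(k+1)$: since $(-\lambda g)(q(k))=q(k-1)$ and $(-\lambda g)(q(k+1))=q(k)$, the lemma produces $d'=d'(d^{C}(k))>0$ and a threshold $\overline{\epsilon}_{k+1}>0$ such that, for every $F\in\hat{\mathcal{I}}_\delta(I^h\times I^v,\overline{\epsilon}_{k+1})$ and every $y\in I_n^v$,
\[
s_n\circ h_n\bigl([q(k)-d',\,q(k+1)+d'],\,y\bigr)\;\subset\;[q(k-1)-d^{C}(k),\,q(k)+d^{C}(k)].
\]
Setting $d^{C}(k+1):=d'$ yields (\ref{eq:extension for C(j)}) at level $k+1$ in the $x$-coordinate, while the $y$-coordinate inclusion $s_n(I_n^v)=I_{n+1}^v$ is automatic from the class. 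Composing with $\Phi_{n+1}^{k}$, which is defined on the target rectangle by the inductive hypothesis, gives $\Phi_n^{k+1}=\Phi_{n+1}^{k}\circ\phi_n$ on $[q(k)-d^{C}(k+1),\,q(k+1)+d^{C}(k+1)]\times I_n^v$. Finally set $\overline{\epsilon}(j):=\min_{1\leq k\leq j}\overline{\epsilon}_k$.

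The one piece of bookkeeping to watch, and the reason $\overline{\epsilon}(j)$ is not uniform in $j$, is that each use of Lemma~\ref{lem:unif cts on interval} requires $\overline{\epsilon}$ below a bound depending on the already-fixed $d^{C}(k)$, so successive levels may force the threshold to shrink; this matches the $j$-dependence advertised in the statement and mirrors the non-uniformity already seen in Proposition~\ref{prop:W(j) in B for n large}. Beyond that, the argument is a clean transfer of the limit-model recurrence on $q(k)$ to the perturbed H\'enon-like setting via the continuity estimate of Lemma~\ref{lem:unif cts on interval}.
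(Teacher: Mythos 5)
Your proof is correct and follows essentially the same inductive scheme as the paper: the base case is handled by Lemma \ref{lem:Expanding rate on A and C} (the paper simply notes $\Phi_n^1=\phi_n$ is defined on $I^{AC,r}\times I_n^v$, while you spell out why), and the inductive step is the same application of Lemma \ref{lem:unif cts on interval} with $a=q(k)$, $b=q(k+1)$ exploiting the recurrence $(-\lambda g)(q(k))=q(k-1)$, followed by composing $\Phi_n^{k+1}=\Phi_{n+1}^k\circ\phi_n$. Your choice $\overline{\epsilon}(j)=\min_{1\le k\le j}\overline\epsilon_k$ also cleans up a directional slip in the paper's statement that $\overline\epsilon(j+1)\ge\overline\epsilon(j)$; the thresholds should of course decrease with $j$.
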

\begin{proof}
We prove by induction on $j\geq1$. 

For the base case $j=1$, $\Phi_{n}^{1}=\phi_{n}$ is defined on $I^{AC,r}\times I_{n}^{v}$
by Lemma \ref{lem:Expanding rate on A and C} where $I^{AC,r}$ is
the right component of $I^{AC}$.

Assume that there exists $d>0$ such that $\Phi_{n}^{j}$ is defined
on $[q(j-1)-d,q(j)+d]\times I_{n}^{v}$ for all $n\geq0$.

For the case $j+1$, we know that $(-\lambda g)(q(j))=q(j-1)$ and
$(-\lambda g)(q(j+1))=q(j)$. By Lemma \ref{lem:unif cts on interval},
there exists $d'>0$ and $\overline{\epsilon}(j+1)\geq\overline{\epsilon}(j)$
such that 
\[
s_{n}\circ h_{n}([q(j)-d',q(j+1)+d'],y)\subset[q(j-1)-d,q(j)+d]
\]
for all $F\in\hat{\mathcal{I}}_{\delta}(I^{h}\times I^{v},\overline{\epsilon}(j+1))$,
$y\in I_{n}^{v}$, and $n\geq0$. Then $\pi_{x}\circ\phi_{n}([q(j)-d',q(j+1)+d']\times I_{n}^{v})\subset[q(j-1)-d,q(j)+d]\times I_{n+1}^{v}$.
Therefore, $\Phi_{n}^{j+1}=\Phi_{n+1}^{j}\circ\phi_{n}$ is defined
on $[q(j)-d',q(j+1)+d']\times I_{n}^{v}$ by the induction hypothesis.

The relation (\ref{eq:extension for C(j)}) follows from the definition
of $d(j)$.
\end{proof}

Recall $q^{l}(j)=-\left|q^{c}(j)\right|$ and $q^{r}(j)=\left|q^{c}(j)\right|$
from Definition \ref{def:ql,qr}.
\begin{corollary}
\label{cor:rect ext on B}Given $\delta>0$ and $I^{v}\supset I^{h}\Supset I$.
For all integer $j\geq1$, there exists $\overline{\epsilon}(j)>0$,
$d^{B}(j)>0$, $E>1$, and $E'>1$ such that for all $F\in\hat{\mathcal{I}}_{\delta}(I^{h}\times I^{v},\overline{\epsilon})$
the following properties hold for all $n\geq0$:
\begin{enumerate}
\item $F_{n}([q^{l}(j-1)-d^{B}(j),q^{l}(j)+d^{B}(j)]\times I_{n}^{v})\subset[q(j-1)-d^{C}(j),q(j)+d^{C}(j)]\times I_{n}^{v}$
and $F_{n}([q^{r}(j-1)-d^{B}(j),q^{r}(j)+d^{B}(j)]\times I_{n}^{v})\subset[q(j-1)-d^{C}(j),q(j)+d^{C}(j)]\times I_{n}^{v}$.
\\
That is, $\Phi_{n}^{j}\circ F_{n}$ is defined on $([q^{l}(j-1)-d^{B}(j),q^{l}(j)+d^{B}(j)]\cup[q^{r}(j)-d^{B}(j),q^{r}(j-1)+d^{B}(j)])\times I_{n}^{v}$.
\item $B_{n}^{l}(j)\subset[q^{l}(j-1)-d^{B}(j),q^{l}(j)+d^{B}(j)]\times I_{n}^{v}$
and $B_{n}^{r}(j)\subset[q^{r}(j-1)-d^{B}(j),q^{r}(j)+d^{B}(j)]\times I_{n}^{v}$.
\end{enumerate}
Here $B_{n}^{l}(j)$ and $B_{n}^{r}(j)$ are the left and right components
of $B_{n}(j)$ respectively.
\end{corollary}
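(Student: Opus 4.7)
The plan is to obtain the corollary by pulling back the rectangular $C$-neighborhoods of Lemma \ref{lem:rect extension on C} through $F_n$, exploiting the proximity of $F_n$ to the limit $G(x,y)=(g(x),x)$ together with the identity $g(q^l(j)) = g(q^r(j)) = q(j)$ that follows from $q(j)=g(q^c(j))$ and the evenness of $g$.

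First, I would exploit continuity of $g$ in the degenerate limit. Since none of $q^l(j-1), q^l(j), q^r(j-1), q^r(j)$ coincide with the critical point $c^{(0)}$ of $g$, the map $g$ is a local diffeomorphism on each of the two monotone branches containing these points, and for any target precision $d>0$ one can choose $d^B=d^B(j,d)>0$ with
\[
g\bigl([q^l(j-1)-d^B,\, q^l(j)+d^B]\bigr) \subset [q(j-1)-d,\, q(j)+d],
\]
and the analogous inclusion on the right branch. Specialising $d=d^C(j)/2$, where $d^C(j)$ is the constant supplied by Lemma \ref{lem:rect extension on C}, fixes $d^B(j)$.

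Next, I would upgrade this containment from $g$ to $F_n$ by a uniform perturbation argument. By Proposition \ref{prop:Hyperbolicity of the Renormalization Operator}, $\sup_{n\geq 0}\|h_n - g\|$ is dominated by a constant multiple of $\|F-G\|$ and hence by $\overline{\epsilon}$. Choosing $\overline{\epsilon}(j)>0$ small enough to force this uniform bound below $d^C(j)/2$ gives
\[
F_n\bigl([q^l(j-1)-d^B(j),\, q^l(j)+d^B(j)]\times I_n^v\bigr) \subset [q(j-1)-d^C(j),\, q(j)+d^C(j)]\times I_n^v,
\]
with the symmetric statement on the right. Composing with $\Phi_n^j$, which by Lemma \ref{lem:rect extension on C} is defined on the larger $C$-rectangle, proves property (1). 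For property (2), I would use Proposition \ref{prop:W(j) in B for n large}: each boundary component $W_n^l(j-1), W_n^l(j), W_n^r(j-1), W_n^r(j)$ of $B_n^l(j)\cup B_n^r(j)$ is a vertical graph confined to a thin horizontal strip around its limiting $x$-coordinate, and shrinking $\overline{\epsilon}(j)$ further makes the width of these strips smaller than $d^B(j)$, yielding the claimed inclusion. The expansion constants $E>1$ and $E'>1$ are inherited from Lemma \ref{lem:B->C1 derivative estimation} applied to $(-\lambda g)^{j-1}\circ g$, extended to $\Phi_n^j\circ F_n$ via Cauchy estimates on the derivatives (Lemma \ref{lem:Derivative Bound}) after a final shrinking of $\overline{\epsilon}(j)$.

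The main obstacle is the joint $j$-dependence of all constants. Because $q^l(j-1)$ and $q^l(j)$ both accumulate at $c^{(0)}$ as $j\to\infty$, the gap between them collapses, forcing $d^B(j)\to 0$ and in turn $\overline{\epsilon}(j)\to 0$; similarly the Lipschitz constant $c=c(j)$ in Proposition \ref{prop:W(j) in B for n large} is not uniform in $j$. Consequently, unlike Lemma \ref{lem:rect extension on C}, there is no way to make $\overline{\epsilon}$ uniform in $j$, and the statement must be read with every constant $\overline{\epsilon}(j), d^B(j), E, E'$ potentially depending on the fixed rescaling depth $j$.
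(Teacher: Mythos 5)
Your proof is correct and follows the same outline as the paper: continuity of $g$ yields the degenerate containment, the $C^0$-proximity $\|h_n-g\|<\overline{\epsilon}$ upgrades it to $F_n$ so that $\Phi_n^j\circ F_n$ is defined on the $B$-rectangle via Lemma \ref{lem:rect extension on C}, and Proposition \ref{prop:W(j) in B for n large} gives property (2). The constants $E,E'$ in the corollary's statement play no role in either proof (they appear to be carried over from Corollary \ref{cor:Rescaling trick expansion}), so your remark sourcing them from Lemma \ref{lem:B->C1 derivative estimation} is unnecessary but harmless.
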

\begin{proof}
Fixed $j\geq1$. 

There exists $d'>0$ and $\overline{\epsilon}>0$ small enough such
that $\Phi_{n}^{j}$ is defined on $[q(j-1)-d',q(j)+d']\times I_{n}^{v}$
for all $n\geq0$. By the continuity of $g$, there exists $d>0$
such that $g([q^{l}(j-1)-d,q^{l}(j)+d])\subset[q(j-1)-\frac{d'}{2},q(j)+\frac{d'}{2}]$.
Then 
\begin{eqnarray*}
h_{n}(q^{l}(j-1)-d,y) & \geq & g(q^{l}(j-1)-d)-\left\Vert h_{n}-g\right\Vert _{I^{h}(\delta)\times I_{n}^{v}(\delta)}\\
 & \geq & q(j-1)-d'
\end{eqnarray*}
for all $n\geq0$. Here, we assume that $\overline{\epsilon}$ is
small enough such that $\left\Vert h_{n}-g\right\Vert _{I^{h}(\delta)\times I_{n}^{v}(\delta)}<\frac{d'}{2}$.
Similarly, 
\begin{eqnarray*}
h_{n}(q^{l}(j)+d,y) & \geq & g(q^{l}(j)+d)-\left\Vert h_{n}-g\right\Vert _{I^{h}(\delta)\times I_{n}^{v}(\delta)}\\
 & \geq & q(j)-d'.
\end{eqnarray*}
Thus, $F_{n}([q^{l}(j-1)-d,q^{l}(j)+d]\times I_{n}^{v})\subset[q(j-1)-d',q(j)+d']\times I_{n}^{v}$.
This proves that $\Phi_{n}^{j}\circ F_{n}$ is defined on $[q^{l}(j-1)-d,q^{l}(j)+d]\times I_{n}^{v}$.

Similarly, we can choose $d>0$ to be small enough such that $F_{n}([q^{r}(j-1)-d,q^{r}(j)+d]\times I_{n}^{v})\subset[q(j-1)-d',q(j)+d']\times I_{n}^{v}$.
Consequently, the first property is proved.

By Proposition \ref{prop:W(j) in B for n large}, we may also assume
that $\overline{\epsilon}=\overline{\epsilon}(j)$ is small enough
such that $W_{n}^{l}(j-1)\subset[q^{l}(j-1)-d,q^{l}(j-1)+d]\times I_{n}^{v}$,
$W_{n}^{l}(j)\subset[q^{l}(j)-d,q^{l}(j)+d]\times I_{n}^{v}$, $W_{n}^{r}(j)\subset[q^{r}(j)-d,q^{r}(j)+d]\times I_{n}^{v}$,
and $W_{n}^{r}(j-1)\subset[q^{r}(j-1)-d,q^{r}(j-1)+d]\times I_{n}^{v}$.
This proves the second property.
\end{proof}

The next lemma will show that the expansion of a Hénon-like map is
close to the limiting case when $\overline{\epsilon}$ is small.
\begin{lemma}
\label{lem:limit of the jth iteration+rescaling dx}Given $\delta>0$
and $I^{v}\supset I^{h}\Supset I$. For all $\hat{\epsilon}>0$ and
integer $j\geq1$, there exists $\overline{\epsilon}=\overline{\epsilon}(\hat{\epsilon},j)>0$
such that 
\begin{equation}
\left|\frac{\partial\pi_{x}\circ\Phi_{n}^{j}\circ F_{n}}{\partial x}(x,y)-\frac{\mathrm{d}\left[(-\lambda)g\right]^{j}\circ g}{\mathrm{d}x}(x)\right|<\hat{\epsilon}\label{eq:limit of the jth iteration+rescaling dx}
\end{equation}
for all $F\in\hat{\mathcal{I}}_{\delta}(I^{h}\times I^{v},\overline{\epsilon})$,
$n\geq0$, and $(x,y)\in([q^{l}(j-1)-d^{B}(j),q^{l}(j)+d^{B}(j)]\cup[q^{r}(j)-d^{B}(j),q^{r}(j-1)+d^{B}(j)])\times I_{n}^{v}$.
\end{lemma}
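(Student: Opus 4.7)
The plan is to induct on $j$, reducing the $C^1$ estimate on the real domain to $C^0$ convergence of the holomorphic extensions on a slightly larger complex neighborhood. Writing $\Psi_n^j = \Phi_n^j \circ F_n$ with limit $\Psi_\infty^j(x,y) = \bigl(\left[(-\lambda)g\right]^j \circ g(x),\,(-\lambda)^j x\bigr)$, I aim to show that for each fixed $j$, provided $\overline{\epsilon}(\hat{\epsilon},j)$ is small enough, $\Psi_n^j$ extends holomorphically to a complex neighborhood $U_j$ of the real rectangular domain in the statement, and that $\|\pi_x \circ \Psi_n^j - \pi_x \circ \Psi_\infty^j\|_{U_j}$ can be made as small as one wishes, uniformly in $n$. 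Then Lemma \ref{lem:Derivative Bound} (Cauchy's estimate) converts this sup-norm bound into the derivative estimate (\ref{eq:limit of the jth iteration+rescaling dx}).

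For the base case $j=1$, Proposition \ref{prop:Hyperbolicity of the Renormalization Operator} already supplies the required sup-norm control: $\|f_n-g\|$, $\|\epsilon_n\|$, $|\lambda_n-\lambda|$, and $\|s_n(x)-(-\lambda)x\|$ are all bounded by $\overline{\epsilon}$-dependent quantities on a fixed complex thickening of $I^h \times I_n^v$. Substituting into $\pi_x \circ \phi_n \circ F_n(x,y) = s_n\bigl(f_n(f_n(x)-\epsilon_n(x,y)) - \epsilon_n(F_n(x,y))\bigr)$ yields uniform convergence to $(-\lambda)g\bigl(g(x)\bigr)$ on a complex neighborhood of the base rectangular domain supplied by Corollary \ref{cor:rect ext on B}. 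For the inductive step I factor $\Psi_n^{j+1} = \phi_{n+j} \circ \Psi_n^j$, use Lemma \ref{lem:rect extension on C} and Corollary \ref{cor:rect ext on B} to ensure, after possibly shrinking the complex thickening, that $\Psi_n^j(U_{j+1}) \subset V_j$, where $\phi_{n+j}$ is holomorphic and uniformly close on $V_j$ to its limit $\phi_\infty(x,y) = ((-\lambda)g(x),(-\lambda)y)$, and estimate via
\[
\bigl\|\phi_{n+j}\circ\Psi_n^j - \phi_\infty\circ\Psi_\infty^j\bigr\| \leq \|\phi_{n+j}-\phi_\infty\|_{V_j} + L_j\,\|\Psi_n^j-\Psi_\infty^j\|_{U_{j+1}},
\]
where $L_j$ is an $n$-independent Lipschitz constant for $\phi_{n+j}$ on $V_j$, obtained from Cauchy bounds on the derivatives of the nearby holomorphic family.

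The principal obstacle is organizing the complex neighborhoods so that each composition is simultaneously defined \emph{and} admits a Cauchy estimate with an $n$-independent constant. Because iteration by $-\lambda g$ limits how far one may thicken the real interval in the imaginary direction before the image leaves the domain of the next $\phi_{n+k}$, the admissible neighborhood $U_j$ must shrink with $j$; this is precisely why $\overline{\epsilon}$ is allowed to depend on $j$ in the statement. Once nested neighborhoods $U_1 \supset U_2 \supset \cdots \supset U_j$ and corresponding $V_1,\dots,V_{j-1}$ are fixed up front for the single $j$ of interest (choosing the real parts via Corollary \ref{cor:rect ext on B} and fattening slightly in the complex direction so that every $\phi_{n+k}$ remains holomorphic there for small $\overline{\epsilon}$), the triangle inequality above closes the induction, and Lemma \ref{lem:Derivative Bound} finally yields the claimed $\partial_x$-bound with $\overline{\epsilon}$ chosen in terms of $\hat{\epsilon}$ and $j$.
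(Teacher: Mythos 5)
The paper leaves this proof as an exercise, only remarking that the claim holds because $j$ is fixed and $F_n$ is close to the fixed point $G$ for small $\overline{\epsilon}$. Your proposal supplies a complete and correct argument precisely along the lines the author hints at: induct on $j$ over a finite (hence manageable) chain of compositions, establish uniform $C^0$ closeness of the holomorphic extensions on a fixed complex thickening of the real domain using Proposition \ref{prop:Hyperbolicity of the Renormalization Operator}, and then invoke Lemma \ref{lem:Derivative Bound} to upgrade the sup-norm bound to a derivative bound. The two key technical points you flag --- that the complex thickenings $U_j$ must be pre-selected to shrink with $j$, and that the Lipschitz constant $L_j$ of $\phi_{n+j}$ can be taken $n$-independent via Cauchy bounds on the nearby holomorphic family --- are exactly the details that make the induction close and explain why $\overline{\epsilon}$ must depend on $j$. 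The domain control you obtain from Lemma \ref{lem:rect extension on C} and Corollary \ref{cor:rect ext on B} is also the right mechanism. In short: this is a correct completion of the exercise, taking essentially the route the paper points to, executed carefully.
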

\begin{proof}
The Lemma is true because $j$ is fixed and the Hénon-like maps $F_{n}$
are close to the fixed point $G$ when $\overline{\epsilon}$ is small.
The proof is left as an exercise to the reader.
\end{proof}

\begin{corollary}
\label{cor:Rescaling trick expansion}Given $\delta>0$ and $I^{v}\supset I^{h}\Supset I$.
For all integer $j\geq1$, there exists $\overline{\epsilon}(j)>0$,
$\hat{d}^{B}(j)$, $E>1$, and $E'>1$ such that for all $F\in\hat{\mathcal{I}}_{\delta}(I^{h}\times I^{v},\overline{\epsilon})$
the following properties hold for all $n\geq0$:

\[
E\lambda^{j}\leq\left|\frac{\partial\pi_{x}\circ\Phi_{n}^{j}\circ F_{n}}{\partial x}(x,y)\right|\leq E'\lambda^{j}
\]
 for all $x\in[q^{l}(j-1)-\hat{d}^{B}(j),q^{l}(j)+\hat{d}^{B}(j)]\cup[q^{r}(j)-\hat{d}^{B}(j),q^{r}(j-1)+\hat{d}^{B}(j)]$
and $y\in I_{n}^{v}$.
\end{corollary}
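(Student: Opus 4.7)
The plan is to combine Lemma \ref{lem:B->C1 derivative estimation}, which gives the derivative bounds for the limiting map $[(-\lambda)g]^{j}\circ g$, with Lemma \ref{lem:limit of the jth iteration+rescaling dx}, which transfers such bounds from the limit to $\partial_{x}\pi_{x}\circ\Phi_{n}^{j}\circ F_{n}$ uniformly in $n$ once $F$ is close enough to $G$.

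First I would verify that for the limiting map $g$ the closed intervals $[q^{l}(j-1),q^{l}(j)]$ and $[q^{r}(j),q^{r}(j-1)]$ coincide precisely with $\{\lambda^{-(j+1)}\le|x|\le\lambda^{-j}\}$. By Proposition \ref{prop:W(j) in B for n large}, in the degenerate limit the boundary points $q^{l}(k)$ and $q^{r}(k)$ are the two preimages under $g$ of the periodic point $p(k)=[(-\lambda)g]^{-k}(1/\lambda)$. Applying the rescaling trick (\ref{eq:Rescaling Trick}) to the defining identity $[(-\lambda)g]^{k}(g(q^{c}(k)))=1/\lambda$ yields $g((-\lambda)^{k}q^{c}(k))=g(1/\lambda)$, and hence $|q^{c}(k)|=\lambda^{-(k+1)}$. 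Consequently Lemma \ref{lem:B->C1 derivative estimation} supplies universal $E_{0}>1$ and $E_{0}'>1$ with
\[
E_{0}\lambda^{j}\le\Bigl|\tfrac{d}{dx}\{[(-\lambda)g]^{j}\circ g\}(x)\Bigr|\le E_{0}'\lambda^{j}
\]
on these two intervals.

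Next I would choose $\hat{d}^{B}(j)\in(0,d^{B}(j)]$ (with $d^{B}(j)$ from Corollary \ref{cor:rect ext on B}) small enough that on the thickened intervals $[q^{l}(j-1)-\hat{d}^{B}(j),q^{l}(j)+\hat{d}^{B}(j)]$ and $[q^{r}(j)-\hat{d}^{B}(j),q^{r}(j-1)+\hat{d}^{B}(j)]$ the value $(-\lambda)^{j}x$ stays inside a slightly enlarged neighborhood of $[\lambda^{-1},1]\cup[-1,-\lambda^{-1}]$ on which $|g'|$ is still bounded below, using continuity of $g'$ and Proposition \ref{prop:Derivative of g on A}. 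Since the rescaling trick gives $\tfrac{d}{dx}\{[(-\lambda)g]^{j}\circ g\}(x)=(-\lambda)^{j}g'((-\lambda)^{j}x)$, a bound $\tilde E\lambda^{j}\le|\cdot|\le\tilde E'\lambda^{j}$ persists on the thickened intervals with slightly degraded but still admissible constants $\tilde E>1$, $\tilde E'>E_{0}'$.

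Finally I would invoke Lemma \ref{lem:limit of the jth iteration+rescaling dx} with $\hat{\epsilon}:=\tfrac{1}{2}(\tilde E-1)\lambda^{j}$, producing a threshold $\overline{\epsilon}(j)>0$ such that for every $F\in\hat{\mathcal{I}}_{\delta}(I^{h}\times I^{v},\overline{\epsilon}(j))$, every $n\ge0$, and every admissible $(x,y)$,
\[
\Bigl|\tfrac{\partial\pi_{x}\circ\Phi_{n}^{j}\circ F_{n}}{\partial x}(x,y)-\tfrac{d}{dx}\{[(-\lambda)g]^{j}\circ g\}(x)\Bigr|<\hat{\epsilon}.
\]
A triangle-inequality step then delivers the desired bound with $E:=\tilde E-\hat{\epsilon}\lambda^{-j}=\tfrac{1}{2}(\tilde E+1)>1$ and $E':=\tilde E'+\hat{\epsilon}\lambda^{-j}$. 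The only mild obstacle is the bookkeeping in the middle step: one must ensure $\hat{d}^{B}(j)$ is small enough that $(-\lambda)^{j}x$ never approaches the critical point $0$ of $g$, where $g'$ vanishes. Since $j$ is fixed and $|q^{c}(j)|=\lambda^{-(j+1)}>0$, this is a finite-$j$ continuity argument that does not require any quantitative control uniform in $j$.
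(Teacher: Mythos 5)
Your proposal is essentially the paper's own proof. The paper chooses $\hat{d}^{B}(j)<d^{B}(j)$ small enough that the bounds from Lemma \ref{lem:B->C1 derivative estimation} persist on the thickened intervals $\lambda^{-(j+1)}-\hat{d}^{B}(j)\le|x|\le\lambda^{-j}+\hat{d}^{B}(j)$ (with slightly relaxed constants), then invokes Lemma \ref{lem:limit of the jth iteration+rescaling dx} to transfer the bound to $\partial_x\pi_x\circ\Phi_n^j\circ F_n$, ending with constants $\sqrt{E}$, $\sqrt{E'}$. You do exactly the same two steps, the only cosmetic difference being that you set $\hat{\epsilon}=\tfrac{1}{2}(\tilde{E}-1)\lambda^{j}$ and land on $E=\tfrac{1}{2}(\tilde{E}+1)$, $E'=\tilde{E}'+\tfrac{1}{2}(\tilde{E}-1)$ instead of square roots. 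You also make explicit the identification $|q^{c}(j)|=\lambda^{-(j+1)}$ via the rescaling trick, which the paper's proof treats as understood when it silently switches from the $q$-intervals in the statement to the $\lambda$-power intervals in the proof; this is a harmless and arguably clarifying addition.
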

\begin{proof}
By the continuity of $g$ and (\ref{eq:derivative estimation}), we
may assume that $\hat{d}^{B}(j)<d^{B}(j)$ is small enough and $E'>E>1$
such that 
\[
E\lambda^{j}\leq\left|\frac{\mathrm{d}\left[(-\lambda)g\right]^{j}\circ g}{\mathrm{d}x}(x)\right|\leq E'\lambda^{j}
\]
for all $\left(\frac{1}{\lambda}\right)^{j+1}-\hat{d}^{B}(j)\leq\left|x\right|\leq\left(\frac{1}{\lambda}\right)^{j}+\hat{d}^{B}(j)$. 

By Lemma \ref{lem:limit of the jth iteration+rescaling dx}, we get
\[
\sqrt{E}\lambda^{j}\leq\left|\frac{\partial\pi_{x}\circ\Phi_{n}^{j}\circ F_{n}}{\partial x}(x,y)\right|\leq\sqrt{E'}\lambda^{j}
\]
for all $\left(\frac{1}{\lambda}\right)^{j+1}-\hat{d}^{B}(j)\leq\left|x\right|\leq\left(\frac{1}{\lambda}\right)^{j}+\hat{d}^{B}(j)$,
$y\in I_{n}^{v}$, and $n\geq0$ when $\overline{\epsilon}$ is small
enough.
\end{proof}

By applying the estimates from the limiting case, we are able to estimate
the expanding rate for the intermediate case as follows.
\begin{lemma}
\label{lem:B->C1 bounds}Given $\delta>0$ and $I^{v}\supset I^{h}\Supset I$.
For all $\overline{K}>0$ and $R>0$, there exists $\overline{\epsilon}=\overline{\epsilon}(\overline{K},R)>0$
and $E>1$ such that for all $F\in\hat{\mathcal{I}}_{\delta}(I^{h}\times I^{v},\overline{\epsilon})$
the following properties hold for all $n\geq0$:

Assume that $J\subset B_{n}(k)$ is a connected closed $R$-regular
set and $k\leq\min\left(\overline{K},K_{n}\right)$. Then $J'\subset C_{n+k}(0)=A_{n+k}\cup W_{n+k}^{1}(0)\cup B_{n+k}$
is $R$-regular and 
\begin{equation}
l(J')\geq E\lambda^{k}l(J)\label{eq:B->C1 expansion}
\end{equation}
where $J'=\Phi_{n}^{k}\circ F_{n}(J)$.
\end{lemma}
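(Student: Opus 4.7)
The plan is to mimic the argument of Lemma \ref{lem:A->A bounds}, but with the iteration$+$rescaling $\Phi_n^k\circ F_n$ in place of $F_n$, using Corollary \ref{cor:Rescaling trick expansion} to supply the uniform $\lambda^k$-expansion rate along the $x$-direction. The crucial observation is that $k\le\overline{K}$ is \emph{bounded}, so only finitely many compositions of $\phi_i$'s occur, and therefore all ``$y$-error'' terms remain $O(\|\epsilon_n\|)$ with constants that depend only on $\overline{K}$.

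Pick horizontal endpoints $(x_1,y_1),(x_2,y_2)\in J$ with $x_2-x_1=l(J)$, so $|y_2-y_1|\le h(J)$. By Corollary \ref{cor:rect ext on B}, both $(x_1,y_2)$ and $(x_2,y_2)$ lie in one of the rectangular extensions where $\Phi_n^k\circ F_n$ is defined, so we may write
\begin{align*}
l(J')&\;\ge\;\bigl|\pi_x\Phi_n^k F_n(x_2,y_2)-\pi_x\Phi_n^k F_n(x_1,y_1)\bigr|\\
&\;\ge\;\bigl|\pi_x\Phi_n^k F_n(x_2,y_2)-\pi_x\Phi_n^k F_n(x_1,y_2)\bigr|-\bigl|\pi_x\Phi_n^k F_n(x_1,y_2)-\pi_x\Phi_n^k F_n(x_1,y_1)\bigr|.
\end{align*}
Applying the mean value theorem together with the lower bound in Corollary \ref{cor:Rescaling trick expansion}, the first term is at least $E\lambda^k\,l(J)$. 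For the second term, the mean value theorem gives $|\partial_y(\pi_x\circ\Phi_n^k\circ F_n)|\cdot|y_2-y_1|$, and this $y$-derivative is what I must control.

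For the $y$-derivative bound, I use that $y$-dependence enters $\phi_i=\Lambda_i\circ H_i$ and $F_n$ only through the perturbations $\epsilon_i$. By the chain rule along the composition $\Phi_n^k\circ F_n$, together with Lemma \ref{lem:Bounds for e_n} and the $C^1$ proximity of $F_{n+j}$ to $G$ for $0\le j\le k\le\overline{K}$, one obtains
\[
\left|\tfrac{\partial}{\partial y}\bigl(\pi_x\circ\Phi_n^k\circ F_n\bigr)(x,y)\right|\;\le\;C(\overline{K})\,\|\epsilon_n\|
\]
uniformly on the rectangles of Corollary \ref{cor:rect ext on B} (this is a routine but slightly lengthy induction on $k$, using that each new factor contributes a bounded $x$-derivative times $\lambda_{n+j}$ plus an $O(\|\epsilon_{n+j}\|)$ correction). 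Combining with the $R$-regularity of $J$ and $h(J')=|\pi_y\Phi_n^k F_n(x_2,y_2)-\pi_y\Phi_n^k F_n(x_1,y_1)|$ (which, since $\pi_y\circ\Phi_n^k\circ F_n=s_{n+k-1}\circ\cdots\circ s_n\circ\pi_x$ depends only on $x$, is bounded by $(2\lambda)^k l(J)$), yields
\[
l(J')\;\ge\;\bigl(E\lambda^k-C(\overline{K})R\,\|\epsilon_n\|^{3/4}\bigr)\,l(J)\,.
\]
Choosing $\overline{\epsilon}=\overline{\epsilon}(\overline{K},R)$ small enough so that $C(\overline{K})R\,\overline{\epsilon}^{3/4}<\tfrac12 E\lambda^k$ (which is legal because both $\overline{K}$ and $R$ are fixed), we obtain (\ref{eq:B->C1 expansion}) with expansion constant $E/2$.

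Finally, $R$-regularity of $J'$ follows from
\[
\frac{h(J')}{l(J')}\;\le\;\frac{(2\lambda)^k\,l(J)}{(E/2)\lambda^k\,l(J)}\;=\;\frac{2^{k+2}}{E}\;\le\;\frac{2^{\overline{K}+2}}{E}
\]
together with $\|\epsilon_{n+k}\|\le c\|\epsilon_n\|^2$ from Proposition \ref{prop:Hyperbolicity of the Renormalization Operator}, which makes $R\|\epsilon_{n+k}\|^{-1/4}$ arbitrarily large as $\overline{\epsilon}\to 0$. The main obstacle I anticipate is the clean verification of the $y$-derivative bound $C(\overline{K})\|\epsilon_n\|$ under composition; all other steps are direct applications of the already-established machinery.
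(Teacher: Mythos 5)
Your proposal is correct and follows essentially the same plan as the paper: fix horizontal endpoints, split the $x$-displacement under $\Phi_n^k\circ F_n$ into a main $\partial_x$ term (controlled via Corollary \ref{cor:Rescaling trick expansion}) and an error $\partial_y$ term (controlled via $R$-regularity), and then verify that $J'$ is again $R$-regular using $\|\epsilon_{n+k}\|\le c\|\epsilon_n\|^{2}$.

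Where you depart from the paper is in the mechanism for bounding $\bigl|\partial_y(\pi_x\circ\Phi_n^k\circ F_n)\bigr|$: the paper multiplies and divides by $\partial_x h_n$, feeds in the already-established bound $|\partial_x G_x|<E'\lambda^k$, and then uses the quadratic estimate together with the good-region lower bound $|x_1-v_n|\gtrsim\sqrt{\|\epsilon_n\|}$, arriving at a bound of order $\lambda^k\|\epsilon_n\|^{1/2}$. You instead observe that $\pi_y\circ F_n$ and $\pi_y\circ\phi_j$ are affine in the outgoing $y$-variable with zero $\partial_y$-contribution, so the chain rule collapses to a pure product $\prod_{j=0}^{k-1}\lambda_{n+j}\,\partial_x h_{n+j}$ times $\partial_y\epsilon_n$, which for $k\le\overline{K}$ is bounded by $C(\overline{K})\|\epsilon_n\|$. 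This is cleaner and in fact slightly sharper; the only imprecision is your phrase ``plus an $O(\|\epsilon_{n+j}\|)$ correction'' -- that correction is identically zero here, since $\partial_y(\pi_y z_j)\equiv 0$ along the chain, so there is nothing extra to absorb. Two cosmetic points: your stated expansion constant $E/2$ is not guaranteed to exceed $1$ (the $E$ from Corollary \ref{cor:Rescaling trick expansion} is only known to be $>1$); take instead, say, $\sqrt{E}$ or $(1+E)/2$ as the paper does. And your estimate $h(J')\le(2\lambda)^k l(J)$ is coarser than necessary but harmless, since $\prod_{j=0}^{k-1}\lambda_{n+j}\le 2\lambda^k$ for $\overline\epsilon$ small, which still yields the required $R$-regularity of $J'$.
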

\begin{proof}
Since $\overline{K}$ is fixed, we may assume that $\overline{\epsilon}=\overline{\epsilon}(\overline{K})>0$
is sufficiently small such that the properties in Corollary \ref{cor:Rescaling trick expansion}
hold for all $j\leq\overline{K}$.

Given a connected closed $R$-regular set $J\subset B_{n}(k)$ with
$k\leq\min\left(\overline{K},K_{n}\right)$ and $n\geq0$. For convenience,
let $G=\Phi_{n}^{k}\circ F_{n}$ and denote $G_{x}=\pi_{x}\circ G$.
We have $J'=G(J)$.

To prove (\ref{eq:B->C1 expansion}), assume the case that $J\subset B_{n}^{l}(k)$.
The other case $J\subset B_{n}^{r}(k)$ is similar. Let $(x_{1},y_{1}),(x_{2},y_{2})\in J$
such that $l(J)=x_{2}-x_{1}$. From Corollary \ref{cor:Rescaling trick expansion},
$G$ is defined on $[q^{l}(k-1)-d(k),q^{l}(k)+d(k)]\times I_{n}^{v}$
and $x_{1},x_{2}\in[q^{l}(k-1)-d(k),q^{l}(k)+d(k)]$. We can apply
the mean value theorem. There exists $\xi\in(x_{1},x_{2})$ and $\eta\in(y_{1},y_{2})$
such that 
\[
G_{x}(x_{2},y_{2})-G_{x}(x_{1},y_{2})=\frac{\partial G_{x}}{\partial x}(\xi,y_{2})(x_{2}-x_{1})
\]
and 
\[
G_{x}(x_{1},y_{2})-G_{x}(x_{1},y_{1})=\frac{\partial G_{x}}{\partial y}(x_{1},\eta)(y_{2}-y_{1}).
\]
By triangular inequality and $J$ is $R$-regular, we get
\begin{eqnarray}
l(J') & \geq & \left|G_{x}(x_{2},y_{2})-G_{x}(x_{1},y_{2})\right|-\left|G_{x}(x_{1},y_{2})-G_{x}(x_{1},y_{1})\right|\label{eq:B->C1 triangular}\\
 & \geq & \left|\frac{\partial G_{x}}{\partial x}(\xi,y_{2})\right|l(J)-\left|\frac{\partial G_{x}}{\partial y}(x_{1},\eta)\right|h(J)\nonumber \\
 & \geq & \left(\left|\frac{\partial G_{x}}{\partial x}(\xi,y_{2})\right|-\left|\frac{\partial G_{x}}{\partial y}(x_{1},\eta)\right|R\left\Vert \epsilon_{n}\right\Vert ^{-1/4}\right)l(J).\nonumber 
\end{eqnarray}

The first term $\frac{\partial G_{x}}{\partial x}(\xi,y_{2})$ can
be bounded by Corollary \ref{cor:Rescaling trick expansion}. That
is
\begin{equation}
E\lambda^{k}<\left|\frac{\partial G_{x}}{\partial x}(\xi,y_{2})\right|<E'\lambda^{k}\label{eq:B->C1 triangular 1}
\end{equation}
for some constants $E'>E>1$. 

To bound the second term $\frac{\partial G_{x}}{\partial y}(x_{1},\eta)$,
compute by the chain rule and Corollary \ref{cor:Rescaling trick expansion}.
We get
\begin{eqnarray*}
E'\lambda^{k} & > & \left|\frac{\partial G_{x}}{\partial x}(x_{1},\eta)\right|\\
 & = & \left|\frac{\partial\pi_{x}\circ\Phi_{n}^{k}}{\partial x}\circ F_{n}(x_{1},\eta)\frac{\partial h_{n}}{\partial x}(x_{1},\eta)+\frac{\partial\pi_{x}\circ\Phi_{n}^{k}}{\partial y}\circ F_{n}(x_{1},\eta)\right|\\
 & \geq & \left|\frac{\partial\pi_{x}\circ\Phi_{n}^{k}}{\partial x}\circ F_{n}(x_{1},\eta)\frac{\partial h_{n}}{\partial x}(x_{1},\eta)\right|-\left|\frac{\partial\pi_{x}\circ\Phi_{n}^{k}}{\partial y}\circ F_{n}(x_{1},\eta)\right|
\end{eqnarray*}
and 
\begin{eqnarray*}
\left|\frac{\partial G_{x}}{\partial y}(x_{1},\eta)\right| & = & \left|\frac{\partial\pi_{x}\circ\Phi_{n}^{k}}{\partial x}\circ F_{n}(x_{1},\eta)\frac{\partial h_{n}}{\partial y}(x_{1},\eta)\right|\\
 & = & \left|\frac{\partial\pi_{x}\circ\Phi_{n}^{k}}{\partial x}\circ F_{n}(x_{1},\eta)\frac{\partial h_{n}}{\partial x}(x_{1},\eta)\right|\left|\frac{\frac{\partial h_{n}}{\partial y}(x_{1},\eta)}{\frac{\partial h_{n}}{\partial x}(x_{1},\eta)}\right|\\
 & \leq & \left(E'\lambda^{k}+\left|\frac{\partial\pi_{x}\circ\Phi_{n}^{k}}{\partial y}\circ F_{n}(x_{1},\eta)\right|\right)\left(\left|f_{n}'(x_{1})\right|-\frac{1}{\delta}\left\Vert \epsilon_{n}\right\Vert \right)^{-1}\frac{1}{\delta}\left\Vert \epsilon_{n}\right\Vert .
\end{eqnarray*}
Assume that $\overline{\epsilon}=\overline{\epsilon}(\overline{K})$
is small enough such that $\left|\frac{\partial\pi_{x}\circ\Phi_{n}^{k}}{\partial y}(x,y)\right|<E'$
for all $k\leq\overline{K}$ and $n\geq0$. This is possible because
of (\ref{eq:limiting case Phi}), the definition of $\hat{\mathcal{I}}$,
and $\overline{K}$ is a fixed bounded number. Apply Lemma \ref{lem:Quadratic center}, we get 
\[
\left|\frac{\partial G_{x}}{\partial y}(x_{1},\eta)\right|\leq2E'\lambda^{k}\left(\frac{1}{a}\left|x_{1}-v_{n}\right|-\frac{1}{\delta}\left\Vert \epsilon_{n}\right\Vert \right)^{-1}\frac{1}{\delta}\left\Vert \epsilon_{n}\right\Vert .
\]
for some constant $a>1$. Also, by Proposition \ref{prop:Good and Bad Regions},
we obtain
\begin{equation}
\left|\frac{\partial G_{x}}{\partial y}(x_{1},\eta)\right|\leq2E'\lambda^{k}\left(\frac{c}{a}\left\Vert \epsilon_{n}\right\Vert ^{1/2}-\frac{1}{\delta}\left\Vert \epsilon_{n}\right\Vert \right)^{-1}\frac{1}{\delta}\left\Vert \epsilon_{n}\right\Vert \leq\frac{4E'a}{\delta c}\lambda^{k}\left\Vert \epsilon_{n}\right\Vert ^{1/2}\label{eq:B->C1 triangular 2'}
\end{equation}
for some constant $c>0$ when $\overline{\epsilon}$ is small enough. 

Combine (\ref{eq:B->C1 triangular}), (\ref{eq:B->C1 triangular 1}),
and (\ref{eq:B->C1 triangular 2'}), we obtain 
\begin{eqnarray}
l(J') & \geq & \left(E\lambda^{k}-\frac{4E'ac}{\delta}\lambda^{k}\left\Vert \epsilon_{n}\right\Vert ^{1/2}R\left\Vert \epsilon_{n}\right\Vert ^{-1/4}\right)l(J)\nonumber \\
 & \geq & \sqrt{E}\lambda^{k}l(J)\label{eq:B->C1 expansion-1}
\end{eqnarray}
when $\overline{\epsilon}$ is small enough.

To prove that $J'$ is $R$-regular, we apply (\ref{eq:B->C1 expansion-1})
and $h(J')=\left(\prod_{j=0}^{k(J)-1}\lambda_{j+n}\right)l(J)$. Assume
that $\overline{\epsilon}=\overline{\epsilon}(\overline{K})$ is small
enough such that $\prod_{j=0}^{i-1}\lambda_{j+n}\leq2\lambda^{i}$
for all $1\leq i\leq\overline{K}$ and $n\geq0$. Thus,
\[
\frac{h(J')}{l(J')}\leq\frac{2\lambda^{k}l(J)}{\sqrt{E}\lambda^{k}l(J)}=\frac{2}{\sqrt{E}}\leq R\left\Vert \epsilon_{n+k}\right\Vert ^{-1/4}
\]
when $\overline{\epsilon}=\overline{\epsilon}(R)$ is small enough.
\end{proof}

\section{\label{sec:Bad region}The Bad Region\index{bad region} and the
Thickness\index{thickness}}

In the good region, we showed the expansion argument holds by studying
the iteration of the horizontal endpoints. However, in the bad region,
the iteration of horizontal endpoints fails to estimate the change
rate of the horizontal size. In fact, the $x$-displacement of the
endpoints can shrink as small as possible by the vertical line argument
in Chapter \ref{sec:Good and Bad region}. Even worse, the case of
entering the bad region is unavoidable. The next lemma shows that
an infinitely renormalizable Hénon-like map must have a wandering
domain in the bad region if it has any wandering domain. 
\begin{lemma}
{*}Given $\delta>0$ and $I^{v}\supset I^{h}\Supset I$. There exists
$\overline{\epsilon}>0$ such that for all non-degenerate Hénon-like
maps $F\in\hat{\mathcal{I}}_{\delta}(I^{h}\times I^{v},\overline{\epsilon})$
the following property holds. If $F$ has a wandering domain in $D$
then $F$ has a wandering domain in the bad region of $B$ and a wandering
domain in the bad region of $C$.
\end{lemma}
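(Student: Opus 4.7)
The plan is to argue by contradiction, using the expansion argument of Proposition~\ref{prop:Good region estimates} to show that any closest approach of a wandering domain must eventually enter the bad region.

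First I would reduce to a convenient starting set. Given a wandering domain of $F$, Corollary~\ref{cor:Wandering Domain/Renormalization} together with the itinerary description of Proposition~\ref{prop:Dynamics of the partition on D} allows me to iterate and, if necessary, rescale via $\phi_0,\dots,\phi_{n-1}$ until I obtain a wandering domain of some $F_n$ meeting $A_n\cup B_n$. Inside such a wandering domain I pick a closed square $J_0$; since $h(J_0)=l(J_0)$, the square is automatically $R$-regular for any fixed $R\ge 1$, in particular for the constant $R$ supplied by Proposition~\ref{prop:Good region estimates} once $\overline{\epsilon}$ is small.

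Next I form the $J_0$-closest approach $\{J_n\}_{n\ge 0}$ with renormalization levels $r(n)$ and rescaling levels $k_n$. Suppose for contradiction that $k_n\le K_{r(n)}$ for every $n\ge 0$, so the sequence stays entirely in the good region. Then Proposition~\ref{prop:Good region estimates} applies inductively: each $J_n$ remains $R$-regular and $l_{n+1}\ge E\, l_n$ for a fixed $E>1$. Hence $l_n\to\infty$, contradicting $J_n\subset I^h\times I^v_{r(n)}$ with bounded horizontal strip $I^h$. Consequently there is a smallest index $m$ with $k_m>K_{r(m)}$. The element $J_m$ is then a wandering domain of $F_{r(m)}$ sitting in the bad region of $B_{r(m)}$, while its forward iterate $F_{r(m)}(J_m)\subset C_{r(m)}(k_m)$ is a wandering domain in the bad region of $C_{r(m)}$. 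Applying the second property of Proposition~\ref{prop:Wandering Domain/Renormalization} iteratively to pull these sets back through $\phi_{r(m)-1}^{-1},\dots,\phi_0^{-1}$ produces the desired wandering domains of $F$ itself in the respective bad regions.

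The only mildly delicate point is the initial reduction: one must confirm that iterating and rescaling never lands the wandering domain on a stable manifold, so that the closed square $J_0$ extracted really does remain a wandering domain in $A_n\cup B_n$. This is built into the definition of wandering domain (whose orbit is required to avoid every stable manifold of a periodic point), so no genuine obstacle arises; the remainder of the argument is purely a bookkeeping application of the already-proved expansion estimate.
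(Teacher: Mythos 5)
Your overall strategy — follow a closest approach, use the expansion argument of Proposition~\ref{prop:Good region estimates} to force entry into the bad region, then pull back — is correct in spirit but is both heavier than the paper's proof and has a genuine gap in the final pullback step.

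The problem is with the claim that pulling $J_{m}$ back through $\phi_{r(m)-1}^{-1},\dots,\phi_{0}^{-1}$ produces a wandering domain of $F$ in the bad region of $B$. Each $\phi_{k}^{-1}$ sends a subset of $D_{k+1}$ into $C_{k}$, so the pullback of $J_{m}\subset B_{r(m)}(k_{m})$ lands in $C_{0}(r(m))$, \emph{not} in $B_{0}$; moreover $r(m)$ need not exceed $K_{0}$, so that set need not lie in the bad region at all. To repair this you must instead pull back $F_{r(m)}(J_{m})\subset C_{r(m)}(k_{m})$ via $(\Phi_{0}^{r(m)})^{-1}$, obtaining a set in $C_{0}(r(m)+k_{m})$; one must then separately check $r(m)+k_{m}>K_{0}$ (which follows because $K_{n}$ is nondecreasing for $\overline{\epsilon}$ small, by Lemma~\ref{lem:K_n bounds} and the super-exponential decay of $\left\Vert\epsilon_{n}\right\Vert$), and finally apply $F^{-1}$ once more to land in $B_{0}(r(m)+k_{m})$. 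This last step requires the pullback to lie in $F(D)$, and the needed containment $(\Phi_{0}^{n})^{-1}(F_{n}(D_{n}))\subset F(D)$ is an observation you have not made. None of these points is insurmountable, but they are not mere bookkeeping as you suggest.

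It is also worth noting that the paper's own proof avoids all of this machinery. It fixes $j>K_{0}$, uses Corollary~\ref{cor:Wandering Domain/Renormalization} to get a wandering domain of $F_{j}$ which (after one iteration) lies in $F_{j}(D_{j})$, and then pulls back directly to $C_{0}(j)$ and $B_{0}(j)$. This is a purely constructive argument using only the renormalization structure; it needs no contradiction and no appeal to the expansion estimate of Chapter~\ref{sec:Good region}. Your contradiction route works once patched, but it invokes a result (Proposition~\ref{prop:Good region estimates}) whose proof is substantially longer than this lemma, so as a matter of exposition the paper's direct construction is preferable.

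Finally, a minor point: $J_{m}$ is closed, hence not literally a wandering domain under Definition~\ref{def:Wandering domain} (which requires openness); one should work with its interior or, as the paper does, with the open wandering domain throughout.
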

\begin{proof}
Recall $K_{0}$ is the boundary of the good and bad region for $F_{0}=F$.
See Definition \ref{def:Good and bad region}. Let $j>K_{0}$.

If $F$ has a wandering domain in $D$, then $F_{j}$ also has a wandering
domain $J'$ in $D_{j}$ by Corollary \ref{cor:Wandering Domain/Renormalization}.
By iterating the wandering domain, we can assume without lose of generality
that $J'\subset F_{j}(D_{j})$. Set $J_{C}=\left(\Phi_{0}^{j}\right)^{-1}(J')$.
Then $J_{C}\subset C$ is a wandering domain of $F$.

Moreover, since $J'\subset F_{j}(D_{j})$, we have $J_{C}\subset\left(\Phi_{0}^{j}\right)^{-1}(F_{j}(D_{j}))\subset F(D)$.
Let $J_{B}=F^{-1}(J_{C})$. Then $J_{B}\subset B$ is a wandering
domain of $F$ in the bad region.
\end{proof}

The case of entering the bad region becomes the main difficulty for
proving the nonexistence of wandering domain. 

In this chapter, we will first introduce a new quantity ``thickness''\index{thickness}
that approximates the horizontal or vertical cross-section of a wandering
domain. When a sequence element $J_{n}$ enters the bad region, we
showed by the vertical line argument in Chapter \ref{sec:Good and Bad region}
that the next sequence element $J_{n+1}$ can turn so vertical that
the iteration of horizontal endpoints fails to estimate the horizontal
size of $J_{n+1}$. However, the horizontal size $l_{n+1}$ is not
zero because $J_{n+1}$ has area as shown in Figure \ref{fig:vertical argument-area}.
This is because the Hénon-like map is non-degenerated. The Jacobian
is not zero. Thus, thickness (horizontal cross-section) provides an
approximation for the horizontal size $l_{n+1}$ of $J_{n+1}$.

Before giving a precise definition for the thickness and rigorous
computation for its change rate, here we present a lax estimation
on the thickness by using an area argument to explain the relations
between the thickness, horizontal size, and vertical size in a closest
approach. 

Assume that we start from a square subset $J_{0}$ of a wandering
domain. Let $\left\{ J_{n}\right\} _{n=0}^{\infty}$ be the $J$-closest
approach, $a_{n}$ be the area of $J_{n}$, and $w_{n}$ be the thickness
of $J_{n}$. Assume that $J_{0},J_{1},\cdots,J_{n-1}$ stays in the
good region and $J_{n}$ enters the bad region. Since $J_{0}$ is
a square, we have $l_{0}=h_{0}$.

Some assumptions are made here to simplify the argument. The contribution
from the rescaling is neglected. When the wandering domain is $R$-regular,
assume that the horizontal size is comparable to the vertical size,
i.e. $l_{n}\sim h_{n}$. Also, assume that the thickness $w_{n}$
is determined by the horizontal cross-section which can be approximated
by $w_{n}\sim\frac{a_{n}}{h_{n}}$.

In the good region, the estimations in Chapter \ref{sec:Good region}
determines the relation of the horizontal size. Proposition \ref{prop:Good region estimates}
says that $l_{m+1}\sim El_{m}$ for all $m\leq n-1$ where $E>1$
is a constant.

However, for the wandering domain $J_{n+1}$, the horizontal size
$l_{n}$ fails to estimate $l_{n+1}$ because $J_{n}$ enters the
bad region. We need to use the thickness to approximate the horizontal
size. That is, $l_{n+1}\sim w_{n+1}$. The only known relation between
the horizontal size and the thickness prior entering the bad region
is $w_{0}=l_{0}$. This is because $J_{0}$ is a square. To relate
$l_{n+1}$ with $l_{0}$, we need to go back to study the change rate
of the thickness in each step.

We use the area to study the change rate of thickness. In each step,
the change rate of the area is determined by the Jacobian $\text{Jac}F_{0}\sim\left\Vert \epsilon_{0}\right\Vert $
of the Hénon-like map, i.e $a_{m+1}\sim\left\Vert \epsilon_{r(m)}\right\Vert a_{n}$.
We get 
\[
w_{m+1}\sim\frac{a_{m+1}}{h_{m+1}}\sim\left\Vert \epsilon_{r(m)}\right\Vert \frac{a_{m}}{l_{m}}\sim\left\Vert \epsilon_{r(m)}\right\Vert \frac{a_{m}}{h_{m}}\sim\left\Vert \epsilon_{r(m)}\right\Vert w_{m}.
\]
This allows us to relate $l_{n+1}$ with $l_{0}$ by 
\[
l_{n+1}\sim w_{n+1}\sim\left(\prod_{m=0}^{n}\left\Vert \epsilon_{r(m)}\right\Vert \right)w_{0}\sim\left(\prod_{m=0}^{n}\left\Vert \epsilon_{r(m)}\right\Vert \right)l_{0}.
\]
Consequently, the horizontal size becomes extremely small when the
$J$-closest approach leaves the bad region.

One can see two problems from the estimations.

One problem is the wandering domain $J_{n+1}$ fail to be $R$-regular
after the $J$-closest approach leaves the bad region. This is because
\[
\frac{h_{n+1}}{l_{n+1}}\sim\frac{l_{n}}{l_{n+1}}\sim\frac{E^{n}l_{0}}{\left(\prod_{m=0}^{n}\left\Vert \epsilon_{r(m)}\right\Vert \right)l_{0}}=E^{n}\left(\prod_{m=0}^{n}\left\Vert \epsilon_{r(m)}\right\Vert \right)^{-1}.
\]
Thus, the expansion argument does not work for the later sequence
element even if the $J$-closest approach does not enter the bad region
again. This problem will be resolved by introducing the largest square
subset.

Another problem is the strong contraction of horizontal size when
the wandering domain enters the bad region. We will show that this
strong contraction happens every time when the $J$-closest approach
enters the bad region. If the the $J$-closest approach enters the
bad region infinitely many time, then the horizontal size may fails
to tend to infinity because this strong contraction happens infinity
many times. This problem will be resolved in Section \ref{subsec:finite rows}
by proving the closest approach $J_{n}$ can only enter the bad region
at most finitely many times. 

Finally, after combining all of the ingredients in this article together,
we will show wandering domains do not exist in Section \ref{subsec:Nonexistence of wandering domain}.

\subsection{\label{subsec:Thickness}Thickness and largest square subset}

When the wandering domain $J_{n}$ enters the bad region, there are
two issues that stop us to proceed. First, the horizontal size of
$J_{n+1}$ cannot be estimated by the expansion argument in Chapter
\ref{sec:Good region}. Instead, it is determined by its horizontal
cross-section that is not comparable to the horizontal size of $J_{n}$.
Second, $J_{n+1}$ fail to be $R$-regular. The estimations for the
expansion rate of the horizontal size in Proposition \ref{prop:Good region estimates}
does not apply to the later steps $J_{n+1}\rightarrow J_{n+2}\rightarrow\cdots$
in the sequence. 

To resolve the two issues, we need the following:
\begin{enumerate}
\item A quantity to approximate the horizontal cross-section of a wandering
domain, called the thickness.
\item Keep track of the thickness in each step of the closest approach.
This will provides the information for the horizontal size when the
wandering domain enters the bad region.
\item A method to select a subset from the wandering domain $J_{n+1}$ that
makes the subset to be $R$-regular and has approximately the same
horizontal size as $J_{n+1}$. The subset will be defined to be a
largest square subset of $J_{n+1}$.
\end{enumerate}
In this section, we define thickness and largest square subset then
study the properties of these two objects in a closest approach. 

First, define
\begin{definition}[Square, Largest square subset, and Thickness]
\label{def:w}A set $I\subset\mathbb{R}^{2}$ is a square\index{square|textbf}
if $I=[x_{1},x_{2}]\times[y_{1},y_{2}]$ with $x_{2}-x_{1}=y_{2}-y_{1}$.
This means that $I$ is a closed square with horizontal and vertical
sides.

\nomenclature[w]{$w$}{Thickness}Assume that $J\subset\mathbb{R}^{2}$.
Define the thickness\index{thickness|textbf} of $J$ to be the quantity
$w(J)=\mbox{sup}\left\{ l(I)\right\} $ where the supremum is taken
over all square subsets $I\subset J$. 

A subset $I\subset J$ is a largest square subset\index{square!largest square subset|textbf}
of $J$ if $I$ is a square such that $l(I)=w(J)$. 

The definition is illustrated as in Figure \ref{fig:Comparison of lengths}.
\end{definition}
\begin{figure}
\begin{centering}
\includegraphics{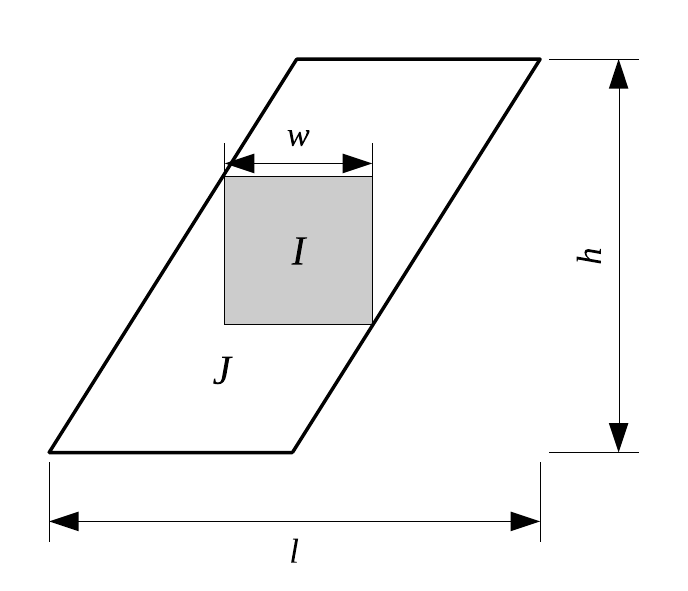}
\par\end{centering}
\caption{\label{fig:Comparison of lengths}Comparison of the horizontal size
$l$, the vertical size $h$, and the thickness $w$ for $J$. In
this picture, $I$ is a largest square subset of $J$.}
\end{figure}
\begin{lemma}
\index{thickness|textit}A largest square subset of a compact set
exists.
\end{lemma}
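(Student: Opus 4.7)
The plan is to realize a largest square subset as a limit of a maximizing sequence, using the compactness of $J$ in a standard Bolzano--Weierstrass argument. Parametrize an arbitrary square $I\subset J$ by the triple $(x,y,s)\in\mathbb{R}^3$ with $I=[x,x+s]\times[y,y+s]$ and $s\geq 0$; then $l(I)=s$. Since $J$ is bounded, $w(J)<\infty$, so by the definition of supremum I can choose a maximizing sequence of squares $I_n=[x_n,x_n+s_n]\times[y_n,y_n+s_n]\subset J$ with $s_n\to w(J)$.

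Because $J$ is bounded, the parameters $(x_n,y_n)$ lie in a bounded subset of $\mathbb{R}^2$, and the $s_n$ are bounded as well. By passing to a subsequence I may assume $(x_n,y_n,s_n)\to(x^\ast,y^\ast,s^\ast)$ with $s^\ast=w(J)$. Set $I^\ast=[x^\ast,x^\ast+s^\ast]\times[y^\ast,y^\ast+s^\ast]$; this is a square with $l(I^\ast)=w(J)$, so it only remains to check that $I^\ast\subset J$. Given any $p\in I^\ast$, write $p=(x^\ast+\alpha,y^\ast+\beta)$ with $0\leq\alpha,\beta\leq s^\ast$; then the points $p_n=(x_n+\min(\alpha,s_n),y_n+\min(\beta,s_n))\in I_n\subset J$ converge to $p$, and since $J$ is closed (being compact) we conclude $p\in J$. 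Hence $I^\ast$ is a largest square subset of $J$.

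There is essentially no obstacle here beyond the standard compactness bookkeeping; the only small subtlety is the degenerate case $w(J)=0$, where $I^\ast$ collapses to a single point, but this is still a square in the sense of the definition (with $x_1=x_2$ and $y_1=y_2$) and any point of $J$ serves, so the conclusion still holds provided $J$ is nonempty.
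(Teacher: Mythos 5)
Your argument is correct and is exactly the compactness argument the paper has in mind — the paper's own proof is simply the one-line remark ``The lemma follows from compactness,'' and your parametrization of squares by $(x,y,s)$ together with Bolzano--Weierstrass and closedness of $J$ is the standard way to flesh that out. The handling of the degenerate case $w(J)=0$ and the observation about nonemptiness are reasonable bookkeeping and do not change the substance.
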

\begin{proof}
The lemma follows from compactness.
\end{proof}

To keep track of the thickness in each step, the following two lemmas
estimate the change rate of a square under iteration and rescaling.
\begin{lemma}
\label{lem:Largest square F_n contraction rate}\index{square|textit}Given
$\delta>0$ and $I^{v}\supset I^{h}\Supset I$. There exists $\overline{\epsilon}>0$
and $c>0$ such that for all $F\in\hat{\mathcal{I}}_{\delta}(I^{h}\times I^{v},\overline{\epsilon})$
the following property holds for all $n\geq0$:

If $I\subset D_{n}$ is a square, there exists a square $I'\subset F_{n}(I)$
such that 
\[
l(I')\geq c\frac{\left\Vert \epsilon_{n}\right\Vert }{\left|I_{n}^{v}\right|}l(I).
\]
\end{lemma}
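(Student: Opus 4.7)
The plan is to analyze the shape of $F_n(I)$ directly and inscribe a square whose side is dictated by the horizontal thickness of the resulting strip. Write $I = [x_1,x_2]\times[y_1,y_2]$ with common side length $l = l(I)$. Since $F_n(x,y) = (f_n(x)-\epsilon_n(x,y),\,x)$, the image of the vertical fibre $\{Y\}\times[y_1,y_2]$ (for $Y\in[x_1,x_2]$) lies on the horizontal line at height $Y$ with $x$-range between $f_n(Y)-\epsilon_n(Y,y_1)$ and $f_n(Y)-\epsilon_n(Y,y_2)$. After possibly swapping $y_1,y_2$, assume $\partial\epsilon_n/\partial y>0$ (this sign is forced by (\ref{eq:Lower bound for de/dy}) anyway for $F\in\hat{\mathcal{I}}_\delta$), so the range is $[L(Y),R(Y)]$ with $L(Y):=f_n(Y)-\epsilon_n(Y,y_2)$ and $R(Y):=f_n(Y)-\epsilon_n(Y,y_1)$. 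Then $F_n(I)$ is precisely the diagonal strip
\[
F_n(I) \;=\; \{(X,Y)\colon Y\in[x_1,x_2],\ L(Y)\le X\le R(Y)\}.
\]

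First I would bound the horizontal thickness from below using the mean value theorem and the lower bound (\ref{eq:Lower bound for de/dy}) built into the definition of $\hat{\mathcal{I}}_\delta$:
\[
R(Y)-L(Y) \;=\; \int_{y_1}^{y_2}\frac{\partial\epsilon_n}{\partial y}(Y,y)\,dy \;\ge\; \frac{c_1\|\epsilon_n\|}{|I_n^v|}\,l,
\]
and call this lower bound $\Delta$. Next I would obtain uniform Lipschitz control on the boundary curves: by Lemma \ref{lem:Bounds for e_n} and the uniform closeness of $f_n$ to $g$ on compact sets, there exists a constant $M$, uniform in $n\ge 0$ and $F\in\hat{\mathcal{I}}_\delta$, with $|L'(Y)|,|R'(Y)|\le M$.

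Finally, to inscribe a square of side $s$ at height $Y_0\in[x_1,x_2-s]$, I would note that the horizontal segment common to every cross-section above $[Y_0,Y_0+s]$ is $[\sup_{[Y_0,Y_0+s]} L,\ \inf_{[Y_0,Y_0+s]} R]$ and has length at least $\Delta-2Ms$. The constraint $\Delta-2Ms\ge s$ then dictates the choice $s=\Delta/(2M+1)$, which produces a square $I'\subset F_n(I)$ with
\[
l(I')\;\ge\;\frac{c_1}{2M+1}\cdot\frac{\|\epsilon_n\|}{|I_n^v|}\,l,
\]
giving the result with $c = c_1/(2M+1)$. The hypothesis $s\le l$ (needed so that $[Y_0,Y_0+s]\subset[x_1,x_2]$) is automatic because $\|\epsilon_n\|\le\overline{\epsilon}$ is small. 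The only place that requires any care is the uniformity of $M$ in $n$ and $F$; this follows from $f_n\to g$ in a complex neighborhood together with the Cauchy estimate of Lemma \ref{lem:Derivative Bound}, and from $|\partial\epsilon_n/\partial x|=O(\|\epsilon_n\|)$ in Lemma \ref{lem:Bounds for e_n}. I do not expect any genuine obstacle in this lemma; the content is entirely the Jacobian-type lower bound on $\partial\epsilon_n/\partial y$.
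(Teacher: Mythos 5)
Your proposal is correct and follows essentially the same route as the paper's proof: both use the mean value theorem with the lower bound (\ref{eq:Lower bound for de/dy}) to get the horizontal thickness $\Delta\gtrsim\|\epsilon_n\|l/|I_n^v|$ of the image strip, both use a uniform Lipschitz bound on the boundary curves $L,R$ (i.e.\ on $\partial h_n/\partial x$), and both inscribe a square of side a fixed small multiple of $\Delta$. The paper just fixes the inscribed square at the bottom of the strip with side $W=b(x_2'-x_1')$ for a small universal $b$ and verifies the four-point inequality explicitly, whereas you phrase the same fitting argument in terms of an arbitrary base height $Y_0$ and the length of $[\sup L,\inf R]$; this is a cosmetic, not substantive, difference.
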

\begin{proof}
The lemma is trivial when $F$ is degenerate. We assume that $F$
is non-degenerate. 

By the definition of $\hat{\mathcal{I}}$ and (\ref{eq:Lower bound for de/dy})
we have $\frac{\partial\epsilon_{n}}{\partial y}>0$ for all $n\geq0$.
Write $I=[x,a]\times[y_{1},y_{2}]$. Fixed $b>0$ to be sufficiently
small. Let $(x_{1}',x)=F_{n}(x,y_{2})$, $(x_{2}',x)=F_{n}(x,y_{1})$,
and $W=b(x_{2}'-x_{1}')=b\left[\epsilon_{n}(x,y_{2})-\epsilon_{n}(x,y_{1})\right]>0$.
Define $x'=\frac{x_{1}'+x_{2}'}{2}$ and $I'=\left[x'-\frac{1}{2}W,x'+\frac{1}{2}W\right]\times\left[x,x+W\right]$. 

To prove that $I'\subset F_{n}(I)$ for some $b>0$ sufficiently small,
it suffice to prove the inequality 
\begin{equation}
h_{n}(t,y_{2})<x'-\frac{1}{2}W<x'+\frac{1}{2}W<h_{n}(t,y_{1})\label{eq:F_n thickness cross-section}
\end{equation}
that corresponds to the four points on a horizontal cross section
at $y=t$ for $x\leq t\leq x+W$. See Figure \ref{fig:Largest square F_n cross section}.
\begin{figure}
\resizebox{\columnwidth}{!}{%
\begin{centering}
\includegraphics[scale=0.6]{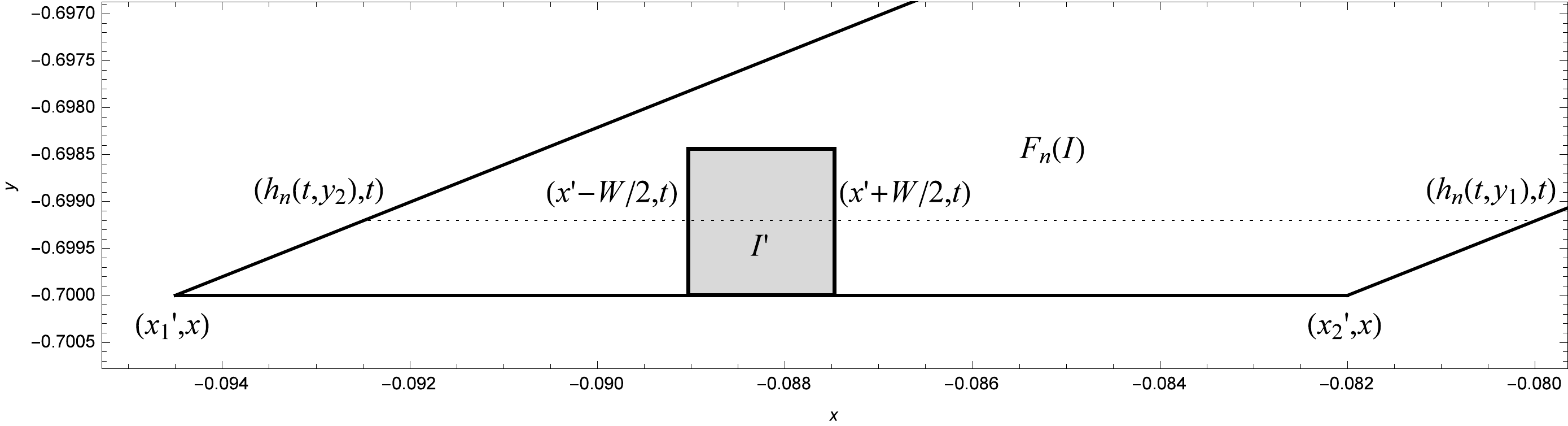}
\par\end{centering}
}

\caption{\label{fig:Largest square F_n cross section}Four points on the cross
section $y=t$.}
\end{figure}
 If this is true, then by the mean value theorem, there exists $\eta\in(y_{1},y_{2})$
such that 
\[
l(I')=W=b\frac{\partial\epsilon_{n}}{\partial y}(x,\eta)l(I).
\]
Also, by (\ref{eq:Lower bound for de/dy}), we obtain 
\[
l(I')\geq\frac{bc}{\left|I_{n}^{v}\right|}\left\Vert \epsilon_{n}\right\Vert l(I)
\]
since $F\in\hat{\mathcal{I}}_{\delta}(I^{h}\times I^{v},\overline{\epsilon})$
which proves the lemma.

First, we prove the left inequality of (\ref{eq:F_n thickness cross-section})
\[
h_{n}(t,y_{2})<x'-\frac{1}{2}W.
\]
By the mean value theorem and the compactness of the domain, there
exists $\xi\in(x,t)$ and $E>1$ such that 
\[
\left|h_{n}(t,y_{2})-x_{1}'\right|=\left|h_{n}(t,y_{2})-h_{n}(x,y_{2})\right|=\left|\frac{\partial h_{n}}{\partial x}(\xi,y_{2})\right|\left|t-x\right|\leq EW.
\]
We get
\begin{eqnarray*}
\left(x'-\frac{1}{2}W\right)-h_{n}(t,y_{2}) & = & \left[\left(x'-\frac{1}{2}W\right)-x_{1}'\right]-\left[h_{n}(t,y_{2})-x_{1}'\right]\\
 & \geq & \left(\frac{x_{2}'-x_{1}'}{2}-\frac{1}{2}W\right)-EW\\
 & = & \left[\frac{1}{2}-\left(\frac{1}{2}+E\right)b\right]\left(x_{2}'-x_{1}'\right)\\
 & > & 0
\end{eqnarray*}
when $b<\frac{1}{1+2E}$. Note that $b$ can chosen to be universal.
Thus, the left inequality is proved.

Similarly, we prove the right inequality of (\ref{eq:F_n thickness cross-section})
\[
x'+\frac{1}{2}W<h_{n}(t,y_{1}).
\]
By the mean value theorem, there exists $\xi\in(x,t)$ such that 
\[
\left|h_{n}(t,y_{1})-x_{2}'\right|=\left|h_{n}(t,y_{1})-h_{n}(x,y_{1})\right|=\left|\frac{\partial h_{n}}{\partial x}(\xi,y_{1})\right|\left|t-x\right|\leq EW.
\]
Similarly, we get 
\begin{eqnarray*}
h_{n}(t,y_{1})-\left(x'+\frac{1}{2}W\right) & = & \left[x_{2}'-\left(x'+\frac{1}{2}W\right)\right]-\left[x_{2}'-h_{n}(t,y_{1})\right]\\
 & \geq & \left(\frac{x_{2}'-x_{1}'}{2}-\frac{1}{2}W\right)-EW\\
 & = & \left[\frac{1}{2}-\left(\frac{1}{2}+E\right)b\right]\left(x_{2}'-x_{1}'\right)\\
 & > & 0.
\end{eqnarray*}
Thus, the right inequality is proved.
\end{proof}

\begin{lemma}
\label{lem:Largest square phi_n expansion rate}\index{square|textit}Given
$\delta>0$ and $I^{v}\supset I^{h}\Supset I$. There exists $\overline{\epsilon}>0$
such that for all $F\in\hat{\mathcal{I}}_{\delta}(I^{h}\times I^{v},\overline{\epsilon})$
the following property holds for all $n\geq0$:

If $I\subset C_{n}$ is a square, there exists a square $I'\subset\phi_{n}(I)$
such that
\[
l(I')=\lambda_{n}l(I).
\]
\end{lemma}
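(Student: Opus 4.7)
The plan is to exploit the decomposition $\phi_n=\Lambda_n\circ H_n$, where $H_n(x,y)=(h_n(x,y),y)$ and $\Lambda_n(u,v)=(s_n(u),s_n(v))$ is the affine part with $|s_n'|=\lambda_n$. Writing $I=[x_1,x_2]\times[y_1,y_2]$ with $l:=l(I)$, the $y$-component of $\phi_n(x,y)$ is simply $s_n(y)$, so the $y$-projection of $\phi_n(I)$ is the interval $J:=s_n([y_1,y_2])$, which has length \emph{exactly} $\lambda_n l$. This pins down the target side length, and the problem reduces to inscribing a vertical strip of width $\lambda_n l$ in $\phi_n(I)$.

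For each $v\in J$, the horizontal slice $\phi_n(I)\cap(\mathbb{R}\times\{v\})$ is a segment with endpoints
\[
\alpha(v)=s_n(h_n(x_1,s_n^{-1}(v))),\qquad \beta(v)=s_n(h_n(x_2,s_n^{-1}(v))).
\]
On $C_n$, the unimodal part $f$ is decreasing, so $h_n$ is decreasing in $x$ for small $\|\epsilon_n\|$; since $s_n$ reverses orientation, $s_n\circ h_n$ is increasing in $x$, and thus $\alpha(v)<\beta(v)$. Lemma \ref{lem:Expanding rate on A and C} gives $|\partial_x h_n|\geq E>1$ on $C_n$, so the mean value theorem yields the width bound $\beta(v)-\alpha(v)\geq \lambda_n E\,l$ for every $v\in J$.

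The decisive observation is that the side boundaries $\alpha$ and $\beta$ are nearly constant functions of $v$. A direct chain rule calculation, using $(s_n^{-1})'=1/s_n'$ and $\partial_y h_n=-\partial_y\epsilon_n$, gives
\[
\alpha'(v)=-\,\partial_y\epsilon_n(x_1,s_n^{-1}(v)),
\]
and analogously for $\beta'$. Lemma \ref{lem:Bounds for e_n} then yields $|\alpha'|,|\beta'|\leq c_1\|\epsilon_n\|$, so across $J$ (of length $\lambda_n l$) each of $\alpha$ and $\beta$ varies by at most $c_1\|\epsilon_n\|\lambda_n l$. Setting $a:=\max_{v\in J}\alpha(v)$, one estimates
\[
\beta(v)-a\;\geq\;[\beta(v)-\alpha(v)]-[\max_J\alpha-\alpha(v)]\;\geq\;\lambda_n l\bigl(E-c_1\|\epsilon_n\|\bigr)\;\geq\;\lambda_n l
\]
for every $v\in J$, provided $\overline{\epsilon}$ is chosen small enough that $E-c_1\|\epsilon_n\|\geq1$ for all $n\geq0$ (possible since $E>1$ is universal). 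Hence the rectangle $I':=[a,a+\lambda_n l]\times J$ sits inside $\phi_n(I)$ and is a square of side $\lambda_n l=\lambda_n l(I)$, as required.

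The argument is largely bookkeeping once the decomposition is set up; the one substantive point is that the slack factor $E>1$ coming from Lemma \ref{lem:Expanding rate on A and C} is precisely what absorbs the $O(\|\epsilon_n\|)$ curvature of the side boundaries of $\phi_n(I)$. In the degenerate limit ($\epsilon_n\equiv0$) the side boundaries are perfectly vertical and the conclusion is immediate.
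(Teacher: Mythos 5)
Your proof is correct and is essentially the same approach as the paper's: both use the decomposition $\phi_n=\Lambda_n\circ H_n$, the expansion $|\partial_x h_n|\geq E>1$ on $C_n$ from Lemma \ref{lem:Expanding rate on A and C}, and the bound $|\partial_y\epsilon_n|\leq c\|\epsilon_n\|$ to inscribe a near-vertical strip of the right width; the only differences are cosmetic (you left-anchor the strip at $a=\max_J\alpha$ and argue directly in $\phi_n(I)$, while the paper centers the strip and argues in $H_n(I)$ before applying $\Lambda_n$).
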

\begin{proof}
Let $I=[x_{1},x_{2}]\times[y_{1},y_{2}]$, $W=l(I)$, $x=\frac{1}{2}\left[h_{n}(x_{2},y_{1})+h_{n}(x_{1},y_{1})\right]$,
and $I''=[x-\frac{1}{2}W,x+\frac{1}{2}W]\times[y_{1},y_{2}]$. Then
$I''$ is a square with $l(I'')=l(I)$.

First we prove that $I''\subset H_{n}(I)$. It suffice to prove the
inequality 
\[
h_{n}(x_{2},t)<x-\frac{1}{2}W<x+\frac{1}{2}W<h_{n}(x_{1},t)
\]
that corresponds to the four points on a horizontal cross section
at $y=t$ for $y_{1}\leq t\leq y_{2}$. See Figure \ref{fig:Largest square H_n cross section}.
\begin{figure}
\begin{centering}
\includegraphics[scale=0.6]{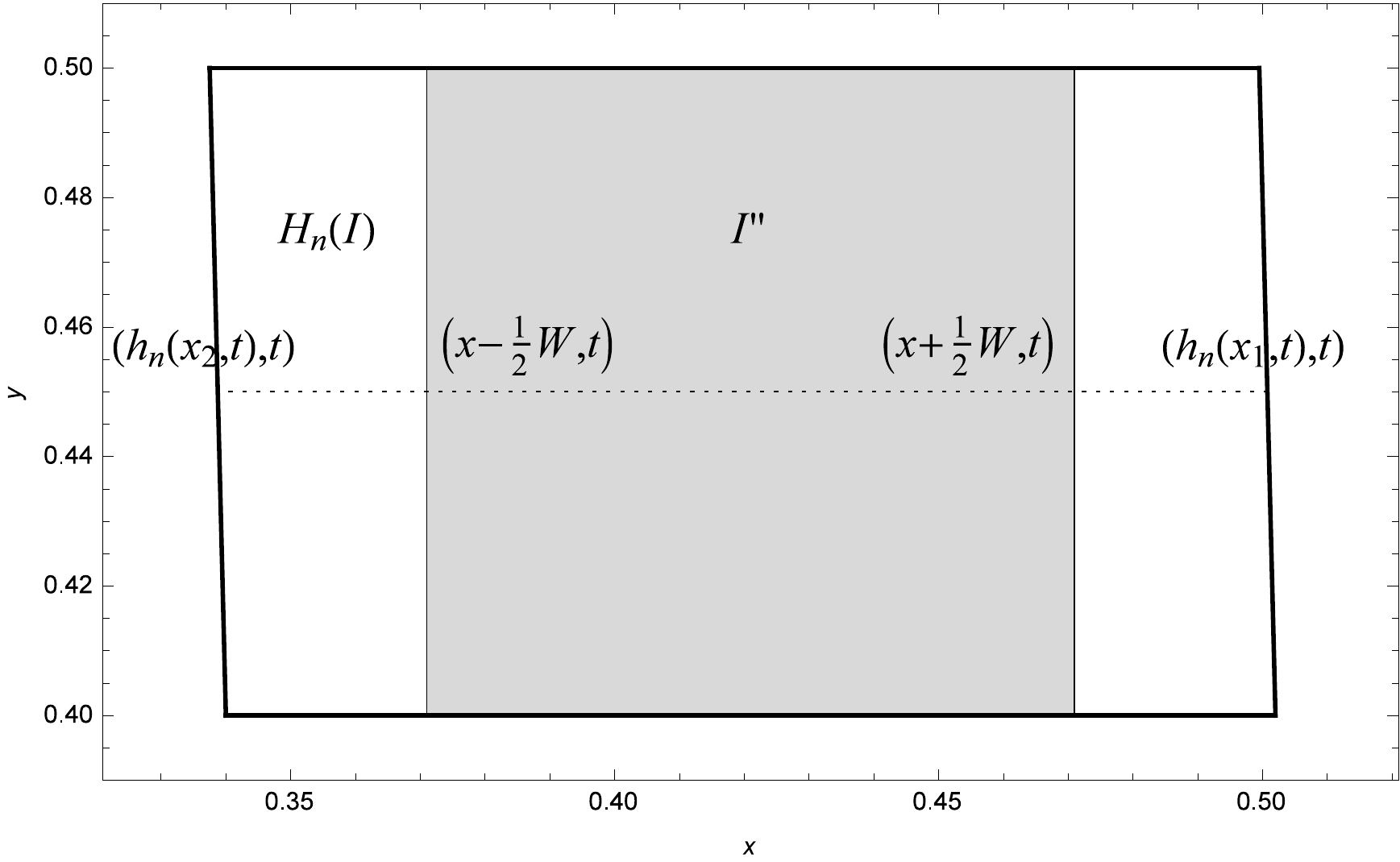}
\par\end{centering}
\caption{\label{fig:Largest square H_n cross section}Four points on the cross
section $y=t$.}
\end{figure}

To prove the left inequality, by the mean value theorem, there exists
$\xi\in(x_{1},x_{2})$ and $\eta\in(y_{1},t)$ such that
\[
h_{n}(x_{1},y_{1})-h_{n}(x_{2},y_{1})=\left|\frac{\partial h_{n}}{\partial x}(\xi,y_{1})\right|(x_{2}-x_{1})
\]
and 
\[
\epsilon_{n}(x_{2},t)-\epsilon_{n}(x_{2},y_{1})=\frac{\partial\epsilon_{n}}{\partial y}(x_{2},\eta)(t-y_{1}).
\]
By Lemma \ref{lem:Expanding rate on A and C}, there exists $E>1$
such that 
\begin{eqnarray*}
\left(x-\frac{1}{2}W\right)-h_{n}(x_{2},t) & = & \left[x-h_{n}(x_{2},y_{1})\right]-\left[\epsilon_{n}(x_{2},y_{1})-\epsilon_{n}(x_{2},t)\right]-\frac{1}{2}W\\
 & \geq & \frac{1}{2}\left|\frac{\partial h_{n}}{\partial x}(\xi,y_{1})\right|\left(x_{2}-x_{1}\right)-\left|\frac{\partial\epsilon_{n}}{\partial y}(x_{2},\eta)\right|\left(t-y_{1}\right)-\frac{1}{2}W\\
 & \geq & \left(\frac{E}{2}-\frac{1}{\delta}\left\Vert \epsilon_{n}\right\Vert -\frac{1}{2}\right)W\\
 & > & 0
\end{eqnarray*}
when $\overline{\epsilon}>0$ is sufficiently small. Thus, the left
inequality is proved.

Similarly, to prove the right inequality, by the mean value theorem,
there exists $\eta'\in(y_{1},t)$ such that 
\[
\epsilon_{n}(x_{1},t)-\epsilon_{n}(x_{1},y_{1})=\frac{\partial\epsilon_{n}}{\partial y}(x_{1},\eta')\left(t-y_{1}\right).
\]
Compute
\begin{eqnarray*}
h_{n}(x_{1},t)-\left(x+\frac{1}{2}W\right) & = & \left[h_{n}(x_{1},y_{1})-x\right]-\left[\epsilon_{n}(x_{1},t)-\epsilon_{n}(x_{1},y_{1})\right]-\frac{1}{2}W\\
 & \geq & \frac{1}{2}\left|\frac{\partial h_{n}}{\partial x}(\xi,y_{1})\right|(x_{2}-x_{1})-\left|\frac{\partial\epsilon_{n}}{\partial y}(x_{1},\eta')\right|(t-y_{1})-\frac{1}{2}W\\
 & \geq & \left(\frac{E}{2}-\frac{1}{\delta}\left\Vert \epsilon_{n}\right\Vert -\frac{1}{2}\right)W\\
 & > & 0.
\end{eqnarray*}
Thus, the right inequality is proved.

Finally, let $I'=\Lambda_{n}(I'')$. Then $I'\subset\phi_{n}(I)$
and 
\[
l(I')=\lambda_{n}l(I'')=\lambda_{n}l(I).
\]
\end{proof}

As before we abbreviate $w_{n}=w(J_{n})$ for a closest approach $\left\{ J_{n}\right\} _{n=0}^{\infty}$.
The next proposition allows us to estimate the contraction rate of
the thickness for a closest approach.
\begin{proposition}
\label{prop:w_n Bound}\index{thickness|textit}Given $\delta>0$
and $I^{v}\supset I^{h}\Supset I$. There exists $\overline{\epsilon}>0$
and $c>0$ such that for all $F\in\hat{\mathcal{I}}_{\delta}(I^{h}\times I^{v},\overline{\epsilon})$
the following property holds:

Assume that $J\subset A\cup B$ is a compact subset of a wandering
domain of $F$ and $\left\{ J_{n}\right\} _{n=0}^{\infty}$ is the
$J$-closest approach. Then
\[
w_{n+1}\geq c\frac{\left\Vert \epsilon_{r(n)}\right\Vert }{\left|I_{r(n)}^{v}\right|}w_{n}
\]
for all $n\geq0$.
\end{proposition}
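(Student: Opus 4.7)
The plan is to reduce the proposition to the two preparatory square-tracking lemmas (Lemma~\ref{lem:Largest square F_n contraction rate} and Lemma~\ref{lem:Largest square phi_n expansion rate}) by inserting a largest square subset of $J_n$ into the definition of one step of the closest approach. Let $I\subset J_n$ be a square with $l(I)=w_n$; I will construct a square $I^\star\subset J_{n+1}$ whose side length is at least $c\,\Vert\epsilon_{r(n)}\Vert/|I^v_{r(n)}|$ times $w_n$, which immediately yields $w_{n+1}\geq l(I^\star)$.

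There are two cases according to Definition~\ref{def: J_n Sequence of Wandering Domain}. If $J_n\subset A_{r(n)}$, then $J_{n+1}=F_{r(n)}(J_n)$, and since $I\subset J_n\subset D_{r(n)}$, Lemma~\ref{lem:Largest square F_n contraction rate} directly produces a square $I^\star\subset F_{r(n)}(I)\subset J_{n+1}$ with $l(I^\star)\geq c\,\Vert\epsilon_{r(n)}\Vert\,|I^v_{r(n)}|^{-1}l(I)$, and we are done. If instead $J_n\subset B_{r(n)}(k_n)$, then $F_{r(n)}(J_n)\subset C_{r(n)}(k_n)$, and a first application of Lemma~\ref{lem:Largest square F_n contraction rate} yields a square $I_0\subset F_{r(n)}(I)\subset C_{r(n)}(k_n)$ with $l(I_0)\geq c\,\Vert\epsilon_{r(n)}\Vert\,|I^v_{r(n)}|^{-1}w_n$.

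Now I iterate Lemma~\ref{lem:Largest square phi_n expansion rate} exactly $k_n$ times along the chain of rescalings that makes up $\Phi_{r(n)}^{k_n}$. Using the rescaling-level structure from Proposition~\ref{prop:Dynamics on the partition}, namely $\phi_{n+i}:C_{n+i}(j-i)\to C_{n+i+1}(j-i-1)$, the squares produced stay inside the appropriate $C_{\cdot}$ component where the next $\phi$ is available. Thus by induction on $0\leq i\leq k_n$ we obtain squares $I_i\subset C_{r(n)+i}(k_n-i)$ with
\[
I_i\subset\phi_{r(n)+i-1}(I_{i-1}),\qquad l(I_i)=\lambda_{r(n)+i-1}\,l(I_{i-1}).
\]
Setting $I^\star=I_{k_n}\subset\Phi_{r(n)}^{k_n}\circ F_{r(n)}(J_n)=J_{n+1}$ and using that $\lambda_{n}>1$ for all $n$ (which follows from $|\lambda_n-\lambda|<\overline\epsilon$ with $\lambda>1$ for $\overline\epsilon$ small enough), we conclude
\[
l(I^\star)=\Big(\prod_{i=0}^{k_n-1}\lambda_{r(n)+i}\Big)\,l(I_0)\;\geq\;l(I_0)\;\geq\;c\,\frac{\Vert\epsilon_{r(n)}\Vert}{|I^v_{r(n)}|}\,w_n,
\]
which gives the claimed bound on $w_{n+1}$.

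The main obstacle is purely bookkeeping: one must verify that the squares $I_i$ remain inside the domains where the next rescaling lemma is applicable, and that the factor appearing in the estimate involves $\Vert\epsilon_{r(n)}\Vert$ and not $\Vert\epsilon_{r(n+1)}\Vert$. The first point is handled by the partition identity $\phi_n(C_n(j))=C_{n+1}(j-1)$; the second is why the Jacobian contraction is paid only once, at the single genuine iterate $F_{r(n)}$, while the subsequent $k_n$ rescalings $\phi$ contribute expansion factors that are harmlessly discarded in the lower bound.
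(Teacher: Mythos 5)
Your argument is correct and is essentially the proof in the paper: both apply Lemma~\ref{lem:Largest square F_n contraction rate} once to the largest square subset of $J_n$, then iterate Lemma~\ref{lem:Largest square phi_n expansion rate} along the $k_n$ rescalings, and discard the harmless expansion factors $\lambda_{r(n)+i}>1$. The only (welcome) difference is that you spell out, via the identity $\phi_m(C_m(j))=C_{m+1}(j-1)$, why each intermediate square stays inside the $C$-strip where the next rescaling lemma applies, a bookkeeping step the paper leaves implicit.
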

\begin{proof}
Let $\overline{\epsilon}>0$ be small enough such that Lemma \ref{lem:Largest square F_n contraction rate}
and Lemma \ref{lem:Largest square phi_n expansion rate} holds. The
sets $\left\{ J_{n}\right\} _{n=0}^{\infty}$ are compact by the continuity
of Hénon-like maps and rescaling.

For the case that $J_{n}\subset A_{r(n)}$, let $I$ be a largest
square of $J_{n}$. By Proposition \ref{lem:Largest square F_n contraction rate},
there exists a square $I'\subset F_{r(n)}(I)\subset J_{n+1}$ such
that 
\[
l(I')\geq c\frac{\left\Vert \epsilon_{r(n)}\right\Vert }{\left|I_{r(n)}^{v}\right|}l(I).
\]
We get
\[
w_{n+1}\geq l(I')\geq c\frac{\left\Vert \epsilon_{r(n)}\right\Vert }{\left|I_{r(n)}^{v}\right|}l(I)=c\frac{\left\Vert \epsilon_{r(n)}\right\Vert }{\left|I_{r(n)}^{v}\right|}w_{n}.
\]

For the case that $J_{n}\subset B_{r(n)}$, let $I$ be a largest
square of $J_{n}$. By Proposition \ref{lem:Largest square F_n contraction rate},
there exists a square $I_{0}\subset F_{r(n)}(I)\subset F_{r(n)}(J_{n})$
such that
\[
l(I_{0})\geq c\frac{\left\Vert \epsilon_{r(n)}\right\Vert }{\left|I_{r(n)}^{v}\right|}l(I).
\]
Also by Proposition \ref{lem:Largest square phi_n expansion rate},
there exists a square $I_{j+1}\subset\phi_{r(n)+j}(I_{j})\subset\Phi_{r(n)}^{j}\circ F_{r(n)}(J_{n})$
such that 
\[
l(I_{j+1})=\lambda_{r(n)+j}l(I_{j})
\]
for all $0\leq j<k_{n}$. We get 
\[
w_{n+1}\geq l(I_{k_{n}})=\left(\prod_{j=0}^{k_{n}-1}\lambda_{r(n)+j}\right)l(I_{0})\geq c\frac{\left\Vert \epsilon_{r(n)}\right\Vert }{\left|I_{r(n)}^{v}\right|}l(I)=c\frac{\left\Vert \epsilon_{r(n)}\right\Vert }{\left|I_{r(n)}^{v}\right|}w_{n}.
\]
\end{proof}

\begin{remark}
The original proof was based on the area and horizontal cross-section
estimates briefly mentioned in the beginning of this chapter instead
of tracking the size of largest square subset. However, the area argument
is discarded by two reasons. First, to estimate the horizontal cross-section
of a set, we need to find the lower bound of $a/l$. This means that
we need to repeat the arguments in Chapter \ref{sec:Good region}
to find the upper bound for $l$ and the lower bound for $a$. This
makes the argument several times longer than the current one. Second,
to select a subset from the wandering domain after it enters the bad
region, the area approach makes it hard to find the upper bound of
$l$ for the subset.
\end{remark}
Since $\left\Vert \epsilon_{n}\right\Vert $ decreases super-exponentially
and $\left|I_{n}^{v}\right|$ increases exponentially, we can simplify
\begin{corollary}
\label{cor:w_n bound simplfied}Given $\delta>0$ and $I^{v}\supset I^{h}\Supset I$.
There exists $\overline{\epsilon}>0$ and $c>0$ such that for all
$F\in\hat{\mathcal{I}}_{\delta}(I^{h}\times I^{v},\overline{\epsilon})$
the following property holds:

Assume that $J\subset A\cup B$ is a compact subset of a wandering
domain of $F$ and $\left\{ J_{n}\right\} _{n=0}^{\infty}$ is the
$J$-closest approach. Then
\[
w_{n+1}\geq c\left\Vert \epsilon_{r(n)}\right\Vert ^{3/2}w_{n}
\]
for all $n\geq0$.
\end{corollary}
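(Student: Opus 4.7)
The plan is to derive the stated bound directly from Proposition \ref{prop:w_n Bound} by controlling the factor $1/|I^v_{r(n)}|$ in terms of $\|\epsilon_{r(n)}\|^{1/2}$. Concretely, Proposition \ref{prop:w_n Bound} already gives
\[
w_{n+1}\geq c\,\frac{\|\epsilon_{r(n)}\|}{|I^v_{r(n)}|}\,w_n,
\]
so the corollary reduces to producing a constant $C>0$ (depending only on the class) such that $|I^v_m|\,\|\epsilon_m\|^{1/2}\leq C$ for every renormalization level $m\geq 0$ and every $F\in\hat{\mathcal{I}}_{\delta}(I^h\times I^v,\overline{\epsilon})$, provided $\overline{\epsilon}$ is taken small enough.

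To obtain this bound I would compare the two competing rates using Proposition \ref{prop:Hyperbolicity of the Renormalization Operator}. From $\|\epsilon_{n+1}\|<c\|\epsilon_n\|^2$ one deduces the super-exponential estimate $\|\epsilon_n\|\leq \tfrac{1}{c}(c\overline{\epsilon})^{2^n}$, and hence
\[
\|\epsilon_n\|^{1/2}\leq c^{-1/2}\bigl(c\overline{\epsilon}\bigr)^{2^{n-1}}.
\]
On the other side, because $I^v_{n+1}=s_n(I^v_n)$ with $|s_n'|=\lambda_n$ and $|\lambda_n-\lambda|<\overline{\epsilon}$ in the class $\hat{\mathcal{I}}_{\delta}$, iteration gives $|I^v_n|\leq(\lambda+\overline{\epsilon})^n|I^v_0|$, i.e.\ an at most exponential growth. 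Multiplying,
\[
|I^v_n|\,\|\epsilon_n\|^{1/2}\leq c^{-1/2}|I^v_0|\bigl(\lambda+\overline{\epsilon}\bigr)^n\bigl(c\overline{\epsilon}\bigr)^{2^{n-1}},
\]
and since the doubly-exponential decay of the rightmost factor dominates the exponential growth $(\lambda+\overline{\epsilon})^n$, this quantity is bounded by some constant $C$ uniformly in $n$ once $\overline{\epsilon}$ is chosen sufficiently small (one can even arrange $C\leq 1$).

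With this uniform bound in hand the corollary follows in one line: combining the two inequalities yields
\[
w_{n+1}\geq c\,\frac{\|\epsilon_{r(n)}\|}{|I^v_{r(n)}|}\,w_n \geq \frac{c}{C}\,\|\epsilon_{r(n)}\|^{1/2}\cdot\|\epsilon_{r(n)}\|\,w_n = c'\|\epsilon_{r(n)}\|^{3/2}w_n,
\]
with $c'=c/C$. I do not expect any serious obstacle here; the only delicate point is to make sure that the constants coming from Proposition \ref{prop:Hyperbolicity of the Renormalization Operator} (in particular the constant $c$ governing the super-exponential decay of $\|\epsilon_n\|$ and the bound $|\lambda_n-\lambda|<\overline{\epsilon}$) are compatible, which is taken care of by shrinking $\overline{\epsilon}$ once at the end.
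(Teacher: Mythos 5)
Your proposal is correct and follows exactly the route the paper suggests: the paper prefaces the corollary with the remark that ``$\left\Vert \epsilon_{n}\right\Vert$ decreases super-exponentially and $\left|I_{n}^{v}\right|$ increases exponentially, so we can simplify,'' and your proof simply expands that one-line hint into a full argument using Proposition \ref{prop:w_n Bound} together with the bounds $\left\Vert \epsilon_{n}\right\Vert \leq c^{-1}(c\overline{\epsilon})^{2^{n}}$ and $\left|I_{n}^{v}\right| \leq (\lambda+\overline{\epsilon})^{n}\left|I_{0}^{v}\right|$ from Proposition \ref{prop:Hyperbolicity of the Renormalization Operator}.
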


\subsection{\label{subsec:Double sequence}Double sequence}

Next, we study the number of times that a closest approach enters
the bad region by defining a double sequence (two-dimensional sequence/sequence
of two indices) of sets. The double sequence consists of rows. Each
row is a closest approach in the sense of Definition \ref{def: J_n Sequence of Wandering Domain}.
When the sequence first enters the bad region in a row, the horizontal
size of the next step is dominated by its thickness. Add a new row
by selecting a largest square subset then generate the closest approach
starting from the subset. Thus, each row in the double sequence corresponds
to enter the bad region once. 

The precise definition of the double sequence is as follows. Figure
\ref{fig:double sequence J (1)} illustrates the construction. 
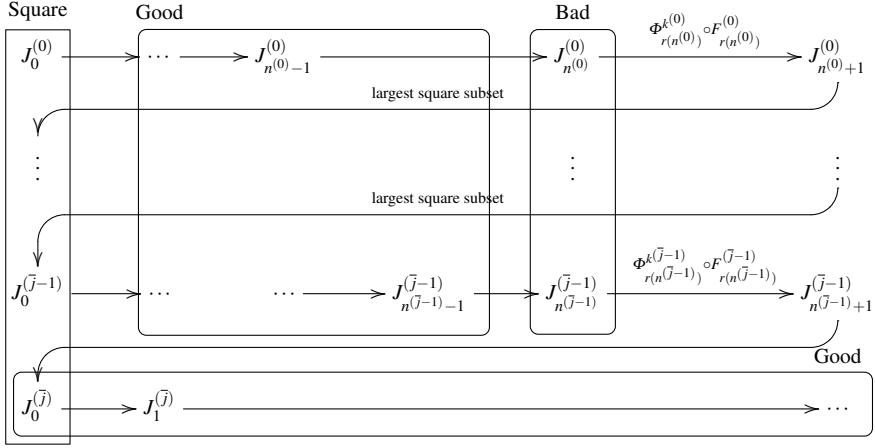
\begin{figure}
\begin{center}
$\xymatrix{J_{0}^{(0)}\ar[r]\save[]+<0cm,+0.6cm>*\txt{\mbox{Square}}\restore & \cdots\vphantom{J_{n^{(0)}-1}^{(0)}}\ar[r]\save[]+<0cm,+0.6cm>*\txt{\mbox{Good}}\restore & J_{n^{(0)}-1}^{(0)}\ar[rr] &  & J_{n^{(0)}}^{(0)}\ar[rr]^{\Phi_{r(n^{(0)})}^{k^{(0)}}\circ F_{r(n^{(0)})}^{(0)}}\save[]+<0cm,+0.6cm>*\txt{\mbox{Bad}}\restore &  & J_{n^{(0)}+1}^{(0)}\ar@{-<}`d[l]`[lllllld]_{\text{largest square subset}}[lllllld]\\
\vdots & \save"1,1"."4,1"*+[F]\frm{}\restore &  &  & \vdots &  & \vdots\ar`d[l]`[lllllld]_{\text{largest square subset}}[lllllld]\\
J_{0}^{(\overline{j}-1)}\ar[r] & \cdots\save"4,1"."4,7"*+[F:<3pt>]\frm{}\restore & \cdots\ar[r] & J_{n^{(\overline{j}-1)}-1}^{(\overline{j}-1)}\ar[r] & J_{n^{(\overline{j}-1)}}^{(\overline{j}-1)}\ar[rr]^{\Phi_{r(n^{(\overline{j}-1)})}^{k^{(\overline{j}-1)}}\circ F_{r(n^{(\overline{j}-1)})}^{(\overline{j}-1)}} & \mbox{\;\;\;\;\;\;\;} & J_{n^{(\overline{j}-1)}+1}^{(\overline{j}-1)}\ar`d[l]`[lllllld][lllllld]\\
J_{0}^{(\overline{j})}\ar[r] & J_{1}^{(\overline{j})}\ar[rrrrr]\save"1,2"."3,4"*+[F:<3pt>]\frm{}\restore &  &  & \save"1,5"."3,5"*+[F:<3pt>]\frm{}\restore &  & \cdots\save[]+<0cm,+0.7cm>*\txt<8pc>{\mbox{Good}}\restore
}
$
\par\end{center}

\caption{\label{fig:double sequence J (1)}Construction of a double sequence.}
\end{figure}
\begin{definition}[Double sequence, Row, and Time span in the good regions]
\label{def:J double sequence}\index{wandering domain!double sequence|see{double sequence}}Given
$\delta>0$ and $I^{v}\supset I^{h}\Supset I$. Assume that $\overline{\epsilon}>0$
be sufficiently small so that Proposition \ref{prop:Good and Bad Regions}
holds and $F\in\hat{\mathcal{I}}_{\delta}(I^{h}\times I^{v},\overline{\epsilon})$
is a non-degenerate open map. 

Given a square subset $J\subset A\cup B$ of a wandering domain for
$F$. Define $\left\{ J_{n}^{(j)}\right\} _{n\geq0,0\leq j\leq\overline{j}}$,
$\left\{ F_{n}^{(j)}=(f_{n}^{(j)}-\epsilon_{n}^{(j)},x)\right\} _{n\geq0,0\leq j\leq\overline{j}}$,
and $\left\{ n^{(j)}\right\} _{0\leq j\leq\overline{j}}$ for some
$\overline{j}\in\mathbb{N}\cup\{0,\infty\}$\footnote{For the case $\overline{j}=\infty$, this means that the sequence
is defined for all finite positive integers $j$.} by induction on $j$ such that the following properties hold.

\begin{enumerate}
\item For $j=0$, set $J_{0}^{(0)}=J$ and $F_{0}^{(0)}=F$. 
\item The super-script $j$ is called \index{row|textbf}row. The initial
set $J_{0}^{(j)}$ for each row $j$ is a square in $A(F_{0}^{(j)})\cup B(F_{0}^{(j)})$.
\item Each row $j$ is a $J_{0}^{(j)}$-closest approach. Precisely, if
$J_{0}^{(j)}$ and $F_{0}^{(j)}$ are defined, set $F_{n}^{(j)}=R^{n}F_{0}^{(j)}$
and $K_{n}^{(j)}$ be the boundary of good and bad region for $F_{n}^{(j)}$.
Let $\left\{ J_{n}^{(j)}\right\} _{n=0}^{\infty}$ and $\left\{ r^{(j)}(n)\right\} _{n=0}^{\infty}$
be the $J_{0}^{(j)}$-closest approach. See Definition \ref{def: J_n Sequence of Wandering Domain}
and Definition \ref{def:Good and bad region}.
\item For a row $j$, if there exists some $n\geq0$ such that $k_{n}^{(j)}>K_{r(n)}^{(j)}$,
set $n^{(j)}$ to be the smallest integer with this property. The
set $J_{n^{(j)}}^{(j)}$ is the first set in row $j$ that enters
the bad regions. The nonnegative integer $n^{(j)}$\nomenclature[n^j]{$n^{(j)}$}{Time span in the good region for row $j$ in a double sequence of wandering domain}
is called the time span in the good regions\index{time span in the good regions|textbf}
for row $j$. Otherwise, if the row never enters the bad region, set
$n^{(j)}=\infty$ and $\overline{j}=j$ and the construction stops.
\item If $n^{(j)}<\infty$, construct a new row $j+1$ by defining $J_{0}^{(j+1)}$
to be a largest square subset\index{square!largest square subset}
of $J_{n^{(j)}+1}^{(j)}$ and set $F_{0}^{(j+1)}=F_{r^{(j)}(n^{(j)}+1)}^{(j)}$.
\item If the procedure never stop, i.e. enters the bad region infinitely
many times, set $\overline{j}=\infty$. 
\end{enumerate}
The two dimensional sequence $\left\{ J_{n}^{(j)}\right\} _{n\geq0,0\leq j\leq\overline{j}}$
is called a double sequence\index{double sequence|textbf} generated
by $J$ or a $J$-double sequence. The integer $\overline{j}$ is
the number of rows\index{double sequence!number of rows|textbf} for
the double sequence (enters the bad region $\overline{j}$ times). 
\end{definition}

To be consistent and avoid confusion, the superscript is assigned
for the row and the subscript is assigned for the renormalization
level or the index of sequence element in the closest approach. For
example, abbreviate $A_{n}^{(j)}=A(F_{n}^{(j)})$, $B_{n}^{(j)}=B(F_{n}^{(j)})$,
$C_{n}^{(j)}=C(F_{n}^{(j)})$, $D_{n}^{(j)}=D(F_{n}^{(j)})$, $l_{n}^{(j)}=l(J_{n}^{(j)})$,
$h_{n}^{(j)}=h(J_{n}^{(j)})$, $w_{n}^{(j)}=w(J_{n}^{(j)})$, and
$k_{n}^{(j)}=k(J_{n}^{(j)})$ as before. 

In the following, we abbreviate $r^{(j)}(n)=r(n)$ when the context
is clear, for example $F_{r(n^{(j)}+1)}^{(j)}=F_{r^{(j)}(n^{(j)}+1)}^{(j)}$.
Also, write $\epsilon^{(j)}=\epsilon_{r(n^{(j)})}^{(j)}$, $K^{(j)}=K_{r(n^{(j)})}^{(j)}$,
and $k^{(j)}=k_{n^{(j)}}^{(j)}$. For convenience, let $m^{(j)}=n^{(j)}+1$.
\begin{example}
\begin{figure}
\begin{centering}
\includegraphics[bb=0bp 0bp 482bp 227bp,clip,scale=0.7]{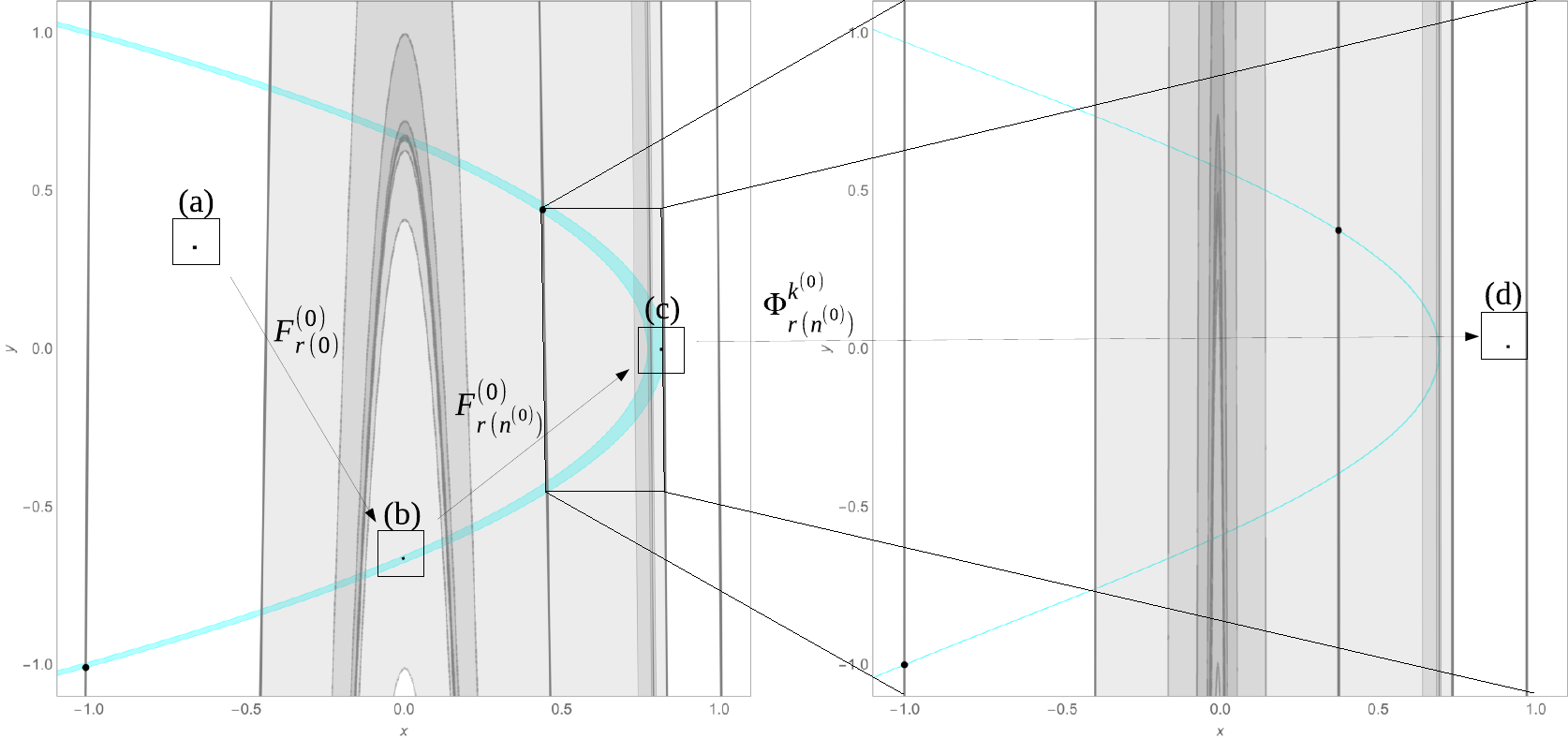}
\par\end{centering}
\begin{centering}
\subfloat[$J_{0}^{(0)}$]{\includegraphics[clip,scale=0.75]{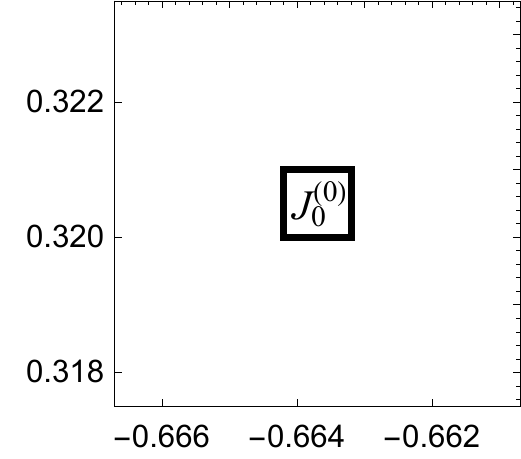}

}\subfloat[$J_{n^{(0)}}^{(0)}$]{\includegraphics[scale=0.75]{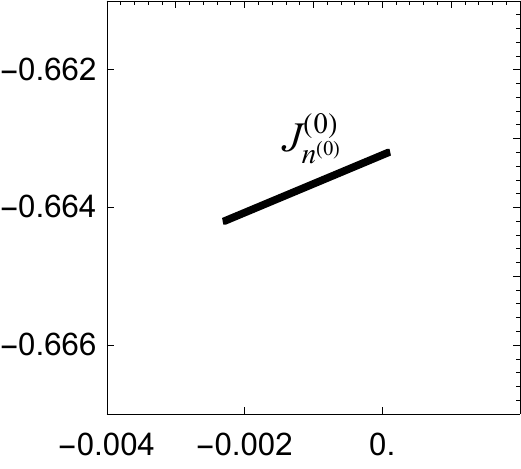}

}\subfloat[$F_{n^{(0)}}^{(0)}\left(J_{n^{(0)}}^{(0)}\right)$]{\includegraphics[scale=0.75]{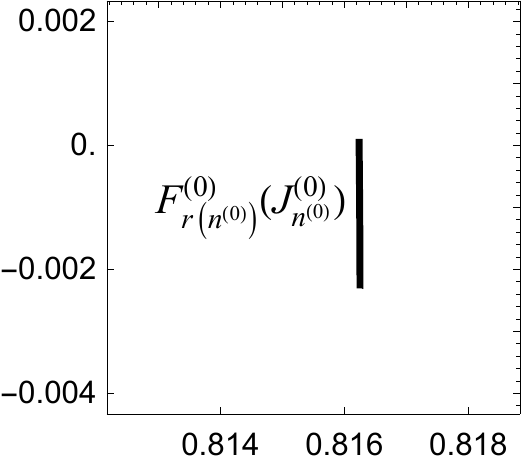}

}
\par\end{centering}
\centering{}\subfloat[\label{fig:double sequence example-bad}$J_{n^{(0)}+1}^{(0)}$]{\includegraphics[scale=0.75]{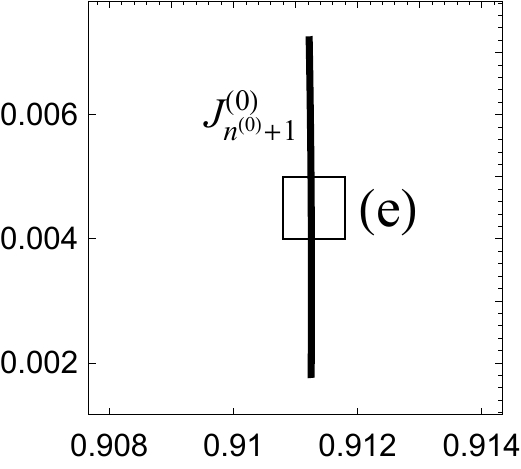}

}\subfloat[\label{fig:double sequence example-square}Zoom in (d)]{\includegraphics[scale=0.75]{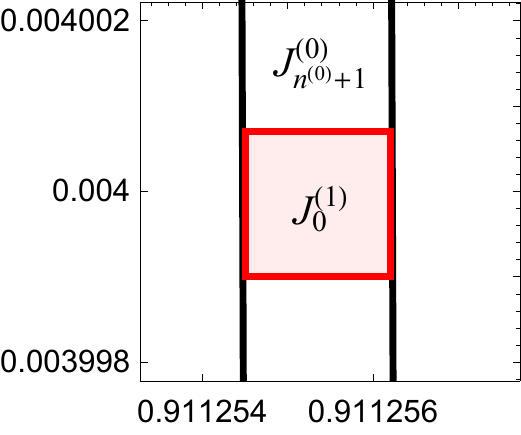}

}\caption{\label{fig:double sequence example}Construction of a double sequence.
The left and right are the graphs for $F_{0}^{(0)}$ and $F_{1}^{(0)}=F_{0}^{(1)}$
respectively. The arrows indicate the iteration and rescaling in the
construction of the double sequence. The sub-figures (a), (b), (c),
and (d) are the zoomed double sequence elements. The scale of (a),
(b), (c), and (d) are chosen to be the same for the reader to compare
the change of the horizontal size.}
\end{figure}
Figure \ref{fig:double sequence example} gives an example of constructing
a double sequence. 

In this example, we choose the same Hénon-like map as in Example \ref{exa:Sequence of wandering domain}.
Select an initial square set $J_{0}^{(0)}=\left[-0.6642,-0.6632\right]\times\left[0.320,0.321\right]\subset A_{0}^{(0)}$
and set $r^{(0)}(0)=0$. 

By the construction of the closest approach, $J_{1}^{(0)}=F_{r(0)}^{(0)}(J_{0}^{(0)})$
and $r^{(0)}(1)=r^{(0)}(0)=0$. From the figure, we see that $J_{1}^{(0)}\subset B_{r(0)}(1)$.
In this example, $\epsilon$ is chosen to be so large that $C_{0}^{r}(1)$
intersects the image $F_{0}(D_{0})$. Thus $K_{r(1)}^{(0)}=K_{0}^{(0)}=0$
and $J_{1}^{(0)}$ lies in the bad region. Set $n^{(0)}=1$. 

By the construction, $J_{n^{(0)}+1}^{(0)}=\Phi_{r(n^{(0)})}^{k^{(0)}}\circ F_{r(n^{(0)})}^{(0)}(J_{n^{(0)}}^{(0)})=\phi_{0}^{(0)}\circ F_{0}^{(0)}(J_{1}^{(0)})$.
The double sequence in this example is chosen in purpose to demonstrate
the set $J_{n^{(0)}+1}^{(0)}$ turns so vertical that the thickness
dominates the horizontal size as in Figure \ref{fig:double sequence example-bad}.
Select a largest square subset $J_{0}^{(1)}$ from $J_{n^{(0)}+1}^{(0)}$
as in Figure \ref{fig:double sequence example-square}. Set $F_{0}^{(1)}=F_{n^{(0)}+1}^{(0)}$. 

The procedure is repeated until the sequence does not enter the bad
region again.
\end{example}
Next, we study the relation between horizontal size and thickness
in a double sequence. For each row $j$, the first set $J_{0}^{(j)}$
is a square so $l_{0}^{(j)}=w_{0}^{(j)}$. When the row stays in the
good region ($n<n^{(j)}$), the next horizontal size can be estimated
by expansion argument $l_{n+1}^{(j)}\geq El_{n}^{(j)}$ (Proposition
\ref{prop:Good region estimates}). When the row first enters the
bad region $n=n^{(j)}$, the expansion argument fails. The vertical
line argument in Chapter \ref{sec:Good and Bad region} shows that
the only way to estimate the horizontal size $l_{n^{(j)}+1}^{(j)}$
is to use the thickness $w_{n^{(j)}+1}^{(j)}$. That is, $l_{n^{(j)}+1}^{(j)}\geq w_{n^{(j)}+1}^{(j)}$.
Proposition \ref{prop:w_n Bound} provides the relation between $l_{n^{(j)}+1}^{(j)}$
and $l_{0}^{(j)}$ by using the thickness. Finally, the horizontal
size $l_{0}^{(j+1)}$ and thickness $w_{0}^{(j+1)}$ of the first
set $J_{0}^{(j+1)}$ in the next row $j+1$ is obtained by the thickness
$w_{n^{(j)}+1}^{(j)}$ by definition.

From the discussion, the horizontal size of any set in the double
sequence can be estimated as follows.
\begin{proposition}
\label{prop:l recurrent relation}\index{horizontal size|textit}\index{time span in the good regions|textit}Given
$\delta>0$ and $I^{v}\supset I^{h}\Supset I$. There exists $\overline{\epsilon}>0$
and $E>1$ such that for all non-degenerate open maps $F\in\hat{\mathcal{I}}_{\delta}(I^{h}\times I^{v},\overline{\epsilon})$
the following property holds:

Let $J\subset A\cup B$ be a square subset of a wandering domain of
$F$ and $\left\{ J_{n}^{(j)}\right\} _{n\geq0,0\leq j\leq\overline{j}}$
be a $J$-double sequence. Then 
\begin{enumerate}
\item $\ln l_{0}^{(j+1)}\geq2m^{(j)}\ln\left\Vert \epsilon^{(j)}\right\Vert +\ln l_{0}^{(j)}$
for all $0\leq j\leq\overline{j}-1$ and 
\item $l_{n+1}^{(j)}\geq El_{n}^{(j)}$ for all $n<n^{(j)}$ and all $0\leq j\leq\overline{j}$.
\end{enumerate}
\end{proposition}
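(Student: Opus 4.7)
The plan is to treat the two inequalities separately, since the second is almost immediate from the good-region estimates of Chapter \ref{sec:Good region} while the first is where the thickness machinery of this chapter does its real work.

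For inequality (2), the definition of $n^{(j)}$ says that $k_n^{(j)}\leq K_{r(n)}^{(j)}$ for all $n\leq n^{(j)}-1$, so the row stays in the good region up to and including step $n^{(j)}-1$. The initial set $J_0^{(j)}$ is a square, hence $h_0^{(j)}/l_0^{(j)}=1$, which is $\leq R\lVert\epsilon_0^{(j)}\rVert^{-1/4}$ for any fixed $R>0$ once $\overline{\epsilon}$ is chosen small enough; thus $J_0^{(j)}$ is $R$-regular. I then invoke Proposition \ref{prop:Good region estimates} with $m=n^{(j)}-1$, obtaining both the $R$-regularity of every $J_n^{(j)}$ with $n\leq n^{(j)}$ and the expansion $l_{n+1}^{(j)}\geq E l_n^{(j)}$ for every $n<n^{(j)}$. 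The universal $E>1$ supplied by that proposition is the one I take here.

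For inequality (1), the key identifications are $w_0^{(j)}=l_0^{(j)}$, which holds because $J_0^{(j)}$ is a square, and $l_0^{(j+1)}=w_{m^{(j)}}^{(j)}$, which is exactly how a largest square subset of $J_{m^{(j)}}^{(j)}$ was selected to start row $j+1$. Iterating Corollary \ref{cor:w_n bound simplfied} along the first $m^{(j)}=n^{(j)}+1$ steps of row $j$ gives
\[
l_0^{(j+1)}\;=\;w_{m^{(j)}}^{(j)}\;\geq\;c^{m^{(j)}}\!\!\prod_{n=0}^{n^{(j)}}\bigl\lVert\epsilon_{r(n)}^{(j)}\bigr\rVert^{3/2}\cdot l_0^{(j)}.
\]
This is an honest lower bound on $l_0^{(j+1)}$ in terms of $l_0^{(j)}$ and the perturbations seen along the row, and no additional expansion-argument input is needed here since thickness is tracked uniformly (good or bad region).

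The only genuine step is now to convert the product $\prod_{n=0}^{n^{(j)}}\lVert\epsilon_{r(n)}^{(j)}\rVert^{3/2}$ into the single quantity $\lVert\epsilon^{(j)}\rVert^{2m^{(j)}}$ claimed in the statement. Because the renormalization level $r(n)$ is non-decreasing along a closest approach and because $\lVert\epsilon_n\rVert$ is decreasing in $n$ (super-exponentially, by Proposition \ref{prop:Hyperbolicity of the Renormalization Operator}), each factor satisfies $\lVert\epsilon_{r(n)}^{(j)}\rVert\geq\lVert\epsilon_{r(n^{(j)})}^{(j)}\rVert=\lVert\epsilon^{(j)}\rVert$, so the product is at least $\lVert\epsilon^{(j)}\rVert^{3m^{(j)}/2}$. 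It then remains to absorb the constant $c^{m^{(j)}}$; the mild inequality $c^{m^{(j)}}\geq\lVert\epsilon^{(j)}\rVert^{m^{(j)}/2}$ is equivalent to $\lVert\epsilon^{(j)}\rVert\leq c^{2}$, which is the content of choosing $\overline{\epsilon}$ small enough. Combining, $l_0^{(j+1)}\geq\lVert\epsilon^{(j)}\rVert^{2m^{(j)}}\,l_0^{(j)}$, and taking logarithms yields inequality (1). The main (and only) delicate point is precisely this comparison between the product of the $\lVert\epsilon_{r(n)}^{(j)}\rVert$'s and the worst single factor $\lVert\epsilon^{(j)}\rVert$; the deliberate overshoot from exponent $3/2$ to exponent $2$ in the statement is what gives the room to absorb the constant $c^{m^{(j)}}$, and this slack will be what makes the later two-row lemma work.
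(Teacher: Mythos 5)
Your proposal is correct and follows essentially the same route as the paper's own proof: $w_0^{(j)}=l_0^{(j)}$ since the initial set is a square, $l_0^{(j+1)}=w_{n^{(j)}+1}^{(j)}$ by construction of the largest square subset, iterate Corollary \ref{cor:w_n bound simplfied} across the $m^{(j)}$ steps of row $j$, bound the product of perturbation norms by the worst factor $\lVert\epsilon^{(j)}\rVert$ using monotonicity, and absorb the constant $c^{m^{(j)}}$ into the exponent-slack from $3/2$ to $2$ by shrinking $\overline{\epsilon}$ (the paper phrases this absorption after taking logarithms, you do it before, but the condition $c^2\geq\lVert\epsilon^{(j)}\rVert$ is the same). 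The treatment of inequality (2) via Proposition \ref{prop:Good region estimates} with $m=n^{(j)}-1$ and $R$-regularity of the initial square is also the paper's argument.
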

\begin{proof}
Let $\overline{\epsilon}>0$ be small enough such that Proposition
\ref{prop:Good region estimates} and Corollary \ref{cor:w_n bound simplfied}
hold. 

With the help Corollary \ref{cor:w_n bound simplfied}, we are able
to compare $l_{0}^{(j+1)}$ with $l_{0}^{(j)}$ by using the thickness.
That is
\begin{align*}
l_{0}^{(j+1)} & =w_{n^{(j)}+1}^{(j)}\\
 & \geq\left(\prod_{n=0}^{n^{(j)}}c\left\Vert \epsilon_{r(n)}^{(j)}\right\Vert ^{3/2}\right)w_{0}^{(j)}\geq\left(c^{\frac{2}{3}}\left\Vert \epsilon_{r(n^{(j)})}^{(j)}\right\Vert \right)^{\frac{3}{2}(n^{(j)}+1)}w_{0}^{(j)}\\
 & =\left(c^{\frac{2}{3}}\left\Vert \epsilon^{(j)}\right\Vert \right)^{\frac{3}{2}m^{(j)}}l_{0}^{(j)}
\end{align*}
where $c>0$ is a constant. Apply nature logarithm to both sides,
we get 
\begin{eqnarray*}
\ln l_{0}^{(j+1)} & \geq & \frac{3}{2}m^{(j)}\left(\ln\left\Vert \epsilon^{(j)}\right\Vert +\frac{2}{3}\ln c\right)+\ln l_{0}^{(j)}\\
 & \geq & 2m^{(j)}\ln\left\Vert \epsilon^{(j)}\right\Vert +\ln l_{0}^{(j)}.
\end{eqnarray*}
Here we assume that $\overline{\epsilon}$ is small enough so that
$\frac{2}{3}\ln c\geq\frac{1}{3}\ln\left\Vert \epsilon^{(j)}\right\Vert $
for all $0\leq j\leq\overline{j}-1$ to assimilate the constants.

The second inequality follows directly from Proposition \ref{prop:Good region estimates},
the definition of $n^{(j)}$, and a square is $R$-regular when $\overline{\epsilon}$
is small enough.
\end{proof}

The next proposition provides the relation of the perturbation $\epsilon$
between two rows.
\begin{proposition}
\label{prop:e in two chains}\index{row|textit}Given $\delta>0$
and $I^{v}\supset I^{h}\Supset I$. There exists $\overline{\epsilon}>0$
and $\alpha>0$ (universal) such that for all non-degenerate open
maps $F\in\hat{\mathcal{I}}_{\delta}(I^{h}\times I^{v},\overline{\epsilon})$
we have
\begin{equation}
\left\Vert \epsilon^{(j+1)}\right\Vert \leq\left\Vert \epsilon^{(j)}\right\Vert ^{\left\Vert \epsilon^{(j)}\right\Vert ^{-2\alpha}}\label{eq:e in two chains}
\end{equation}
for all $0\leq j\leq\overline{j}-1$.
\end{proposition}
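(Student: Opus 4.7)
The plan is to chase through what the construction does to the renormalization index between the two rows, and then feed this into the super-exponential decay of $\|\epsilon_n\|$ provided by hyperbolicity, using the lower bound on $K^{(j)}$ from the definition of the good region.

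First I would unpack the definitions. Since row $j$ first enters the bad region at $n^{(j)}$, we have $J_{n^{(j)}}^{(j)} \subset B_{r(n^{(j)})}^{(j)}(k^{(j)})$ with $k^{(j)} > K^{(j)}$. The construction of the $J_0^{(j)}$-closest approach then gives $r^{(j)}(n^{(j)}{+}1) = r^{(j)}(n^{(j)}) + k^{(j)}$, and by definition of the next row $F_0^{(j+1)} = F_{r^{(j)}(n^{(j)}+1)}^{(j)}$, so $\epsilon_n^{(j+1)} = \epsilon_{r(n^{(j)}) + k^{(j)} + n}^{(j)}$ for every $n \ge 0$. In particular, since the sequence $\{\|\epsilon_n\|\}$ is decreasing for $\overline{\epsilon}$ small,
\[
\|\epsilon^{(j+1)}\| \;=\; \|\epsilon_{r(n^{(j+1)})}^{(j+1)}\| \;\le\; \|\epsilon_{r(n^{(j)})+k^{(j)}}^{(j)}\|.
\]

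Second I would apply the super-exponential estimate from the corollary after Proposition \ref{prop:Hyperbolicity of the Renormalization Operator}, which gives $\|\epsilon_{n+t}\| \le (c\|\epsilon_n\|)^{2^{t}}$. Taking $n = r(n^{(j)})$ and $t = k^{(j)}$ yields
\[
\|\epsilon^{(j+1)}\| \;\le\; (c\|\epsilon^{(j)}\|)^{2^{k^{(j)}}} \;\le\; (c\|\epsilon^{(j)}\|)^{2^{K^{(j)}}}.
\]
Then I would use Lemma \ref{lem:K_n bounds}, which says $\lambda^{K^{(j)}} \ge (c')^{-1}\|\epsilon^{(j)}\|^{-1/2}$. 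Taking the log base $\lambda$ and multiplying by $\log 2/\log \lambda$ gives $2^{K^{(j)}} \ge c''\,\|\epsilon^{(j)}\|^{-\beta}$ with $\beta = \frac{\log 2}{2\log \lambda} > 0$. Pick any $0 < \alpha < \beta/2$; then for $\overline{\epsilon}$ small enough (depending only on $\alpha$ and $c''$) we have $2^{K^{(j)}} \ge \|\epsilon^{(j)}\|^{-2\alpha}$.

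Third I would absorb the multiplicative constant $c$ into the exponent. Shrinking $\overline{\epsilon}$ further so that $c\|\epsilon^{(j)}\| \le \|\epsilon^{(j)}\|^{1/2}$, we get $(c\|\epsilon^{(j)}\|)^{N} \le \|\epsilon^{(j)}\|^{N/2}$ for any $N \ge 0$. Applying this with $N = 2^{K^{(j)}}$ and adjusting $\alpha$ downward by a factor of $2$ (which is harmless, as $\alpha$ is still positive and universal), we conclude
\[
\|\epsilon^{(j+1)}\| \;\le\; \|\epsilon^{(j)}\|^{2^{K^{(j)}}/2} \;\le\; \|\epsilon^{(j)}\|^{\|\epsilon^{(j)}\|^{-2\alpha}},
\]
which is the desired inequality. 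The constant $\alpha$ is universal because $\beta$ depends only on the Feigenbaum constant $\lambda$.

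There is no serious obstacle here: the whole argument is bookkeeping. The only point that needs care is making sure the constants $c$ from hyperbolicity and from Lemma \ref{lem:K_n bounds} are absorbed cleanly into the exponent $\|\epsilon^{(j)}\|^{-2\alpha}$, which is why we can afford to give away a factor of $2$ in the exponent when choosing $\alpha$.
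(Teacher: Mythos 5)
Your proof is correct and follows essentially the same route as the paper's: both use the chain $\|\epsilon^{(j+1)}\| \le \|\epsilon_0^{(j+1)}\| = \|\epsilon_{r(n^{(j)})+k^{(j)}}^{(j)}\|$, the super-exponential contraction $\|\epsilon_{n+t}\| \le (c\|\epsilon_n\|)^{2^t}$, the inequality $k^{(j)} > K^{(j)}$ from the definition of the bad region, and the lower bound on $\lambda^{K^{(j)}}$ from Lemma \ref{lem:K_n bounds}, with $\alpha$ chosen strictly smaller than the natural exponent $\frac{\ln 2}{2\ln\lambda}$ to absorb the constants for $\overline{\epsilon}$ small. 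The only difference is cosmetic: the paper passes to logarithms to manage the constants, while you stay with exponents and use $c\|\epsilon^{(j)}\| \le \|\epsilon^{(j)}\|^{1/2}$; the underlying bookkeeping is the same.
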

\begin{proof}
By definition and Proposition \ref{prop:Hyperbolicity of the Renormalization Operator},
we have 
\[
\left\Vert \epsilon^{(j+1)}\right\Vert =\left\Vert \epsilon_{r(n^{(j+1)})}^{(j+1)}\right\Vert \leq\left\Vert \epsilon_{0}^{(j+1)}\right\Vert =\left\Vert \epsilon_{r(n^{(j)}+1)}^{(j)}\right\Vert \leq\left(c\left\Vert \epsilon_{r(n^{(j)})}^{(j)}\right\Vert \right)^{2^{k^{(j)}}}=\left(c\left\Vert \epsilon^{(j)}\right\Vert \right)^{2^{k^{(j)}}}
\]
for some constant $c>0$. Apply logarithm to the both side, we get
\begin{equation}
\ln\left\Vert \epsilon^{(j+1)}\right\Vert \leq2^{k^{(j)}}\left(\ln\left\Vert \epsilon^{(j)}\right\Vert +\ln c\right)\leq2^{k^{(j)}-1}\ln\left\Vert \epsilon^{(j)}\right\Vert .\label{eq:e in two chains 1}
\end{equation}
Here we assume that $\overline{\epsilon}>0$ is small enough such
that 
\[
-\frac{1}{2}\ln\left\Vert \epsilon^{(j)}\right\Vert >\ln c
\]
for all $j\geq0$.

Since $J_{n^{(j)}}^{(j)}$ enters the bad region, we have $k^{(j)}>K^{(j)}$.
By Proposition \ref{prop:Good and Bad Regions} and the change base
formula, we get
\begin{equation}
2^{k^{(j)}}>2^{K^{(j)}}=\left(\lambda^{K^{(j)}}\right)^{\frac{\ln2}{\ln\lambda}}\geq c'\left(\frac{1}{\left\Vert \epsilon^{(j)}\right\Vert }\right)^{\frac{\ln2}{2\ln\lambda}}\label{eq:e in two chains 2}
\end{equation}
for some constant $c'>0$. Let $\alpha=\frac{\ln2}{6\ln\lambda}>0$.
Combine (\ref{eq:e in two chains 1}) and (\ref{eq:e in two chains 2}),
we obtain 
\[
\ln\left\Vert \epsilon^{(j+1)}\right\Vert \leq\frac{c'}{2}\left(\frac{1}{\left\Vert \epsilon^{(j)}\right\Vert }\right)^{3\alpha}\ln\left\Vert \epsilon^{(j)}\right\Vert <\left(\frac{1}{\left\Vert \epsilon^{(j)}\right\Vert }\right)^{2\alpha}\ln\left\Vert \epsilon^{(j)}\right\Vert .
\]
Note that $\ln\left\Vert \epsilon^{(j)}\right\Vert <0$. Here we also
assume that $\overline{\epsilon}$ is small enough such that 
\[
\frac{c'}{2}\left(\frac{1}{\left\Vert \epsilon^{(j)}\right\Vert }\right)^{\alpha}\geq\frac{c'}{2}\left(\frac{1}{\left\Vert \epsilon\right\Vert }\right)^{\alpha}>1
\]
for all $j\geq0$. This proves the proposition.
\end{proof}

\subsection{\label{subsec:finite rows}Closest approach cannot enter the bad
region infinitely many times}

A strong contraction on the horizontal size occurs each time when
the double sequence (or closest approach) enters the bad region as
proved in Proposition \ref{prop:l recurrent relation}. The contraction
produces an obstruction to the expansion argument. This section will
resolve the problem by proving the double sequence can have at most
finitely many rows.

Although entering the bad region produces an obstruction to the expansion
argument, it also provides a restriction to the sequence element $J_{n^{(j)}}^{(j)}$:
its horizontal size $l_{n^{(j)}}^{(j)}$ cannot exceed the size of
bad region (Proposition \ref{prop:Good and Bad Regions}). The Two
Row Lemma, which is the final key toward the proof, studies the interaction
between the obstruction and restriction between two consecutive rows
as illustrated in Figure \ref{fig:two rows}.

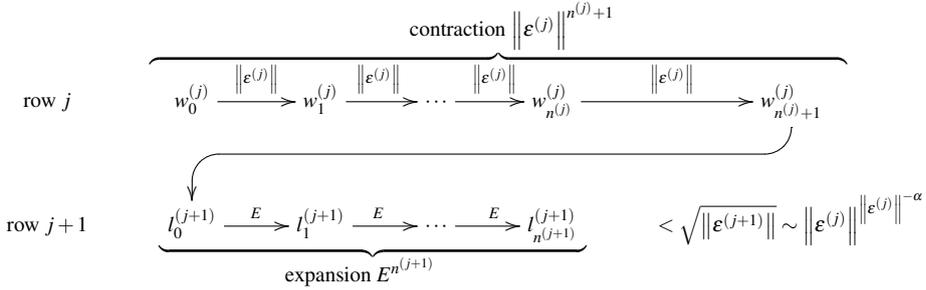
\begin{figure}
\begin{center}
$\xymatrix{\text{row }j & w_{0}^{(j)}\ar[r]^{\left\Vert \epsilon^{(j)}\right\Vert }\save"1,2"."1,6"!C*+[]+[]\frmu\restore{} & w_{1}^{(j)}\ar[r]^{\left\Vert \epsilon^{(j)}\right\Vert } & \cdots\ar[r]^{\left\Vert \epsilon^{(j)}\right\Vert }\save[]+<+1cm,+1cm>*\txt<8pc>{\mbox{contraction \ensuremath{\left\Vert \epsilon^{(j)}\right\Vert ^{n^{(j)}+1}}}}\restore & w_{n^{(j)}}^{(j)}\ar[r]^{\left\Vert \epsilon^{(j)}\right\Vert } & w_{n^{(j)}+1}^{(j)}\ar@{->}`d[l]`[lllld][lllld]\\
\text{row }j+1 & l_{0}^{(j+1)}\ar[r]^{E}\save"2,2"."2,5"!C*\frmd\restore{} & l_{1}^{(j+1)}\ar[r]^{E} & \cdots\ar[r]^{E}\save[]+<-1cm,-0.6cm>*\txt<8pc>{\mbox{expansion \ensuremath{E^{n^{(j+1)}}}}}\restore & l_{n^{(j+1)}}^{(j+1)} & <\sqrt{\left\Vert \epsilon^{(j+1)}\right\Vert }\sim\left\Vert \epsilon^{(j)}\right\Vert ^{\left\Vert \epsilon^{(j)}\right\Vert ^{-\alpha}}
}
$
\par\end{center}

\caption{\label{fig:two rows}Relations of horizontal size and thickness in
two rows $j$ and $j+1$.}
\end{figure}

Assume the two rows $j$ and $j+1$ both enters the bad region. 

On row $j+1$, the sequence element $J_{n^{(j+1)}}^{(j+1)}$ enters
the bad region. The size of bad region provides the restriction to
the horizontal size $l_{n^{(j+1)}}^{(j+1)}$. For the sequence elements
prior to $J_{n^{(j+1)}}^{(j+1)}$ on the same row, the horizontal
size expand. This means that the initial sequence element $J_{0}^{(j+1)}$
is restricted by both the size of bad region and the amount of expansion
on row $j+1$. 

On row $j$, the thickness determines the horizontal size of the next
row $j+1$. The contraction of thickness produces the contraction
of horizontal size from $l_{0}^{(j)}$ to $l_{0}^{(j+1)}$. This cause
the obstruction toward the expansion argument.

The following lemma summarize the discussion.
\begin{lemma}[Two Row Lemma]
\label{lem:n recurrent relation}\index{two row lemma}\index{row|textit}\index{horizontal size|textit}Given
$\delta>0$ and $I^{v}\supset I^{h}\Supset I$. There exists $\overline{\epsilon}>0$,
$E>1$, $\alpha>0$ (universal) such that for all non-degenerate open
maps $F\in\hat{\mathcal{I}}_{\delta}(I^{h}\times I^{v},\overline{\epsilon})$
the following property holds:

Let $J\subset A\cup B$ be a square subset of a wandering domain of
$F$ and $\left\{ J_{n}^{(j)}\right\} _{n\geq0,0\leq j\leq\overline{j}}$
be a $J$-double sequence. Then the time span in the good regions\index{time span in the good regions|textit}
$n^{(j)}=m^{(j)}-1$ for row $j$ is bounded below by
\begin{equation}
m^{(j)}>\frac{\ln E}{-2\ln\left\Vert \epsilon^{(j)}\right\Vert }m^{(j+1)}+\left(\frac{1}{\left\Vert \epsilon^{(j)}\right\Vert }\right)^{\alpha}+\frac{1}{-2\ln\left\Vert \epsilon^{(j)}\right\Vert }\ln l_{0}^{(j)}\label{eq:n recurrent relation}
\end{equation}
for all $0\leq j\leq\overline{j}-2$.
\end{lemma}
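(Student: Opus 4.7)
The plan is to chain together the expansion estimate on row $j+1$, the geometric bound that comes from entering the bad region, and the thickness-to-horizontal-size transition across rows.

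First I would work on row $j+1$. Since $J_{n^{(j+1)}}^{(j+1)}$ is the first element of that row entering the bad region, the expansion in the good region from Proposition \ref{prop:l recurrent relation}(2) applies to every step $n<n^{(j+1)}$, so by induction
\[
l_{n^{(j+1)}}^{(j+1)} \geq E^{n^{(j+1)}}\,l_{0}^{(j+1)}.
\]
On the other hand, $J_{n^{(j+1)}}^{(j+1)}$ sits inside the bad region of $B$, so by the bad-region bound in Proposition \ref{prop:Good and Bad Regions} its horizontal projection is contained in an interval of width $O(\sqrt{\|\epsilon^{(j+1)}\|})$ around $v_{r(n^{(j+1)})}^{(j+1)}$. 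Combining the two bounds and taking logarithms yields
\[
n^{(j+1)}\ln E + \ln l_{0}^{(j+1)} \leq \tfrac{1}{2}\ln\|\epsilon^{(j+1)}\| + \ln c.
\]

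Next I would bridge rows $j$ and $j+1$ via the thickness. Proposition \ref{prop:l recurrent relation}(1) says $\ln l_{0}^{(j+1)} \geq 2m^{(j)}\ln\|\epsilon^{(j)}\| + \ln l_{0}^{(j)}$, which captures how the thickness contracted during the first $m^{(j)}$ steps of row $j$. Substituting into the inequality above and rearranging, using $\ln\|\epsilon^{(j)}\|<0$, gives
\[
m^{(j)} \geq \frac{n^{(j+1)}\ln E}{-2\ln\|\epsilon^{(j)}\|} + \frac{\ln l_{0}^{(j)}}{-2\ln\|\epsilon^{(j)}\|} + \frac{-\tfrac{1}{2}\ln\|\epsilon^{(j+1)}\| - \ln c}{-2\ln\|\epsilon^{(j)}\|}.
\]
To convert $n^{(j+1)}$ into $m^{(j+1)}=n^{(j+1)}+1$ I will pay a harmless additive constant $\ln E/(-2\ln\|\epsilon^{(j)}\|)$, which for $\overline{\epsilon}$ small can be swallowed into the third term.

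The final step, and the only place where real work is hidden, is to show that the third term above dominates $(1/\|\epsilon^{(j)}\|)^{\alpha}$ for some universal $\alpha>0$. Here I apply Proposition \ref{prop:e in two chains}: from $\|\epsilon^{(j+1)}\|\leq\|\epsilon^{(j)}\|^{\|\epsilon^{(j)}\|^{-2\alpha_{0}}}$ (with $\alpha_{0}$ the universal constant produced there), taking logarithms and remembering all logs are negative gives
\[
\frac{-\ln\|\epsilon^{(j+1)}\|}{-\ln\|\epsilon^{(j)}\|} \geq \|\epsilon^{(j)}\|^{-2\alpha_{0}}.
\]
Dividing by $4$, ignoring the $\ln c$ correction (which is negligible for $\overline{\epsilon}$ small), and choosing $\alpha<2\alpha_{0}$ so that $\tfrac{1}{4}\|\epsilon^{(j)}\|^{-2\alpha_{0}} \geq \|\epsilon^{(j)}\|^{-\alpha}$ uniformly for $\|\epsilon^{(j)}\|\leq\overline{\epsilon}$, delivers the required lower bound on the third term. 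Putting everything together gives exactly \eqref{eq:n recurrent relation}. I expect the main obstacle to be bookkeeping: tracking how the constant $\ln c$ and the additive $\ln E$ shift get absorbed into the super-polynomial growth of $\|\epsilon^{(j)}\|^{-\alpha}$, and verifying that one fixed universal $\alpha$ works for all admissibly small $\overline{\epsilon}$.
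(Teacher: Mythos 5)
Your proposal is correct and follows essentially the same route as the paper's proof: the same three ingredients (expansion on row $j+1$, the bad-region size bound, and the thickness contraction bridging rows $j$ and $j+1$), combined in the same order, with the superexponential decay from Proposition \ref{prop:e in two chains} used at the end to convert the $\ln\|\epsilon^{(j+1)}\|/\ln\|\epsilon^{(j)}\|$ ratio into a polynomial gain. The only cosmetic difference is how the universal $\alpha$ is fixed and where the additive constants ($\ln E$, $\ln c$) get absorbed, but both bookkeeping choices work.
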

\begin{proof}
The idea of the proof comes from Figure \ref{fig:two rows}.

On row $j+1$, $J_{n^{(j+1)}}^{(j+1)}$ is in the bad region since
$j+1\leq\overline{j}-1$. The size of $J_{n^{(j+1)}}^{(j+1)}$ cannot
exceed the size of bad region. Let $z_{1},z_{2}\in J_{n^{(j+1)}}^{(j+1)}$
be such that $\left|\pi_{x}z_{2}-\pi_{x}z_{1}\right|=l_{n^{(j+1)}}^{(j+1)}$.
Apply Proposition \ref{prop:Good and Bad Regions} to bound the horizontal
size. We get
\[
l_{n^{(j+1)}}^{(j+1)}\leq\left|\pi_{x}z_{2}-v_{n^{(j+1)}}^{(j+1)}\right|+\left|\pi_{x}z_{1}-v_{n^{(j+1)}}^{(j+1)}\right|\leq2c\sqrt{\left\Vert \epsilon_{n^{(j+1)}}^{(j+1)}\right\Vert }=2c\sqrt{\left\Vert \epsilon^{(j+1)}\right\Vert }
\]
for some constant $c>0$. 

Also, the horizontal size expands on row $j+1$. Proposition \ref{prop:l recurrent relation}
yields 
\[
E^{n^{(j+1)}}l_{0}^{(j+1)}\leq l_{n^{(j+1)}}^{(j+1)}\leq2c\sqrt{\left\Vert \epsilon^{(j+1)}\right\Vert }.
\]
Apply natural logarithm to the both sides, we get 
\begin{eqnarray*}
\ln l_{0}^{(j+1)} & < & -n^{(j+1)}\ln E+\frac{1}{2}\ln\left\Vert \epsilon^{(j+1)}\right\Vert +\ln2c\\
 & = & -m^{(j+1)}\ln E+\frac{1}{2}\ln\left\Vert \epsilon^{(j+1)}\right\Vert +\left(\ln E+\ln2c\right).
\end{eqnarray*}

On row $j$, the thickness contracts. Proposition \ref{prop:l recurrent relation}
provides the contraction as
\begin{eqnarray*}
2m^{(j)}\ln\left\Vert \epsilon^{(j)}\right\Vert  & \leq & \ln l_{0}^{(j+1)}-\ln l_{0}^{(j)}\\
 & < & -m^{(j+1)}\ln E+\frac{1}{2}\ln\left\Vert \epsilon^{(j+1)}\right\Vert +\left(\ln E+\ln2c\right)-\ln l_{0}^{(j)}.
\end{eqnarray*}
Since $\ln\left\Vert \epsilon^{(j)}\right\Vert <0$, we solved 
\[
m^{(j)}>\frac{\ln E}{-2\ln\left\Vert \epsilon^{(j)}\right\Vert }m^{(j+1)}+\frac{1}{4}\frac{\ln\left\Vert \epsilon^{(j+1)}\right\Vert }{\ln\left\Vert \epsilon^{(j)}\right\Vert }+\frac{\ln E+\ln2c}{2\ln\left\Vert \epsilon^{(j)}\right\Vert }+\frac{\ln l_{0}^{(j)}}{-2\ln\left\Vert \epsilon^{(j)}\right\Vert }.
\]

To simplify the second term, apply Proposition \ref{prop:e in two chains}.
We obtain
\begin{eqnarray*}
m^{(j)} & > & \frac{\ln E}{-2\ln\left\Vert \epsilon^{(j)}\right\Vert }m^{(j+1)}+\frac{1}{4}\left(\frac{1}{\left\Vert \epsilon^{(j)}\right\Vert }\right)^{2\alpha}+\frac{\ln E+\ln2c}{2\ln\left\Vert \epsilon^{(j)}\right\Vert }+\frac{1}{-2\ln\left\Vert \epsilon^{(j)}\right\Vert }\ln l_{0}^{(j)}\\
 & = & \frac{\ln E}{-2\ln\left\Vert \epsilon^{(j)}\right\Vert }m^{(j+1)}+\left(\frac{1}{\left\Vert \epsilon^{(j)}\right\Vert }\right)^{\alpha}\left[\frac{1}{4}\left(\frac{1}{\left\Vert \epsilon^{(j)}\right\Vert }\right)^{\alpha}+\frac{\ln E+\ln2c}{2\ln\left\Vert \epsilon^{(j)}\right\Vert }\left\Vert \epsilon^{(j)}\right\Vert ^{\alpha}\right]\\
 &  & +\frac{1}{-2\ln\left\Vert \epsilon^{(j)}\right\Vert }\ln l_{0}^{(j)}\\
 & > & \frac{\ln E}{-2\ln\left\Vert \epsilon^{(j)}\right\Vert }m^{(j+1)}+\left(\frac{1}{\left\Vert \epsilon^{(j)}\right\Vert }\right)^{\alpha}+\frac{1}{-2\ln\left\Vert \epsilon^{(j)}\right\Vert }\ln l_{0}^{(j)}.
\end{eqnarray*}
Here we assume that $\overline{\epsilon}$ is sufficiently small
such that 
\[
\frac{1}{4}\left(\frac{1}{\left\Vert \epsilon^{(j)}\right\Vert }\right)^{\alpha}+\frac{\ln E+\ln2c}{2\ln\left\Vert \epsilon^{(j)}\right\Vert }\left\Vert \epsilon^{(j)}\right\Vert ^{\alpha}>1
\]
for all $j\geq0$ to assimilate the constants.
\end{proof}

If the double sequence has infinite rows, then the obstruction and
restriction both happens infinitely many times. For this to happen,
the obstruction must beats (or balance with) the restriction. However,
it is not possible to compare the contraction of the horizontal size
with the size of bad region directly because the time span in the
good regions also interacts with the obstruction and restriction as
(\ref{eq:n recurrent relation}) shows. So we turn to analyze the
relation between the time span in the good regions versus the the
number of rows in a double sequence.

If the double sequence enters the bad region twice, we apply the Two
Row Lemma to row $0$ and $1$. The restriction from the bad region
says the the horizontal size is bounded by the size of bad region
$\left\Vert \epsilon^{(1)}\right\Vert $. At this moment, there are
no information for the expansion on row $1$. So the restriction comes
only from the size of bad region. To balance the obstruction with
the restriction, the total contraction $\left\Vert \epsilon^{(0)}\right\Vert ^{m^{(0)}}$
on row $0$ must have at least the same order as the size of bad region.
Thus, the time span in the good regions $m^{(0)}$ must be large ($\approx\left\Vert \epsilon^{(0)}\right\Vert ^{-1}$)
by Proposition \ref{prop:e in two chains} because the contraction
and the size of the bad region come from the perturbation on two different
rows.

If a double sequence enters the bad region three times, we apply the
Two Row Lemma twice. First, we apply the lemma to row $1$ and $2$.
The previous paragraph says that $m^{(1)}$ is large ($\approx\left\Vert \epsilon^{(1)}\right\Vert ^{-1}$).
Then, apply the Two Row Lemma again to row $0$ and $1$. Unlike the
previous paragraph, now the expansion on row $1$ is determined when
the Two Row Lemma is applied to row $1$ and $2$. So the restriction
comes from both the size of bad region $\left\Vert \epsilon^{(1)}\right\Vert $
and the expansion of horizontal size $E^{m^{(1)}}\sim E^{\left\Vert \epsilon^{(1)}\right\Vert ^{-1}}$.
To balance the obstruction with the restriction, the total contraction
$\left\Vert \epsilon^{(0)}\right\Vert ^{m^{(0)}}$ on row $0$ must
have at least the same order as $E^{-\left\Vert \epsilon^{(1)}\right\Vert ^{-1}}\left\Vert \epsilon^{(1)}\right\Vert $.
This yields a larger estimate (compare to the previous paragraph)
for the time span in the good regions $m^{(0)}$ because of the expansion.

If the double sequence enters the bad region infinite times, we start
from any arbitrary row $j+k+1$ then apply the Two Row Lemma recurrently
to the rows $j$, $j+1$, $\cdots$, and $j+k+1$ in reverse order.
It is important that the contribution of obstruction and restriction
comes from the perturbation in two different rows as illustrated in
Figure \ref{fig:two rows}. With the help from Proposition \ref{prop:e in two chains},
the contribution from different rows makes the time span in the good
regions increases each time when the Two Row Lemma is applied. This
gives the following lemma
\begin{lemma}
\label{lem:n lower bound}\index{row|textit}\index{time span in the good regions|textit}Given
$\delta>0$ and $I^{v}\supset I^{h}\Supset I$. There exists $\overline{\epsilon}>0$
such that for all non-degenerate open maps $F\in\hat{\mathcal{I}}_{\delta}(I^{h}\times I^{v},\overline{\epsilon})$
the following property holds:

Let $J\subset A\cup B$ be a square subset of a wandering domain of
$F$ and $\left\{ J_{n}^{(j)}\right\} _{n\geq0,0\leq j\leq\overline{j}}$
be a $J$-double sequence. Then the time span in the good regions
$n^{(j)}$ for row $j$ is bounded below by
\begin{equation}
m^{(j)}=n^{(j)}+1>\frac{2^{k}}{\left\Vert \epsilon^{(j)}\right\Vert ^{\alpha}}+\frac{1}{-2\ln\left\Vert \epsilon^{(j)}\right\Vert }\ln l_{0}^{(j)}\label{eq:m lower bound}
\end{equation}
for all $j$ and $k$ with $0\leq j\leq\overline{j}-2$ and $0\leq k\leq(\overline{j}-2)-j$
where $\alpha>0$ is a universal constant.

In particular for the case $j=0$ 
\begin{equation}
m^{(0)}=n^{(0)}+1>\frac{2^{k}}{\left\Vert \epsilon^{(0)}\right\Vert ^{\alpha}}+\frac{1}{-2\ln\left\Vert \epsilon^{(0)}\right\Vert }\ln l_{0}^{(0)}\label{eq:m lower bound for row 0}
\end{equation}
for all $0\leq k\leq\overline{j}-2$.
\end{lemma}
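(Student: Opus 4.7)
\emph{Plan of proof.} I will proceed by induction on $k$, at each level letting $j$ range over $0\le j\le \overline{j}-2-k$.

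The base case $k=0$ is immediate from the Two Row Lemma (Lemma \ref{lem:n recurrent relation}): since $m^{(j+1)}\ge 1$ and $\frac{\ln E}{-2\ln \left\Vert \epsilon^{(j)}\right\Vert }>0$, dropping the first summand on the right-hand side of (\ref{eq:n recurrent relation}) yields exactly (\ref{eq:m lower bound}) at $k=0$.

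For the inductive step, fix $0\le j\le \overline{j}-2-k$; then $j+1\le \overline{j}-1-k$, so the inductive hypothesis applies at $(j+1,k-1)$. Substituting its lower bound for $m^{(j+1)}$ into the Two Row Lemma at $j$ reduces the target to
\[
\frac{\ln E}{-2\ln\left\Vert \epsilon^{(j)}\right\Vert }\cdot\frac{2^{k-1}}{\left\Vert \epsilon^{(j+1)}\right\Vert ^{\alpha}}\ \ge\ \frac{2^{k}-1}{\left\Vert \epsilon^{(j)}\right\Vert ^{\alpha}}+|\Delta|,
\]
where $\Delta:=\frac{\ln E}{-2\ln\left\Vert \epsilon^{(j)}\right\Vert }\cdot\frac{\ln l_{0}^{(j+1)}}{-2\ln\left\Vert \epsilon^{(j+1)}\right\Vert }$ is the (negative) correction contributed by the inductive hypothesis. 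Ignoring $\Delta$, the inequality becomes
\[
\frac{\left\Vert \epsilon^{(j)}\right\Vert ^{\alpha}}{\left\Vert \epsilon^{(j+1)}\right\Vert ^{\alpha}}\ \ge\ \frac{4\left|\ln\left\Vert \epsilon^{(j)}\right\Vert \right|}{\ln E},
\]
which holds with enormous room: Proposition \ref{prop:e in two chains} forces $\left\Vert \epsilon^{(j+1)}\right\Vert ^{\alpha}\le \left\Vert \epsilon^{(j)}\right\Vert ^{\alpha\left\Vert \epsilon^{(j)}\right\Vert ^{-2\alpha}}$, giving a super-polynomial gap that dwarfs the logarithmic factor on the right, as soon as $\overline{\epsilon}$ is small.

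The main obstacle is therefore the correction $\Delta$. I will bound $|\ln l_{0}^{(j+1)}|$ using Proposition \ref{prop:l recurrent relation} in the form
\[
|\ln l_{0}^{(j+1)}|\ \le\ |\ln l_{0}^{(j)}|+2m^{(j)}\,\left|\ln\left\Vert \epsilon^{(j)}\right\Vert \right|,
\]
and combine it with Proposition \ref{prop:e in two chains} (giving $|\ln\left\Vert \epsilon^{(j+1)}\right\Vert |\ge \left\Vert \epsilon^{(j)}\right\Vert ^{-2\alpha}\left|\ln\left\Vert \epsilon^{(j)}\right\Vert \right|$) to obtain
\[
\frac{\ln l_{0}^{(j+1)}}{-2\ln\left\Vert \epsilon^{(j+1)}\right\Vert }\ \ge\ -m^{(j)}\left\Vert \epsilon^{(j)}\right\Vert ^{2\alpha}-\frac{|\ln l_{0}^{(j)}|\left\Vert \epsilon^{(j)}\right\Vert ^{2\alpha}}{2\left|\ln\left\Vert \epsilon^{(j)}\right\Vert \right|}.
\]
The second piece is negligible for small $\overline{\epsilon}$; the first is linear in $m^{(j)}$, so when carried back through the Two Row Lemma it produces a prefactor $1+O(\left\Vert \epsilon^{(j)}\right\Vert ^{2\alpha}/\left|\ln\left\Vert \epsilon^{(j)}\right\Vert \right|)=1+o(1)$ on $m^{(j)}$, which is harmless. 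With $\Delta$ neutralized, the super-polynomial gap above closes the induction, and the summand $\frac{\ln l_{0}^{(j)}}{-2\ln\left\Vert \epsilon^{(j)}\right\Vert }$ is carried through verbatim from the Two Row Lemma. The special case (\ref{eq:m lower bound for row 0}) is just the specialization $j=0$.
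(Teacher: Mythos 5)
Your proof is correct and follows essentially the same approach as the paper: induction on $k$ with base case given by dropping the first summand in the Two Row Lemma, and in the inductive step substituting the inductive hypothesis into (\ref{eq:n recurrent relation}), using Proposition \ref{prop:l recurrent relation} to express $\ln l_0^{(j+1)}$ in terms of $m^{(j)}$ and $\ln l_0^{(j)}$, and Proposition \ref{prop:e in two chains} to supply the super-polynomial gap that dominates the losses. The only cosmetic difference is that the paper substitutes the signed inequality from Proposition \ref{prop:l recurrent relation} and solves for $m^{(j)}$, which makes the $\ln l_0^{(j)}$ term cancel exactly rather than up to a favorable $1+o(1)$ prefactor, but your absolute-value bookkeeping yields the same conclusion.
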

\begin{proof}
We prove (\ref{eq:m lower bound}) holds for all $0\leq j\leq\overline{j}-k-2$
by induction on $k\leq\overline{j}-2$. Let $\overline{\epsilon}$
be small enough such that Proposition \ref{prop:l recurrent relation},
Proposition \ref{prop:e in two chains}, and Lemma \ref{lem:n recurrent relation}
hold.

For the base case $k=0$. Apply (\ref{eq:n recurrent relation}),
we have
\begin{eqnarray*}
m^{(j)} & > & \frac{\ln E}{-2\ln\left\Vert \epsilon^{(j)}\right\Vert }m^{(j+1)}+\left(\frac{1}{\left\Vert \epsilon^{(j)}\right\Vert }\right)^{\alpha}+\frac{1}{-2\ln\left\Vert \epsilon^{(j)}\right\Vert }\ln l_{0}^{(j)}\\
 & > & \frac{1}{\left\Vert \epsilon^{(j)}\right\Vert ^{\alpha}}+\frac{1}{-2\ln\left\Vert \epsilon^{(j)}\right\Vert }\ln l_{0}^{(j)}
\end{eqnarray*}
for all $j$ with $0\leq j\leq\overline{j}-2$.

Assume that there exists $k$ with $1\leq k\leq\overline{j}-2$ such
that (\ref{eq:m lower bound}) holds for all $j$ with $0\leq j\leq\overline{j}-k-2$.
If $k+1\leq\overline{j}-2$ and $0\leq j<\overline{j}-(k+1)-2$, then
$k\leq\overline{j}-2$ and $1\le j+1\leq\overline{j}-k-2$. The induction
hypothesis yields
\begin{eqnarray}
m^{(j+1)} & > & \frac{2^{k}}{\left\Vert \epsilon^{(j+1)}\right\Vert ^{\alpha}}+\frac{1}{-2\ln\left\Vert \epsilon^{(j+1)}\right\Vert }\ln l_{0}^{(j+1)}.\label{eq:n-induction hypothesis}
\end{eqnarray}
Substitute (\ref{eq:n-induction hypothesis}) into (\ref{eq:n recurrent relation}),
we get
\begin{align}
m^{(j)}> & \frac{\ln E}{-2\ln\left\Vert \epsilon^{(j)}\right\Vert }\frac{2^{k}}{\left\Vert \epsilon^{(j+1)}\right\Vert ^{\alpha}}+\frac{\ln E}{-2\ln\left\Vert \epsilon^{(j)}\right\Vert }\frac{1}{-2\ln\left\Vert \epsilon^{(j+1)}\right\Vert }\ln l_{0}^{(j+1)}\nonumber \\
 & +\frac{1}{-2\ln\left\Vert \epsilon^{(j)}\right\Vert }\ln l_{0}^{(j)}.\label{eq:n-substitute}
\end{align}

For the first term of (\ref{eq:n-substitute}), we have 
\[
\ln\frac{1}{\left\Vert \epsilon^{(j)}\right\Vert }<\frac{1}{\left\Vert \epsilon^{(j)}\right\Vert }.
\]
Together with (\ref{eq:e in two chains}), we get 
\[
\frac{\ln E}{-2\ln\left\Vert \epsilon^{(j)}\right\Vert }\frac{2^{k}}{\left\Vert \epsilon^{(j+1)}\right\Vert ^{\alpha}}>2^{k}\left[\frac{\ln E}{2}\left(\frac{1}{\left\Vert \epsilon^{(j)}\right\Vert }\right)^{\alpha\left\Vert \epsilon^{(j)}\right\Vert ^{-2\alpha}-1}\right]>2^{k+2}\left(\frac{1}{\left\Vert \epsilon^{(j)}\right\Vert }\right)^{\alpha}.
\]
Here, we assume that $\overline{\epsilon}$ is small enough such that
\[
\frac{\ln E}{8}>\left\Vert \epsilon^{(j)}\right\Vert 
\]
and 
\[
\alpha\left\Vert \epsilon^{(j)}\right\Vert ^{-2\alpha}-2>\alpha
\]
for all $j\geq0$.

For the second term of (\ref{eq:n-substitute}), apply Proposition
\ref{prop:l recurrent relation}. We get 
\[
\frac{\ln E}{-2\ln\left\Vert \epsilon^{(j)}\right\Vert }\frac{1}{-2\ln\left\Vert \epsilon^{(j+1)}\right\Vert }\ln l_{0}^{(j+1)}>\frac{\ln E}{2\ln\left\Vert \epsilon^{(j+1)}\right\Vert }m^{(j)}+\frac{\ln E}{-2\ln\left\Vert \epsilon^{(j)}\right\Vert }\frac{1}{-2\ln\left\Vert \epsilon^{(j+1)}\right\Vert }\ln l_{0}^{(j)}.
\]

Combine the results to (\ref{eq:n-substitute}), we obtain 
\[
m^{(j)}>2^{k+2}\left(\frac{1}{\left\Vert \epsilon^{(j)}\right\Vert }\right)^{\alpha}+\frac{\ln E}{2\ln\left\Vert \epsilon^{(j+1)}\right\Vert }m^{(j)}+\frac{1}{-2\ln\left\Vert \epsilon^{(j)}\right\Vert }\left(1+\frac{\ln E}{-2\ln\left\Vert \epsilon^{(j+1)}\right\Vert }\right)\ln l_{0}^{(j)}.
\]
Then
\begin{align*}
\left(1+\frac{\ln E}{-2\ln\left\Vert \epsilon^{(j+1)}\right\Vert }\right)m^{(j)}> & 2^{k+2}\left(\frac{1}{\left\Vert \epsilon^{(j)}\right\Vert }\right)^{\alpha}\\
 & +\frac{1}{-2\ln\left\Vert \epsilon^{(j)}\right\Vert }\left(1+\frac{\ln E}{-2\ln\left\Vert \epsilon^{(j+1)}\right\Vert }\right)\ln l_{0}^{(j)}.
\end{align*}
Solve for $m^{(j)}$, we get
\begin{eqnarray*}
m^{(j)} & > & 2^{k+2}\left(1+\frac{\ln E}{-2\ln\left\Vert \epsilon^{(j+1)}\right\Vert }\right)^{-1}\left(\frac{1}{\left\Vert \epsilon^{(j)}\right\Vert }\right)^{\alpha}+\frac{1}{-2\ln\left\Vert \epsilon^{(j)}\right\Vert }\ln l_{0}^{(j)}.
\end{eqnarray*}

To simplify the inequality, we assume that $\overline{\epsilon}$
is small enough such that 
\[
\frac{\ln E}{-2\ln\left\Vert \epsilon^{(j+1)}\right\Vert }\leq\frac{\ln E}{-2\ln\overline{\epsilon}}<1
\]
for all $j\geq0$. Therefore, 
\[
m^{(j)}>\frac{2^{k+1}}{\left\Vert \epsilon^{(j)}\right\Vert ^{\alpha}}+\frac{1}{-2\ln\left\Vert \epsilon^{(j)}\right\Vert }\ln l_{0}^{(j)}
\]
and the lemma is proved by induction.
\end{proof}

The lemma shows that the restriction beats the obstruction because
(\ref{eq:m lower bound for row 0}) approaches infinity as the total
number of rows in a double sequence increases. This proves
\begin{proposition}
\label{prop:Finite number of chain}\index{row|textit}\index{double sequence!number of rows|textit}Given
$\delta>0$ and $I^{v}\supset I^{h}\Supset I$. There exists $\overline{\epsilon}>0$
such that for all non-degenerate open maps $F\in\hat{\mathcal{I}}_{\delta}(I^{h}\times I^{v},\overline{\epsilon})$
the following property holds:

Let $J\subset A\cup B$ be a square subset of a wandering domain of
$F$ and $\left\{ J_{n}^{(j)}\right\} _{n\geq0,0\leq j\leq\overline{j}}$
be a $J$-double sequence. Then the number of rows $\overline{j}$
for the double sequence is finite.
\end{proposition}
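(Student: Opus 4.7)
The plan is to argue by contradiction using the lower bound on $m^{(0)}$ supplied by Lemma \ref{lem:n lower bound}. I will suppose that the $J$-double sequence has infinitely many rows, i.e.\ $\overline{j} = \infty$, and derive a contradiction from (\ref{eq:m lower bound for row 0}).

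First I will observe that if $\overline{j} = \infty$, then by the construction in Definition \ref{def:J double sequence} every row $j \geq 0$ must eventually enter the bad region; otherwise the procedure would terminate at that $j$ with $\overline{j} = j < \infty$. In particular the time span in the good regions $n^{(0)}$ is a finite nonnegative integer, and so $m^{(0)} = n^{(0)} + 1$ is a finite positive integer determined by $J$ and $F$.

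Next I will invoke inequality (\ref{eq:m lower bound for row 0}),
\[
m^{(0)} > \frac{2^{k}}{\|\epsilon^{(0)}\|^{\alpha}} + \frac{1}{-2\ln \|\epsilon^{(0)}\|}\ln l_{0}^{(0)},
\]
which is valid for every integer $0 \leq k \leq \overline{j} - 2$. Under the assumption $\overline{j} = \infty$ this bound is in force for every nonnegative integer $k$. Since $\|\epsilon^{(0)}\|$ and $l_{0}^{(0)} = l(J) > 0$ are fixed positive quantities depending only on $F$ and $J$, and $\alpha > 0$ is a universal constant, the second term on the right is independent of $k$ while the first term equals $2^{k}$ times a fixed positive constant. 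Letting $k \to \infty$ would therefore force $m^{(0)} = \infty$, contradicting the finiteness established in the previous paragraph. Hence $\overline{j}$ must be finite.

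The substantive work has already been carried out in the previous section; the present proposition is essentially a corollary of Lemma \ref{lem:n lower bound}. The main conceptual obstacle was the delicate interplay between the contraction of thickness in the bad region (Proposition \ref{prop:w_n Bound}), the expansion of horizontal size in the good region (Proposition \ref{prop:Good region estimates}), the upper bound on $l^{(j+1)}_{n^{(j+1)}}$ coming from the size of the bad region (Proposition \ref{prop:Good and Bad Regions}), and the doubly-exponential relation between $\|\epsilon^{(j)}\|$ and $\|\epsilon^{(j+1)}\|$ (Proposition \ref{prop:e in two chains}). Once those ingredients are assembled into the Two Row Lemma (Lemma \ref{lem:n recurrent relation}) and iterated along the rows in reverse order to produce the $2^{k}$ growth in (\ref{eq:m lower bound for row 0}), the deduction that no infinite double sequence can exist reduces to the one-line contradiction above.
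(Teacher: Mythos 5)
Your proof is correct and follows the same route the paper intends: the paper treats Proposition \ref{prop:Finite number of chain} as an immediate consequence of Lemma \ref{lem:n lower bound}, remarking only that (\ref{eq:m lower bound for row 0}) tends to infinity with $k$ while $m^{(0)}$ is a fixed finite integer. Your contradiction argument — assume $\overline{j}=\infty$, note that $m^{(0)}$ is finite since $n^{(0)}$ is defined as the smallest index of entry into the bad region, and then let $k\to\infty$ in the bound — is precisely this deduction, spelled out explicitly.
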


\subsection{\label{subsec:Nonexistence of wandering domain}Nonexistence of wandering
domain}

Finally, the main theorem is concluded as follows.
\begin{theorem}
\label{thm:nonexistence of wandering domain}\index{wandering domain!nonexistence|textit}Given
$\delta>0$ and $I^{v}\supset I^{h}\Supset I$. There exists $\overline{\epsilon}>0$
such that every non-degenerate open Hénon-like map $F\in\mathcal{I}_{\delta}(I^{h}\times I^{v},\overline{\epsilon})$
does not have wandering domains.
\end{theorem}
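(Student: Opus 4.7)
The plan is to argue by contradiction, leveraging the machinery already built up through Chapters \ref{sec:Framework}--\ref{sec:Bad region}. Suppose $F\in\mathcal{I}_{\delta}(I^{h}\times I^{v},\overline{\epsilon})$ is a non-degenerate open H\'enon-like map possessing a wandering domain $J_{*}$. Using Corollary \ref{cor:Wandering Domain/Renormalization} together with the hyperbolicity of the renormalization operator (Proposition \ref{prop:Hyperbolicity of the Renormalization Operator}), I may replace $F$ by a sufficiently deep renormalization $R^{N}F$ and thereby assume $F\in\hat{\mathcal{I}}_{\delta}(I^{h}\times I^{v},\overline{\epsilon})$ with $\overline{\epsilon}$ as small as any finite list of lemmas in Chapters \ref{sec:Good and Bad region}--\ref{sec:Bad region} requires. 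By iterating $J_{*}$ once if necessary (Proposition \ref{prop:Dynamics of the partition on D}) I can shrink to a closed square $J\subset A\cup B$ contained in the wandering domain, so that $l(J)=w(J)=h(J)$.

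Next I would form the $J$-double sequence $\{J_{n}^{(j)}\}$ of Definition \ref{def:J double sequence}. By Proposition \ref{prop:Finite number of chain}, the number of rows $\overline{j}$ is finite; that is, the closest approach enters the bad region at most finitely often. Consider then the final row $j=\overline{j}$. By definition this row has $n^{(\overline{j})}=\infty$, meaning every sequence element $J_{n}^{(\overline{j})}$ with $n\geq 0$ lies in the good region at its renormalization scale. The initial element $J_{0}^{(\overline{j})}$ is a square, hence $R$-regular for the $R>0$ fixed in Proposition \ref{prop:Good region estimates}, so the expansion argument applies inductively: each $J_{n}^{(\overline{j})}$ is $R$-regular and
\[
l_{n+1}^{(\overline{j})}\geq E\, l_{n}^{(\overline{j})}
\]
for all $n\geq 0$, where $E>1$ is the universal constant supplied by that proposition. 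Consequently $l_{n}^{(\overline{j})}\to\infty$.

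This contradicts boundedness: each $J_{n}^{(\overline{j})}$ is contained in the compact set $I^{h}\times I^{v}$ (whose dynamical domains $D_{n}$ uniformly embed into $I^{h}\times I_{n}^{v}$ by Proposition \ref{prop:Hyperbolicity of the Renormalization Operator}), so $l_{n}^{(\overline{j})}\leq |I^{h}|$ for every $n$. The contradiction forces $J_{*}$ not to exist, proving the theorem.

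The genuine obstacle, already met in the body of the paper, is controlling what happens between rows: in each transition $(j)\to(j+1)$ the horizontal size collapses catastrophically to the thickness, which contracts at rate $\|\epsilon^{(j)}\|^{3/2}$ per step (Corollary \ref{cor:w_n bound simplfied}), so a single passage through the bad region could in principle be fatal. The Two Row Lemma (Lemma \ref{lem:n recurrent relation}) and its inductive consequence (Lemma \ref{lem:n lower bound}) are precisely what tame this: they exploit the competition between the contraction of thickness on row $j$, the forced expansion on row $j+1$ prior to entering the bad region, the super-exponential decay of $\|\epsilon^{(j)}\|$ between rows (Proposition \ref{prop:e in two chains}), and the size constraint $l_{n^{(j+1)}}^{(j+1)}\lesssim\sqrt{\|\epsilon^{(j+1)}\|}$ imposed by Proposition \ref{prop:Good and Bad Regions}. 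In my write-up I will only need to invoke the endpoint of that analysis, Proposition \ref{prop:Finite number of chain}, and then close the argument with the good-region expansion as above.
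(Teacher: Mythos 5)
Your proposal is correct and follows essentially the same route as the paper: pass to a deep renormalization so $F_N\in\hat{\mathcal{I}}_{\delta_R}$, extract a square subset $\hat{J}$ of a wandering domain in $A\cup B$, build the $\hat{J}$-double sequence, invoke Proposition \ref{prop:Finite number of chain} to bound the number of rows, and then apply the good-region expansion on the terminal row to force $l_n^{(\overline{j})}\to\infty$, contradicting boundedness of $I^h$. The only cosmetic differences are that the paper is more careful about restricting the vertical domain to $I_R^h\times I_R^h$ before applying $\hat{\mathcal{I}}$-hypotheses, and that moving $J_*$ into $B$ may take several iterations (not just ``once'') when $J_*\subset A$, via Proposition \ref{prop:Dynamics of the partition on D}.
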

\begin{proof}
Assume that $\overline{\epsilon}>0$ is small enough such that Proposition
\ref{prop:Hyperbolicity of the Renormalization Operator} holds and
$F\in\mathcal{I}_{\delta}(I^{h}\times I^{v},\overline{\epsilon})$.
There exists $0<\delta_{R}<\delta$ and $I\Subset I_{R}^{h}\subset I^{h}$
such that $F_{n}\in\mathcal{H}_{\delta_{R}}(I_{R}^{h}\times I_{n}^{v},\overline{\epsilon})$
for all $n\geq0$.

Prove by contradiction. Assume that $F$ has a wandering domain. Let
$\overline{\epsilon}'>0$ be small enough such that Proposition \ref{prop:l recurrent relation}
and Proposition \ref{prop:Finite number of chain} holds for $\delta_{R}$
and $I_{R}^{h}\times I_{R}^{h}$. By Proposition \ref{prop:Hyperbolicity of the Renormalization Operator},
there exists $N\geq0$ such that $F_{N}\in\hat{\mathcal{I}}_{\delta_{R}}(I_{R}^{h}\times I_{R}^{h},\overline{\epsilon}')$.
Set $\hat{F}=F_{N}|_{I_{R}^{h}\times I_{R}^{h}}$.

By Corollary \ref{cor:Wandering Domain/Renormalization}, $F_{N}$
has a wandering domain $J$ in $D(F_{N})\subset I^{h}(F_{N})\times I_{N}^{v}$.
If $J\subset B(F_{N})$, then $J\subset I_{R}^{h}\times I_{N}^{v}$
and so $F^{2}(J)\subset B(F_{N})\cap\left(I_{R}^{h}\times I_{R}^{h}\right)$.
If $J\subset A(F_{N})$, there exists $n>0$ such that $F^{n}(J)\subset B(F_{N})$
by Proposition \ref{prop:Dynamics of the partition on D}. If $J\subset C(F_{N})$,
then $F(J)\subset B(F_{N})$. Without lose of generality, we may assume
that $J\subset B(F_{N})\cap\left(I_{R}^{h}\times I_{R}^{h}\right)$.
Hence, $J\subset B(\hat{F})$ is a wandering domain of the restriction
$\hat{F}$.

Let $\hat{J}$ be a nonempty square subset of $J$ and $\left\{ J_{n}^{(j)}\right\} _{n\geq0,0\leq j\leq\overline{j}}$
be a $\hat{J}$-double sequence. By Proposition \ref{prop:Finite number of chain},
$\overline{j}$ is finite. Then the second property of Proposition
\ref{prop:l recurrent relation} implies that 
\[
\lim_{n\rightarrow\infty}l_{n}^{(\overline{j})}=\infty
\]
which is a contraction. Therefore, $F$ does not have wandering domains.
\end{proof}

\begin{remark}
\label{rem:Absence of wandering domain for CLM-renormalizable}The
result for Theorem \ref{thm:nonexistence of wandering domain} also
applies to infinitely CLM-renormalizable maps if all levels of renormalization
are defined on a sufficiently large domain. This is because of the
hyperbolicity of the Hénon renormalization operator.

Assume that $F$ is a strongly dissipative infinitely CLM-renormalizable
Hénon-like map. By the hyperbolicity of the renormalization operator
\cite[Theorem 4.1]{de2005renormalization}, there exists $N\geq0$
such that $F_{n}$ is sufficiently close to the fixed point $G$ for
all $n\geq N$. This means that $F_{n}$ is renormalizable for all
$n\geq N$ and hence $F_{N}$ is infinitely renormalizable in the
sense of this article. Thus, we can apply the theorem to $F_{N}$
to conclude $F$ does not have wandering domains.
\end{remark}
As a consequence, the absence of wandering domains provides the information
of the topology as follows.
\begin{corollary}
Given $\delta>0$ and $I^{v}\supset I^{h}\Supset I$. There exists
$\overline{\epsilon}>0$ such that for any non-degenerate open map
$F\in\mathcal{I}_{\delta}(I^{h}\times I^{v},\overline{\epsilon})$,
the union of the stable manifolds for the period doubling periodic
points is dense in the domain.
\end{corollary}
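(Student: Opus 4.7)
The plan is to derive the corollary as an essentially immediate consequence of Theorem \ref{thm:nonexistence of wandering domain}, exploiting the non-classical definition of wandering domain (Definition \ref{def:Wandering domain}), which is phrased precisely so as to be dual to density of stable manifolds. Proceed by contradiction: suppose the union $\mathcal{W}=\bigcup_{k\geq -1}W^{s}(p(k))$ of global stable manifolds of the period-doubling periodic orbits is not dense in $D(F)$. Then there exists a point in $D(F)$ together with an open neighborhood disjoint from $\overline{\mathcal{W}}$; shrinking to a small open ball, obtain a nonempty connected open set $U\subset D(F)$ with $U\cap\mathcal{W}=\emptyset$.

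Next, I would verify that $U$ qualifies as a wandering domain in the sense of Definition \ref{def:Wandering domain}. For each $k$, the global stable set $W^{s}(p(k))=\{z:F^{n\cdot 2^{k}}(z)\to \text{orbit of }p(k)\}$ is totally invariant: $F(W^{s}(p(k)))\subset W^{s}(p(k))$ and $F^{-1}(W^{s}(p(k)))\subset W^{s}(p(k))$. Consequently, if $u\in U$ satisfied $F^{n}(u)\in\mathcal{W}$ for some $n\geq 0$, then $u\in F^{-n}(\mathcal{W})\subset\mathcal{W}$, contradicting the choice of $U$. Hence the forward orbit $\{F^{n}(U)\}_{n\geq 0}$ is disjoint from every $W^{s}(p(k))$. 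To upgrade this to the full statement of Definition \ref{def:Wandering domain} (disjointness from stable manifolds of \emph{all} periodic points), I would invoke the dichotomy of Gambaudo--Tresser and Lyubich--Martens recalled in the introduction: for an infinitely period-doubling renormalizable strongly dissipative H\'enon-like map, every $\omega$-limit set is either the Cantor attractor or a saddle periodic orbit, and the periodic orbits that occur are precisely the $p(k)$. Thus $\mathcal{W}$ is the entire union of stable manifolds of periodic orbits, and $U$ is a wandering domain.

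Applying Theorem \ref{thm:nonexistence of wandering domain} to the non-degenerate open map $F$ now yields that no such $U$ exists, contradicting the assumed failure of density. Therefore $\mathcal{W}$ is dense in $D(F)$.

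I do not anticipate a genuine obstacle: the corollary is essentially the contrapositive of the main theorem made accessible by the weakened notion of wandering domain, and the only external ingredient, namely the classification of $\omega$-limit sets for infinitely period-doubling renormalizable H\'enon-like maps, is already cited in the introduction. The mildly technical point to get right is the total $F$-invariance of stable sets of periodic orbits, which is routine.
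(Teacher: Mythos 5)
Your argument is correct and matches the reasoning the paper gives (only implicitly, in the introduction) for this corollary: pass to the contrapositive of Theorem \ref{thm:nonexistence of wandering domain}, using the Gambaudo--Tresser / Lyubich--Martens dichotomy of $\omega$-limit sets to identify the complement of the union of period-doubling stable manifolds with the basin of the Cantor attractor, so that any open set missing that union would be a wandering domain in the weakened sense of Definition \ref{def:Wandering domain}. Your explicit verification of total $F$-invariance of the stable sets is a correct and clean way to confirm that the whole forward orbit of $U$ stays disjoint from $\mathcal{W}$, which is exactly what Definition \ref{def:Wandering domain} requires.
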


\appendix

\bibliographystyle{spmpsci}
\bibliography{Wandering_Domain}

\addcontentsline{toc}{section}{\refname}

\settowidth{\nomlabelwidth}{$\mathcal{U}_{\delta}$}
\printnomenclature{}

\addcontentsline{toc}{section}{Nomenclature}

\printindex{}
\end{document}